\newcommand{\addQEDstyle}[2]{\AtBeginEnvironment{#1}{\pushQED{\qed}
\renewcommand{\qedsymbol}{#2}}\AtEndEnvironment{#1}{\popQED}}
\newtheorem{theorem}{Theorem}[section]
\newtheorem{lemma}[theorem]{Lemma}
\newtheorem{definition}[theorem]{Definition}
\newtheorem{corollary}[theorem]{Corollary}
\newtheorem{proposition}[theorem]{Proposition}
\newtheoremstyle{example}% Style for Example and Remark Environments 
{5pt}% <Space above>
{5pt}% <Space below>
{}% <Body font>
{}% <Indent amount>
{\bf}% <Theorem head font>
{:}% <Punctuation after theorem head>
{.5em}% <Space after theorem headi>
{}% <Theorem head spec (can be left empty, meaning `normal')>
\theoremstyle{example}
\newtheorem{example}{Example}
\newtheorem{remark}{Remark}
\newcommand*{\myproofname}{Proof}
\newenvironment{subproof}[1][\myproofname]{\begin{proof}[#1]\renewcommand*{\qedsymbol}{$\mathbin{/\mkern-6mu/}$}}{\end{proof}}
\newenvironment{exproof}[1][\myproofname]{\begin{proof}[#1]\renewcommand*{\qedsymbol}{$\square$}}{\end{proof}}
\renewcommand\Re{\operatorname{Re}}%%redefined Re and Im
\newcommand\End{\operatorname{End}} %Prefix linear transformations
\newcommand\Gl{\operatorname{Gl}} %Prefix for general linear group
\newcommand\OdR{\mbox{O}(\mathbb{R}^d)} %Orthogonal transformations
\newcommand\Sym{\operatorname{Sym}}
\newcommand\Exp{\operatorname{Exp}}
\newcommand\diag{\operatorname{diag}}
\newcommand\Spec{\operatorname{Spec}}
\renewcommand\det{\operatorname{det}}
\title{On-diagonal asymptotics for heat kernels of a class of inhomogeneous partial differential operators}
\author[1]{Evan Randles\thanks{Corresponding author: evan.randles@colby.edu}\thanks{Permanent Address: Department of Mathematics, Colby College, Waterville, Maine 04901, USA}}
\author[1]{Laurent Saloff-Coste}
\affil[1]{Department of Mathematics, Cornell University, Ithaca, New York 14853-4201, USA}
\date{}
\begin{document}
\maketitle

\begin{abstract}
We consider certain constant-coefficient differential operators on $\mathbb{R}^d$ with positive-definite symbols. Each such operator $\Lambda$ with symbol $P$ defines a semigroup $e^{-t\Lambda}$ , $t>0$ , admitting a convolution kernel $H^t_P$ for which the large-time behavior of $H_P^t(0)$ cannot be deduced by basic scaling arguments. The simplest example has symbol $P(\xi)=(\eta+\zeta^2)^2+\eta^4$ , $\xi=(\eta,\zeta)\in \mathbb{R}^2$ . We devise a method to establish large-time asymptotics of $H^t_P(0)$ for several classes of examples of this type and we show that these asymptotics are preserved by perturbations by certain higher-order differential operators. For the $P$ just given, it turns out that $H^t_P(0)\sim c_Pt^{-5/8}$ as $t\to\infty$ . We show how such results are relevant to understand the convolution powers of certain complex functions on $\mathbb Z^d$ . Our work represents a first basic step towards a good understanding of the semigroups associated with these operators. Obtaining meaningful off-diagonal upper bounds for $H_P^t$ remains an interesting challenge.
\end{abstract}

%\begin{abstract}
%We consider certain constant-coefficient differential operators on $\mathbb{R}^d$ that have positive-definite symbols. Each such operator $\Lambda$ with symbol $P$ defines a semigroup of operators $e^{-t\Lambda}$, $t>0$, admitting a continuous convolution kernel $H^t_P$ for which the large-time behavior of $H_P^t(0)$ cannot be deduced by basic scaling arguments. The simplest example has symbol $P(\xi)=(\eta+\zeta^2)^2+\eta^4$, $\xi=(\eta,\zeta)\in \mathbb{R}^2$. We devise a method that allows us to determine the large-time behavior of $H^t_P(0)$ for several classes of examples of this type and we show that these large-time asymptotics are preserved by perturbations of $\Lambda$ by certain higher-order differential operators. For the $P$ just given, it turns out that $H^t_P(0)\sim c_Pt^{-5/8}$ when $t$ tends to infinity. We show how such results are relevant to understand the iterated convolution powers of certain finitely-supported complex functions on $\mathbb Z^d$. We also discuss how these techniques provide precise small-time asymptotics for $H^t_P(0)$ in some cases when the operator $\Lambda$ is not hypoelliptic. The simplest such example $\Lambda$ has symbol $P(\xi)=\eta^2+(\eta-\xi^2)^4$ and we show that $H^t_P(0)\sim c_Pt^{-1/2}$ as $t$ tends to $0$ in this case.   Our work represents a first basic step towards a good understanding of the semigroups associated with these differential operators. Obtaining meaningful off-diagonal upper bounds for the convolution kernels of these semigroups remains an interesting challenge.  
%\end{abstract}

\noindent{\small\bf 2020 Mathematics Subject Classification:} Primary 35K08, 35K25; Secondary 47D06, 42B99.\\

\noindent{\small\bf Keywords:} On-diagonal heat kernel asymptotics, inhomogeneous partial differential operators, convolution powers

\section{Introduction}

On $\mathbb{R}^2$, consider the constant-coefficient partial differential operator
\begin{equation*}
\Lambda=\partial_{x_1}^4+\partial_{x_2}^4+2i\partial_{x_1}\partial_{x_2}^2-\partial_{x_1}^2
\end{equation*}
and its symbol
\begin{equation*}
P(\xi)=(\eta+\zeta^2)^2+\eta^4
\end{equation*}
defined for $\xi=(\eta,\zeta)\in\mathbb{R}^2$. It is evident that $\Lambda$ is a non-negative, symmetric, and fourth-order elliptic operator. Thus, when defined initially on the set of compactly supported smooth functions, $C_c^\infty(\mathbb{R}^2)$, $\Lambda$ extends uniquely to a non-negative self-adjoint operator on $L^2(\mathbb{R}^2)$ which, by an abuse of notation, we denote by $\Lambda$. Via the spectral calculus or the Hille-Yosida construction, $-\Lambda$ generates a continuous one-parameter semigroup of contractions on $L^2(\mathbb{R}^2)$ which is denoted by $\{e^{-t\Lambda}\}$ and called the \textit{heat semigroup associated to $\Lambda$}. Thanks to the Fourier transform\footnote{On $\mathbb{R}^d$, we shall take the Fourier transform $\mathcal{F}$ and inverse Fourier transform $\mathcal{F}^{-1}$ to be given by $\mathcal{F}(f)(\xi)=\widehat{f}(\xi)=\int_{\mathbb{R}^d}f(x)e^{ix\cdot\xi}\,dx$ for $f\in L^2(\mathbb{R}^d)\cap L^1(\mathbb{R}^d)$ and $\mathcal{F}^{-1}(g)(x)=\check{g}(x)=\frac{1}{(2\pi)^d}\int_{\mathbb{R}^d}g(\xi)e^{-ix\cdot\xi}\,d\xi$ for $g\in L^2(\mathbb{R}^d)\cap L^1(\mathbb{R}^d)$, respectively.}, this semigroup has the integral representation
\begin{equation}\label{eq:HeatKernelRep}
(e^{-t\Lambda}f)(x)=\int_{\mathbb{R}^2}H_P^t(x-y)f(y)\,dy
\end{equation}
for each $f\in L^2(\mathbb{R}^2)$ where $H_P=H_P^{(\cdot)}(\cdot)$ is called the \textit{heat kernel associated to $\Lambda$} and is given by
\begin{equation}\label{eq:HeatKernelIntro}
H_P^t(x)=\frac{1}{(2\pi)^2}\int_{\mathbb{R}^2}e^{-tP(\xi)}e^{-ix\cdot\xi}\,d\xi
\end{equation}
for $t>0$ and $x\in\mathbb{R}^2$. For its central role in the analysis surrounding $\Lambda$, including its spectral theory, associated Sobolev inequalities, and properties of the semigroup $\{e^{-t\Lambda}\}$, we are interested in the behavior and properties of the heat kernel $H_P$. As we demonstrate below, this curiosity is further spurred by the appearance of $H_P$ as a scaled limit of convolution powers of complex-valued functions on $\mathbb{Z}^2$, just as the Gaussian density appears as the scaled limit in the local (central) limit theorem \cite{RSC17}.

Consider the function $\phi:\mathbb{Z}^2\to\mathbb{C}$ defined by 
\begin{equation}\label{eq:IntroPhi}
\phi=\frac{1}{3840000}\left(\frac{1}{12}\phi_1+\frac{1}{8}\phi_2+\frac{1}{3}\phi_3\right)
\end{equation}
where
\begin{equation*}
\phi_1(x)=
\begin{cases}
41375061 & (x_1,x_2)=(0,0)\\
1080000\pm 969232 i & (x_1,x_2)=\pm(1,0)\\
    -165072 &(x_1,x_2)=\pm(2,0)\\
    72000\pm 9024 & (x_1,x_2)=\mp(3,0)\\
    -38256 &(x_1,x_2)=\pm (4,0)\\
    0 & \mbox{else}
\end{cases},
\hspace{1cm}
\phi_2(x)=
\begin{cases}
1228800 & (x_1,x_2)=(0,\pm 1)\\
    -286328 & (x_1,x_2)=(0,\pm 2)\\
    -9524 & (x_1,x_2)=(0,\pm 4)\\
    2232 &(x_1,x_2)=(0,\pm 6)\\
    -179 & (x_1,x_2)=(0,\pm 8)\\
    0 & \mbox{else}
\end{cases}
\end{equation*}
and
\begin{equation*}
    \phi_3(x)=
    \begin{cases}
    \pm115200i & (x_1,x_2)=(\mp 1,1), (\mp 1,-1)\\
    \pm6939 i &(x_1,x_2)=(\mp 1,2), (\mp 1,-2)\\
    216 &(x_1,x_2)=(\pm 2,2),(\pm2 ,-2)\\
    1128 i &(x_1,x_2)=(\pm 3,2),(\pm 3,-2)\\
    \pm1062 i &(x_1,x_2)=(\pm 1,4),(\pm 1,-4)\\
    -54 &(x_1,x_2)=(\pm 2, 4),(\pm2,-4)\\
    \pm 77 i &(x_1,x_2)=(\mp 1,6),(\mp 1,-6)\\
    0 &\mbox{else}
    \end{cases}
\end{equation*}
for $x=(x_1,x_2)\in\mathbb{Z}^2$.
With this function, we define its iterated convolution powers $\phi^{(n)}:\mathbb{Z}^2\to\mathbb{C}$ by putting $\phi^{(1)}=\phi$ and, for $n\geq 2$, 
\begin{equation*}
    \phi^{(n)}(x)=\sum_{y\in\mathbb{Z}^2}\phi^{(n-1)}(x-y)\phi(y)
\end{equation*}
for $x\in\mathbb{Z}^2$. Motivated by applications to data smoothing and partial differential equations, we are interested in the asymptotic behavior of $\phi^{(n)}$ as $n\to\infty$. Given the nature of convolution, it is reasonable to expect that the mass of $\phi^{(n)}$ ``spreads out'' on $\mathbb{Z}^2$ as $n$ increases, however, exactly how it does this is not a priori clear. In Section \ref{sec:ConvPower}, we show that, for large $n$, $\phi^{(n)}$ is well approximated by the heat kernel $H_P$ evaluated at $t=n/100$. This so-called local limit theorem is illustrated in Figure \ref{fig:Intro} and it motivates us to understand $H_P^t(\cdot)$ for large $t$.

\begin{figure}[h!]
\begin{center}
\resizebox{\textwidth}{!}{
	    \begin{subfigure}[5cm]{0.5\textwidth}
		\includegraphics[width=\textwidth]{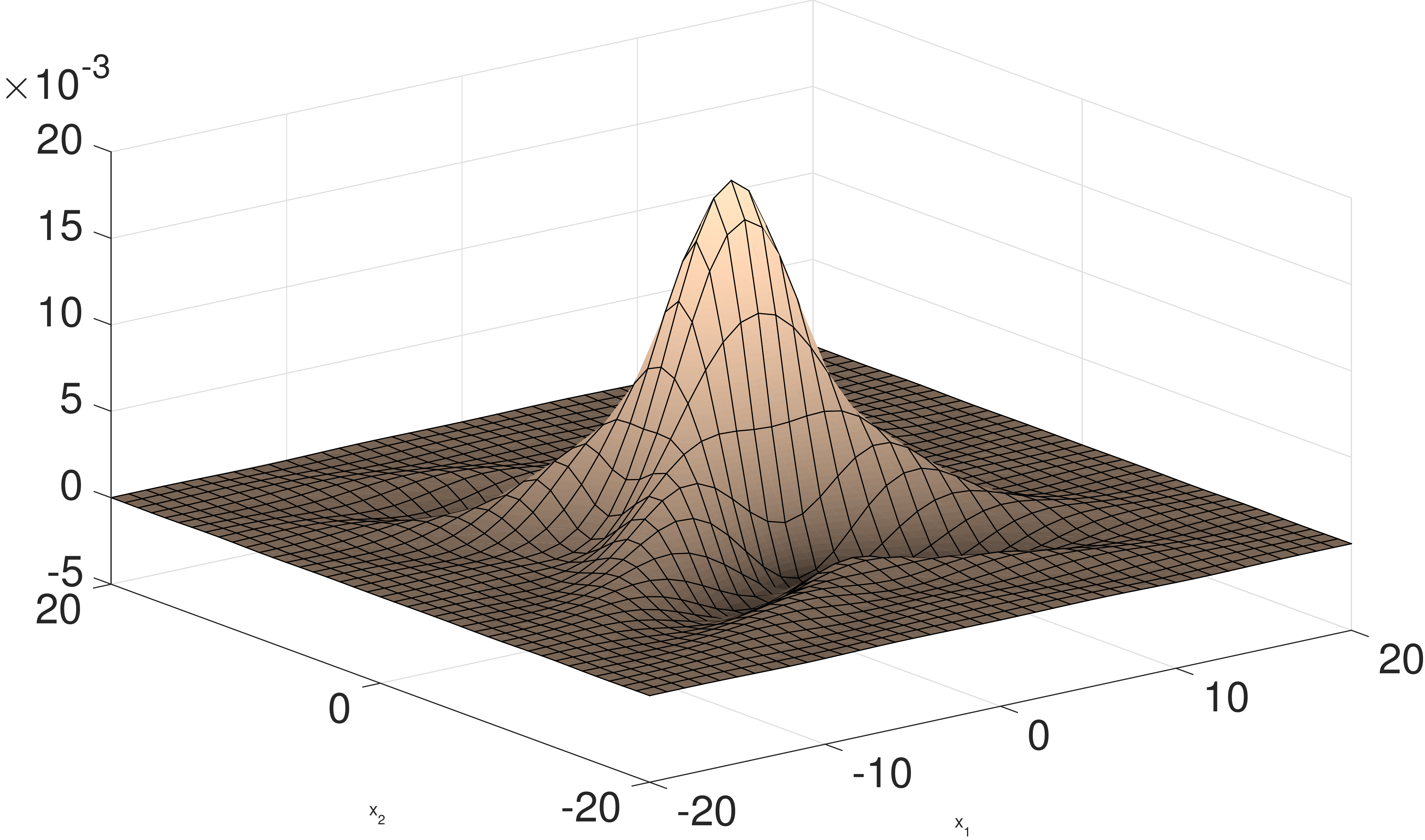}
		\caption{$\Re(\phi^{(n)})$ for $n=1000$}
		\label{fig:IntroPhi}
	    \end{subfigure}
\begin{subfigure}[6cm]{0.5\textwidth}
		\includegraphics[width=\textwidth]{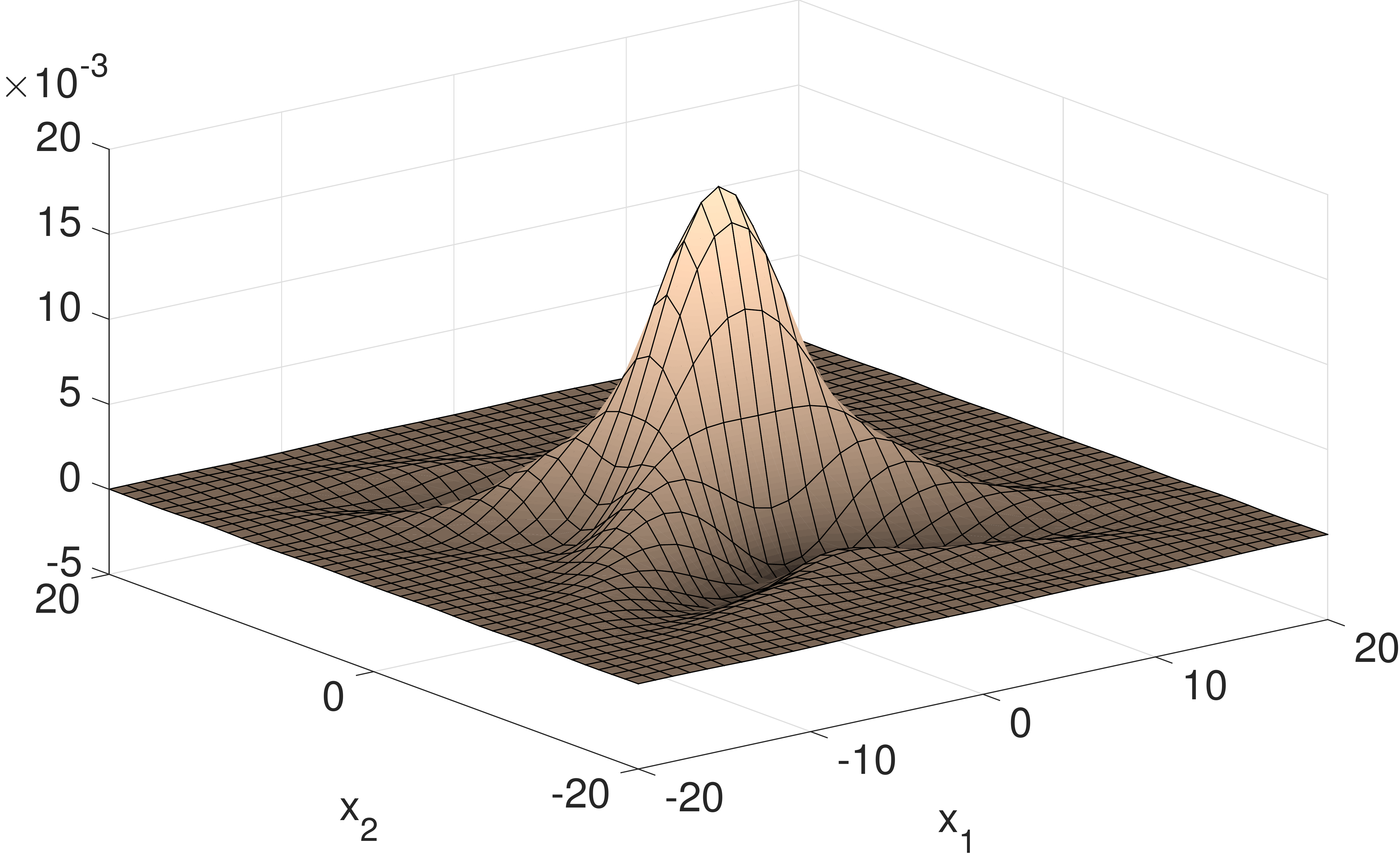}
		\caption{$\Re(H_P^{t})$ for $t=n/100$}
		\label{fig:IntroAttractor}
	    \end{subfigure}}
\caption{The graphs of $\Re(\phi^{(n)})$ and $\Re(H_{P})$ for $n=1000$.}
\label{fig:Intro}
\end{center}
\end{figure}

In light of the fast growth of $P(\xi)$ as $\xi\to\infty$, it is not difficult to show that, for each $t>0$, $x\mapsto H_P^t(x)$ is a Schwartz function. Given that $\Lambda$ is a fourth-order (uniformly) elliptic operator, $H_P$ is known to satisfy the following estimate: There are positive constants $A, B$, and $C$ for which
\begin{equation}\label{eq:OffDiagIntro}
\abs{H_P^t(x)}\leq \frac{A}{t^{1/2}}\exp\left(-Bt\abs{\frac{x}{t}}^{4/3}+Ct\right)
\end{equation}
for $t>0$ and $x\in\mathbb{R}^2$ \cite{Ei69,Da95}. Using this so-called off-diagonal estimate, it is easily verified that the semigroup $\{e^{-t\Lambda}\}$, initially defined on $L^2(\mathbb{R}^2)$, extends uniquely to a continuous semigroup $\{e^{-t\Lambda_p}\}$ on  $L^p(\mathbb{R}^2)$ for all $1\leq p\leq\infty$. In fact, this estimate also guarantees that the spectrum of $\Lambda=\Lambda_p$ independent of $p$, c.f., \cite[Theorem 20]{Da95}. 

For the present article, we shall focus our attention on the so-called on-diagonal behavior of $H_P^t(x)$. That is, we are interested in the behavior of
\begin{equation*}
    \varphi(t):=H_P^t(0)=\sup_{x\in\mathbb{R}^2}\abs{H_P^t(x)}
\end{equation*}
defined for $t>0$. Given that $P$ is non-negative, $\varphi$ is evidently a non-increasing function on $(0,\infty)$. In view of the representation \eqref{eq:HeatKernelRep}, we observe immediately that
\begin{equation*}
    \|e^{-t\Lambda}\|_{1\to\infty}=\sup_{x\in\mathbb{R}^d}\abs{H_P^t(x)}=\varphi(t)
\end{equation*}
for $t>0$, i.e., $e^{-t\Lambda}$ is a bounded operator from $L^1(\mathbb{R}^2)$ to $L^\infty(\mathbb{R}^2)$ with operator norm precisely $\varphi(t)$. By virtue of Plancherel's theorem,
\begin{equation*}
    \|H_P^t\|_2^2=\frac{1}{(2\pi)^2}\|e^{-tP}\|_2^2=\frac{1}{(2\pi)^2}\int_{\mathbb{R}^2}e^{-2tP(\xi)}\,d\xi=H_P^{2t}(0)=\varphi(2t)
\end{equation*}
for $t>0$. From this it follows that $e^{-t\Lambda}$ is a bounded operator from $L^2(\mathbb{R}^2)$ to $L^\infty(\mathbb{R}^2)$ with
\begin{equation*}
    \|e^{-t\Lambda}\|_{2\to\infty}=\|H_P^t\|_2=\sqrt{\varphi(2t)}
\end{equation*}
for $t>0$. In other words, the semigroup $\{e^{-t\Lambda}\}$ is ultracontractive with bound characterized by $\varphi$. It is well known that the ultracontractivity of the semigroup $\{e^{-t\Lambda}\}$ can be used to establish various Sobolev-type inequalities associated to $\Lambda$, e.g., Sobolev, Nash, Gagliardo-Nirenberg \cite{CKS87,Co96,Da89,RSC20,SC09,Va85}. To do this, however, it is necessary to have a good understanding of the function $\varphi(t)$. Further, given our motivation to understand the convolution powers of the example $\phi$ given above, we are especially interested in understanding the function $\varphi(t)$ for large values of $t$ and, in fact, we will find that $\|\phi^{(n)}\|_\infty\asymp\varphi(n)$ for\footnote{Here and in what follows, for real-valued functions $f$ and $g$ defined, at least, on a non-empty set $X$, we shall write $f(x)\asymp g(x)$ for $x\in X$ to mean that there are positive constants $C$ and $C'$ for which $Cg(x)\leq f(x)\leq C'g(x)$ for all $x\in X$.} $n\in\mathbb{N}_+:=\{1,2,\dots\}$.

Analyzing $\varphi$ in small time is straightforward. Using \eqref{eq:OffDiagIntro}, we see that
\begin{equation}\label{eq:IntroSmallTimeEst}
\varphi(t)\leq \frac{C}{t^{1/2}}
\end{equation}
for all $0<t\leq 1$ where $C$ is a positive constant. We can, of course, do better by employing the following elementary scaling argument: Upon noting that
\begin{equation}\label{eq:IntroScaledLimit}
\lim_{t\to 0}tP(t^{-1/4}\xi)=\lim_{t\to 0}tP(t^{-1/4}\eta,t^{-1/4}\zeta)=\eta^4+\zeta^4
\end{equation}
for each $\xi=(\eta,\zeta)\in\mathbb{R}^2$, the change of variables $\xi\mapsto t^{-1/4}\xi=(t^{-1/4}\eta,t^{-1/4}\zeta)$ in \eqref{eq:HeatKernelIntro} yields
\begin{equation}\label{eq:IntroSmallTimeTrueAsymptotic}
    \lim_{t\to 0}t^{1/2}\varphi(t)=\frac{1}{(2\pi)^2}\int_{\mathbb{R}^2}e^{-(\eta^4+\zeta^4)}\,d\eta\,d\zeta=\frac{1}{\pi^2}\Gamma(5/4)^2\approx 0.0832420
\end{equation}
where $\Gamma$ is Euler's Gamma function. Consequently, $\varphi(t)\asymp t^{-1/2}$ for $0<t\leq 1$. It is noteworthy that scaled limit \eqref{eq:IntroScaledLimit} at $t=0$ ``picks out'' the homogeneous fourth-order polynomial $P_0(\xi)=\eta^4+\zeta^4$ which is precisely the principal symbol of $\Lambda$. Though not directly related, we refer the reader to the works of Evgrafov and Postnikov \cite{EP70}, and Tintarev \cite{Ti82} who established short-time off-diagonal asymptotics similar to the right hand side in \eqref{eq:OffDiagIntro} for heat kernels of higher-order elliptic operators. See also the related works of Barbatis and Davies \cite{BD96,Ba98,Ba01,Da95a}.

Following Davies \cite{Da95a} and driven by the motivations previously discussed, we seek to understand the behavior of $\varphi(t)$ in large time. For this goal, it is clear that the estimate \eqref{eq:OffDiagIntro} is useless for, in contrast to $\varphi(t)$, the function $t\mapsto t^{-1/2}\exp(Ct)$ is increasing for $t>1/2C$. In looking back through the scaling argument above, we wonder: Perhaps there is a rescaling of the symbol $P$ that will yield an asymptotic (or simply a useful estimate) for $\varphi$ in large time. In fact, this type of approach was taken in \cite{Da95a} to characterize the large-time on-diagonal behavior of the heat kernel associated to the fourth-order elliptic operator $f\mapsto \frac{d^4f}{dx^4}-\frac{d^2f}{dx^2}$ in one spatial dimension. We note that Davies' results addressed a question posed by M. van den Berg concerning the necessity of the term $Ct$ in off-diagonal estimates of the form \eqref{eq:OffDiagIntro}. Given real numbers $\alpha$ and $\beta$, we make the change of variables $\xi=(\eta,\zeta)\mapsto (t^{-\alpha}\eta,t^{-\beta}\zeta)$ in the integral \eqref{eq:HeatKernelIntro} to see that
\begin{equation*}
    \varphi(t)=\frac{t^{-(\alpha+\beta})}{(2\pi)^2}\int_{\mathbb{R}^2}e^{-tP(t^{-\alpha}\eta,t^{-\beta}\zeta)}\,d\xi
\end{equation*}
for $t>0$. With the aim of mimicking our small-time scaling argument, we seek values of $\alpha$ and $\beta$ for which $tP(t^{-\alpha}\eta,t^{-\beta}\zeta)$ is sufficiently well behaved as $t\to\infty$. Upon noting that
\begin{equation*}
    tP(t^{-\alpha}\eta,t^{-\beta}\zeta)=t^{1-2\alpha}\eta^2+t^{1-\alpha-2\beta}2\eta\zeta^2+t^{1-4\alpha}\eta^4+t^{1-4\beta}\zeta^4
\end{equation*}
for $t>0$ and $(\eta,\zeta)\in\mathbb{R}^2$ and considering all possibilities for $\alpha$ and $\beta$, we find that
\begin{equation*}
   \lim_{t\to\infty}tP(t^{-\alpha}\eta,t^{-\beta}\zeta)=\begin{cases}
    (\eta+\zeta^2)^2 &\alpha=1/2,\,\beta=1/4\\
    \eta^2 &\alpha=1/2,\,\beta>1/4\\
    \zeta^4 &\alpha>1/2, \beta=1/4\\
    0\,\,\mbox{or}\,\,\infty &\mbox{otherwise}
    \end{cases}
\end{equation*}
for almost every $(\eta,\zeta)\in\mathbb{R}^2$. Given that none of the functions $(\eta,\zeta)\mapsto e^{-(\eta+\zeta^2)^2}$, $(\eta,\zeta)\mapsto e^{-\eta^2}$, or $(\eta,\zeta)\mapsto e^{-\zeta^4}$ are integrable, it follows that
\begin{equation*}
    \lim_{t\to \infty}\int_{\mathbb{R}^2}e^{-tP(t^{-\alpha}\eta,t^{-\beta}\zeta)}\,d\xi
\end{equation*}
is either $0$ or $\infty$ for all possible cases of $\alpha$ and $\beta$. Consequently, the argument we used to establish the small-time asymptotics for $\varphi$ is not helpful to us. In fact, it can be shown that no ``reasonable'' scaling, which is linear in $\xi$, can be used to deduce the asymptotic behavior of $\varphi(t)$ as $t\to \infty$. These observations are tied, in some sense, to the absence of a well-behaved lower-order component of $P$ characterizing the behavior of $\varphi(t)$ in large time just as the principal symbol $\eta^4+\zeta^4$ does for small time. Without a tractable scaling method for large time, the nature of the decay of $\varphi(t)$ as $t\to\infty$ -- be it exponential, polynomial, or otherwise -- is not a priori clear. Our main theorem, Theorem \ref{thm:OnDiagonal}, yields the (to us) surprising conclusion that $\varphi(t)\asymp t^{-5/8}$ for $t\geq 1$. By an application of Theorem \ref{thm:TrueAsymptotic}, we are able to obtain the ``true asymptotic'',
\begin{equation*}
\lim_{t\to\infty}t^{5/8}\varphi(t)=\frac{1}{2\pi^{3/2}}\Gamma(9/8)\approx 0.0845624.
\end{equation*}

Taking this example as motivation, we introduce and study a class of symbols on $\mathbb{R}^d$ for which it is possible to establish on-diagonal heat kernel asymptotics in large (and small) time. For these examples, the small-time behavior is often characterized by a principal homogeneous term and we focus on the interesting question of large-time behavior. In particular, for positive integers $a$ and $b$ with $d=a+b$, we consider a polynomial of the form
\begin{equation*}
    P(\xi)=P_1(\eta+Q(\zeta))+P_2(\eta)
\end{equation*}
for $\xi=(\eta,\zeta)\in\mathbb{R}^d=\mathbb{R}^a\times\mathbb{R}^b$ where $P_1$ and $P_2$ are real-valued positive homogeneous polynomials in sense of \cite{RSC17} and $Q:\mathbb{R}^b\to\mathbb{R}^a$ is a so-called multivariate nondegenerate homogeneous polynomial. The notions of positive homogeneous and multivariate nondegenerate homogeneous are presented in Section \ref{sec:Hom}; we remark that the prototypical example of a positive homogeneous polynomial is a positive-definite and homogeneous semi-elliptic polynomial \cite{Ho83,RSC17,RSC17a,RSC20}. The homogeneous structure (and order) of $P_1$ and $P_2$ need not coincide and so $P$ is generally inhomogeneous and further, as seen in our motivating example, no rescaling of $P$ in large time yields a tractable homogeneous term.  To the constant-coefficient operator $\Lambda$ on $\mathbb{R}^d$ with symbol $P$, provided that $P$ grows sufficiently fast as $\xi\to\infty$ (which will always be the case for us), there corresponds a heat semigroup $\{e^{-t\Lambda}\}$ with heat kernel given by
\begin{equation*}
    H_P^t(x)=\frac{1}{(2\pi)^d}\int_{\mathbb{R}^d}e^{-tP(\xi)}e^{-ix\cdot\xi}\,d\xi
\end{equation*}
for $t>0$ and $x\in\mathbb{R}^d$ and, with this, we define $\varphi(t)=H_P^t(0)$ for $t>0$. Under certain hypotheses concerning $P_1$, $P_2$, and $Q$, our main theorem (Theorem \ref{thm:OnDiagonal}) gives positive numbers $\mu_0$ and $\mu_\infty$ for which
\begin{equation*}
    \varphi(t)\asymp \begin{cases}
    t^{-\mu_0} & 0<t\leq 1\\
    t^{-\mu_\infty} & t\geq 1
    \end{cases}
\end{equation*}
for $t>0$; in particular, our result describes the elusive behavior of $\varphi(t)$ in large time. Under one addition hypothesis, we also show that the limits
\begin{equation*}
\lim_{t\to0}t^{\mu_0}\varphi(t)\hspace{1cm}\mbox{and}\hspace{1cm}\lim_{t\to\infty}t^{\mu_\infty}\varphi(t)
\end{equation*}
exist and are positive computable numbers depending only on $P_1$, $P_2$ and $Q$; this is Theorem \ref{thm:TrueAsymptotic}. We note that Theorems \ref{thm:OnDiagonal} and \ref{thm:TrueAsymptotic} are stated in terms of positive homogeneous functions $P_1$ and $P_2$ (in the sense of \cite{BR22}) and a multivariate nondegenerate homogeneous function $Q$ (introduced in Section \ref{sec:Hom}) and, correspondingly, $P$ need not be a polynomial nor $H_P$ correspond to a constant-coefficient partial differential operator. Following Section \ref{sec:OnDiagonalAsymptotics}, we treat a perturbation theory in which $P$ is replaced by $P+R$ where $R(\xi)=o(P(\xi))$ as\footnote{This ``little-o'' notation means that, for each $\epsilon>0$, there is an open neighborhood $\mathcal{O}\subseteq\mathbb{R}^d$ of $0$ for which $\abs{R(\xi)}\leq \epsilon P(\xi)$ whenever $\xi\in\mathcal{O}$.} $\xi\to 0$ and, under certain conditions, we show that $H_{P+R}^t(0)\asymp \varphi(t)\asymp t^{-\mu_\infty}$ for $t\geq 1$; this is Theorem \ref{thm:Perturbation}. Further, viewing it essentially as a perturbation problem, we then apply our methods to the related problem of determining the asymptotic behavior of the convolution powers of certain complex-valued functions $\phi$ on $\mathbb{Z}^d$. Our results in this direction, Theorem \ref{thm:LLT} and Corollary \ref{cor:ConvSupNorm}, describe the asymptotic behavior of $\phi^{(n)}$ in form of local limit theorems and sup-norm asymptotics. Our theory provides an inhomogeneous counterpart to the homogeneous theory developed in \cite{RSC17}, \cite{BR22}, and \cite{Ra22}. Specifically, Theorem \ref{thm:LLT} can be compared to Theorem 1.6 in \cite{RSC17} and Theorems 1.9 and 3.8 of \cite{Ra22} and Corollary \ref{cor:ConvSupNorm} can be compared to Theorem 4.1 of \cite{RSC17}, Theorem 3.2 of \cite{BR22}, and Theorem 3.1 of \cite{Ra22}. Applying our results to the $\phi$ discussed in this introduction, we find that $\|\phi^{(n)}\|_\infty\asymp n^{-5/8}$ for $n\in\mathbb{N}_+$ and obtain the value of the (existent) limit, $\lim_{n\to\infty}n^{5/8}\|\phi^{(n)}\|_\infty$.

The entire theory developed in this article has a parallel version where the large-time behavior of $\varphi(t)$ is easy to compute while the small-time behavior is unclear. In fact, a key to our proof of Theorem \ref{thm:OnDiagonal} is that the heat kernel $H_P$ associated to $\Lambda$ agrees on the diagonal with the heat kernel $H_{\widetilde{P}}$ associated to a ``dual'' constant-coefficient partial differential operator $\widetilde{\Lambda}$, i.e.,
\begin{equation*}
    \varphi(t)=H_P^t(0)=H_{\widetilde{P}}^t(0)=\widetilde{\varphi}(t)
\end{equation*}
for all $t>0$. The utility of this correspondence is that the large-time behavior of $\widetilde{\varphi}$ is easily and directly computed. For the fourth-order elliptic operator $\Lambda$ considered in this introduction,
\begin{equation*}
    \widetilde{\Lambda}=-\partial_{x_1}^2+\partial_{x_1}^4+4i\partial_{x_1}^3\partial_{x_2}^2-6\partial_{x_1}^2\partial_{x_2}^4-4i\partial_{x_1}\partial_{x_2}^6+\partial_{x_2}^8
\end{equation*}
which is not an elliptic operator, nor is it semi-elliptic or even hypoelliptic. Consequently, the heat kernel corresponding to $\widetilde{\Lambda}$, especially for small time, is not well understood. Akin to the fact that $\Lambda$ has elliptic principal part $\Lambda_0=\partial_{x_1}^4+\partial_{x_2}^4$ which determines the small-time decay of $\varphi(t)$, $\widetilde{\Lambda}$ has ``lowest-order'' component which is well behaved and determines the behavior of $\widetilde{\varphi}(t)$ in large time. This component, which could be called the principal symbol at infinity, is the operator
\begin{equation*}
    \widetilde{\Lambda}_\infty=-\partial_{x_1}^2+\partial_{x_2}^8
\end{equation*}
which is semi-elliptic and determines the $5/8=1/2+1/8$ exponent of polynomial decay of $\widetilde{\varphi}=\varphi$ for large time. In view of this, the authors see evidence for a useful notion of large-time semi-ellipticity for operators and a theory surrounding it which is akin (and perhaps dual to) the standard theory of elliptic/semi-elliptic operators. In Section \ref{sec:Discussion}, we discuss this and future directions of the theory presented in this article.

\section{Homogeneous Functions}\label{sec:Hom}

In this section, we give a brief account of the theory of positive homogeneous functions (presented more fully in \cite{BR22}) and introduce a multivariate generalization of such functions, which we will call nondegenerate multivariate homogeneous functions. For a positive integer $d$, we shall denote by $\End(\mathbb{R}^d)$ the set of linear endomorphisms of $\mathbb{R}^d$ and by $\Gl(\mathbb{R}^d)$ the corresponding subset of automorphisms. We shall take $\End(\mathbb{R}^d)$ to be equipped with the operator norm $\|\cdot\|$ inherited from the standard Euclidean norm $\abs{\cdot}$ on $\mathbb{R}^d$. For a given $E\in\End(\mathbb{R}^d)$, we define $T:(0,\infty)\to\Gl(\mathbb{R}^d)$ by
\begin{equation*}
T_t=t^E=\exp((\ln t) E)=\sum_{k=0}^\infty \frac{(\ln t)^k}{k!}E^k
\end{equation*}
for $t>0$. It is straightforward to verify that $T$ is a Lie group homomorphism from the set of positive real numbers under multiplication into $\Gl(\mathbb{R}^d)$. The collection $\{T_t:t>0\}=\{t^E:t>0\}$, which we view both as a set and as a subgroup of $\Gl(\mathbb{R}^d)$, is called \textit{the continuous one-parameter group generated by $E$}; it will usually be written $\{t^E\}_{t>0}$ or simply $\{t^E\}$. It is a standard fact that every continuous one-parameter (sub)group of $\{T_t\}\subseteq \Gl(\mathbb{R}^d)$ is of this form, i.e., is generated by some $E\in\End(\mathbb{R}^d)$. An account of the theory of continuous one-parameter groups can be found in \cite{EN00}. Two notions of particular interest for us are captured by the following definition; the first of which is equivalent to the so-called Lyapunov stability of the one-parameter additive group $\mathbb{R}\ni t\to T_{e^{-t}}=\exp(-tE)$ \cite{EN00}.

\begin{definition}
Let $\{T_t\}\subseteq\Gl(\mathbb{R}^d)$ be a continuous one-parameter group.
\begin{enumerate}
\item We say that $\{T_t\}$ is contracting if
\begin{equation*}
\lim_{t\to 0}\|T_t\|=0.
\end{equation*}
\item We say that $\{T_t\}$ is non-expanding if 
\begin{equation*}
\sup_{0<t\leq 1}\|T_t\|<\infty.
\end{equation*}
\end{enumerate}
\end{definition}

Thanks to the continuity of $T:(0,\infty)\to\Gl(\mathbb{R}^d)$, every contracting group is non-expanding.
\begin{proposition}
Let $\{T_t\}$ be a continuous one-parameter group generated by $E\in\End(\mathbb{R}^d)$. If $\{T_t\}$ is contracting, then $\tr E>0$. If $\{T_t\}$ is non-expanding, then $\tr E\geq 0$.
\end{proposition}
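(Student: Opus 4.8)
The plan is to relate $\|t^E\|$ to $\det(t^E) = t^{\tr E}$ and exploit the group structure to force a sign on $\tr E$. First I would recall the determinant identity $\det(\exp((\ln t)E)) = \exp((\ln t)\tr E) = t^{\tr E}$, which holds for every $E\in\End(\mathbb{R}^d)$. On the other hand, for any $A\in\Gl(\mathbb{R}^d)$ the Hadamard-type bound $|\det A|\leq \|A\|^d$ holds (the operator norm dominates each singular value). Applying this to $A = t^E$ gives $t^{\tr E} = |\det(t^E)| \leq \|t^E\|^d$ for all $t>0$.

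For the contracting case, suppose $\{T_t\}$ is contracting, so $\|t^E\|\to 0$ as $t\to 0$. From $t^{\tr E}\leq \|t^E\|^d$ we get $t^{\tr E}\to 0$ as $t\to 0^+$, which is impossible if $\tr E\leq 0$ (then $t^{\tr E}$ stays $\geq 1$ or blows up as $t\to 0^+$). Hence $\tr E>0$. For the non-expanding case, suppose $\sup_{0<t\leq 1}\|T_t\| = M<\infty$. Then $t^{\tr E}\leq M^d$ for all $0<t\leq 1$. If $\tr E<0$, then $t^{\tr E}\to\infty$ as $t\to 0^+$, contradicting the uniform bound; hence $\tr E\geq 0$.

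There is really no main obstacle here — the argument is short and elementary once one thinks to pass from the operator norm to the determinant. The only point requiring a small amount of care is the inequality $|\det A|\leq \|A\|^d$; I would justify it either via the singular value decomposition (the absolute value of the determinant is the product of the $d$ singular values, each of which is at most $\|A\|$) or, equivalently, by noting that $A$ maps the unit ball into the ball of radius $\|A\|$, so the volume scaling factor $|\det A|$ is at most $\|A\|^d$. Everything else is the elementary observation about the behavior of $t\mapsto t^c$ near $0$ depending on the sign of $c$.
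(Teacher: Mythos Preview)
Your proposal is correct and follows essentially the same approach as the paper: both arguments pass from the operator norm to the determinant via the identity $\det(t^E)=t^{\tr E}$ and then read off the sign of $\tr E$ from the behavior of $t\mapsto t^{\tr E}$ as $t\to 0^+$. The only cosmetic difference is that the paper bounds $\det(t^E)$ by invoking continuity of the determinant (and, in the non-expanding case, compactness of the norm ball $\{A:\|A\|\leq M\}$), whereas you use the explicit inequality $|\det A|\leq \|A\|^d$; your version is slightly more quantitative but the underlying idea is the same.
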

\begin{proof}
By virtue of the continuity of the determinant map and the fact that $\det(t^E)=t^{\tr E}$, we have
\begin{equation*}
\lim_{t\to 0}t^{\tr E}=\lim_{t\to 0}\det(t^E)=\det(0)=0
\end{equation*}
provided $\{t^E\}$ is contracting. In this case, it follows that $\tr E>0$. If $\{t^E\}$ is expanding with $M=\sup_{0<t\leq 1}\|t^E\|$, we have
\begin{equation*}
\sup_{0<t\leq 1}t^{\tr E}=\sup_{0<t\leq 1}\det(t^E)\leq\sup_{\|A\|\leq M}\abs{\det(A)}<\infty
\end{equation*}
because $\{A\in\End(\mathbb{R}^d):\|A\|\leq M\}$ is a compact set. Thus $\tr E\geq 0$.
\end{proof}

Given a function $P:\mathbb{R}^d\to\mathbb{R}$ and $E\in\End(\mathbb{R}^d)$, we shall say that \textit{}$P$ is homogeneous with respect to $E$ if
\begin{equation*}
tP(\xi)=P(t^E\xi)
\end{equation*}
for all $t>0$ and $\xi\in\mathbb{R}^d$; in this case we say that \textit{$E$ is a member of the exponent set of $P$} and write $E\in\Exp(P)$. Central to the definition of positive homogeneous function given below is the following characterization taken from \cite{BR22}.

\begin{proposition}\label{prop:PosHomChar}
Let $P:\mathbb{R}^d\to\mathbb{R}$ be continuous, positive-definite (in the sense that $P\geq 0$ and $P(\xi)=0$ only when $\xi= 0$), and have $\Exp(P)\neq \varnothing$. Then the following are equivalent:
\begin{enumerate}
\item The so-called unital level set of $P$,
\begin{equation*}
S_P:=\{\xi\in\mathbb{R}^d:\abs{P(\xi)}=P(\xi)=1\},
\end{equation*}
is compact.
\item There is a positive number $M$ for which $P(\xi)>1$ whenever $\abs{\xi}>M$.
\item For each $E\in\Exp(P)$, $\{t^E\}$ is contracting.
\item There exists $E\in\Exp(P)$ for which $\{t^E\}$ is contracting.
\item We have
\begin{equation*}
\lim_{\abs{\xi}\to\infty}P(\xi)=\infty.
\end{equation*}
\end{enumerate}
\end{proposition}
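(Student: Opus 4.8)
The plan is to prove Proposition \ref{prop:PosHomChar} by establishing a cycle of implications that passes through all five conditions, since no single pairwise equivalence is much harder than any other. A natural order is $(1)\Rightarrow(2)\Rightarrow(5)\Rightarrow(1)$ to handle the three ``geometric'' conditions, and then $(3)\Leftrightarrow(4)$ and a bridge such as $(4)\Rightarrow(2)$ together with $(5)\Rightarrow(3)$ to fold in the dynamical conditions; concretely I would prove the chain $(1)\Rightarrow(2)\Rightarrow(5)\Rightarrow(3)\Rightarrow(4)\Rightarrow(1)$, using $(3)\Rightarrow(4)$ for free since $\Exp(P)\neq\varnothing$.

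For $(1)\Rightarrow(2)$: fix any $E\in\Exp(P)$ (nonempty by hypothesis). Using $t^E\xi$ and homogeneity, every nonzero $\xi$ can be written as $\xi = s^E\omega$ for a unique $s>0$ and $\omega\in S_P$, with $P(\xi)=s$; here one needs that $r\mapsto P(r^E\xi_0)=r P(\xi_0)$ is a continuous increasing bijection of $(0,\infty)$ onto itself along each ray $r\mapsto r^E\xi_0$, so every punctured neighborhood of the origin meets $S_P$ in the right way. Since $S_P$ is compact and does not contain $0$, it lies in an annulus $\{m\le |\xi|\le M_0\}$; and since $t\mapsto \|t^E\|$ and $t\mapsto \|t^{-E}\|$ are continuous, the sets $\{s^E\omega : \omega\in S_P,\ s\le 1\}$ and its complement are controlled, giving a radius $M$ beyond which $P>1$. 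For $(2)\Rightarrow(5)$: iterate, or rather use homogeneity again — if $|\xi|$ is large then for suitable large $t>1$ we can write $\xi = t^E\zeta$ with $|\zeta|>M$ (contraction of $t^{-E}$ as $t\to\infty$, which itself follows from $(2)$ applied along rays), so $P(\xi)=tP(\zeta)>t\to\infty$; I will need to be a little careful and instead argue directly that on the annulus $\{|\xi|=R\}$ the minimum of $P$ tends to infinity with $R$ by a compactness-plus-homogeneity argument. $(5)\Rightarrow(3)$ is the key link to the dynamics: if $\lim_{|\xi|\to\infty}P(\xi)=\infty$ and $E\in\Exp(P)$, then for $\xi$ on the unit sphere, $P(t^E\xi)=tP(\xi)\to 0$ as $t\to 0$; combined with positive-definiteness and continuity this forces $t^E\xi\to 0$ uniformly in $\xi$ on the sphere — otherwise a sequence $t_n\to0$, $|\xi_n|=1$ with $|t_n^E\xi_n|\ge\delta$ would have, after passing to a subsequence, $t_n^E\xi_n$ either escaping to infinity (contradicting $tP(\xi_n)\to0$ since $P$ would be large there) or converging to some nonzero point where $P$ is bounded away from $0$, again contradicting $t_nP(\xi_n)\to 0$ — hence $\|t^E\|\to 0$, i.e. $\{t^E\}$ is contracting. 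Finally $(4)\Rightarrow(1)$: if $\{t^E\}$ is contracting, then $S_P$ is a closed (by continuity of $P$) subset of $\mathbb{R}^d\setminus\{0\}$ that is bounded, because $S_P=\{\,t^E\omega : \omega\in S_P\cap\text{(unit sphere? no)}\,\}$; more cleanly, $S_P$ is the image of the compact unit sphere under the continuous map $\xi\mapsto P(\xi)^{-E}\xi$ (well-defined since $P(\xi)>0$ off the origin), hence compact.

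The main obstacle I anticipate is the $(5)\Rightarrow(3)$ step, i.e. upgrading the scalar statement ``$P(t^E\xi)=tP(\xi)\to0$'' to the operator-norm statement ``$\|t^E\|\to0$.'' The subtlety is that $P$ being coercive controls $t^E\xi$ only through its $P$-value, and one must rule out the orbit $t^E\xi$ drifting off along a direction where $P$ happens to stay small for a while before the coercivity kicks in; the compactness of the sphere together with the fact that $P$ is \emph{uniformly} large outside a bounded set (which is exactly condition $(5)$, or equivalently $(2)$) is what closes this gap, so in the write-up I would route through $(2)$ rather than $(5)$ if convenient. A secondary routine point is the ray decomposition $\xi = s^E\omega$ used repeatedly: this requires knowing that for each $\xi\neq0$ the orbit $\{r^E\xi : r>0\}$ meets $S_P$ exactly once, which follows from $P(r^E\xi)=rP(\xi)$ being a bijection $(0,\infty)\to(0,\infty)$ — but one should note this map is real-analytic/continuous and strictly monotone, so it is harmless. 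Everything else is bookkeeping with continuity of $t\mapsto t^{\pm E}$ and compactness.
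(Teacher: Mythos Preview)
The paper does not prove this proposition; it is quoted from \cite{BR22}. The paper does, however, prove the multivariate analogue (Proposition~\ref{prop:NMHChar}) in full, and comparing against that template shows that your steps $(2)\Rightarrow(5)\Rightarrow(3)\Rightarrow(4)\Rightarrow(1)$ are essentially correct --- in particular your $(5)\Rightarrow(3)$ argument by subsequence dichotomy is exactly the right idea, and realizing $S_P$ as the continuous image of the unit sphere under $\xi\mapsto P(\xi)^{-E}\xi$ for $(4)\Rightarrow(1)$ is clean.

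The gap is in your $(1)\Rightarrow(2)$ step. You assert that $\{s^E\omega:\omega\in S_P,\ 0<s\le1\}$ is ``controlled'' because $t\mapsto\|t^E\|$ is continuous, but continuity on $(0,\infty)$ says nothing about the behavior as $s\to0^+$: if $E$ had an eigenvalue with nonpositive real part, $\|s^E\|$ would blow up there, and the sub-level set $\{P\le1\}$ --- which is exactly this union of orbits --- could be unbounded. You are tacitly assuming $\{t^E\}$ is non-expanding, which is essentially condition~(3)/(4) and is precisely what has not yet been established at this point in your cycle. The fix, used in the paper's proof of the corresponding implication in Proposition~\ref{prop:NMHChar} (there labeled (\ref{item:NMHChar_SComp})$\Rightarrow$(\ref{item:NMHChar_BiggerThanOne})), is a connectedness argument: compactness of $S_P$ gives $M_0$ with $P\neq1$ on $\{|\xi|>M_0\}$, so by connectedness either $P<1$ or $P>1$ uniformly there; rule out $P<1$ by picking any $\xi_0$ in that region and observing that $P(t^E\xi_0)=tP(\xi_0)\to\infty$ forces $t^E\xi_0$ into the compact ball $\{|\xi|\le M_0\}$ for all large $t$, where $P$ is bounded --- a contradiction. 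This sidesteps any a priori control on $\|s^E\|$.
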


\begin{definition}
Let $P:\mathbb{R}^d\to\mathbb{R}$ be continuous, positive definite and have $\Exp(P)\neq \varnothing$. If any one
(and hence all) of the equivalent conditions in Proposition \ref{prop:PosHomChar} are fulfilled, we say that $P$ is positive homogeneous. We will also say that $P$ is a positive homogeneous function on $\mathbb{R}^d$.
\end{definition}

The following proposition amasses some basic facts about positive homogeneous functions and its proof can be found in Section 2 of \cite{BR22}. 

\begin{proposition}\label{prop:HomOrder}
Let $P$ be a positive homogeneous function and denote by $\Sym(P)$ the set of $O\in\End(\mathbb{R}^d)$ for which $P(O\xi)=P(\xi)$ for all $\xi\in\mathbb{R}^d$. We have
\begin{enumerate}
\item $\Sym(P)$ is a compact subgroup of $\Gl(\mathbb{R}^d)$.
\item For each $E,\widetilde{E}\in\Exp(P)$, we have
\begin{equation*}
\tr E=\tr\widetilde{E}>0.
\end{equation*}
\end{enumerate}
\end{proposition}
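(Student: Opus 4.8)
The plan is to treat the two assertions separately, drawing throughout on the equivalent characterizations of positive homogeneity collected in Proposition \ref{prop:PosHomChar}. For the first assertion I would begin by checking that $\Sym(P)$ in fact lies in $\Gl(\mathbb{R}^d)$: if some $O\in\End(\mathbb{R}^d)$ satisfied $O\xi=0$ for a vector $\xi\neq 0$ while $P(O\eta)=P(\eta)$ for all $\eta$, then $P(\xi)=P(O\xi)=P(0)=0$, contradicting positive-definiteness; hence every member of $\Sym(P)$ is injective and therefore invertible. Closure under composition is immediate from the defining identity, and substituting $O^{-1}\eta$ for $\eta$ in $P(O\eta)=P(\eta)$ shows $O^{-1}\in\Sym(P)$, so $\Sym(P)$ is a subgroup of $\Gl(\mathbb{R}^d)$. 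It is closed in $\End(\mathbb{R}^d)$ because, for each fixed $\eta$, the map $O\mapsto P(O\eta)$ is continuous, so a limit of elements of $\Sym(P)$ again satisfies $P(O\eta)=P(\eta)$ for every $\eta$ and is invertible by the previous remark.

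The substantive point for the first assertion is boundedness. I would set $c:=\sup_{\abs{\xi}=1}P(\xi)$, which is finite by continuity of $P$ on the compact unit sphere and strictly positive by positive-definiteness; then, invoking condition (5) of Proposition \ref{prop:PosHomChar}, I fix $R>0$ with $P(\eta)>c$ whenever $\abs{\eta}>R$. For any $O\in\Sym(P)$ and any unit vector $\xi$ one has $P(O\xi)=P(\xi)\leq c$, which forces $\abs{O\xi}\leq R$; taking the supremum over unit $\xi$ gives $\|O\|\leq R$. Thus $\Sym(P)$ is a closed and bounded subset of $\End(\mathbb{R}^d)$, hence compact, which is the first assertion. (Equivalently, one can say $\Sym(P)$ permutes the compact unital level set $S_P$, which is bounded away from $0$ and from $\infty$ and meets every ray through the origin.)

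For the second assertion, positivity is essentially already done: by Proposition \ref{prop:PosHomChar}(3) the group $\{t^E\}$ is contracting for each $E\in\Exp(P)$, and the proposition proved earlier in this section (contracting one-parameter group $\Rightarrow$ generator of positive trace) gives $\tr E>0$. To obtain $\tr E=\tr\widetilde{E}$ for $E,\widetilde{E}\in\Exp(P)$, I would consider $O_t:=t^{-\widetilde{E}}\,t^{E}\in\Gl(\mathbb{R}^d)$ for $t>0$ and compute, applying the homogeneity identity for $E$ and then that for $\widetilde{E}$ with parameter $t^{-1}$ and argument $t^{E}\xi$,
\begin{equation*}
P(O_t\xi)=P\big(t^{-\widetilde{E}}(t^{E}\xi)\big)=t^{-1}P(t^{E}\xi)=t^{-1}\big(tP(\xi)\big)=P(\xi)
\end{equation*}
for all $\xi\in\mathbb{R}^d$ and all $t>0$, so that $O_t\in\Sym(P)$ for every $t>0$. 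By the first assertion the family $\{O_t:t>0\}$ is bounded in $\End(\mathbb{R}^d)$, whence $t\mapsto\det(O_t)$ is bounded on $(0,\infty)$; but $\det(O_t)=\det(t^{-\widetilde{E}})\det(t^{E})=t^{-\tr\widetilde{E}}t^{\tr E}=t^{\tr E-\tr\widetilde{E}}$, and $t\mapsto t^{a}$ is bounded on $(0,\infty)$ only when $a=0$, forcing $\tr E=\tr\widetilde{E}$.

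I expect the boundedness of $\Sym(P)$ to be the only real obstacle: it is precisely at that step that positive-definiteness together with the coercivity $P(\xi)\to\infty$ enters essentially — for a merely semidefinite or non-coercive $P$ the symmetry set may be unbounded, and the determinant argument in the second part would then collapse. Everything else is bookkeeping with the defining functional equations and the trace identity $\det(t^{E})=t^{\tr E}$.
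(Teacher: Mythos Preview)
Your proof is correct in all details. The paper does not supply its own argument for this proposition; it simply states that the proof can be found in Section~2 of \cite{BR22}, so there is nothing to compare against directly. Your approach---showing $\Sym(P)\subseteq\Gl(\mathbb{R}^d)$ via positive-definiteness, establishing boundedness from the coercivity $P(\xi)\to\infty$, and then deducing $\tr E=\tr\widetilde{E}$ from the fact that $t^{-\widetilde{E}}t^{E}\in\Sym(P)$ forces $t^{\tr E-\tr\widetilde{E}}=\det(t^{-\widetilde{E}}t^{E})$ to be bounded---is the standard one and almost certainly matches what appears in \cite{BR22}.
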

In view of the preceding proposition, we define \textit{the homogeneous order of $P$} to be the unique positive number $\mu_P$ for which
\begin{equation*}
\mu_P=\tr E
\end{equation*}
for all $E\in\Exp(P)$.

\begin{example}
For any $\alpha>0$, the map $\xi\mapsto \abs{\xi}^\alpha$ is a positive homogeneous function on $\mathbb{R}^d$. Indeed, it is evident that it is continuous and positive-definite and its unital level set is precisely the unit sphere $S_{\abs{\cdot}^\alpha}=\mathbb{S}_d$ in $\mathbb{R}^d$. Further,
\begin{equation*}
\Exp(\abs{\cdot}^\alpha)=\frac{1}{\alpha}I+\mathfrak{o}_d
\end{equation*}
where $I$ is the identity map on $\mathbb{R}^d$ and $\mathfrak{o}_d$ is the Lie algebra of the orthogonal group $\OdR$ and is characterized by the set of skew symmetric matrices.
\end{example}
\begin{example}
Given a $d$-tuple of positive even integers $\mathbf{m}=(m_1,m_2,\dots,m_d)$, we consider a polynomial of the form
\begin{equation}\label{eq:SemiElliptic}
P(\xi)=\sum_{|\alpha:\mathbf{m}|=1}a_\alpha\xi^{\alpha}
\end{equation}
for $\xi=(\xi_1,\xi_2,\dots,\xi_d)\in\mathbb{R}^d$ where, for each multi-index $\alpha=(\alpha_1,\alpha_2,\dots,\alpha_d)\in\mathbb{N}^d$, $\xi^{\alpha}:=\xi_1^{\alpha_1}\xi_2^{\alpha_2}\cdots\xi_d^{\alpha_d}$ and
\begin{equation*}
|\alpha:\mathbf{m}|:=\sum_{k=1}^d\frac{\alpha_k}{m_k}.
\end{equation*}
A polynomial of the form \eqref{eq:SemiElliptic} is said to be \textit{semi-elliptic} provided $P(\xi)$ vanishes only at $\xi=0$. Appearing in L. H\"{o}rmander's treatise on linear partial differential operators \cite{Ho83}, semi-elliptic polynomials are the symbols of a class of hypoelliptic partial differential operators, called semi-elliptic operators. For a semi-elliptic polynomial $P(\xi)$ of the form \eqref{eq:SemiElliptic}, its corresponding semi-elliptic operator is the constant-coefficient linear partial differential operator given by
\begin{equation*}
P(D)=\sum_{|\alpha:\mathbf{m}|=1}a_\alpha D^\alpha
\end{equation*}
where we have written $D=(-i\partial_{x_1},-i\partial_{x_2},\dots,-i\partial_{x_d})$ and, for each multi-index $\alpha=(\alpha_1,\alpha_2,\dots,\alpha_d)\in\mathbb{N}^d$, $D^\alpha=(-i\partial_{x_1})^{\alpha_1}(-i\partial_{x_2})^{\alpha_2}\cdots(-i\partial_{x_d})^{\alpha_d}$. For a polynomial $P(\xi)$ of the form \eqref{eq:SemiElliptic}, observe that, for $E\in \End(\mathbb{R}^d)$ with standard matrix representation $\diag(1/m_1,1/m_2,\dots,1/m_d)$, 
\begin{equation*}
P(t^E\xi)=\sum_{|\alpha:\mathbf{m}|=1}a_\alpha (t^{1/m_1}\xi_1)^{\alpha_1}(t^{1/m_2}\xi_2)^{\alpha_2}\cdots(t^{1/m_d}\xi_d)^{\alpha_d}=\sum_{|\alpha:\mathbf{m}|=1}a_\alpha t^{|\alpha:\mathbf{m}|}\xi^\alpha=tP(\xi)
\end{equation*}
for all $t>0$ and $\xi=(\xi_1,\xi_2,\dots,\xi_d)\in\mathbb{R}^d$. Thus $E\in\Exp(P)$ and, because $\{t^E\}$ is clearly contracting, Proposition \ref{prop:PosHomChar} guarantees that $P$ is positive homogeneous whenever it is positive-definite. Thus, whenever a semi-elliptic polynomial $P$ of the form \eqref{eq:SemiElliptic} is positive-definite, it is positive homogeneous with homogeneous order
\begin{equation*}
\mu_P=\tr E=|\mathbf{1}:\mathbf{m}|=\frac{1}{m_1}+\frac{1}{m_2}+\cdots+\frac{1}{m_d}.
\end{equation*}
For two concrete examples, consider
\begin{equation*}
P_1(\xi)=\xi_1^4+\xi_2^6\hspace{1cm}\mbox{and}\hspace{1cm}P_2(\xi)=\xi_1^2+\xi_1\xi^2+\xi_2^4
\end{equation*}
defined for $\xi=(\xi_1,\xi_2)\in\mathbb{R}^2$. These are positive homogeneous semi-elliptic polynomials on $\mathbb{R}^2$ with homogeneous order $\mu_{P_1}=1/4+1/6=5/12$ and $\mu_{P_2}=1/2+1/4=3/4$, respectively.
\end{example}

Before we conclude our treatment of positive homogeneous functions and turn our attention to multivariate homogeneous functions, we present the following lemma which is used several times throughout the course of this paper. Its proof makes use of the generalized polar-coordinate integration formula developed and presented in Theorem 1.4 of \cite{BR22}.

\begin{lemma}\label{lem:ExpIntegrability}
Let $P$ be a positive homogeneous function on $\mathbb{R}^d$ with relatively compact unit ball $B_P=\{\xi\in\mathbb{R}^d:P(\xi)<1\}$ and homogeneous order $\mu_P>0$. Then, for each $\epsilon>0$, 
\begin{equation*}
\int_{\mathbb{R}^d}e^{-\epsilon P(\xi)}\,d\xi=m(B_P)\frac{\Gamma(\mu_P+1)}{\epsilon^{\mu_P}}
\end{equation*}
where $m(B_P)$ denoted the Lebesgue measure of $B_P$. In particular, for each $\epsilon>0$, $ \exp(-\epsilon P)\in L^1(\mathbb{R}^d)$.
\end{lemma}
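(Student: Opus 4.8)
The plan is to collapse the $d$-dimensional integral to a one-dimensional one by using the homogeneity of $P$, and then to recognize what is left as a Gamma integral. I will describe the elementary route, which uses only the material already developed above, and then indicate the alternative through the generalized polar-coordinate formula of \cite{BR22}.

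First I would record how the sublevel sets of $P$ scale. Fix $E\in\Exp(P)$; by Proposition~\ref{prop:PosHomChar} the group $\{t^E\}$ is contracting, and by Proposition~\ref{prop:HomOrder} we have $\tr E=\mu_P>0$. For $\lambda>0$, the defining identity $P(t^E\xi)=tP(\xi)$ (taken at $t=\lambda^{-1}$) gives $\{\xi:P(\xi)<\lambda\}=\lambda^E B_P$, and since the linear map $\xi\mapsto\lambda^E\xi$ has Jacobian determinant $\det(\lambda^E)=\lambda^{\tr E}=\lambda^{\mu_P}$, the change-of-variables formula yields
\[
m(\{\xi:P(\xi)<\lambda\})=\lambda^{\mu_P}\,m(B_P)\qquad(\lambda>0),
\]
with $m(B_P)<\infty$ because $B_P$ is relatively compact.

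Next I would apply the layer-cake (Cavalieri) formula to the nonnegative measurable function $e^{-\epsilon P}$. Since $P\ge 0$ forces $0\le e^{-\epsilon P}\le 1$, the superlevel set $\{e^{-\epsilon P}>s\}$ is empty for $s\ge 1$, so
\[
\int_{\mathbb{R}^d}e^{-\epsilon P(\xi)}\,d\xi=\int_0^1 m\bigl(\{\xi:e^{-\epsilon P(\xi)}>s\}\bigr)\,ds=\int_0^1 m\bigl(\{\xi:P(\xi)<-\epsilon^{-1}\ln s\}\bigr)\,ds.
\]
Inserting the measure formula and substituting $u=-\ln s$ transforms this into
\[
m(B_P)\int_0^1\Bigl(\frac{-\ln s}{\epsilon}\Bigr)^{\mu_P}ds=\frac{m(B_P)}{\epsilon^{\mu_P}}\int_0^\infty u^{\mu_P}e^{-u}\,du=\frac{m(B_P)}{\epsilon^{\mu_P}}\,\Gamma(\mu_P+1),
\]
the last integral converging precisely because $\mu_P>0$. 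Finiteness of the right-hand side gives $e^{-\epsilon P}\in L^1(\mathbb{R}^d)$, completing the argument.

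The alternative, which is presumably the route intended by the reference to Theorem~1.4 of \cite{BR22}, is to write the integral in generalized polar coordinates adapted to $E$ and $S_P$: it becomes $\int_0^\infty\!\int_{S_P}e^{-\epsilon P(r^E\sigma)}\,\mu_P r^{\mu_P-1}\,d\sigma_P(\sigma)\,dr$, and since $P(r^E\sigma)=r$ on $S_P$ and the surface measure satisfies $\sigma_P(S_P)=m(B_P)$, this equals $m(B_P)\,\mu_P\int_0^\infty e^{-\epsilon r}r^{\mu_P-1}\,dr=m(B_P)\Gamma(\mu_P+1)/\epsilon^{\mu_P}$. In both approaches the only substantive step is the scaling identity $m(\{P<\lambda\})=\lambda^{\mu_P}m(B_P)$; everything else is a one-variable computation, so I do not expect a real obstacle, the only mild point being the observation that the superlevel sets are empty for $s\ge 1$ (equivalently that $\{P=0\}=\{0\}$ is Lebesgue-null).
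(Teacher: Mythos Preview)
Your proposal is correct. The paper's proof is precisely your alternative route: it invokes the generalized polar-coordinate formula of \cite{BR22} to write the integral as $\int_{S_P}\int_0^\infty e^{-\epsilon t}t^{\mu_P-1}\,dt\,\sigma(d\eta)$ with $\sigma(S_P)=\mu_P\, m(B_P)$, and then evaluates the radial Gamma integral.

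Your primary route via the layer-cake formula is a genuinely different and more elementary argument. It sidesteps the polar-coordinate machinery entirely, needing only the scaling identity $m(\{P<\lambda\})=\lambda^{\mu_P}m(B_P)$, which follows from a single linear change of variables. What the polar-coordinate approach buys is a template that generalizes to integrands depending on both radial and angular parts; what your layer-cake approach buys is self-containment, since it avoids importing the construction of the surface measure $\sigma_P$ from \cite{BR22}. For the present lemma, where the integrand depends only on $P(\xi)$, your elementary route is arguably preferable.
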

\begin{proof}
By an appeal to Theorem 1.5 of \cite{BR22}, we obtain a Borel measure $\sigma=\sigma_P$ on $S=S_P$ for which $\sigma(S)=\mu_P\cdot m(B_P)$ and
\begin{equation*}
\int_{\mathbb{R}^d}e^{-\epsilon P(\xi)}\,d\xi=\int_{S}\int_0^\infty e^{-\epsilon P(t^E\eta)}t^{\mu_P-1}\,dt\,\sigma(d\eta)=\int_S\int_0^\infty e^{-\epsilon t}t^{\mu_P-1}\,dt\,\sigma(d\eta)
\end{equation*}
for any $E\in\Exp(P)$. Thus
\begin{equation*}
\int_{\mathbb{R}^d}e^{-\epsilon P(\xi)}\,d\xi=\sigma(S)\int_0^\infty e^{-\epsilon t}t^{\mu_P-1}\,dt=m(B_P)\frac{\mu_P}{\epsilon^{\mu_P}}\int_0^\infty e^{-s}s^{\mu_P-1}\,ds=m(B_P)\frac{\Gamma(\mu_P+1)}{\epsilon^{\mu_P}}
\end{equation*}
where we have used the Laplace-transform representation of Euler's Gamma function and the property that $\mu_P\cdot\Gamma(\mu_P)=\Gamma(\mu_P+1)$. 
\end{proof}

We now introduce a generalization of the notion of positive homogeneous function. Given non-negative integers $a$ and $b$ and a function $Q:\mathbb{R}^{b}\to\mathbb{R}^{a}$, we say that $Q$ is \textit{nondegenerate} if $Q(\zeta)\neq 0$ whenever $\zeta\neq 0$. Given a pair $(E,E')\in\End(\mathbb{R}^{a})\times\End(\mathbb{R}^{{b}})$, we say that \textit{$Q$ is homogeneous with respect to the pair $(E,E')$} provided that
\begin{equation*}
t^EQ(\zeta)=Q(t^{E'}\zeta)
\end{equation*}
for all $t>0$ and $\zeta\in\mathbb{R}^{{b}}$. Akin to Proposition \ref{prop:PosHomChar}, we have the following:
\begin{proposition}\label{prop:NMHChar}
Let $Q:\mathbb{R}^{{b}}\to\mathbb{R}^{a}$ be continuous, nondegenerate, and homogeneous with respect to some pair $(E,E')\in\End(\mathbb{R}^{a})\times\End(\mathbb{R}^{{b}})$ for which $\{t^E\}_{t>0}$ is contracting. Then, the following are equivalent:
\begin{enumerate}
\item\label{item:NMHChar_CompComp} For any compact set $K\subseteq\mathbb{R}^{a}$, $Q^{-1}(K)\subseteq\mathbb{R}^{{b}}$ is compact. 
\item\label{item:NMHChar_SComp} The set
\begin{equation*}
S_Q=\{\zeta\in\mathbb{R}^{{b}}:\abs{Q(\zeta)}=1\}
\end{equation*}
is compact.
\item\label{item:NMHChar_BiggerThanOne} There is a number $M>0$ for which $\abs{Q(\zeta)}>1$ for all $|\zeta|>M$.
\item\label{item:NMHChar_AnyContracting} For each pair $(E,E')\in\End(\mathbb{R}^{a})\times\End(\mathbb{R}^{{b}})$ for which $Q$ is homogeneous and $\{t^E\}$ is contracting, $\{t^{E'}\}$ must also be contracting.
\item\label{item:NMHChar_OneContracting} There exists a pair $(E,E')\in\End(\mathbb{R}^{a})\times\End(\mathbb{R}^{{b}})$ for which $Q$ is homogeneous and $\{t^E\}$ and $\{t^{E'}\}$ are both contracting.
\item\label{item:NMHChar_InfLimit} We have
\begin{equation*}
\lim_{\abs{\zeta}\to\infty}\abs{Q(\zeta)}=\infty.
\end{equation*}
\end{enumerate}
\end{proposition}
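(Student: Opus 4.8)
The plan is to prove Proposition~\ref{prop:NMHChar} by establishing a cycle of implications, mirroring the structure used for Proposition~\ref{prop:PosHomChar}. A convenient ordering is $(\ref{item:NMHChar_SComp})\Rightarrow(\ref{item:NMHChar_BiggerThanOne})\Rightarrow(\ref{item:NMHChar_InfLimit})\Rightarrow(\ref{item:NMHChar_CompComp})\Rightarrow(\ref{item:NMHChar_SComp})$, and then a separate loop linking these to the ``one-parameter group'' conditions $(\ref{item:NMHChar_AnyContracting})$ and $(\ref{item:NMHChar_OneContracting})$, say $(\ref{item:NMHChar_OneContracting})\Rightarrow(\ref{item:NMHChar_InfLimit})$ and $(\ref{item:NMHChar_BiggerThanOne})\Rightarrow(\ref{item:NMHChar_AnyContracting})\Rightarrow(\ref{item:NMHChar_OneContracting})$, the last being immediate since the hypothesis of the proposition already supplies a pair with $\{t^E\}$ contracting. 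Throughout, the key structural device is the homogeneity relation $t^E Q(\zeta)=Q(t^{E'}\zeta)$, which lets one transport statements about $Q$ near $0$ to statements about $Q$ near $\infty$ (and vice versa) by acting with $t^{E'}$.

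First I would record the elementary consequences of the hypotheses. Since $\{t^E\}$ is contracting, $\|t^E\|\to 0$ as $t\to 0$, and by the group property $\|t^E\|\to\infty$ as $t\to\infty$; likewise, once we know $\{t^{E'}\}$ is contracting, $|t^{E'}\zeta|\to\infty$ as $t\to\infty$ for every $\zeta\neq 0$. Nondegeneracy of $Q$ together with continuity gives that $Q$ does not vanish on the compact unit sphere $\mathbb{S}_b\subset\mathbb{R}^b$, so $c:=\min_{|\zeta|=1}|Q(\zeta)|>0$; combined with the homogeneity relation this yields a two-sided bound on $|Q|$ in terms of $\|t^E\|$ along each ray, which is the engine behind the ``blow-up'' statements. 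For $(\ref{item:NMHChar_SComp})\Rightarrow(\ref{item:NMHChar_BiggerThanOne})$: if $S_Q$ is compact then it is bounded, say contained in the ball of radius $M$; any $\zeta$ with $|\zeta|>M$ cannot have $|Q(\zeta)|=1$, and a scaling argument along the orbit $t\mapsto t^{E'}\zeta$ (using that $|Q(t^{E'}\zeta)|=\|t^E\|$-controlled is continuous in $t$, tends to $0$ as $t\to 0$, and the orbit passes through arbitrarily large norms) forces $|Q(\zeta)|>1$ rather than $<1$. For $(\ref{item:NMHChar_BiggerThanOne})\Rightarrow(\ref{item:NMHChar_InfLimit})$: given $R>0$, pick $t_0$ with $\|t_0^{-E}\|$ small enough that $t_0^{-E}$ maps the ball of radius $R$ into the unit ball, i.e. $|w|\le R\Rightarrow |t_0^{-E}w|<1$; then if $|Q(\zeta)|\le R$ we get $|Q(t_0^{-E'}\zeta)|=|t_0^{-E}Q(\zeta)|<1$, hence by $(\ref{item:NMHChar_BiggerThanOne})$ that $|t_0^{-E'}\zeta|\le M$, so $|\zeta|\le \|t_0^{E'}\|M$; contrapositively $|\zeta|$ large forces $|Q(\zeta)|$ large. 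The remaining implications $(\ref{item:NMHChar_InfLimit})\Rightarrow(\ref{item:NMHChar_CompComp})\Rightarrow(\ref{item:NMHChar_SComp})$ are soft: $(\ref{item:NMHChar_InfLimit})$ says preimages of bounded sets are bounded, and they are closed by continuity, hence compact, giving $(\ref{item:NMHChar_CompComp})$; and $S_Q=Q^{-1}(\mathbb{S}_a)$ with $\mathbb{S}_a$ compact, so $(\ref{item:NMHChar_CompComp})\Rightarrow(\ref{item:NMHChar_SComp})$.

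For the one-parameter-group conditions, $(\ref{item:NMHChar_OneContracting})\Rightarrow(\ref{item:NMHChar_InfLimit})$ can be obtained by the same ray argument as above but now using that $\{t^{E'}\}$ is contracting to control small orbit parameters and using $c=\min_{|\zeta|=1}|Q|>0$ together with $t^E Q=Q\circ t^{E'}$ to get $|Q(\zeta)|\ge c\,\sigma_{\min}(t^E)$ where $t=$ the orbit time bringing $\zeta$ to the unit sphere — so $|Q(\zeta)|\to\infty$ with $|\zeta|$. The step $(\ref{item:NMHChar_BiggerThanOne})\Rightarrow(\ref{item:NMHChar_AnyContracting})$ is the one I expect to require the most care: given another homogeneity pair $(\widetilde E,\widetilde E')$ with $\{t^{\widetilde E}\}$ contracting, I must show $\{t^{\widetilde E'}\}$ is contracting, i.e. $|t^{\widetilde E'}\zeta|\to 0$ uniformly on $|\zeta|=1$ as $t\to 0$. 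The idea: $|Q(t^{\widetilde E'}\zeta)|=|t^{\widetilde E}Q(\zeta)|\le \|t^{\widetilde E}\|\cdot\max_{|\zeta|=1}|Q(\zeta)|\to 0$, so $t^{\widetilde E'}\zeta$ enters the region $\{|Q|<1\}$, which by $(\ref{item:NMHChar_BiggerThanOne})$ is contained in the ball of radius $M$ — this gives boundedness of $\{t^{\widetilde E'}\}$ for small $t$ but not yet contraction to $0$; to upgrade it one iterates, using that for $t$ in a shrinking sequence $t_n\to 0$ one can write $t_n^{\widetilde E'}=s_n^{\widetilde E'}\cdot (t_n/s_n)^{\widetilde E'}$ and push the $|Q|$-value arbitrarily small, so the image sphere is squeezed into $\{|Q|<\epsilon\}$ for every $\epsilon$, and then invoking the already-proved implication $(\ref{item:NMHChar_BiggerThanOne})\Rightarrow(\ref{item:NMHChar_InfLimit})$ which shows $\{|Q|<\epsilon\}$ has diameter $\to 0$ as $\epsilon\to 0$. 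Finally $(\ref{item:NMHChar_AnyContracting})\Rightarrow(\ref{item:NMHChar_OneContracting})$ is immediate from the standing hypothesis that $Q$ is homogeneous with respect to some pair whose first component generates a contracting group. The main obstacle, as indicated, is making the ``boundedness upgrades to contraction'' argument in $(\ref{item:NMHChar_BiggerThanOne})\Rightarrow(\ref{item:NMHChar_AnyContracting})$ fully rigorous; everything else is a routine transcription of the positive-homogeneous case with $\|t^E\|$ playing the role that the scalar $t$ plays there.
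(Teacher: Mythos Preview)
Your overall architecture matches the paper's: both prove the cycle $(\ref{item:NMHChar_CompComp})\Rightarrow(\ref{item:NMHChar_SComp})\Rightarrow(\ref{item:NMHChar_BiggerThanOne})\Rightarrow(\ref{item:NMHChar_AnyContracting})\Rightarrow(\ref{item:NMHChar_OneContracting})\Rightarrow(\ref{item:NMHChar_InfLimit})\Rightarrow(\ref{item:NMHChar_CompComp})$, and your soft steps $(\ref{item:NMHChar_InfLimit})\Rightarrow(\ref{item:NMHChar_CompComp})\Rightarrow(\ref{item:NMHChar_SComp})$, your direct $(\ref{item:NMHChar_BiggerThanOne})\Rightarrow(\ref{item:NMHChar_InfLimit})$, and your polar-decomposition sketch for $(\ref{item:NMHChar_OneContracting})\Rightarrow(\ref{item:NMHChar_InfLimit})$ are all fine. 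However, there is a genuine gap in your $(\ref{item:NMHChar_SComp})\Rightarrow(\ref{item:NMHChar_BiggerThanOne})$.

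You argue: take $|\zeta|>M$ with (toward contradiction) $|Q(\zeta)|<1$, and look at the orbit $t\mapsto t^{E'}\zeta$; since $|Q(t^{E'}\zeta)|=|t^E Q(\zeta)|$ is continuous in $t$ and runs from $0$ to $\infty$, it must hit $1$, giving a point of $S_Q$. The trouble is that you need this intermediate point to have norm $>M$ to contradict $S_Q\subseteq\overline{B_M}$, and nothing prevents the orbit $t^{E'}\zeta$ from dipping inside the ball of radius $M$ precisely when $|Q|$ crosses $1$. Your parenthetical ``the orbit passes through arbitrarily large norms'' is true but does not help: the orbit can oscillate in and out of $\overline{B_M}$, and you have no control over $\{t^{E'}\}$ at this stage. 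The paper closes this gap differently: it first uses connectedness of $\{|\zeta|>M\}$ (handling $b>1$ and $b=1$ separately) to conclude that $|Q|$ is either everywhere $<1$ or everywhere $>1$ on that region, and only then uses an orbit argument to rule out the former by forcing $|Q|$ to be unbounded on the compact ball $\overline{B_M}$.

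On $(\ref{item:NMHChar_BiggerThanOne})\Rightarrow(\ref{item:NMHChar_AnyContracting})$, your ``shrinking sublevel sets'' approach is a genuine alternative to the paper's contrapositive subsequence argument, and it works: the claim that $\{|Q|<\epsilon\}$ has diameter tending to $0$ follows not from $(\ref{item:NMHChar_InfLimit})$ alone but from $(\ref{item:NMHChar_BiggerThanOne})$ together with the fact that $|Q|$ is continuous and nonvanishing on each compact annulus $\{\delta\le|\zeta|\le M\}$, hence bounded below there. The paper instead assumes $\{t^{E'}\}$ is not contracting, extracts a sequence $t_k\to 0$ with $|t_k^{E'}\zeta|\ge\epsilon$, and splits into bounded and unbounded cases, deriving a contradiction with nondegeneracy in the first and with $(\ref{item:NMHChar_BiggerThanOne})$ in the second; this is shorter and avoids the auxiliary sublevel-set lemma.
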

\begin{proof}
\begin{subproof}[(\ref{item:NMHChar_CompComp}$\,\Rightarrow$\ref{item:NMHChar_SComp})]
Since $S_Q=Q^{-1}(\mathbb{S}_a)$ where $\mathbb{S}_a$ is the unit sphere in $\mathbb{R}^{a}$, this is immediate.
\end{subproof}
\begin{subproof}[(\ref{item:NMHChar_SComp}$\,\Rightarrow$\ref{item:NMHChar_BiggerThanOne})]
Assuming that $S_Q$ is compact, let $M>0$ be such that $\abs{Q(\zeta)}\neq 1$ for all $\abs{\zeta}>M$. Denote by $\overline{\mathbb{B}_M}$ the closed ball in $\mathbb{R}^{{b}}$ with center $0$ and radius $M$ and by $\mathcal{O}_M=\mathbb{R}^{{b}}\setminus\overline{\mathbb{B}_M}$ its complement. Let us first treat the situation in which ${b}>1$. In this case, the fact that $\mathcal{O}_M$ is path connected and $Q(\zeta)$ is continuous ensures that there cannot be two elements $\zeta_1$ and $\zeta_2$ in $\mathcal{O}_M$ with $\abs{Q(\zeta_1)}<1<\abs{Q(\zeta_2)}$ for otherwise the intermediate value theorem would imply that $\abs{Q(\zeta_3)}=1$ for some $\zeta_3\in\mathcal{O}_M$, an impossibility. Thus, to prove the statement, we simply need to rule out the possibility that $\abs{Q(\zeta)}<1$ for all $\zeta\in\mathcal{O}_M$. Let us assume, to reach a contradiction, that $\abs{Q(\zeta)}<1$ for all $\zeta\in\mathcal{O}_M$. Let $(E,E')\in \End(\mathbb{R}^{a})\times\End(\mathbb{R}^{{b}})$ be a pair for which $Q$ is homogeneous and $\{t^E\}$ is contracting and let $\zeta_0$ be a non-zero element in $\mathbb{R}^{{b}}$. The nondegenerateness of $Q$ and the fact that $\{t^E\}$ is contracting guarantees that
\begin{equation*}
\infty=\lim_{t\to\infty}\abs{t^EQ(\zeta_0)}=\lim_{t\to\infty}\abs{Q(t^{E'}\zeta_0)}.
\end{equation*}
In view of our hypothesis it follows that, for all sufficiently large $t$, $t^{E'}\zeta_0\in \overline{\mathbb{B}_M}$. Of course, this implies that $\zeta\mapsto \abs{Q(\zeta)}$ is unbounded on the compact set $\overline{\mathbb{B}_M}$ and this is impossible for we know that $Q$ is continuous. 

In the case that ${b}=1$, we first argue that $Q(\zeta)>1$ for all $\zeta>M>0$. Of course, since $(M,\infty)$ is connected, an argument analogous to that above for ${b}>1$ guarantees that it suffices to rule out the case that $Q(\zeta)<1$ for all $\zeta>M$. We therefore assume, to reach a contradiction, that $Q(\zeta)<1$ for all $\zeta>M$ and select a pair $(E,E')\in\End(\mathbb{R}^{a})\times\End(\mathbb{R}^{{b}})$ for which $Q$ is homogeneous and $\{t^E\}$ contracting. Due to the simplicity of $\End(\mathbb{R}^{{b}})=\End(\mathbb{R})$, $E'=\alpha' I$ for some $\alpha'\in\mathbb{R}$ and so $t^{E'}\zeta=t^{\alpha'}\zeta$ for all $t>0$ and $\zeta\in\mathbb{R}=\mathbb{R}^{{b}}$. Using the fact that $\{t^E\}$ is contracting, we have
\begin{equation*}
\infty=\lim_{t\to\infty}\abs{t^{E}Q(1)}=\lim_{t\to\infty}\abs{Q(t^{\alpha'})}
\end{equation*}
and so, in view of our supposition, it follows that $0<t^{\alpha'}\leq M$ for all sufficiently large $t$ (which, at the same time guarantees that $\alpha'< 0$). This, however, contradicts that fact that $Q$ is continuous at $0$. Hence, $Q(\zeta)>1$ for all $\zeta>M$. A similar argument shows that $Q(\zeta)>1$ for all $\zeta<-M$. Thus $Q(\zeta)>1$ for all $\abs{\zeta}>M$, as was asserted.
\end{subproof}

\begin{subproof}[(\ref{item:NMHChar_BiggerThanOne}$\,\Rightarrow$\ref{item:NMHChar_AnyContracting})]
We prove the contrapositive statement. Let us assume that there is a pair $(E,E')\in\End(\mathbb{R}^{a})\times\End(\mathbb{R}^{{b}})$ for which $Q$ is homogeneous and $\{t^E\}$ is contracting, but $\{t^{E'}\}$ is not contracting. It follows that, for some $\zeta\in\mathbb{R}^{{b}}$ and $\epsilon>0$, there is a sequence $t_k\to 0$ for which
\begin{equation*}
\abs{t_k^{E'}\zeta}\geq \epsilon
\end{equation*}
for all $k$. In the case that $\{t_k^{E'}\zeta\}$ is bounded, we assume without loss of generality (by passing to a subsequence, if needed) that $\lim_{k\to\infty}t_k^{E'}\zeta=\zeta_0$ where $\abs{\zeta_0}\geq \epsilon$. Consequently,
\begin{equation*}
Q(\zeta_0)=\lim_{k\to\infty}Q(t_k^{E'}\zeta)=\lim_{k\to\infty}t_k^{E}Q(\zeta)=0
\end{equation*}
since $t_k\to 0$ and $\{t^{E}\}$ is contracting. Since $\zeta_0\neq 0$, this is impossible for we know that $Q$ is nondegenerate. Thus, we must conclude that $\{t_k^{E'}\zeta\}$ is unbounded. Without loss of generality, we may assume (by passing to a subsequence, if necessary) that $\zeta_k=t_k^{E'}\zeta\to \infty$ as $k\to\infty$. Consequently, there is a sequence $\zeta_k\to \infty$ for which
\begin{equation*}
\lim_{k\to\infty}Q(\zeta_k)=\lim_{k\to\infty}t_k^{E}Q(\zeta)=0.
\end{equation*}
This shows that Item \ref{item:NMHChar_BiggerThanOne} cannot hold, as was asserted.
\end{subproof}

\begin{subproof}[(\ref{item:NMHChar_AnyContracting}$\,\Rightarrow$\ref{item:NMHChar_OneContracting})]
In view of the hypotheses, this is immediate.
\end{subproof}
\begin{subproof}[(\ref{item:NMHChar_OneContracting}$\,\Rightarrow$\ref{item:NMHChar_InfLimit})]
We fix a pair $(E,E')\in\End(\mathbb{R}^{a})\times\End(\mathbb{R}^{{b}})$ for which $Q$ is homogeneous and both $\{t^E\}$ and $\{t^{E'}\}$ are contracting. Let $\{\zeta_k\}\subseteq\mathbb{R}^{{b}}$ be a sequence with $\zeta_k\to \infty$ as $k\to\infty$. Since $\{t^{E'}\}$ is contracting and in view of Proposition A.5 of \cite{BR22}, we can write $\zeta_k=t_k^{E'}\eta_k$ where $\abs{\eta_k}=1$ for each $k$ and $t_k\to\infty$. We claim that
\begin{equation*}
\lim_{k\to\infty}\abs{t_k^EQ(\eta_k)}=\infty.
\end{equation*}
To see this, we assume, to reach a contradiction, that a subsequence has the property that $\abs{t_{k_j}^EQ(\eta_{k_j})}\leq M$ for some $M$. In this case, we see that
\begin{equation*}
\abs{Q(\eta_{k_j})}=\abs{(1/t_{k_j})^Et_{k_j}^EQ(\eta_{k_j)}}\leq M\|(1/t_{k_j})^E\|
\end{equation*}
for all $j$ and, since $1/t_{k_j}\to 0$ as $j\to\infty$, the fact that $\{t^E\}$ is contracting implies that 
\begin{equation*}
0=\inf_{\abs{\zeta}=1}\abs{Q(\zeta)}.
\end{equation*}
This is, however, impossible because $Q$ is continuous and nonvanishing on the compact unit sphere in $\mathbb{R}^{{b}}$. We have therefore substantiated our claim and so it follows that
\begin{equation*}
\lim_{k\to\infty}\abs{Q(\zeta_k)}=\lim_{k\to\infty}\abs{Q(t_k^{E'}\eta_k)}=\lim_{k\to\infty}\abs{t_k^EQ(\eta_k)}=\infty,
\end{equation*}
as desired. 
\end{subproof}
\begin{subproof}[(\ref{item:NMHChar_InfLimit}$\,\Rightarrow$\ref{item:NMHChar_CompComp})]
Let $K\subseteq\mathbb{R}^{a}$ be compact. The fact that $Q$ is continuous ensures that $Q^{-1}(K)$ is necessarily closed. If $Q^{-1}(K)$ were unbounded, we could find a sequence $\zeta_k \in Q^{-1}(K)$ for which $\abs{\zeta_k}\to \infty$ yet $Q(\zeta_k)\in K$ for all $k$. Of course, this is impossible in light of our assumption. Hence $Q^{-1}(K)$ must be bounded and therefore compact in view of the Heine-Borel theorem.
\end{subproof}
\end{proof}

\begin{definition}
Let $Q:\mathbb{R}^{{b}}\to\mathbb{R}^{a}$ be continuous, nondegenerate and homogeneous with respect to a pair $(E,E')\in\End(\mathbb{R}^{a})\times\End(\mathbb{R}^{{b}})$ for which $\{t^E\}$ is contracting. If any (hence, every) of the equivalent  conditions listed in Proposition \ref{prop:NMHChar} are satisfied, we say that $Q$ is nondegenerate multivariate homogeneous.
\end{definition}

\begin{example}
For a given positive integer $\alpha$, consider $Q:\mathbb{R}\to\mathbb{R}$ defined by $Q(\zeta)=\zeta^\alpha$. It is clear that $Q(\zeta)$ is continuous and nondegenerate. We observe further that
\begin{equation*}
Q(t^I\zeta)=Q(t\zeta)=t^\alpha\zeta^{\alpha}=t^{\alpha I}\zeta^{\alpha}
\end{equation*}
for all $t>0$ and $\zeta\in\mathbb{R}$ where $I$ is the identity transformation on $\mathbb{R}$. Thus $Q$ is homogeneous with respect to $(\alpha I, I)$ and, further, it is clear that $\{t^{\alpha I}\}=\{t^\alpha\}$ and $\{t^I\}=\{t\}$ are both contracting. Thus, by virtue of Proposition \ref{prop:NMHChar}, we conclude that $Q$ is nondegenerate multivariate homogeneous. 
\end{example}

\begin{example}\label{ex:CanonicalNMH}
Given positive integers $a$ and $b$, let $\sigma_1,\sigma_2,\dots,\sigma_a$ be a collection of positive integers for which, as sets, $\{1,2,\dots,{b}\}=\{\sigma_1,\sigma_2,\dots,\sigma_a\}$ and, given positive integers $\alpha_1,\alpha_2,\dots,\alpha_a$, define $Q:\mathbb{R}^{{b}}\to\mathbb{R}^a$ by
\begin{equation}\label{eq:CanonicalNMH}
Q(\zeta)=Q(\zeta_1,\zeta_2,\dots,\zeta_{{b}})=(\zeta_{\sigma_1}^{\alpha_1},\zeta_{\sigma_2}^{\alpha_2},\dots,\zeta_{\sigma_a}^{\alpha_a})
\end{equation}
for $\zeta=(\zeta_1,\zeta_2,\dots,\zeta_{{b}})\in\mathbb{R}^{{b}}$. We claim that $Q$ is nondegenerate multivariate homogeneous. To see this, we observe first that $Q$ is clearly continuous (in fact, $Q\in C^\infty$) and $Q(\zeta)=0$ if and only if $\zeta_{\sigma_j}=0$ for all $j=1,2,\dots,d$. In view of the condition that $\{\sigma_1,\sigma_2,\dots,\sigma_d\}=\{1,2,\dots,{b}\}$, we conclude that $Q(\zeta)=0$ if and only if $\zeta=(\zeta_1,\zeta_2,\dots,\zeta_{{b}})=0$ and so $Q$ is nondegenerate. Observe that
\begin{equation*}
Q(t^I\zeta)=Q(t\zeta)=(t^{\alpha_1}\zeta_{\sigma_1}^{\alpha_1},t^{\alpha_2}\zeta_{\sigma_2}^{\alpha_2},\dots,t^{\alpha_a}\zeta_{\sigma_a}^{\alpha_a})=t^EQ(\zeta)
\end{equation*}
where $I$ is the identity on $\mathbb{R}^{{b}}$ and $E\in\End(\mathbb{R}^{a})$ has standard representation $\diag(\alpha_1,\alpha_2,\dots,\alpha_a)$. Consequently, $Q$ is homogeneous with respect to $(E,I)$ and, since $\{t^E\}$ and $\{t^I\}$ are contracting, we conclude that $Q$ is nondegenerate multivariate homogeneous. For a concrete example, consider $Q:\mathbb{R}^2\to\mathbb{R}^3$ defined by
\begin{equation*}
Q(\zeta_1,\zeta_2)=(\zeta_2,\zeta_1^4,\zeta_2^3)
\end{equation*}
for $\zeta=(\zeta_1,\zeta_2)\in\mathbb{R}^2$. This is a nondegenerate multivariate homogeneous function of the above form with $\sigma_1=\sigma_3=2$, $\sigma_2=1$, $\alpha_1=1$, $\alpha_2=4$, and $\alpha_3=3$. 
\end{example}

We remark that, for any $Q$ of the form \eqref{eq:CanonicalNMH}, we have $b\leq a$. The following example generalizes that above and allows for $b>a$. In addition, the example shows that all positive homogeneous functions are nondegenerate multivariate homogeneous.

\begin{example}\label{ex:AGeneralQ}
For positive integers $a$ and $b$, let $q_1(\zeta),q_2(\zeta),\dots,q_a(\zeta)$ be a collection of continuous real-valued functions on $\mathbb{R}^{{b}}$ which satisfy the following conditions:
\begin{enumerate}
\item\label{cond:AGeneralQ1} If $\zeta\neq 0$, then $q_k(\zeta)\neq 0$ for at least one $k=1,2,\dots,a$.
\item\label{cond:AGeneralQ2} There exists $E'\in\End(\mathbb{R}^{{b}})$ for which $\{t^{E'}\}$ is contracting and $E'\in \Exp(q_k)$ for all $k=1,2,\dots,a$. 
\end{enumerate}
Given such a collection, let $\alpha_1,\alpha_2,\dots,\alpha_a$ be positive integers and define
$Q:\mathbb{R}^{{b}}\to\mathbb{R}^{a}$ by
\begin{equation}\label{eq:AGeneralQ}
Q(\zeta)=(q_1(\zeta)^{\alpha_1},q_2(\zeta)^{\alpha_2},\dots,q_a(\zeta)^{\alpha_a})
\end{equation}
for $\zeta\in\mathbb{R}^{{b}}$. We claim that $Q$ is nondegenerate multivariate homogeneous. Indeed, $Q$ is continuous and nondegenerate in view of Condition \ref{cond:AGeneralQ1}. Upon taking $E'\in\End(\mathbb{R}^{{b}})$ satisfying Condition \ref{cond:AGeneralQ2}, we observe that $Q$ is homogeneous with respect to the pair $(E,E')$ where $E\in\End(\mathbb{R}^d)$ has standard representation $\diag(\alpha_1,\alpha_2,\dots,\alpha_a)$. Since $\{t^E\}$ and $\{t^{E'}\}$ are contracting, we conclude that $Q$ is nondegenerate multivariate homogeneous in view of Proposition \ref{prop:NMHChar}. 

In the case that $a=1$, $\alpha_1=1$, and $q_1(\zeta)=P(\zeta)$ is a positive homogeneous function, the above conditions are automatically satisfied for any $E'\in\Exp(P)$. From this we conclude that positive homogeneous functions are nondegenerate multivariate homogeneous. We remark that, for any $E'\in\Exp(P)$, $Q(\zeta)=q_1(\zeta)=P(\zeta)$ is homogeneous with respect to the pair $(I,E')$ where $I$ is the identity on $\mathbb{R}$.

For a concrete example of a multivariate homogeneous function of the form \eqref{eq:AGeneralQ} (and for which $1<a<{b}$), consider $Q:\mathbb{R}^3\to\mathbb{R}^2$ defined by
\begin{equation*}
Q(\zeta)=(\zeta_1^2+\zeta_2^4,(\zeta_1+\zeta_3^3)^5)
\end{equation*}
for $\zeta=(\zeta_1,\zeta_2,\zeta_3)\in\mathbb{R}^3$. This can be written equivalently as
\begin{equation*}
Q(\zeta)=\left(\left(\zeta_1^2+\zeta_2^4)^{1/2}\right)^2,\left(\zeta_1+\zeta_3^3\right)^5\right)
\end{equation*}
for $\zeta=(\zeta_1,\zeta_2,\zeta_3)\in\mathbb{R}^3$. This is clearly of the form \eqref{eq:AGeneralQ} with $q_1(\zeta)=(\zeta_1^2+\zeta_2^4)^{1/2}$, $q_2(\zeta)=\zeta_1+\zeta_3^3$, $\alpha_1=2$ and $\alpha_2=5$. In this case, it is easy to see that $q_1(\zeta)$ and $q_2(\zeta)$ satisfy Condition \ref{cond:AGeneralQ1}. Further, observe that, for $E'\in\End(\mathbb{R}^3)$ with standard representation $\diag(1,1/2,1/3)$, we have $E'\in \Exp(q_1)\cap\Exp(q_2)$. Since $\{t^{E'}\}$ is clearly contracting, we may conclude that $Q$ is nondegenerate multivariate homogeneous and homogeneous with respect to $(E,E')$ where $E'$ is that above and $E\in\End(\mathbb{R}^2)$ has standard representation $\diag(2,5)$. Of course, we can confirm the homogeneity of $Q$ with respect to the pair $(E,E')$ directly: For $t>0$ and $\zeta=(\zeta_1,\zeta_2,\zeta_3)$, 
\begin{eqnarray*}
Q(t^{E'}\zeta)&=&Q(t^1\zeta_1,t^{1/2}\zeta_2,t^{1/3}\zeta_3)\\
&=&\left((t\zeta_1)^2+(t^{1/2}\zeta_2)^4,\left((t\zeta_1)+(t^{1/3}\zeta_3)^3\right)^5\right)\\
&=&\left(t^2\left(\zeta_1^2+\zeta_2^4\right),t^5\left(\zeta_1+\zeta_3^3\right)^5\right)\\
&=&t^{E}Q(\zeta).
\end{eqnarray*}
\end{example}

\begin{proposition}\label{prop:CompIsPosHom}
Suppose that $P$ is a positive homogeneous function on $\mathbb{R}^{a}$, $Q:\mathbb{R}^{{b}}\to\mathbb{R}^{a}$ is a nondegenerate multivariate homogeneous function and, for some $E\in\Exp(P)$ and $E'\in\End(\mathbb{R}^{{b}})$, $Q$ is homogeneous with respect to the pair $(E,E')$. Then $P\circ Q$ and $P\circ (-Q)$ are positive homogeneous functions on $\mathbb{R}^{{b}}$ whose exponent sets contain $E'$. In particular, $\mu_{P\circ Q}=\mu_{P\circ(-Q)}=\tr E'$.
\end{proposition}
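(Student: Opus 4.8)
The plan is to verify directly that $P\circ Q$ meets the three standing hypotheses of Proposition \ref{prop:PosHomChar} — continuity, positive-definiteness, and nonempty exponent set — with $E'$ exhibited as a member of $\Exp(P\circ Q)$, and then to use one of the equivalent conditions of that proposition to upgrade this to ``positive homogeneous.'' Once that is done, Proposition \ref{prop:HomOrder} immediately gives $\mu_{P\circ Q}=\tr E'$. The case of $P\circ(-Q)$ follows by the same argument after noting that $-Q$ is again nondegenerate multivariate homogeneous with respect to $(E,E')$.

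First I would record the elementary facts. The map $P\circ Q$ is continuous as a composition of continuous maps. It is positive-definite: since $P\geq 0$ we have $P\circ Q\geq 0$, and if $(P\circ Q)(\zeta)=0$ then $P(Q(\zeta))=0$, which forces $Q(\zeta)=0$ by positive-definiteness of $P$, which in turn forces $\zeta=0$ by nondegeneracy of $Q$. For the homogeneity identity, I would compute, for $t>0$ and $\zeta\in\mathbb{R}^{b}$,
\[
(P\circ Q)(t^{E'}\zeta)=P\bigl(Q(t^{E'}\zeta)\bigr)=P\bigl(t^{E}Q(\zeta)\bigr)=t\,P\bigl(Q(\zeta)\bigr)=t\,(P\circ Q)(\zeta),
\]
where the second equality is homogeneity of $Q$ with respect to $(E,E')$ and the third is $E\in\Exp(P)$. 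Hence $E'\in\Exp(P\circ Q)$, and in particular $\Exp(P\circ Q)\neq\varnothing$.

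To conclude that $P\circ Q$ is positive homogeneous, I would verify condition (4) of Proposition \ref{prop:PosHomChar}, namely that $\{t^{E'}\}$ is contracting. Here is where the hypothesis that $P$ is \emph{positive homogeneous} (rather than merely positive-definite with nonempty exponent set) enters: since $E\in\Exp(P)$, condition (3) of Proposition \ref{prop:PosHomChar} gives that $\{t^{E}\}$ is contracting; then, because $Q$ is nondegenerate multivariate homogeneous and homogeneous with respect to the pair $(E,E')$, item \ref{item:NMHChar_AnyContracting} of Proposition \ref{prop:NMHChar} forces $\{t^{E'}\}$ to be contracting as well. Thus Proposition \ref{prop:PosHomChar} applies and $P\circ Q$ is positive homogeneous; since $E'\in\Exp(P\circ Q)$, Proposition \ref{prop:HomOrder} yields $\mu_{P\circ Q}=\tr E'$.

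Finally, for $P\circ(-Q)$: the map $-Q$ is continuous, nondegenerate (its only zero is at $0$), and homogeneous with respect to $(E,E')$ because $(-Q)(t^{E'}\zeta)=-t^{E}Q(\zeta)=t^{E}(-Q)(\zeta)$; moreover $\abs{(-Q)(\zeta)}=\abs{Q(\zeta)}$ so $S_{-Q}=S_{Q}$ is compact, and with $\{t^{E}\}$ contracting Proposition \ref{prop:NMHChar} shows $-Q$ is nondegenerate multivariate homogeneous. Running the argument above with $-Q$ in place of $Q$ gives $E'\in\Exp(P\circ(-Q))$, that $P\circ(-Q)$ is positive homogeneous, and $\mu_{P\circ(-Q)}=\tr E'$. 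There is no genuine obstacle in this proof; the only point that is not purely mechanical is the contractivity of $\{t^{E'}\}$, which is precisely the place where the positive-homogeneity of $P$ is used through the chain of equivalences in Propositions \ref{prop:PosHomChar} and \ref{prop:NMHChar}.
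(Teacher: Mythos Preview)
Your proof is correct and follows essentially the same approach as the paper: verify continuity and positive-definiteness of $P\circ Q$, show $E'\in\Exp(P\circ Q)$ via the homogeneity chain, then use that $\{t^E\}$ is contracting (Proposition~\ref{prop:PosHomChar}) together with Proposition~\ref{prop:NMHChar} to deduce $\{t^{E'}\}$ is contracting, and handle $-Q$ by the same reasoning. The paper's write-up is slightly terser in reducing the $-Q$ case to the $Q$ case at the outset, but the substance is identical.
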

\begin{proof}
It is clear that $Q$ is nondegenerate multivariate homogeneous if and only if $-Q$ is nondegenerate multivariate homogeneous and $Q$ is homogeneous with respect to a pair $(E,E')$ if and only if $-Q$ is. Thus, to prove the proposition, it suffices to prove that $P\circ Q$ is positive homogeneous with $E'\in\Exp(P\circ Q)$. Because $P$ is continuous and positive-definite and $Q$ is continuous and nondegenerate, it evident that  $P\circ Q$ is continuous and positive-definite. Given the pair $(E,E')$ as in the statement of the proposition, we observe that
\begin{equation*}
(P\circ Q)(t^{E'}\zeta)=P(t^EQ(\zeta))=tP(Q(\zeta))=t(P\circ Q)(\zeta)
\end{equation*}
for all $t>0$ and $\zeta\in\mathbb{R}^{{b}}$. Thus, $E'\in\Exp(P\circ Q)$. Finally, since $\{t^E\}$ is contracting thanks to Proposition \ref{prop:PosHomChar}, $\{t^{E'}\}$ must also be contracting in view or Proposition \ref{prop:NMHChar} and so we conclude that $P\circ Q$ is positive homogeneous in light of Proposition \ref{prop:PosHomChar}.
\end{proof}

\begin{example}
Consider the nondegenerate multivariate homogeneous function $Q:\mathbb{R}^3\to\mathbb{R}^2$ from the previous example given by
\begin{equation*}
Q(\zeta)=(\zeta_1^2+\zeta_2^4,(\zeta_1+\zeta_3^3)^5)
\end{equation*}
for $\zeta=(\zeta_1,\zeta_2,\zeta_3)\in\mathbb{R}^3$. Also, consider the positive homogeneous function $P:\mathbb{R}^2\to\mathbb{R}$ defined by
\begin{equation*}
P(\xi)=P(\xi_1,\xi_2)=\xi_1^{5}+\xi_2^{2}
\end{equation*}
for $\xi=(\xi_1,\xi_2)\in\mathbb{R}^2$. We see that $P$ is homogeneous with respect to $E\in\End(\mathbb{R}^2)$ with standard representation $\diag(1/5,1/2)$. Consider also $E'\in\End(\mathbb{R}^3)$ with standard representation $\diag(1/10, 1/20, 1/30)$ and observe that
\begin{eqnarray*}
Q(t^{E'}\zeta)&=&Q(t^{1/10}\zeta_1,t^{1/20}\zeta_2,t^{1/30}\zeta_3)\\
&=&\left(t^{1/5}(\zeta_1^2+\zeta_2^4),t^{1/2}(\zeta_1+\zeta_3^3)^5\right)\\
&=&t^EQ(\zeta)
\end{eqnarray*}
for $t>0$ and $\zeta=(\zeta_1,\zeta_2,\zeta_3)\in\mathbb{R}^3$. Thus, by the preceding proposition, we have that $P\circ Q$ is positive homogeneous with $\mu_{P\circ Q}=\tr E'=11/60$. Of course, this can be verified directly by simplification of $P\circ Q$. We have
\begin{equation*}
(P\circ Q)(\zeta)=(\zeta_1^2+\zeta_2^4)^5+(\zeta_1+\zeta_3^3)^{10}
\end{equation*}
which is clearly positive homogeneous with $E'\in\Exp(P\circ Q)$.
\end{example}

\section{On-Diagonal Asymptotics}\label{sec:OnDiagonalAsymptotics}

\begin{theorem}\label{thm:OnDiagonal}
Given positive integers ${a}$ and ${b}$, let $P_1$ and $P_2$ be positive homogeneous functions on $\mathbb{R}^{{a}}$ with homogeneous orders $\mu_{P_1}$ and $\mu_{P_2}$, respectively, and let $Q:\mathbb{R}^{{b}}\to\mathbb{R}^{{a}}$ be a $C^1$ function which is nondegenerate multivariate homogeneous. Set $d={a}+{b}$ and define $P:\mathbb{R}^{d}\to\mathbb{R}$ by
\begin{equation*}
P(\xi)=P(\eta,\zeta)=P_1(\eta+Q(\zeta))+P_2(\eta)
\end{equation*}
for $\xi=(\eta,\zeta)\in\mathbb{R}^{{a}}\times\mathbb{R}^{{b}}=\mathbb{R}^d$. Suppose that there exist $E_1\in \Exp(P_1)$, $E_2\in\Exp(P_2)$, and $F_1,F_2\in\End(\mathbb{R}^{{b}})$ for which the following conditions hold:

\begin{enumerate}
\item\label{cond:HomPair} For $k=1,2$,  $Q$ is homogeneous with respect to the pair $(E_k,F_k)$.
\item\label{cond:ComutingAndNonExpanding} We have $[E_1,E_2]=E_1E_2-E_2E_1=0$ and $\{t^{E_1-E_2}\}$ is non-expanding.
\end{enumerate}
In particular, $P_1\circ Q$ and $P_2\circ (-Q)$ are positive homogeneous on $\mathbb{R}^{{b}}$ (by virtue of Proposition \ref{prop:CompIsPosHom}) with homogeneous orders $\mu_{P_1\circ Q}$ and $\mu_{P_2\circ (-Q)}$, respectively. Then the heat kernel
\begin{equation*}
H_P^t(x)=\frac{1}{(2\pi)^{d}}\int_{\mathbb{R}^{d}}e^{-tP(\xi)}e^{-ix\cdot\xi}\,d\xi
\end{equation*}
exists for each $t>0$ and $x\in\mathbb{R}^{d}$. Upon setting $\varphi(t):=H_P^t(0)$ for $t>0$, we have the following on-diagonal asymptotics:
\begin{equation*}
\varphi(t)\asymp \begin{cases}
t^{-\mu_0} & 0<t\leq 1\\
t^{-\mu_\infty} & 1\leq t<\infty
\end{cases}
\end{equation*}
for $t>0$ where $\mu_0=\mu_{P_2}+\mu_{P_1\circ Q}$ and $\mu_\infty=\mu_{P_1}+\mu_{P_2\circ(-Q)}$.
\end{theorem}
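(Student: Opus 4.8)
The plan is to turn the problem, by exact changes of variables, into showing that two explicit families of integrals stay bounded above and below uniformly in $t$; the only substantive point is a ``quasi-triangle'' integrability estimate for a positive homogeneous function precomposed with a nondegenerate multivariate homogeneous map. First, for existence of $H_P^t$ and a crude bound, I would write $\varphi(t)=(2\pi)^{-d}\int e^{-tP_1(\eta+Q(\zeta))}e^{-tP_2(\eta)}\,d\eta\,d\zeta$ and integrate in $\zeta$ first. Since $Q$ is homogeneous with respect to $(E_1,F_1)$, the substitution $\zeta=t^{-F_1}\zeta'$ gives $\int_{\mathbb{R}^{b}}e^{-tP_1(\eta+Q(\zeta))}\,d\zeta=t^{-\tr F_1}\int_{\mathbb{R}^{b}}e^{-P_1(t^{E_1}\eta+Q(\zeta'))}\,d\zeta'$, and $\tr F_1=\mu_{P_1\circ Q}$ by Proposition \ref{prop:CompIsPosHom}. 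The key estimate I would establish is that
\[
\int_{\mathbb{R}^{b}}e^{-P_1(v+Q(\zeta))}\,d\zeta\le C(1+\abs{v})^{N}\qquad(v\in\mathbb{R}^{a})
\]
for constants $C,N$ depending only on $P_1,Q$. Combined with Lemma \ref{lem:ExpIntegrability} for $P_2$ and the fact that $e^{-P_2(\eta)}$ decays super-polynomially (a consequence of positive homogeneity), this yields $e^{-tP}\in L^1(\mathbb{R}^{d})$ for all $t>0$, so $H_P^t$ exists, and already shows $\varphi(t)\le C't^{-\mu_0}$ for all $t>0$.

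For the lower bound in small time and for all of the large-time assertion I rescale. Because $[E_1,E_2]=0$, $t^{-E_2}=t^{-E_1}t^{E_1-E_2}$, and using $P_2(t^{-E_2}\tilde\eta)=t^{-1}P_2(\tilde\eta)$ and $Q(t^{-F_1}\tilde\zeta)=t^{-E_1}Q(\tilde\zeta)$, the substitution $\eta=t^{-E_2}\tilde\eta$, $\zeta=t^{-F_1}\tilde\zeta$ produces
\[
\varphi(t)=\frac{t^{-\mu_0}}{(2\pi)^{d}}\int_{\mathbb{R}^{d}}e^{-P_1\left(t^{E_1-E_2}\tilde\eta+Q(\tilde\zeta)\right)-P_2(\tilde\eta)}\,d\tilde\eta\,d\tilde\zeta=:\frac{t^{-\mu_0}}{(2\pi)^{d}}J_0(t).
\]
Since $\{t^{E_1-E_2}\}$ is non-expanding, $\norm{t^{E_1-E_2}}\le M$ for $0<t\le 1$; applying the key estimate to the inner $\tilde\zeta$-integral bounds it by $C(1+M\abs{\tilde\eta})^{N}$, and integrating against $e^{-P_2(\tilde\eta)}$ gives $J_0(t)\le C_0$, while restricting to $\{\abs{\tilde\eta}\le 1,\ \abs{\tilde\zeta}\le 1\}$ and using continuity of $P_1,P_2,Q$ together with $\norm{t^{E_1-E_2}}\le M$ gives $J_0(t)\ge c_0>0$. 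Hence $\varphi(t)\asymp t^{-\mu_0}$ on $(0,1]$.

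For large time I first perform the volume-preserving shift $\eta\mapsto\eta-Q(\zeta)$ in the inner $\eta$-integral, passing to the ``dual'' representation $\varphi(t)=(2\pi)^{-d}\int e^{-tP_1(\eta)-tP_2(\eta-Q(\zeta))}\,d\eta\,d\zeta$, and then rescale $\eta=t^{-E_1}\tilde\eta$, $\zeta=t^{-F_2}\tilde\zeta$ (now using $Q$ homogeneous with respect to $(E_2,F_2)$ and $t^{-E_1}=t^{-E_2}t^{E_2-E_1}$) to obtain
\[
\varphi(t)=\frac{t^{-\mu_\infty}}{(2\pi)^{d}}\int_{\mathbb{R}^{d}}e^{-P_1(\tilde\eta)-P_2\left(t^{E_2-E_1}\tilde\eta-Q(\tilde\zeta)\right)}\,d\tilde\eta\,d\tilde\zeta=:\frac{t^{-\mu_\infty}}{(2\pi)^{d}}J_\infty(t),
\]
where $\mu_\infty=\mu_{P_1}+\tr F_2=\mu_{P_1}+\mu_{P_2\circ(-Q)}$ by Proposition \ref{prop:CompIsPosHom}. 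For $t\ge 1$ one has $t^{E_2-E_1}=(1/t)^{E_1-E_2}$ with $1/t\le 1$, so $\norm{t^{E_2-E_1}}\le M$; applying the key estimate to $P_2$ and $-Q$ (note $P_2\circ(-Q)$ is positive homogeneous) bounds the inner $\tilde\zeta$-integral uniformly, and integrating against $e^{-P_1(\tilde\eta)}$ (which is in $L^1$ with super-polynomial decay) gives $J_\infty(t)\le C$, with the compact-set lower bound as before. Hence $\varphi(t)\asymp t^{-\mu_\infty}$ on $[1,\infty)$.

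The main obstacle is the key integrability estimate, and I would prove it by the layer-cake formula: with $E_1\in\Exp(P_1)$ one has $\{\zeta:P_1(v+Q(\zeta))<r\}=Q^{-1}(r^{E_1}B_{P_1}-v)$, and since $B_{P_1}$ is bounded and $\norm{r^{E_1}}\le C(1+r)^{\Lambda}$ for some $\Lambda>0$ (because $\{r^{E_1}\}$ is contracting, hence non-expanding near $0$, and every one-parameter group has polynomially bounded norm near $\infty$), this set lies in $\{\zeta:\abs{Q(\zeta)}\le\abs{v}+C(1+r)^{\Lambda}\}$. Comparing $\abs{Q(\zeta)}$ with the genuinely positive homogeneous function $P_1\circ Q$ — via the polar-coordinate estimates of \cite{BR22}, which give $c\abs{w}^{\gamma_1}\le P_1(w)\le C\abs{w}^{\gamma_2}$ for $\abs{w}\ge 1$ — bounds the measure of that set by $C(1+\abs{v}+r^{\Lambda})^{\gamma_2\mu_{P_1\circ Q}}$, and integrating $e^{-r}$ against this polynomial bound (after the substitution $s=e^{-r}$) gives the asserted polynomial-in-$v$ estimate, which is exactly what the rest of the argument needs; the analogous statement for $P_2$ and $-Q$ follows verbatim. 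Everything else — the changes of variables, the trace/Jacobian bookkeeping, and the compact-set lower bounds — is routine given Propositions \ref{prop:PosHomChar}, \ref{prop:NMHChar}, \ref{prop:CompIsPosHom} and Lemma \ref{lem:ExpIntegrability}.
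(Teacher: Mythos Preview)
Your proposal is correct, and the skeleton of the argument—the linear rescalings $(\eta,\zeta)\mapsto(t^{-E_2}\eta,t^{-F_1}\zeta)$ for small time and the nonlinear shift $\eta\mapsto\eta-Q(\zeta)$ followed by $(\eta,\zeta)\mapsto(t^{-E_1}\eta,t^{-F_2}\zeta)$ for large time—is exactly what the paper does. Where you diverge is in how you show the resulting integrals $J_0(t)$ and $J_\infty(t)$ are bounded above and below uniformly in $t$. The paper proves and invokes a pointwise sandwich (its Lemma \ref{lem:P2P1Estimates}): under the commuting/non-expanding hypotheses there are constants with
\[
C(P_1(\xi)+P_2(\eta))-M\;\le\;P_1(t^{E_1-E_2}\eta+\xi)+P_2(\eta)\;\le\;C'(P_1(\xi)+P_2(\eta))+M'
\]
(and a companion inequality for the large-time integrand), which reduces both bounds to Lemma \ref{lem:ExpIntegrability}. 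You instead Fubini the integral, bound the inner $\zeta$-integral by $C(1+|v|)^N$ via a layer-cake argument and the measure scaling of $P_1\circ Q$, and then absorb the polynomial with the super-polynomial decay of $e^{-P_j}$; your lower bound is a direct compact-set argument. Both routes are legitimate. The paper's pointwise sandwich is a reusable tool—it reappears in the proofs of the perturbation theorem and the local limit theorem—while your layer-cake estimate is more self-contained and avoids the chain of auxiliary lemmas in Appendix~\ref{sec:TechnicalEstimates}, at the price of a somewhat less sharp control of the integrand (polynomial growth in the shift rather than a uniform exponential comparison). One small imprecision: your claim that the crude bound gives $\varphi(t)\le C't^{-\mu_0}$ ``for all $t>0$'' actually uses $\|t^{E_1-E_2}\|\le M$, so it is only valid for $0<t\le 1$; this is harmless since the large-time bound is handled separately by $J_\infty$.
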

\begin{remark}\label{rmk:OrderTrace}
In view of Propositions \ref{prop:HomOrder} and \ref{prop:CompIsPosHom}, $\mu_{P_1}=\tr E_1$, $\mu_{P_2}=\tr E_2$, $\mu_{P_1\circ Q}=\tr F_1$, and $\mu_{P_2\circ (-Q)}=\tr F_2$ where $E_1,E_2,F_1,F_2$ are those given in the hypotheses of Theorem \ref{thm:OnDiagonal} (or any which satisfy Conditions \ref{cond:HomPair} and \ref{cond:ComutingAndNonExpanding}). In these terms, the asymptotics for $\varphi(t)$ can be equivalently written
\begin{equation*}
\varphi(t)\asymp \begin{cases}
t^{-\left(\tr E_2+\tr F_1\right)} & 0<t\leq 1\\
t^{-\left(\tr E_1+\tr F_2\right)} & 1\leq t<\infty
\end{cases}
\end{equation*}
for $t>0$.
\end{remark}
\begin{remark}
Given our hypothesis that $Q$ is nondegenerate multivariate homogeneous, when we ask that $Q$ be homogeneous with respect to the pair $(E_1,F_1)$ for some $E_1\in\Exp(P_1)$ and $F_1\in\End(\mathbb{R}^{{b}})$, we are ensuring that $\{t^{F_1}\}$ is contracting by virtue of Proposition \ref{prop:NMHChar} since it is known that $\{t^{E_1}\}$ is contracting thanks to Proposition \ref{prop:PosHomChar}. For the same reason, $\{t^{F_2}\}$ must also be contracting. Of course, these observations also follow from Proposition \ref{prop:CompIsPosHom} since $F_1\in\Exp(P_1\circ Q)$ and $F_2\in\Exp(P_2\circ(-Q))$.

In this direction, if Condition \ref{cond:HomPair} were adjusted to include the hypothesis that, for $F_1$ (or $F_2$), $\{t^{F_1}\}$ (or $\{t^{F_2}\}$) is contracting, then the initial hypothesis could be weakened to ask only that $Q:\mathbb{R}^{{a}}\to\mathbb{R}^d$ be nondegenerate and $C^1$. In this case, the modified Condition \ref{cond:HomPair} would ensure that $Q$ were nondegenerate multivariate homogeneous by virtue of Item \ref{item:NMHChar_OneContracting} of Proposition \ref{prop:NMHChar}.
\end{remark}

\begin{remark}
If Condition \ref{cond:ComutingAndNonExpanding} were replaced by the stronger condition that $[E_1,E_2]=E_1E_2-E_2E_1=0$ and $\{t^{E_1-E_2}\}$ is contracting, then one finds that the small and large-time asymptotics $t^{-\mu_0}$ and $t^{-\mu_\infty}$ are ``true asymptotics'' in the sense that the limits $\lim_{t\to 0}t^{\mu_0}\varphi(t)$ and $\lim_{t\to\infty}t^{\mu_\infty}\varphi(t)$ both exist and are positive numbers. This result is presented in Theorem \ref{thm:TrueAsymptotic}. The theorem, in fact, gives us the precise value of these limits in terms of $P_1$, $P_2$, and $Q$. 
\end{remark}

\begin{example}\label{ex:OnDiagonalMotivatingExample}
Let $q$ and $l$ be even positive integers with $q\leq l$ and let $p\in\mathbb{N}_+$. We consider $P:\mathbb{R}^2\to\mathbb{R}$ defined by
\begin{equation*}
P(\eta,\zeta)=(\eta+\zeta^p)^q+\eta^l
\end{equation*}
for $(\eta,\zeta)\in\mathbb{R}^2$ and the corresponding heat kernel $H_P$ defined by
\begin{equation*}
H_P^t(x,y)=\frac{1}{(2\pi)^2}\int_{\mathbb{R}^2}e^{-tP(\eta,\zeta)}e^{-i(\eta,\zeta)\cdot(x,y)}\,d\zeta\,d\eta
\end{equation*}
for $t>0$ and $(x,y)\in\mathbb{R}^2$. In this case, we can write
\begin{equation*}
P(\eta,\zeta)=P_1(\eta+Q(\zeta))+P_2(\eta)
\end{equation*}
where $P_1(\eta)=\eta^q$ and $P_2(\eta)=\eta^l$ are positive homogeneous on $\mathbb{R}=\mathbb{R}^1$ and $Q(\zeta)=\zeta^p$ is evidently a nondegenerate multivariate homogeneous $C^1$ function from $\mathbb{R}$ to itself. For $P_1$, we have $E_1=I/q\in\Exp(P_1)$ and $\mu_{P_1}=1/q$ and, for $P_2$, we have $E_2=I/l$ and $\mu_{P_2}=1/l$. Further, we observe that
\begin{equation*}
t^{E_1}Q(\zeta)=t^{1/q}\zeta^{p}=(t^{1/qp}\zeta)^p=Q(t^{1/qp}\zeta)
\end{equation*}
for $t>0$ and $\zeta\in\mathbb{R}$ and from this we see that $Q$ is homogeneous with respect to the pair $(E_1,F_1)$ where $F_1=I/qp$. Similarly,$Q$ is also homogeneous with respect to the pair $(E_2,F_2)$ where $F_2=1/lp$. Finally, we observe that  $E_1$ and $E_2$ commute and, since $q\leq l$, $t^{E_1-E_2}=t^{(1/q-1/l)I}$ is non-expanding. Thus, an application of the theorem is valid and we conclude that
\begin{equation*}
\varphi(t)=H_P^t(0)\asymp \begin{cases}
t^{-(1/l+1/qp)} & t\leq 1\\
t^{-(1/q+1/lp)} & t\geq 1
\end{cases} 
\end{equation*}
because $\mu_0=\mu_{P_2}+\mu_{P_1\circ Q}=\tr E_2+\tr F_1=1/l+1/qp$ and $\mu_\infty=\mu_{P_1}+\mu_{P_2\circ (-Q)}=\tr E_1+\tr F_2=1/q+1/lp$. 

We recognize that the motivating example in the introduction is of the above form where $p=q=2$ and $l=4$, i.e.,
\begin{equation*}
    P(\eta,\zeta)=(\eta+\zeta^2)^2+\eta^4
\end{equation*}
for $(\eta,\zeta)\in\mathbb{R}^2$. From this, we conclude that
\begin{equation*}
    \varphi(t)\asymp \begin{cases}
    t^{-1/2} & 0<t\leq 1\\
    t^{-5/8} & 0<t\leq 1
    \end{cases}
\end{equation*}
for $t>0$, as was asserted. Following Theorem \ref{thm:TrueAsymptotic}, we revisit this example to obtain precise values of $\lim_{t\to 0}t^{1/2}\varphi(t)$ and $\lim_{t\to\infty}t^{5/8}\varphi(t)$. Necessarily, the $t\to 0$ limit is precisely that obtained by the scaling argument presented in the introduction.
\end{example}

\begin{example}\label{ex:CanonicalNMHFollowUp}
In view of Example \ref{ex:CanonicalNMH}, let $Q:\mathbb{R}^{a}\to\mathbb{R}^a$ be a nondegenerate multivariate homogeneous function of the form
\begin{equation*}
Q(\zeta)=Q(\zeta_1,\zeta_2,\dots,\zeta_{a})=(\zeta_{\sigma(1)}^{\alpha_1},\zeta_{\sigma(2)}^{\alpha_2},\dots,\zeta_{\sigma(a)}^{\alpha_a})
\end{equation*}
where $\alpha_1,\alpha_2,\dots,\alpha_a\in\mathbb{N}_+$ and $\sigma$ is a permutation of $\{1,2,\dots,a\}$; in particular, $\{\sigma(1),\sigma(2),\dots,\sigma(a)\}=\{1,2,\dots,a\}.$ We remark that $Q$ is clearly smooth. Now, let $P_1$ and $P_2$ be positive homogeneous functions on $\mathbb{R}^a$ and suppose that, for $k=1,2$, $\Exp(P_k)$ contains $E_k\in\End(\mathbb{R}^a)$ having standard matrix representation $\diag(\lambda_{k,1},\lambda_{k,2},\dots,\lambda_{k,a})$ where $\lambda_{k,j}>0$ for $j=1,2,\dots, a$. If, for $k=1,2$, we consider $F_k$ with standard matrix representation
\begin{equation*}
\diag\left(\frac{\lambda_{k,\sigma^{-1}(1)}}{\alpha_{\sigma^{-1}(1)}},\frac{\lambda_{k,\sigma^{-1}(2)}}{\alpha_{\sigma^{-1}(2)}},\dots,\frac{\lambda_{k,\sigma^{-1}(a)}}{\alpha_{\sigma^{-1}(a)}}\right),
\end{equation*}
we see that
\begin{equation*}
t^{E_k}Q(\zeta)=\left(t^{\lambda_{k,1}}\zeta_{\sigma(1)}^{\alpha_1},t^{\lambda_{k,2}}\zeta_{\sigma(2)}^{\alpha_2},\dots,t^{\lambda_{k,2}}\zeta_{\sigma(a)}^{\alpha_a}\right)=Q(t^{F_k}\zeta)
\end{equation*}
for all $t>0$ and $\zeta\in\mathbb{R}^a$. Correspondingly, $Q$ is homogeneous with respect to the pair $(E_k,F_k)$ for $k=1,2$. We have the following result.
\begin{proposition}
If $\lambda_{1,j}\geq\lambda_{2,j}$ for $j=1,2,\dots, a$, then for $P:\mathbb{R}^{2a}\to\mathbb{R}$ defined by
\begin{equation*}
P(\eta,\zeta)=P_1(\eta+Q(\zeta))+P_2(\eta)
\end{equation*}
for $(\eta,\zeta)\in\mathbb{R}^{2a}$, we have
\begin{equation*}
\varphi(t)\asymp\begin{cases}
t^{-\mu_{0}} & t\leq 1\\
t^{-\mu_{\infty}} & t\geq 1
\end{cases}
\end{equation*}
for $t>0$ where
\begin{equation*}
\mu_0=\sum_{j=1}^a\left(\lambda_{2,j}+\frac{\lambda_{1,j}}{\alpha_j}\right)
\end{equation*}
and
\begin{equation*}
\mu_\infty=\sum_{j=1}^a\left(\lambda_{1,j}+\frac{\lambda_{2,j}}{\alpha_j}\right).
\end{equation*}
\end{proposition}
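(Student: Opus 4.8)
The plan is to verify that $P_1$, $P_2$, $Q$, together with the operators $E_1,E_2,F_1,F_2$ exhibited just above, satisfy the hypotheses of Theorem \ref{thm:OnDiagonal}, and then to read off $\mu_0$ and $\mu_\infty$ from the trace formulas of Remark \ref{rmk:OrderTrace}. The structural hypotheses are immediate: $P_1$ and $P_2$ are positive homogeneous on $\mathbb{R}^a$ by assumption; $Q$ is nondegenerate multivariate homogeneous (Example \ref{ex:CanonicalNMH}) and is a polynomial map, hence $C^1$; each $E_k\in\Exp(P_k)$ by hypothesis; and the computation preceding the proposition shows that $Q$ is homogeneous with respect to the pair $(E_k,F_k)$ for $k=1,2$. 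This establishes Condition \ref{cond:HomPair}.

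Next I would check Condition \ref{cond:ComutingAndNonExpanding}. Since $E_1=\diag(\lambda_{1,1},\dots,\lambda_{1,a})$ and $E_2=\diag(\lambda_{2,1},\dots,\lambda_{2,a})$ are both diagonal, they commute, so $[E_1,E_2]=0$. Moreover $E_1-E_2=\diag(\lambda_{1,1}-\lambda_{2,1},\dots,\lambda_{1,a}-\lambda_{2,a})$ has nonnegative diagonal entries by the hypothesis $\lambda_{1,j}\geq\lambda_{2,j}$, so for $0<t\leq 1$ each diagonal entry of $t^{E_1-E_2}=\diag(t^{\lambda_{1,1}-\lambda_{2,1}},\dots,t^{\lambda_{1,a}-\lambda_{2,a}})$ lies in $(0,1]$, whence $\|t^{E_1-E_2}\|\leq 1$ and $\{t^{E_1-E_2}\}$ is non-expanding. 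This is the only place where the assumption $\lambda_{1,j}\geq\lambda_{2,j}$ enters, and it is the only genuine verification in the argument; everything else is bookkeeping.

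With the hypotheses in place, Theorem \ref{thm:OnDiagonal} yields $\varphi(t)\asymp t^{-\mu_0}$ for $0<t\leq 1$ and $\varphi(t)\asymp t^{-\mu_\infty}$ for $t\geq 1$, where $\mu_0=\mu_{P_2}+\mu_{P_1\circ Q}$ and $\mu_\infty=\mu_{P_1}+\mu_{P_2\circ(-Q)}$. By Proposition \ref{prop:HomOrder}, $\mu_{P_1}=\tr E_1=\sum_{j=1}^a\lambda_{1,j}$ and $\mu_{P_2}=\tr E_2=\sum_{j=1}^a\lambda_{2,j}$; by Proposition \ref{prop:CompIsPosHom} (cf.\ Remark \ref{rmk:OrderTrace}), $\mu_{P_1\circ Q}=\tr F_1$ and $\mu_{P_2\circ(-Q)}=\tr F_2$. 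Reindexing the defining sum for $\tr F_k$ via the bijection $i=\sigma^{-1}(j)$ of $\{1,\dots,a\}$ gives $\tr F_1=\sum_{j=1}^a \lambda_{1,\sigma^{-1}(j)}/\alpha_{\sigma^{-1}(j)}=\sum_{i=1}^a\lambda_{1,i}/\alpha_i$ and, likewise, $\tr F_2=\sum_{i=1}^a\lambda_{2,i}/\alpha_i$. Substituting these into the expressions for $\mu_0$ and $\mu_\infty$ produces exactly $\mu_0=\sum_{j=1}^a(\lambda_{2,j}+\lambda_{1,j}/\alpha_j)$ and $\mu_\infty=\sum_{j=1}^a(\lambda_{1,j}+\lambda_{2,j}/\alpha_j)$, as claimed. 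Thus the proposition is a direct specialization of Theorem \ref{thm:OnDiagonal}, and I anticipate no substantive obstacle beyond the non-expansiveness check above and the routine reindexing.
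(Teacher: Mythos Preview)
Your proposal is correct and follows essentially the same approach as the paper: verify the hypotheses of Theorem \ref{thm:OnDiagonal} (in particular that $\{t^{E_1-E_2}\}$ is non-expanding via the diagonal form and the inequality $\lambda_{1,j}\geq\lambda_{2,j}$), then invoke Remark \ref{rmk:OrderTrace} and compute the traces by reindexing over the permutation $\sigma$. Your write-up is in fact slightly more explicit than the paper's, which simply states that the hypothesis guarantees non-expansiveness and immediately reindexes the sum for $\tr F_1$.
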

\begin{exproof}
The hypothesis guarantees that $t^{E_1-E_2}$ is non-expanding and so, in view of Remark \ref{rmk:OrderTrace}, we only need to verify that the exponents $\mu_0$ and $\mu_\infty$ are as stated. Of course, since $\sigma$ is a permutation of $\{1,2,\dots,a\}$, 
\begin{equation*}
\mu_0=\tr E_2+\tr F_1=\sum_{j=1}^a \lambda_{2,j}+\sum_{j'=1}^a\frac{\lambda_{1,\sigma^{-1}(j')}}{\alpha_{\sigma^{-1}(j')}}=\sum_{j=1}^a\lambda_{2,j}+\sum_{j=1}^a\frac{\lambda_{1,j}}{\alpha_j}.
\end{equation*}
The computation for $\mu_\infty$ is done analogously.
\end{exproof}
In this example, we have assumed that $a=b$. We remark that the results found above can be extended to arbitrary dimensions $a$ and $b$ by consider $Q$'s of the form found in Examples \ref{ex:CanonicalNMH} and \ref{ex:AGeneralQ}. We leave these details to the reader.
\end{example}

Our proof of Theorem \ref{thm:OnDiagonal} makes use of the following lemma, whose proof can be found in Appendix \ref{sec:TechnicalEstimates}. 

\begin{lemma}\label{lem:P2P1Estimates}
Let $P_1$ and $P_2$ be positive homogeneous functions on $\mathbb{R}^{{a}}$. If there exist $E_1\in\Exp(P_1)$ and $E_2\in\Exp(P_2)$ for which $[E_1,E_2]=0$ and $\{t^{E_1-E_2}\}$ is non-expanding, then the following statements hold:
\begin{enumerate}
\item There are positive constants $C,C',M,M'$ for which
\begin{equation}\label{eq:EstimateForSmallTime}
C(P_1(\xi)+P_2(\eta))-M\leq P_1(t^{E_1-E_2}\eta+\xi)+P_2(\eta)\leq C'(P_1(\xi)+P_2(\eta))+M'
\end{equation}
for all $\xi,\eta\in\mathbb{R}^{{a}}$ and $0<t\leq 1$.
\item There are positive constants $C,C',M,M'$ for which
\begin{equation}\label{eq:EstimateForLargeTime}
C(P_1(\eta)+P_1(\xi))-M\leq P_1(\eta)+P_2(t^{E_1-E_2}\eta+\xi)\leq C'(P_2(\eta)+P_2(\xi))+M'
\end{equation}
for all $\eta,\xi\in\mathbb{R}^{{a}}$ and $0<t\leq 1$.
\end{enumerate}
\end{lemma}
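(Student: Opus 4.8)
The plan is to reduce the whole statement to two standard facts about positive homogeneous functions together with a short list of scaling estimates that exploit the commuting, non-expanding hypothesis. Throughout, recall that for a positive homogeneous $P$ on $\mathbb{R}^{{a}}$ with exponent $E\in\Exp(P)$, the group $\{t^E\}$ is contracting (Proposition \ref{prop:PosHomChar}), hence $M_E:=\sup_{0<t\le1}\|t^E\|<\infty$, and every $\xi\neq 0$ has the unique ``polar'' representation $\xi=s^E\omega$ with $s=P(\xi)>0$ and $\omega=(1/s)^E\xi\in S_P$, where $S_P$ is the compact unital level set (this is immediate from $P(s^E\omega)=sP(\omega)$, cf.\ \cite{BR22}). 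From this one gets the \emph{quasi-triangle inequality}: there is $c_P\ge1$ with $P(u+v)\le c_P(P(u)+P(v))$ for all $u,v$. Indeed, if $s:=P(u)\ge r:=P(v)$ and $u=s^E\omega$, $v=s^E(r/s)^E\nu$ with $\omega,\nu\in S_P$, then $u+v=s^E(\omega+(r/s)^E\nu)$, and since $\omega+(r/s)^E\nu$ stays in the \emph{fixed} compact set $\{\,\omega+\rho^E\nu:\omega,\nu\in S_P,\ 0<\rho\le1\,\}$ one obtains $P(u+v)=s\,P(\omega+(r/s)^E\nu)\le c_P s=c_P\max(P(u),P(v))$. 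A similar argument (using that $\omega\mapsto P(-\omega)$ is continuous and positive on $S_P$) gives $P(-v)\asymp P(v)$. I would either cite these from \cite{BR22} or include these two short proofs.

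Next I would establish the scaling estimates carrying the hypotheses. Set $A:=E_1-E_2$, so $[A,E_1]=[A,E_2]=0$ (from $[E_1,E_2]=0$) and $M_A:=\sup_{0<t\le1}\|t^A\|<\infty$ (non-expansiveness of $\{t^{E_1-E_2}\}$). Writing $\eta=r^{E_k}\omega$ with $r=P_k(\eta)$, $\omega\in S_{P_k}$ and commuting $t^A$ past $r^{E_k}$ yields, for $k=1,2$ and $0<t\le1$, $P_k(t^A\eta)=r\,P_k(t^A\omega)\le C_kP_k(\eta)$, since $t^A\omega$ lies in the fixed compact ball $\overline{B}(0,M_A\sup_{S_{P_k}}|\cdot|)$. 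A slightly finer version gives the crucial \emph{asymmetric comparison}: writing $\eta=r^{E_2}\nu$ ($r=P_2(\eta)$, $\nu\in S_{P_2}$) and using $r^{E_2}=r^{E_1}r^{-A}$ (commuting), one finds $t^A\eta=r^{E_1}(t/r)^A\nu$, hence $P_1(t^A\eta)=r\,P_1((t/r)^A\nu)$; when $r\ge t$ the point $(t/r)^A\nu$ sits in a fixed compact set so $P_1(t^A\eta)\le C\,P_2(\eta)$, and when $0<r<t\le1$ the vector $\eta$ — and so $t^A\eta$ — sits in a fixed compact set so $P_1(t^A\eta)\le M$. Thus $P_1(t^A\eta)\le CP_2(\eta)+M$ for all $\eta$, $0<t\le1$, and taking $t=1$ gives $P_1(\eta)\le CP_2(\eta)+M$. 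Note the reverse comparison $P_2\lesssim P_1$ is \emph{not} available: this is precisely the asymmetry between ``non-expanding'' and ``contracting'', and it is why part 2 of the lemma has $P_1$'s on the left and $P_2$'s on the right.

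Assembling the four inequalities is then routine. For the upper bounds: in part 1, $P_1(t^A\eta+\xi)\le c_{P_1}(P_1(t^A\eta)+P_1(\xi))\le c_{P_1}(CP_2(\eta)+M+P_1(\xi))$, and adding $P_2(\eta)$ gives \eqref{eq:EstimateForSmallTime}; in part 2, $P_1(\eta)\le CP_2(\eta)+M$ while $P_2(t^A\eta+\xi)\le c_{P_2}(P_2(t^A\eta)+P_2(\xi))\le c_{P_2}(C_2P_2(\eta)+P_2(\xi))$, giving the right half of \eqref{eq:EstimateForLargeTime}. For the lower bounds I would write the outer variable in terms of the quantity to be bounded, $\xi=(t^A\eta+\xi)+(-t^A\eta)$, apply the quasi-triangle inequality, and control $P_1(-t^A\eta)$ by the scaling estimates — routing it through $P_2(\eta)$ in part 1, and through $P_1(\eta)$ in part 2, so that only the available comparison $P_1\lesssim P_2$ is ever used. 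In part 2 this directly yields $P_1(\eta)+P_1(\xi)\le(1+c')P_1(\eta)+c''P_2(t^A\eta+\xi)+M$, so choosing $C$ small enough proves the left half of \eqref{eq:EstimateForLargeTime}. In part 1 one is left with an estimate of the shape $P_1(t^A\eta+\xi)+P_2(\eta)\ge c_{P_1}^{-1}P_1(\xi)+(1-D)P_2(\eta)-M$ whose $P_2(\eta)$-coefficient may be negative, so I would finish with a dichotomy on a threshold $P_2(\eta)\gtrless\theta(P_1(\xi)+P_2(\eta))$: when $\eta$ dominates the bound is trivial from $P_1\ge0$, and when $\xi$ dominates the bad $P_2(\eta)$ term is reabsorbed into $c_{P_1}^{-1}P_1(\xi)$ for $\theta$ small.

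The genuinely essential point — as opposed to constant-chasing — is that the hypothesis only makes $\{t^{E_1-E_2}\}$ non-expanding, not contracting, so one has the one-sided comparison $P_1\lesssim P_2$ but not its converse; every step of the lower-bound arguments must therefore be arranged so that no $P_2(\eta)$ term is ever bounded by a $P_1(\eta)$ term, which is also what dictates the exact form (asymmetric in part 2, dichotomy-driven in part 1) of the conclusions. The small-$\eta$ (equivalently, small-$r$) cases in the scaling estimates are exactly what force the additive constants $M,M'$ into the statement, and those constants are harmless since, in the intended application, $P$ will be multiplied by $t$ and integrated.
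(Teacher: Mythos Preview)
Your proposal is correct and follows essentially the same route as the paper: polar decomposition on the compact unital level sets, the scaling estimate $P_k(t^{A}\eta)\lesssim P_k(\eta)$ (the paper's Lemma~\ref{lem:GroupEstForP}), the asymmetric comparison $P_1(t^{A}\eta)\le CP_2(\eta)+M$ (the paper's Lemma~\ref{lem:CompareEst}), and quasi-triangle inequalities (the paper's Lemmas~\ref{lem:SumEst} and~\ref{lem:EasySumEst}), assembled in the same order. The one substantive difference is in finishing the lower bound of \eqref{eq:EstimateForSmallTime}: the paper's Lemma~\ref{lem:SumEst} is stated with \emph{free} coefficients $\rho_1,\rho_2$ (for any $\rho_1,\rho_2>0$ there is $\epsilon>0$ with $\epsilon P(\xi)\le\rho_1P(\zeta+\xi)+\rho_2P(\zeta)$), which lets one choose $\rho_2$ to match the $\rho$ coming out of Lemma~\ref{lem:CompareEst} and thereby keep the coefficient of $P_2(\eta)$ equal to $1/2$ from the outset---no dichotomy needed. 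Your fixed-constant quasi-triangle forces the dichotomy workaround, which is valid but slightly less clean; if you upgrade your quasi-triangle to the free-coefficient form (same compactness proof: $\epsilon^{-1}=\sup\{P(\xi):\rho_1P(\zeta+\xi)+\rho_2P(\zeta)=1\}$) the dichotomy disappears and your argument becomes identical to the paper's.
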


As the reader will see, this lemma is fundamental to our work throughout the article and, in addition to its appearance in the proof of Theorem \ref{thm:OnDiagonal}, it is used essentially in the proofs of Theorems \ref{thm:Perturbation} and \ref{thm:LLT}. For the aid of the reader, we find it useful to state this result in the context of our introductory example where $P(\eta,\zeta)=(\eta+\zeta^2)^2+\eta^4$. In this case (using $\xi=Q(\zeta)=\zeta^2$), the lemma gives the following estimates.
\begin{enumerate}
    \item There are positive constants $C,\, C',\, M,\, M'$ for which
    \begin{equation*}
        C(\zeta^4+\eta^4)-M\leq (t^{1/4}\eta+\zeta^2)^2+\eta^4\leq C'(\zeta^4+\eta^4)+M'
    \end{equation*}
    for all $\eta,\zeta\in\mathbb{R}$ and $0<t\leq 1$.
    \item There are positive constants $C,\, C',\, M,\, M'$ for which
    \begin{equation*}
        C(\eta^2+\zeta^4)-M\leq \eta^2+(t^{1/4}\eta-\zeta^2)^4\leq C'(\eta^4+\zeta^8)+M'
    \end{equation*}
    for all $\eta,\zeta\in\mathbb{R}$ and $0<t\leq 1$.
\end{enumerate}

\begin{proof}[Proof of Theorem \ref{thm:OnDiagonal}]
We first treat the $t\leq 1$ behavior. Using the homogeneity of $P_1$ and $P_2$, we observe that
\begin{equation*}
tP(\eta,\zeta)=P_1\left(t^{E_1}\eta+t^{E_1}Q(\zeta)\right)+P_2(t^{E_2}\eta)=P_1\left(t^{E_1}\eta+Q(t^{F_1}\zeta)\right)+P_2(t^{E_2}\eta)
\end{equation*}
for all $t>0$, $\eta\in\mathbb{R}^{{a}}$ and $\zeta\in\mathbb{R}^{{b}}$. By making the change of variables $(\eta,\zeta)\mapsto (t^{-E_2}\eta,t^{-F_1}\zeta)$, we see that
\begin{equation}\label{eq:SmallTScaled}
\varphi(t)=H_P^t(0)=\frac{t^{-(\tr E_2+\tr F_1)}}{(2\pi)^{d}}\int_{\mathbb{R}^{d}}e^{-\left(P_1\left(t^{E_1-E_2}\eta+Q(\zeta)\right)+P_2(\eta)\right) }\,d\eta\,d\zeta
\end{equation}
for $t>0$. By virtue of Lemma \ref{lem:P2P1Estimates}, we find that there are positive constants $C,C',M,M'$ for which
\begin{equation}\label{eq:SmallTEst}
-M'-C'(P_1(Q(\zeta))+P_2(\eta))\leq -P_1(t^{E_1-E_2}\eta+Q(\zeta))-P_2(\eta)\leq M-C(P_1(Q(\zeta))+P_2(\eta))
\end{equation}
for all $t\leq 1$, $\eta\in\mathbb{R}^{{a}}$, and $\zeta\in\mathbb{R}^{{b}}$. Upon noting that $(\eta,\zeta)\mapsto P_1(Q(\zeta))+ P_2(\eta)$ is a positive homogeneous function on $\mathbb{R}^{d}$ in view of Proposition \ref{prop:CompIsPosHom}, it follows from Lemma \ref{lem:ExpIntegrability} that
\begin{equation*}
\int_{\mathbb{R}^{d}}e^{-\epsilon (P_1(Q(\zeta))+P_2(\eta))}\,d\eta\,d\zeta
\end{equation*} 
is a positive finite number for each $\epsilon>0$. With this in mind, the inequality \eqref{eq:SmallTEst} guarantees constants $C,C'>0$ for which
\begin{equation*}
C\leq\frac{1}{(2\pi)^{d}}\int_{\mathbb{R}^{d}}e^{-P_1\left(t^{E_1-E_2}\eta+Q(\zeta)\right)}e^{-P_2(\eta)}\,d\eta\,d\zeta\leq C'
\end{equation*}
for all $0<t\leq 1$, and from \eqref{eq:SmallTScaled} we conclude that
\begin{equation*}
\varphi(t)\asymp t^{-(\tr E_2+\tr F_1)}=t^{-(\mu_{P_2}+\mu_{P_1\circ Q})}
\end{equation*}
for $t\leq 1$. 

On the other hand, establishing the $t\geq 1$ asymptotics is more difficult. A first hope would be to introduce the change of variables $(\eta,\zeta)\mapsto (t^{-E_1}\eta,t^{-F_1}\zeta)$ and find that
\begin{equation*}
\varphi(t)=\frac{t^{-(\tr E_1+\tr F_1)}}{(2\pi)^{d}}\int_{\mathbb{R}^{d}}e^{-P_1(\eta+Q(\zeta))-P_2(t^{E_2-E_1}\eta)}\,d\eta\,d\zeta
\end{equation*}
for $t>0$. This is problematic, however. To see this, let us assume momentarily that $\{t^{E_1-E_2}\}$ is contracting and observe that
\begin{equation*}
\lim_{t\to\infty}\int_{\mathbb{R}^{d}}e^{-P_1(\eta+Q(\zeta))-P_2(t^{E_2-E_1}\eta)}\,d\eta\,d\zeta=\int_{\mathbb{R}^{d}}e^{-P_1(\eta+Q(\eta))}\,d\eta\,d\zeta=\infty
\end{equation*}
by virtue of Fatou's lemma and the fact that
 \begin{equation*}
\int_{\mathbb{R}^{d}}e^{-P_1(\eta+Q(\zeta))}\,d\eta\,d\zeta=\int_{\mathbb{R}^{d}}e^{-P_1(\eta')}\,d\eta'\,d\zeta'=\infty.
\end{equation*}
Consequently, in the case that $\{t^{E_1-E_2}\}$ is contracting, the only information found by this argument is that $\varphi(t)$ decays more slowly than $t^{-(\tr E_1+\tr F_1)}=t^{-(\mu_{P_1}+\mu_{P_1\circ Q})}$ as $t\to\infty$. 

Instead, we return to the assumption that $\{t^{E_1-E_2}\}$ is non-expanding and make the non-linear change of variables $(\eta,\zeta)\mapsto (\eta-Q(\zeta),\zeta)$. Denoting this transformation by $T$, we find that
\begin{equation*}
DT(\eta,\zeta)=\begin{pmatrix}
I_{{a}} & -D_{\zeta}Q\\
0 & I_{{b}}
\end{pmatrix}
\end{equation*}
where $DT$ and $D_\zeta Q$ are the Jacobian matrices for $T$ and $Q$, respectively, and $I_{{a}}$ and $I_{{b}}$ are the identity matrices on $\mathbb{R}^{{a}}$ and $\mathbb{R}^{{b}}$, respectively. With this, it is easy to see that $T$ is measure preserving and consequently
\begin{eqnarray*}
\varphi(t)&=&\frac{1}{(2\pi)^{d}}\int_{\mathbb{R}^{d}}e^{-t(P\circ T)(\eta,\zeta)}\det(DT(\eta,\zeta))\,d\eta \,d\zeta\\
&=&\frac{1}{(2\pi)^{d}}\int_{\mathbb{R}^{d}}e^{-t\widetilde{P}(\eta,\zeta)}\,d\eta \,d\zeta
\end{eqnarray*}
for $t>0$, where
\begin{equation*}
\widetilde P(\eta,\zeta)=(P\circ T)(\eta,\zeta)=P_1(\eta)+P_2(\eta-Q(\zeta))
\end{equation*}
for $(\eta,\zeta)\in\mathbb{R}^{d}$. Observe that
\begin{equation*}
t\widetilde{P}(\eta,\zeta)=P_1(t^{E_1}\eta)+P_2(t^{E_2}\eta-Q(t^{F_2}\zeta))
\end{equation*}
for $t>0$ and $(\eta,\zeta)\in\mathbb{R}^{d}$ and, upon making the change of variables $(\eta,\zeta)\mapsto (t^{-E_1}\eta,t^{-F_2}\zeta)$, it follows that
\begin{equation}\label{eq:LargeTimeAsymptoticRepresentation}
\varphi(t)=\frac{t^{-(\tr E_1+\tr F_2)}}{(2\pi)^{d}}\int_{\mathbb{R}^{d}}e^{-P_1(\eta)-P_2(t^{E_2-E_1}\eta-Q(\zeta))}\,d\eta\,d\zeta
\end{equation}
for $t>0$. By virtue of Lemma \ref{lem:P2P1Estimates}, there are positive constants $C,C',M,M'$ for which
\begin{equation*}%\label{eq:EstimateForLargeTime}
M'-C'(P_2(\eta)+P_2(-Q(\zeta))\leq -(P_1(\eta)+P_2(t^{E_2-E_1}\eta-Q(\zeta))\leq M-C(P_1(\eta)+P_1(-Q(\zeta)))
\end{equation*}
for all $\eta\in\mathbb{R}^{{a}}$, $\zeta\in\mathbb{R}^{{b}}$, and $1\leq t<\infty$ since $\{t^{E_1-E_2}\}$ is non-expanding and $t^{E_2-E_1}=(1/t)^{E_1-E_2}$. With the observation that $(\eta,\zeta)\mapsto P_1(\zeta)+P_1(-Q(\eta))$  and $(\eta,\zeta)\mapsto P_2(\zeta)+P_2(-Q(\eta))$ are positive homogeneous functions on $\mathbb{R}^{d}$, it follows from \eqref{eq:EstimateForLargeTime} and Lemma \ref{lem:ExpIntegrability} that there are positive numbers $C,C'$ for which
\begin{equation*}
C\leq \int_{\mathbb{R}^{d}}e^{-P_1(\eta)-P_2(t^{E_2-E_1}\eta-Q(\zeta))}\,d\eta\,d\zeta\leq C'
\end{equation*}
for all $1\leq 1<\infty$. Hence,
\begin{equation*}
\varphi(t)\asymp t^{-(\tr E_1+\tr F_2)}=t^{-\left(\mu_{P_1}+\mu_{P_2\circ(-Q)}\right)}
\end{equation*}
for $t\geq 1$. 
\end{proof}

Our final result in this section shows that, in the case that $\{t^{E_1-E_2}\}$ is contracting, the asymptotics of Theorem \ref{thm:OnDiagonal} are ``true asymptotics''.
\begin{theorem}\label{thm:TrueAsymptotic}
Let $P:\mathbb{R}^2\to\mathbb{R}$ satisfy the hypotheses of Theorem \ref{thm:OnDiagonal} and, for $P_1$, $P_2$, and $Q$ as in the theorem's statement, put
\begin{equation*}
   P_0(\xi)=P_1(Q(\zeta))+P_2(\eta)\hspace{1cm}\mbox{and}\hspace{1cm} P_\infty(\xi)=P_1(\eta)+P_2(-Q(\zeta))
\end{equation*}
for $\xi=(\eta,\zeta)\in\mathbb{R}^d$. Also, let $\varphi(t)$, $\mu_0$, $\mu_{\infty}$, $E_1$, and $E_2$ be as they appear in the statement of Theorem \ref{thm:OnDiagonal}. If $\{t^{E_1-E_2}\}$ is contracting, then
\begin{equation*}
    \lim_{t\to 0}t^{\mu_0}\varphi(t)=H_{P_0}^1(0)=\frac{1}{(2\pi)^d}\int_{\mathbb{R}^d}e^{-P_0(\xi)}\,d\xi
\end{equation*}
and
\begin{equation*}
    \lim_{t\to\infty}t^{\mu_\infty}\varphi(t)=H_{P_\infty}^1(0)=\frac{1}{(2\pi)^d}\int_{\mathbb{R}^d}e^{-P_\infty(\xi)}\,d\xi.
\end{equation*}
In particular, $\lim_{t\to\infty}t^{\mu_\infty}\varphi(t)$ exists, is a positive number, and can be computed using Lemma \ref{lem:ExpIntegrability}.
\end{theorem}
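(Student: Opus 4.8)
The plan is to leverage the two scaled integral representations of $\varphi(t)$ already established in the proof of Theorem \ref{thm:OnDiagonal}, namely
\begin{equation*}
t^{\mu_0}\varphi(t)=\frac{1}{(2\pi)^d}\int_{\mathbb{R}^d}e^{-P_1(t^{E_1-E_2}\eta+Q(\zeta))-P_2(\eta)}\,d\eta\,d\zeta
\end{equation*}
valid for all $t>0$ (this is \eqref{eq:SmallTScaled} with $\mu_0=\tr E_2+\tr F_1$), and
\begin{equation*}
t^{\mu_\infty}\varphi(t)=\frac{1}{(2\pi)^d}\int_{\mathbb{R}^d}e^{-P_1(\eta)-P_2(t^{E_2-E_1}\eta-Q(\zeta))}\,d\eta\,d\zeta
\end{equation*}
valid for all $t>0$ (this is \eqref{eq:LargeTimeAsymptoticRepresentation} with $\mu_\infty=\tr E_1+\tr F_2$). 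The goal is then simply to pass the limit inside each integral. First I would observe that since $\{t^{E_1-E_2}\}$ is contracting, we have $t^{E_1-E_2}\eta\to 0$ as $t\to 0$ and $t^{E_2-E_1}\eta=(1/t)^{E_1-E_2}\eta\to 0$ as $t\to\infty$, for every fixed $\eta$; by continuity of $P_1$ and $P_2$ the integrands converge pointwise to $e^{-P_1(Q(\zeta))-P_2(\eta)}=e^{-P_0(\xi)}$ and $e^{-P_1(\eta)-P_2(-Q(\zeta))}=e^{-P_\infty(\xi)}$, respectively.

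The substantive step is to justify the interchange of limit and integral, for which I would invoke the dominated convergence theorem. For the $t\to 0$ case, restrict attention to $0<t\le 1$; Lemma \ref{lem:P2P1Estimates}, part (1), in the form already used as \eqref{eq:SmallTEst}, gives constants $C,M'>0$ with
\begin{equation*}
P_1(t^{E_1-E_2}\eta+Q(\zeta))+P_2(\eta)\ge C\bigl(P_1(Q(\zeta))+P_2(\eta)\bigr)-M'
\end{equation*}
uniformly in $0<t\le 1$, so the integrands are dominated by $e^{M'}e^{-C(P_1(Q(\zeta))+P_2(\eta))}$, which is integrable by Lemma \ref{lem:ExpIntegrability} since $(\eta,\zeta)\mapsto P_1(Q(\zeta))+P_2(\eta)=P_0(\xi)$ is positive homogeneous on $\mathbb{R}^d$ by Proposition \ref{prop:CompIsPosHom}. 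For the $t\to\infty$ case, restrict to $1\le t<\infty$, so that $t^{E_2-E_1}=(1/t)^{E_1-E_2}$ with $1/t\in(0,1]$, and apply part (2) of Lemma \ref{lem:P2P1Estimates} (as in the large-time portion of the proof of Theorem \ref{thm:OnDiagonal}): there are constants $C,M>0$ with
\begin{equation*}
P_1(\eta)+P_2(t^{E_2-E_1}\eta-Q(\zeta))\ge C\bigl(P_1(\eta)+P_1(-Q(\zeta))\bigr)-M,
\end{equation*}
giving the integrable dominating function $e^{M}e^{-C(P_1(\eta)+P_1(-Q(\zeta)))}$, again integrable by Lemma \ref{lem:ExpIntegrability} because $(\eta,\zeta)\mapsto P_1(\eta)+P_1(-Q(\zeta))$ is positive homogeneous on $\mathbb{R}^d$.

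With the interchange justified, dominated convergence yields
\begin{equation*}
\lim_{t\to 0}t^{\mu_0}\varphi(t)=\frac{1}{(2\pi)^d}\int_{\mathbb{R}^d}e^{-P_0(\xi)}\,d\xi
\qquad\text{and}\qquad
\lim_{t\to\infty}t^{\mu_\infty}\varphi(t)=\frac{1}{(2\pi)^d}\int_{\mathbb{R}^d}e^{-P_\infty(\xi)}\,d\xi,
\end{equation*}
and these equal $H_{P_0}^1(0)$ and $H_{P_\infty}^1(0)$ respectively by the definition of the heat kernel. That both limits are finite and positive follows from the fact that $P_0$ and $P_\infty$ are positive homogeneous (using Proposition \ref{prop:CompIsPosHom} for the $P_1\circ Q$ and $P_2\circ(-Q)$ pieces), so by Lemma \ref{lem:ExpIntegrability} the integrals are positive finite numbers expressible as $m(B_{P_0})\Gamma(\mu_0+1)/(2\pi)^d$ and $m(B_{P_\infty})\Gamma(\mu_\infty+1)/(2\pi)^d$. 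The main obstacle, such as it is, is purely a matter of bookkeeping: one must check that the relevant half of Lemma \ref{lem:P2P1Estimates} genuinely supplies a $t$-uniform lower bound on the exponent over the correct range of $t$ (the small-time estimate \eqref{eq:EstimateForSmallTime} is stated for $0<t\le 1$, which is exactly the range needed for $t\to 0$, while the large-time estimate \eqref{eq:EstimateForLargeTime} is also stated for $0<t\le 1$ and is applied with $1/t$ in place of $t$ to cover $t\ge 1$), and that the dominating functions are honestly integrable — both of which are immediate from the cited lemmas. No new ideas beyond those already deployed in the proof of Theorem \ref{thm:OnDiagonal} are required; the contracting hypothesis is used only to upgrade the two-sided bounds into genuine pointwise limits of the integrands.
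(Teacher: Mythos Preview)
Your proposal is correct and follows essentially the same approach as the paper: starting from the scaled representations \eqref{eq:SmallTScaled} and \eqref{eq:LargeTimeAsymptoticRepresentation}, you identify the pointwise limits via the contracting hypothesis and justify the interchange by dominated convergence using the uniform bounds from Lemma~\ref{lem:P2P1Estimates} together with Lemma~\ref{lem:ExpIntegrability}. The paper's own proof is terser (it treats only the $t\to\infty$ case and declares the $t\to 0$ case analogous), but the argument is the same.
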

\begin{proof}
We shall prove the statement involving the limit as $t\to\infty$; the $t\to 0$ statement is proved analogously. In view of \eqref{eq:LargeTimeAsymptoticRepresentation}, we have
\begin{equation*}
    t^{\mu_\infty}\varphi(t)=\frac{1}{(2\pi)^d}\int_{\mathbb{R}^d}e^{-P_1(\eta)-P_2(t^{E_1-E_2}\eta-Q(\zeta))}\,d\xi
\end{equation*}
for $t>0$. Now, given that $\{t^{E_1-E_2}\}$ is contracting, we have
\begin{equation*}
    \lim_{t\to\infty}e^{-P_1(\eta)-P_2(t^{E_2-E_1}\eta-Q(\zeta))}=e^{-P_\infty(\xi)}
\end{equation*}
for each $\xi=(\eta,\zeta)\in\mathbb{R}^d$. As noted in the proof of Theorem \ref{thm:OnDiagonal} in the paragraph following \eqref{eq:LargeTimeAsymptoticRepresentation}, the integrands $\xi\mapsto \exp(-P_1(\eta)-P_2(t^{E_2-E_1}\eta-Q(\zeta)))$ are uniformly dominated, for $t\geq 1$, by the integrable function $\xi\mapsto \exp(M-C(P_1(\eta)+P_1(-Q(\zeta))))$ and so our desired result follows by an appeal to the dominated convergence theorem.
\end{proof}

\begin{example}\label{ex:OnDiagonalMotivatingExampleTrueAsymptotics}
In Example \ref{ex:OnDiagonalMotivatingExample}, we found that
\begin{equation*}
    \varphi(t)=H_P^t(0)\asymp 
    \begin{cases} t^{-(1/l+1/qp)} &0< t\leq 1\\
    t^{-(1/q+1/lp)} & t\geq 1
    \end{cases}
\end{equation*}
for $t>0$ where $P(\eta,\zeta)=P_1(\eta+Q(\zeta))+P_2(\eta)=(\eta+\zeta^p)^q+\eta^l$ for $p\in\mathbb{N}_+$ and positive even integers $q$ and $l$ with $q\leq l$. As we noted in the example, $E_1-E_2=(1/q-1/l)I$ where $I$ is the identity transformation on $\mathbb{R}$ and therefore $\{t^{E_1-E_2}\}$ is contracting whenever $q<l$. Upon noting that $P_0(\eta,\zeta)=\zeta^{qp}+\eta^l$ and $P_\infty(\eta,\zeta)=\eta^q+\zeta^{lp}$ for $(\eta,\zeta)\in\mathbb{R}^2$, Theorem \ref{thm:TrueAsymptotic} guarantees that
\begin{eqnarray*}
\lim_{t\to 0}t^{(1/l+1/qp)}\varphi(t)=H_{P_0}^1(0)&=&\frac{1}{4\pi^2}\left(\int_{\mathbb{R}}e^{-\zeta^{qp}}\,d\zeta\right)\left(\int_{\mathbb{R}}e^{-\eta^l}\,d\eta\right)\\
&=&\frac{1}{\pi^2}\Gamma\left(1+\frac{1}{qp}\right)\Gamma\left(1+\frac{1}{l}\right)
\end{eqnarray*}
and similarly
\begin{equation*}
\lim_{t\to\infty}t^{(1/q+1/lp)}\varphi(t)=H_{P_\infty}^1(0)=\frac{1}{\pi^2}\Gamma\left(1+\frac{1}{q}\right)\Gamma\left(1+\frac{1}{lp}\right)
\end{equation*}
provided $q<l$. In particular, for our introductory example in which $P(\eta,\zeta)=(\eta+\zeta^2)^2+\eta^4$, i.e., where $2=p=q<l=4$, this gives the (expected) limit \eqref{eq:IntroSmallTimeTrueAsymptotic} and, more interestingly,
\begin{equation*}
    \lim_{t\to\infty}t^{5/8}\varphi(t)=H_{P_\infty}^1(0)=\frac{1}{2\pi^{3/2}}\Gamma(9/8)\approx 0.0845624.
\end{equation*}
\end{example}

\section{A perturbation theory}

Let us take the classical viewpoint that the theory of elliptic/semi-elliptic operators is a ``perturbation theory'' in which a sufficiently well-behaved partial differential operator is perturbed by adding operators whose order is lower than that of the given operator. In that setting, one may investigate properties of solutions to related partial differential equations which are preserved under such perturbations. For example, in the theory of elliptic operators, short-time heat kernel estimates for a uniformly elliptic operator are determined by the operator's principal symbol. In this way, perturbation by lower-order operators -- provided they are sufficiently well behaved -- will not essentially change the short-time behavior of heat kernels. In this short section, we explore perturbation by higher-order operators/symbols. In particular, we show that, under certain conditions, the large-time decay of $\varphi(t)=H_P^t(0)$ is essentially unchanged when $P$ is replaced with $P+R$ where $R(\xi)=o(P(\xi))$ as $\xi\to 0$. Given that our analysis is done exclusively in the frequency domain, our results amount, essentially, to a perturbation theory of constant-coefficient operators. We suspect that a successful variable-coefficient theory is possible, however, we do not pursue that here. Our main result is as follows.

\begin{theorem}\label{thm:Perturbation}
Let $P$ satisfy the hypotheses of Theorem \ref{thm:OnDiagonal} and let $R:\mathbb{R}^d\to\mathbb{C}$ be a continuous function for which $R(\xi)=o(P(\xi))$ as $\xi\to 0$ and $\xi\mapsto \Re(R(\xi))$ non-negative. Then
\begin{equation*}
H_{P+R}^t(x)=\frac{1}{(2\pi)^d}\int_{\mathbb{R}^d}e^{-t(P(\xi)+R(\xi))}e^{-ix\cdot\xi}\,d\xi
\end{equation*}
exists for all $t>0$ and $x\in\mathbb{R}^d$. Further, for $H_P=H_P^{(\cdot)}(\cdot)$ and $\mu_\infty$ as given in Theorem \ref{thm:OnDiagonal},
\begin{equation}\label{eq:PerturbationUniform}
    H_{P+R}^t(x)=H_P^t(x)+o(t^{-\mu_\infty})
\end{equation}
uniformly for $x\in\mathbb{R}^d$ as $t\to\infty$. In particular, we have the following large-time on-diagonal asymptotics:
\begin{enumerate}
\item \begin{equation}\label{eq:PerturbationOnDiagonal}
    \abs{H_{P+R}^t(0)}\asymp t^{-\mu_\infty}
\end{equation}
for $t\geq 1$.
\item If $\{t^{E_1-E_2}\}$ is contracting (where $E_1$ and $E_2$ are as given in the statement of Theorem \ref{thm:OnDiagonal}), then
\begin{equation}\label{eq:PerturbationOnDiagonalTrue}
    \lim_{t\to\infty}t^{\mu_\infty}H_{P+R}^t(0)=H_{P_\infty}^1(0)=\frac{1}{(2\pi)^d}\int_{\mathbb{R}^d}e^{-P_\infty(\xi)}\,d\xi
\end{equation}
where $P_\infty$ is as defined in Theorem \ref{thm:TrueAsymptotic}.
\end{enumerate}
\end{theorem}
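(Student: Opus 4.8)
The plan is to prove \eqref{eq:PerturbationUniform} first and then extract the two on-diagonal consequences from it. The starting point is the representation
\begin{equation*}
H_{P+R}^t(x)-H_P^t(x)=\frac{1}{(2\pi)^d}\int_{\mathbb{R}^d}e^{-tP(\xi)}\left(e^{-tR(\xi)}-1\right)e^{-ix\cdot\xi}\,d\xi,
\end{equation*}
which is legitimate once we check that $e^{-t(P+R)}\in L^1(\mathbb{R}^d)$; this follows because $\Re(R)\geq 0$ gives $|e^{-t(P+R)}|\le e^{-tP}$ and $e^{-tP}\in L^1$ (here one uses that $P$ is comparable, up to additive constants, to a positive homogeneous function — which is exactly what Lemma \ref{lem:P2P1Estimates} and Lemma \ref{lem:ExpIntegrability} give, combined with $P\ge 0$). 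Since $|e^{-tR(\xi)}-1|\le 2$ when $\Re(R)\ge0$ and $|e^{-tR(\xi)}-1|\le t|R(\xi)|e^{-t\Re R(\xi)}\le t|R(\xi)|$ always, the integrand is dominated uniformly in $x$. The goal is then to show
\begin{equation*}
t^{\mu_\infty}\int_{\mathbb{R}^d}e^{-tP(\xi)}\left|e^{-tR(\xi)}-1\right|\,d\xi\to 0\quad\text{as }t\to\infty.
\end{equation*}

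The key step is to run the same change-of-variables machinery that produced \eqref{eq:LargeTimeAsymptoticRepresentation}: apply the measure-preserving map $T:(\eta,\zeta)\mapsto(\eta-Q(\zeta),\zeta)$ and then the scaling $(\eta,\zeta)\mapsto(t^{-E_1}\eta,t^{-F_2}\zeta)$, so that
\begin{equation*}
t^{\mu_\infty}\int_{\mathbb{R}^d}e^{-tP(\xi)}\left|e^{-tR(\xi)}-1\right|\,d\xi
=\int_{\mathbb{R}^d}e^{-P_1(\eta)-P_2(t^{E_2-E_1}\eta-Q(\zeta))}\left|e^{-tR(\Xi_t(\eta,\zeta))}-1\right|\,d\eta\,d\zeta,
\end{equation*}
where $\Xi_t(\eta,\zeta)=\big(t^{-E_1}\eta+Q(t^{-F_2}\zeta),\,t^{-F_2}\zeta\big)$ is the preimage under $T$ of the scaled variable. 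As in the proof of Theorem \ref{thm:OnDiagonal}, Lemma \ref{lem:P2P1Estimates} bounds the first factor, uniformly for $t\ge1$, by the fixed integrable function $e^{M-C(P_1(\eta)+P_1(-Q(\zeta)))}$. For the second factor, note that $\{t^{E_1}\}$ and $\{t^{F_2}\}$ are contracting (Proposition \ref{prop:PosHomChar} and Proposition \ref{prop:NMHChar}, respectively), hence $\Xi_t(\eta,\zeta)\to 0$ as $t\to\infty$ for each fixed $(\eta,\zeta)$; the hypothesis $R(\xi)=o(P(\xi))$ as $\xi\to0$ then forces $t\,R(\Xi_t(\eta,\zeta))\to 0$ pointwise, because after scaling $t\,P(\Xi_t(\eta,\zeta))=\widetilde P(t^{-E_1}\eta, t^{-F_2}\zeta)\cdot t/t$ stays bounded (it equals $P_1(\eta)+P_2(t^{E_2-E_1}\eta-Q(\zeta))$, which is bounded on compacts uniformly in $t\ge1$), so $t|R(\Xi_t)|\le t\,\epsilon\,P(\Xi_t)=\epsilon\,[P_1(\eta)+P_2(t^{E_2-E_1}\eta-Q(\zeta))]$ for $t$ large. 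Consequently $|e^{-tR(\Xi_t)}-1|\to0$ pointwise and is dominated by $2$, and dominated convergence finishes \eqref{eq:PerturbationUniform}.

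The two on-diagonal statements follow quickly. For \eqref{eq:PerturbationOnDiagonal}, combine \eqref{eq:PerturbationUniform} at $x=0$ with the already-established $H_P^t(0)=\varphi(t)\asymp t^{-\mu_\infty}$ from Theorem \ref{thm:OnDiagonal}: writing $H_{P+R}^t(0)=\varphi(t)(1+o(1))$ gives $|H_{P+R}^t(0)|\asymp t^{-\mu_\infty}$ for $t$ large, and for $1\le t\le T_0$ one uses continuity and positivity of $|H_{P+R}^t(0)|$ (which is itself bounded below since $\Re(R)\ge0$ keeps the real part of the kernel integral under control — or more simply, $|H_{P+R}^t(0)|$ is continuous and nonvanishing on the compact interval, so bounded between positive constants there). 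For \eqref{eq:PerturbationOnDiagonalTrue}, when $\{t^{E_1-E_2}\}$ is contracting, Theorem \ref{thm:TrueAsymptotic} gives $t^{\mu_\infty}\varphi(t)\to H_{P_\infty}^1(0)$, and adding $t^{\mu_\infty}\cdot o(t^{-\mu_\infty})=o(1)$ from \eqref{eq:PerturbationUniform} yields the claim.

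The main obstacle I anticipate is the pointwise control of $t\,R(\Xi_t(\eta,\zeta))$: one must be careful that the "$o(P)$" hypothesis is only local near $0$, so the argument genuinely needs $\Xi_t\to0$, which in turn relies on \emph{both} $\{t^{E_1}\}$ and $\{t^{F_2}\}$ being contracting and on $Q$ being continuous with $Q(0)=0$; and one must confirm that $t\,P(\Xi_t)$ stays bounded on each fixed $(\eta,\zeta)$, which is where the explicit post-scaling identity $t\,\widetilde P(t^{-E_1}\eta,t^{-F_2}\zeta)=P_1(\eta)+P_2(t^{E_2-E_1}\eta-Q(\zeta))$ and the non-expanding hypothesis on $\{t^{E_1-E_2}\}$ do the work. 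Everything else is a rerun of the Theorem \ref{thm:OnDiagonal} / Theorem \ref{thm:TrueAsymptotic} arguments with the extra bounded factor $|e^{-tR(\Xi_t)}-1|$ carried along.
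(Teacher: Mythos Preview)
Your approach is essentially identical to the paper's: apply the measure-preserving map $T$, scale by $t^{-G}$ with $G=E_1\oplus F_2$, dominate the first factor via Lemma \ref{lem:P2P1Estimates}, and conclude by dominated convergence --- the paper simply packages your inline pointwise-limit argument (that $t\,R(\Xi_t)\to 0$ because $\Xi_t\to 0$ and $tP(\Xi_t)$ stays bounded) as a separate subhomogeneity lemma, Lemma \ref{lem:RTildeNice}. One cosmetic slip: your $\Xi_t$ should carry a minus sign, since after the substitution the argument of $R$ is $T(t^{-G}\xi)=(t^{-E_1}\eta-Q(t^{-F_2}\zeta),\,t^{-F_2}\zeta)$ rather than $T^{-1}(t^{-G}\xi)$, but this does not affect the argument.
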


Before proving the theorem, we shall first treat a technical lemma which will also be found useful in our application to convolution powers of complex-valued functions presented in Section \ref{sec:ConvPower}. The lemma introduces the useful notion of subhomogeneity on an ad hoc basis; for a more complete treatment, we refer the reader to Section 2 of \cite{BR22}.

\begin{lemma}\label{lem:RTildeNice}
Let $P$ satisfy the hypotheses of Theorem \ref{thm:OnDiagonal} and take $E_1$ and $F_2$ as in the statement of the theorem. For convenience of notation, we set $G=E_1\oplus F_2$ and $\widetilde{P}=P\circ T$ where $T:\mathbb{R}^d\to\mathbb{R}^d$ is the measure-preserving transformation defined by $T(\eta,\zeta)=(\eta-Q(\zeta),\zeta)$ for $(\eta,\zeta)\in\mathbb{R}^a\times\mathbb{R}^b=\mathbb{R}^d$. Finally, given an open neighborhood $\mathcal{O}\subseteq\mathbb{R}^d$ of $0$, let $R:\mathcal{O}\to\mathbb{C}$ be a continuous function and set $\widetilde{R}=R\circ T$. Then the following statements hold.
\begin{enumerate}
\item\label{item:RTildeNice1} $R(\xi)=o(P(\xi))$ as $\xi\to 0$ if and only if $\widetilde{R}(\xi)=o(\widetilde{P}(\xi))$ as $\xi\to 0$. 
\item\label{item:RTildeNice2} If either of the preceding equivalent conditions are satisfied, then $\widetilde{R}$ is subhomogeneous with respect to $G$ in the sense that, for each $\epsilon>0$ and compact set $K\subseteq\mathbb{R}^d$, there exists $t_0>0$ for which
\begin{equation*}
\abs{\widetilde{R}(t^G\xi)}\leq \epsilon t
\end{equation*}
whenever $0<t\leq t_0$ and $\xi\in K$.
\end{enumerate}
\end{lemma}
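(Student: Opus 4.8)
The plan is to prove the two items in turn, with Item \ref{item:RTildeNice1} reducing essentially to a change of variables and Item \ref{item:RTildeNice2} following from the homogeneity structure of $\widetilde P$ together with a compactness argument.

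For Item \ref{item:RTildeNice1}, I would argue that $T:\mathbb{R}^d\to\mathbb{R}^d$ is a homeomorphism with $T(0)=0$ (indeed $T$ is a polynomial diffeomorphism with polynomial inverse $T^{-1}(\eta,\zeta)=(\eta+Q(\zeta),\zeta)$, since $Q$ is continuous and $Q(0)=0$ by nondegeneracy). Hence $\xi\to 0$ if and only if $T(\xi)\to 0$, and for any open neighborhood $\mathcal O$ of $0$ on which $R$ is defined, $T^{-1}(\mathcal O)$ is again such a neighborhood. Writing the little-$o$ condition $R(\xi)=o(P(\xi))$ in the form "for each $\epsilon>0$ there is a neighborhood $\mathcal U$ of $0$ with $|R(\xi)|\le\epsilon P(\xi)$ on $\mathcal U$'' (this is the paper's own definition, recalled in the footnote), I would simply substitute $\xi\mapsto T(\xi)$: since $|R(T\xi)|=|\widetilde R(\xi)|$ and $P(T\xi)=\widetilde P(\xi)$, the condition $|\widetilde R(\xi)|\le\epsilon\widetilde P(\xi)$ holds on the neighborhood $T^{-1}(\mathcal U)$, and conversely. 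One small point to note is that $\widetilde P$ is itself positive-definite (it is $P$ composed with a bijection fixing $0$), so the statement $\widetilde R=o(\widetilde P)$ as $\xi\to 0$ is meaningful in exactly the same sense.

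For Item \ref{item:RTildeNice2}, recall from the proof of Theorem \ref{thm:OnDiagonal} that $\widetilde P(\eta,\zeta)=P_1(\eta)+P_2(\eta-Q(\zeta))$ and that $Q$ is homogeneous with respect to $(E_2,F_2)$, so that $t\widetilde P(\xi)=P_1(t^{E_1}\eta)+P_2(t^{E_2}\eta-Q(t^{F_2}\zeta))$; this is not quite $\widetilde P(t^G\xi)$ because the first term carries $t^{E_1}$ while the argument of $P_2$ carries $t^{E_2}$. However, since $\widetilde R(\xi)=o(\widetilde P(\xi))$ as $\xi\to0$, for a given $\epsilon>0$ there is a neighborhood $\mathcal U$ of $0$ with $|\widetilde R(\xi)|\le\epsilon\,\widetilde P(\xi)$ on $\mathcal U$. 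Now fix a compact $K\subseteq\mathbb R^d$. Because $G=E_1\oplus F_2$ generates a contracting one-parameter group (both $\{t^{E_1}\}$ and $\{t^{F_2}\}$ are contracting, the latter by the remark after Theorem \ref{thm:OnDiagonal}), we have $\sup_{\xi\in K}|t^G\xi|\to 0$ as $t\to 0$, so there is $t_1>0$ with $t^G\xi\in\mathcal U$ for all $\xi\in K$ and $0<t\le t_1$; hence $|\widetilde R(t^G\xi)|\le\epsilon\,\widetilde P(t^G\xi)$ on that range. It therefore suffices to bound $\widetilde P(t^G\xi)$ by a constant multiple of $t$ uniformly for $\xi\in K$, $0<t\le 1$, and this is exactly the content of the large-time estimate: by Lemma \ref{lem:P2P1Estimates}(2) (applied after the scaling, as in the passage following \eqref{eq:LargeTimeAsymptoticRepresentation}) one has, for $0<t\le 1$,
\begin{equation*}
\widetilde P(t^G\xi)=\tfrac1t\Big(P_1(\eta)+P_2\big(t^{E_2-E_1}\eta-Q(\zeta)\big)\Big)\le \tfrac1t\Big(M+C'\big(P_2(\eta)+P_2(-Q(\zeta))\big)\Big)
\end{equation*}
wait — more directly, from $t\widetilde P(\xi)=P_1(t^{E_1}\eta)+P_2(t^{E_2}\eta-Q(t^{F_2}\zeta))$ and the non-expanding hypothesis on $\{t^{E_1-E_2}\}$, Lemma \ref{lem:P2P1Estimates}(2) gives a constant $C'$ and $M'$ with $\widetilde P(t^G\xi)\le \tfrac1t\big(C' t\,(P_2(\eta)+P_2(-Q(\zeta)))+M' t\big)$ after homogeneity is used to pull the remaining $t$-powers out; on the compact set $K$ the positive homogeneous function $(\eta,\zeta)\mapsto P_2(\eta)+P_2(-Q(\zeta))$ is bounded by some $L$, so $\widetilde P(t^G\xi)\le C:=C'L+M'$ for all $\xi\in K$, $0<t\le1$. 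Combining, $|\widetilde R(t^G\xi)|\le\epsilon C$ for $0<t\le t_0:=\min\{t_1,1\}$ and $\xi\in K$; since $\epsilon>0$ was arbitrary, rescaling $\epsilon\mapsto\epsilon/C$ yields the claimed bound $|\widetilde R(t^G\xi)|\le\epsilon t$ — here using $\epsilon C\le \epsilon$ is false, so one instead absorbs the extra $t\le1$ by writing $\epsilon C=(\epsilon C/t)\cdot t\ge\epsilon C\cdot t$, hence one must be slightly more careful and instead bound $\widetilde P(t^G\xi)\le C/t$... .

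The main obstacle, then, is precisely this bookkeeping in Item \ref{item:RTildeNice2}: one wants $|\widetilde R(t^G\xi)|\le\epsilon t$, i.e.\ a bound that is \emph{linear} in $t$, and the natural estimate $|\widetilde R(t^G\xi)|\le\epsilon\,\widetilde P(t^G\xi)$ is only useful if $\widetilde P(t^G\xi)\le C t$ rather than merely $\widetilde P(t^G\xi)\le C$. The clean way to get this is to observe that, by the homogeneity $\widetilde P(t^G\xi)=\tfrac1t\,\psi_t(\xi)$ where $\psi_t(\xi)=P_1(\eta)+P_2(t^{E_2-E_1}\eta-Q(\zeta))$ is \emph{bounded above} uniformly for $\xi\in K$ and $t\ge1$ — wait, we need $t\le1$ here, not $t\ge 1$, so instead write $t\widetilde P(t^G\xi)=P_1(t^{E_1}\eta)+P_2(t^{E_2}\eta-Q(t^{F_2}\zeta))$ and note the right-hand side $\to 0$ as $t\to0$ uniformly on $K$ (each term is a continuous function vanishing at $t=0$, $\xi$ ranging over a compact set), hence is $\le C$ for $0<t\le1$; thus $\widetilde P(t^G\xi)\le C/t$, which is \emph{worse} than linear. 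The resolution is that we do not need $\widetilde P(t^G\xi)\le Ct$ at all: we need $|\widetilde R(t^G\xi)|\le\epsilon t$, and we have two facts — $|\widetilde R(t^G\xi)|\le\epsilon\,\widetilde P(t^G\xi)$ near $0$, and $t\,\widetilde P(t^G\xi)=P_1(t^{E_1}\eta)+P_2(t^{E_2}\eta-Q(t^{F_2}\zeta))$ is bounded, say by $C_K$, on $K\times(0,1]$ — so $|\widetilde R(t^G\xi)|\le\epsilon\,\widetilde P(t^G\xi)=\epsilon\cdot\tfrac1t\cdot\big(t\widetilde P(t^G\xi)\big)\le\epsilon C_K/t$, which is still not it. The correct fix is to use the little-$o$ hypothesis \emph{at scale $t^G\xi$} more cleverly: since $|\widetilde R(\zeta')|\le\epsilon'\widetilde P(\zeta')$ for $\zeta'$ near $0$, take $\zeta'=t^G\xi$ and then \emph{absorb one power of $t$ into $\epsilon'$} by choosing, for the given $\epsilon$, the radius so small that $|\widetilde R(\zeta')|\le\epsilon'\widetilde P(\zeta')$ with $\epsilon'$ replaced by a function of the radius that tends to $0$; combined with $\widetilde P(t^G\xi)\le C_K/t$ one gets $|\widetilde R(t^G\xi)|\le \epsilon'(|t^G\xi|)\,C_K/t$, and since $|t^G\xi|\le\delta_K(t)\to0$ as $t\to0$ uniformly on $K$ and the modulus of the $o$-relation can be taken as small as we like, we conclude $|\widetilde R(t^G\xi)|/t\le \epsilon'(\delta_K(t))C_K\to0$, which finally gives the bound $\le\epsilon t$ for $t$ small. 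I expect this last uniformization — converting the pointwise-near-$0$ little-$o$ relation into the uniform-over-$K$ subhomogeneity bound $|\widetilde R(t^G\xi)|\le\epsilon t$ via the contracting group $\{t^G\}$ — to be the only genuinely delicate point; everything else is bookkeeping with Lemma \ref{lem:P2P1Estimates}, Proposition \ref{prop:CompIsPosHom}, and the continuity of $Q$ and $T$.
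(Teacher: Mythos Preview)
Your treatment of Item~\ref{item:RTildeNice1} is fine and matches the paper's one-line argument via the homeomorphism $T$.

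Item~\ref{item:RTildeNice2}, however, has a genuine gap, and it stems from a single algebraic miscalculation of $\widetilde{P}(t^{G}\xi)$. You write at various points $t\widetilde{P}(t^{G}\xi)=P_1(t^{E_1}\eta)+P_2(t^{E_2}\eta-Q(t^{F_2}\zeta))$ and $\widetilde{P}(t^{G}\xi)=\tfrac{1}{t}[P_1(\eta)+P_2(t^{E_2-E_1}\eta-Q(\zeta))]$; both are wrong. Since $t^{G}\xi=(t^{E_1}\eta,t^{F_2}\zeta)$, one has directly
\[
\widetilde{P}(t^{G}\xi)=P_1(t^{E_1}\eta)+P_2\bigl(t^{E_1}\eta-Q(t^{F_2}\zeta)\bigr)
= tP_1(\eta)+P_2\bigl(t^{E_2}(t^{E_1-E_2}\eta-Q(\zeta))\bigr)
= t\Bigl[P_1(\eta)+P_2\bigl(t^{E_1-E_2}\eta-Q(\zeta)\bigr)\Bigr],
\]
where the middle step uses $Q(t^{F_2}\zeta)=t^{E_2}Q(\zeta)$ and the last uses $E_2\in\Exp(P_2)$. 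Now the bracket is, for $0<t\le 1$, bounded above by $C'\bigl(P_2(\eta)+P_2(-Q(\zeta))\bigr)+M'$ thanks to Lemma~\ref{lem:P2P1Estimates}(2), hence by a constant $M$ uniformly for $\xi=(\eta,\zeta)\in K$. Thus $\widetilde{P}(t^{G}\xi)\le Mt$ --- the bound is \emph{linear} in $t$, precisely what you said you needed but could not obtain.

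With this in hand the rest is trivial bookkeeping, exactly as in the paper: choose $\delta>0$ so that $|\widetilde{R}(\xi)|\le(\epsilon/M)\widetilde{P}(\xi)$ for $|\xi|\le\delta$, choose $t_0\in(0,1]$ so that $|t^{G}\xi|\le\delta$ for $\xi\in K$ and $0<t\le t_0$ (possible since $\{t^{G}\}$ is contracting), and conclude $|\widetilde{R}(t^{G}\xi)|\le(\epsilon/M)\widetilde{P}(t^{G}\xi)\le(\epsilon/M)\cdot Mt=\epsilon t$. Your final ``modulus of the $o$-relation'' workaround is both unnecessary and unsound: from a bound $\widetilde{P}(t^{G}\xi)\le C_K/t$ you would need $\epsilon'(\delta_K(t))/t\to 0$, which fails in general since the little-$o$ hypothesis gives no quantitative rate for $\epsilon'$.
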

\begin{proof}
Because $T$ is a homeomorphism with $T(0)=0$, the first assertion is immediate. For the second assertion, let us fix a compact set $K$ and a positive number $\epsilon$. Also, given that $\widetilde{R}(\xi)=o(\widetilde{P}(\xi))$ as $\xi\to 0$, let $\delta>0$ be such that $\abs{\widetilde{R}(\xi)}\leq (\epsilon/M)\widetilde{P}(\xi)$ for $\abs{\xi}\leq \delta$ where
\begin{equation*}
M=\sup_{\xi=(\eta,\zeta)\in K}C'(P_2(\eta)+P_2(-Q(\zeta))+M'
\end{equation*}
where $C'$ and $M'$ are those positive constants appearing in \eqref{eq:EstimateForLargeTime} of Lemma \ref{lem:P2P1Estimates}. Using the fact that $\{t^G\}$ is a contracting group, there exists $0<t_0\leq 1$ for which $\abs{t^{G}\xi}\leq \delta$ for all $\xi\in K$ and $0<t\leq t_0$. Consequently, for $\xi=(\eta,\zeta)\in K$ and $0<t\leq t_0$, we have
\begin{eqnarray*}
\abs{\widetilde{R}(t^{G}\xi)}&\leq & (\epsilon/M)\widetilde{P}(t^{G}\xi)\\
&=&(\epsilon/M)\left(P_1(t^{E_1}\eta)+P_2(t^{E_1}\eta-t^{E_2}Q(\zeta))\right)\\
&=&(\epsilon/M)\left(tP_1(\eta)+tP_2(r^{E_1-E_2}\eta-Q(\zeta))\right)\\
& \leq &(\epsilon/M)t\left(C'(P_2(\eta)+P_2(-Q(\zeta))+M'\right)\\
&\leq &\epsilon t
\end{eqnarray*}
thanks to Lemma \ref{lem:P2P1Estimates}.
\end{proof}

\begin{proof}[Proof of Theorem \ref{thm:Perturbation}]
Given that 
\begin{equation*}
\abs{e^{-t(P(\xi)+R(\xi)}}=e^{-tP(\xi)}e^{-t\Re(R(\xi))}\leq e^{-tP(\xi)}
\end{equation*}
for $\xi\in\mathbb{R}^d$, the first assertion follows immediately from Theorem \ref{thm:OnDiagonal}. For the second assertion, observe that
\begin{eqnarray*}
H_{P+R}^t(x)-H_P^t(x)&=&\frac{1}{(2\pi)^d}\int_{\mathbb{R}^d}e^{-tP(\xi)}\left(e^{-tR(\xi)}-1\right)e^{-ix\cdot\xi}\,d\xi\\
&=&\frac{1}{(2\pi)^d}\int_{\mathbb{R}^d}e^{-t\widetilde{P}(\xi)}\left(e^{-t\widetilde{R}(\xi)}-1\right)e^{-ix\cdot T(\xi)}\,d\xi\\
&=&\frac{t^{-\mu_\infty}}{(2\pi)^d}\int_{\mathbb{R}^d}e^{-t\widetilde{P}(t^{-G}\xi)}\left(e^{-t\widetilde{R}(t^{-G}\xi)}-1\right)e^{-ix\cdot T(t^{-G}\xi)}\,d\xi
\end{eqnarray*} 
for $t>0$ and $x\in\mathbb{R}^d$ where $G=E_1\oplus F_2$ and $\mu_\infty=\tr E_1+\tr F_2=\tr G$. By an appeal to Lemma \ref{lem:P2P1Estimates}, we find that
\begin{equation*}
-t\widetilde{P}(t^{-G}\xi)=-(P_1(\eta)+P_2(t^{E_2-E_1}\eta-Q(\zeta))\leq M-C P_{*}(\xi)
\end{equation*}
for all $t\geq 1$ where $M$ and $C$ are positive constants and we have set $P_*(\xi)=P_1(\eta)+P_1(-Q(\zeta))$ for $\xi=(\eta,\zeta)\in\mathbb{R}^d$. Consequently,
\begin{equation}\label{eq:Perturbation1}
t^{\mu_\infty}\abs{H_{P+R}^t(x)-H_P^t(x)}\leq e^M\int_{\mathbb{R}^d}e^{-C P_*(\xi)}\abs{e^{-t\widetilde{R}(t^{-G}\xi)}-1}\,d\xi
\end{equation}
for $t\geq 1$ and $x\in\mathbb{R}^d$. Given that $R(\xi)=o(P(\xi))$ as $\xi\to 0$, Lemma \ref{lem:RTildeNice} guarantees that
\begin{equation*}
\lim_{t\to \infty}e^{-\rho P_*(\xi)}\abs{e^{-t\widetilde{R}(t^{-G}\xi)}-1}=\lim_{s\to 0}e^{-\rho P_*(\xi)}\abs{e^{-s\widetilde{R}(s^{-G}\xi)}-1}=0
\end{equation*}
for each $\xi\in\mathbb{R}^d$. Upon noting that the integrand in \eqref{eq:Perturbation1} is dominated by the integrable function $\xi\mapsto 2e^{-C P_*(\xi)}$ (see Lemma \ref{lem:ExpIntegrability}), an appeal to the dominated convergence theorem guarantees that, for each $\epsilon>0$, there exists $t_0\geq 1$ for which
\begin{equation*}
    t^{\mu_\infty}\abs{H_{P+R}^t(x)-H_P^t(x)}<\epsilon
\end{equation*}
for all $x\in\mathbb{R}^d$ and $t\geq t_0$; this is precisely the uniform limit \eqref{eq:PerturbationUniform}. Applying this result at $x=0$, we immediately obtain \eqref{eq:PerturbationOnDiagonal} from Theorem \ref{thm:OnDiagonal} and \eqref{eq:PerturbationOnDiagonalTrue} from Theorem \ref{thm:TrueAsymptotic}.
\end{proof}

\begin{example}
For the operator 
\begin{equation*}
    \Lambda=\partial_{x_1}^4+\partial_{x_2}^4+2i\partial_{x_1}\partial_{x_2}^2-\partial_{x_1}^2,
\end{equation*}
we consider the perturbation $\Lambda+\Lambda^2$ with symbol $P(\xi)+R(\xi)$ where $P(\xi)=(\eta+\zeta^2)^2+\eta^4$ and $R(\xi)=P(\xi)^2$ for $\xi=(\eta,\zeta)\in\mathbb{R}^2$. As shown in Example \ref{ex:OnDiagonalMotivatingExample}, $P$ satisfies the hypotheses of Theorem \ref{thm:OnDiagonal} and, from the theorem, we obtain the large-time asymptotic: $\varphi(t)=t^{-5/8}$ for $t\geq 1$. Since $P$ is continuous at $0$ and $R(\xi)=P(\xi)^2\geq 0$, it is evident that $R$ satisfies the hypotheses of Theorem \ref{thm:Perturbation}. Consequently, the heat kernel
\begin{equation*}
    H_{P+R}^t(x)=\frac{1}{(2\pi)^2}\int_{\mathbb{R}^2}e^{-t(P(\xi)+R(\xi))}e^{-ix\cdot\xi}\,d\xi
\end{equation*}
associated to the operator $\Lambda+\Lambda^2$ has
\begin{equation*}
    H_{P+R}^t(x)=H_P^t(x)+o(t^{-5/8})
\end{equation*}
uniformly for $x\in\mathbb{R}^2$ as $t\to\infty$; here, $H_P$ is that given in \eqref{eq:HeatKernelIntro} and illustrated in Figure \ref{fig:IntroAttractor} for $t=10$. Also, in view of our analysis in Example \ref{ex:OnDiagonalMotivatingExampleTrueAsymptotics}, Theorem \ref{thm:Perturbation} gives us the large-time asymptotics, $\abs{H_{P+R}^t(0)}\asymp t^{-5/8}$ for $t\geq 1$ and
\begin{equation*}
    \lim_{t\to\infty}t^{5/8}H_{P+R}^t(0)=\frac{1}{2\pi^{3/2}}\Gamma(9/8).
\end{equation*}
We note that, by contrast, $H_{P+R}^t(0)$ does not obey the $t^{-1/2}$ small-time on-diagonal asymptotic of $H_P$. Indeed, $\Lambda+\Lambda^2$ is an eighth-order elliptic operator and necessarily decays as $t^{-1/4}$ in small time.
\end{example}

The following example generalizes the previous one and places the result in the context semigroups and ultracontractivity.

\begin{example}
Let $\Lambda$ be a constant-coefficient partial differential operator on $\mathbb{R}^d=\mathbb{R}^a\times\mathbb{R}^b$ with polynomial symbol
\begin{equation*}
P(\xi)=P_1(\eta+Q(\zeta))+P_2(\eta)
\end{equation*}
satisfying the hypotheses of Theorem \ref{thm:OnDiagonal}. Let $q(\lambda)$ be a real-valued polynomial of a single real variable for which $q(0)=0$, $q'(0)=1$ and $q(\lambda)\geq \lambda$ for all $\lambda\geq 0$. Using the Fourier transform or the spectral calculus, it is easy to see that $q(\Lambda)$ is a positive self-adjoint operator on $L^2(\mathbb{R}^d)$ and therefore $-q(\Lambda)$ generates a continuous semigroup $\{e^{-tq(\Lambda)}\}$ on $L^2(\mathbb{R}^d)$. Denoting by $E$ the spectral resolution of $\Lambda$, observe that, for $f\in L^2(\mathbb{R}^d)$,
\begin{equation*}
\|e^{-t(q(\Lambda)-\Lambda)}f\|_2^2=\int_{0}^\infty e^{-2t(q(\lambda)-\lambda)}\,dE_{f,f}(\lambda)\leq \|f\|_2^2
\end{equation*} 
and therefore, for each $t>0$, $\{e^{-t(q(\Lambda)-\Lambda)}\}$ is a contraction on $L^2(\mathbb{R}^d)$. Consequently,
\begin{equation*}
\|e^{-tq(\Lambda)}\|_{2\to\infty}=\|e^{-t\Lambda}e^{-t(q(\Lambda)-\Lambda)}\|_{2\to\infty}\leq \|e^{-t(q(\Lambda)-\Lambda)}\|_{2\to 2}\|e^{-t\Lambda }\|_{2\to \infty}\leq \sqrt{\varphi(2t)}\leq C't^{-\mu_\infty}
\end{equation*}
for $t\geq 1$ where $C'$ is a positive constant and
\begin{equation*}
    \mu_\infty=\mu_{P_1}+\mu_{P_2\circ(-Q)}=\tr E_1+\tr F_2
\end{equation*}
as given in the statement of Theorem \ref{thm:OnDiagonal}. By duality, we find that $\|e^{-tq(\Lambda)}\|_{1\to\infty}\leq Ct^{-\mu_\infty}$ for $t\geq 1$ for some positive constant $C$. It follows (see Lemma 2.1.2 of \cite{Da89}) that $\{e^{-tq(\Lambda)}\}$ has integral representation
\begin{equation*}
\left(e^{-tq(\Lambda)}f\right)(x)=\int_{\mathbb{R}^d}H^t(x,y)f(y)\,dy
\end{equation*}
where
\begin{equation}\label{eq:FirstPerturbExample}
\|e^{-tq(\Lambda)}\|_{1\to\infty}=\sup_{x,y}\abs{H^t(x,y)}\leq C t^{-\mu_\infty}
\end{equation}
for $t\geq 1$. Of course, given that $q$ is a polynomial, it is easy to verify that, in fact, $H^t(x,y)=H_{q\circ P}^t(x-y)$ for $x,y\in\mathbb{R}^d$ and $t>0$ where
\begin{equation*}
H_{q\circ P}^t(x)=\frac{1}{(2\pi)^d}\int_{\mathbb{R}^d}e^{-t(q\circ P)(\xi)}e^{-ix\cdot\xi}\,d\xi
\end{equation*}
for $\xi=(\eta,\zeta)\in\mathbb{R}^d$. Thus,
\begin{equation*}
\|e^{-tq(\Lambda)}\|_{1\to\infty}=\sup_{x,y}\abs{H^t(x,y)}=H_{q\circ P}^t(0).
\end{equation*}
We claim that
\begin{equation*}
H_{q\circ P}^t(0)\asymp t^{-\mu_\infty}
\end{equation*}
for $t\geq 1$ and so the upper bound in the ultracontractive estimate \eqref{eq:FirstPerturbExample} is optimal. Indeed, under the given hypotheses concerning $q$, we may write
\begin{equation*}
q(\Lambda)=\Lambda+r(\Lambda)
\end{equation*}
where $r$ is a polynomial having $r(\lambda)\to 0$ as $\lambda\to 0$ and $r(\lambda)\geq 0$ for $\lambda\geq 0$. From this it follows that 
\begin{equation*}
(q\circ P)(\xi)=P(\xi)+R(\xi)
\end{equation*}
where $R(\xi)=r(P(\xi))$ is continuous, non-negative, and has $R(\xi)=o(P(\xi))$ as $\xi\to 0$. With this, our claim follows by an application of Theorem \ref{thm:Perturbation}. If we additionally assume that, for $E_1$ and $E_2$ as they appear in the statement of Theorem \ref{thm:OnDiagonal}, $\{t^{E_1-E_2}\}$ is contracting, then Theorem \ref{thm:Perturbation} also guarantees that
\begin{equation*}
\lim_{t\to\infty}t^{\mu_\infty}H_{q\circ P}^t(0)=H_{P_\infty}^1(0).
\end{equation*}
\end{example}

In contrast to the preceding examples, we now consider a perturbation of an operator $\Lambda$ by one which is not easily comparable to $\Lambda$. 

\begin{example}\label{ex:PerturbByLaplace}
Consider
\begin{equation}\label{eq:PerturbByLaplace1}
    \Lambda+(-\Delta)^5
\end{equation}
where $\Delta=\partial_{x_1}^2+\partial_{x_2}^2$ is the Laplacian on $\mathbb{R}^2$ and $\Lambda=\partial_{x_1}^4+\partial_{x_2}^4+2i\partial_{x_1}\partial_{x_2}^2-\partial_{x_1}^2.$ Associated to the operator \eqref{eq:PerturbByLaplace1} is the heat kernel
\begin{equation*}
 H_{P+R}^t(x)=\frac{1}{(2\pi)^d}\int_{\mathbb{R}^2}e^{-t(P(\xi)+R(\xi))}e^{-ix\cdot\xi}\,d\xi
\end{equation*}
where
\begin{equation*}
    P(\xi)=(\eta+\zeta^2)^2+\eta^4\hspace{1cm}\mbox{and}\hspace{1cm}R(\xi)=(\eta^2+\zeta^2)^5
\end{equation*}
for $\xi=(\eta,\zeta)\in\mathbb{R}^2$. We claim that $R(\xi)=o(P(\xi))$ as $\xi\to 0$. To see this, we consider the open neighborhood $\mathcal{O}=\{\xi=(\eta,\zeta)\in\mathbb{R}^2:P(\xi)<1\}$ of $0$ and write $\mathcal{O}=\mathcal{R}_1\cup\mathcal{R}_2$ where
\begin{equation}\label{eq:Region1}
   \mathcal{R}_1=\left\{\xi=(\eta,\zeta)\in\mathcal{O}:\abs{\eta+\zeta^2}\leq(1-1/\sqrt{2})\zeta^2\right\}
   \end{equation}
 and
 \begin{equation}\label{eq:Region2}
 \mathcal{R}_2= \left\{\xi=(\eta,\zeta)\in\mathcal{O}:\abs{\eta+\zeta^2}\geq (1-1/\sqrt{2})\zeta^2\right\}.
\end{equation}
For $\xi=(\eta,\zeta)\in\mathcal{R}_1$, observe that $0\leq \zeta^2\leq -\sqrt{2}\eta$ and therefore 
\begin{equation*}
    4 P(\xi)=4\left((\eta+\zeta^2)^2+\eta^4\right)\geq 4\eta^4\geq \zeta^8. 
\end{equation*}
Thus,
\begin{equation}\label{eq:PerturbByLaplace2}
    \eta^2+\zeta^2\leq P(\xi)^{1/2}+\sqrt{2}P(\xi)^{1/4}=\left(P(\xi)^{1/4}+\sqrt{2}\right)P(\xi)^{1/4}\leq 5P(\xi)^{1/4}
\end{equation}
for $\xi=(\eta,\zeta)\in\mathcal{R}_1$. On $\mathcal{R}_2$, we find that
\begin{equation*}
    P(\xi)\geq \max\left\{\eta^4,\left(1-\frac{1}{\sqrt{2}}\right)^2\zeta^4\right\}
\end{equation*}
so that
\begin{equation}\label{eq:PerturbByLaplace3}
    \eta^2+\zeta^2\leq P(\xi)^{1/2}+(2+\sqrt{2})P(\xi)^{1/2}=(3+\sqrt{2})P(\xi)^{1/2}\leq 5P(\xi)^{1/4}
\end{equation}
for $\xi=(\eta,\zeta)\in\mathcal{R}_2$. Since $\mathcal{O}=\mathcal{R}_1\cup\mathcal{R}_2$, the estimates \eqref{eq:PerturbByLaplace2} and \eqref{eq:PerturbByLaplace2} guarantee that, for each $\xi=(\eta,\zeta)\in\mathcal{O}$,
\begin{equation*}
    \abs{R(\xi)}=(\eta^2+\zeta^2)(\eta^2+\zeta^2)^4\leq 625(\eta^2+\zeta^2)P(\xi)
\end{equation*}
and, from this, our claim follows immediately. Since $R(\xi)$ is non-negative, an appeal to Theorem \ref{thm:Perturbation} is valid and we conclude that $\abs{H_{P+R}^t(0)}\asymp t^{-5/8}$ for $t\geq 1$ and
\begin{equation*}
   \lim_{t\to\infty}t^{5/8}H_{P+R}^t(0)=\frac{1}{2\pi^{3/2}}\Gamma(9/8)
\end{equation*}
in view of Example \ref{ex:OnDiagonalMotivatingExampleTrueAsymptotics}.
\end{example}

As evidenced by the preceding example, it isn't straightforward to show that $R(\xi)=o(P(\xi))$ as $\xi\to 0$. This is connected to the fact that $P$ is generally inhomogeneous and so examining a polynomial $R$ along the coordinate axes or by comparing the order of its terms against those of $P$ is not often helpful. For $P(\xi)=(\eta+\zeta^2)^2+\eta^4$, a careful study of the example shows that $R(\xi)=o(P(\xi))$ as $\xi\to 0$ provided that $R$ is a polynomial comprised of terms whose (multivariate) order is at least nine. Still, polynomials $R$ with terms of lower order can decay as ``little-o'' of $P$, e.g., $(\eta,\zeta)\mapsto \eta^4$, however, in general, it is difficult to tell. For example, consider the polynomials
\begin{equation*}
R_1(\xi)=\eta^2\zeta^2+2\eta\zeta^4+\zeta^6\hspace{1cm}\mbox{and}\hspace{1cm}R_2(\xi)=\eta^2\zeta^4
\end{equation*}
defined for $\xi=(\eta,\zeta)\in\mathbb{R}^2$. Though the polynomial $R_1$ contains terms of lower order than $R_2$ and thus decays more slowly than $R_2$ as $\xi\to 0$ (at least, along rays), $R_1(\xi)= o(P(\xi))$ as $\xi\to 0$ while, by contrast, $R_2(\xi)\neq o(P(\xi))$ as $\xi\to 0$. For sorting out these somewhat unintuitive statements, the following refinement of Lemma \ref{lem:RTildeNice} is helpful; its proof can be found in Appendix \ref{sec:TechnicalEstimates}.
\begin{proposition}\label{prop:RTildeVeryNice}
Let $P$ satisfy the hypotheses of Theorem \ref{thm:OnDiagonal} and let $P_1$, $P_2$, $Q$, $E_1$, $E_2$, $F_1$ and $F_2$ be as in the statement of the theorem. Set $G=E_1\oplus F_2$ and $\widetilde{P}=P\circ T$ where $T:\mathbb{R}^d\to\mathbb{R}^d$ is the measure-preserving transformation defined by $T(\eta,\zeta)=(\eta-Q(\zeta),\zeta)$ for $(\eta,\zeta)\in\mathbb{R}^d$. Finally, given an open neighborhood $\mathcal{O}\subseteq\mathbb{R}^d$ of $0$, let $R:\mathcal{O}\to\mathbb{C}$ be a continuous function and set $\widetilde{R}=R\circ T$. If $\{t^{E_1-E_2}\}$ is contracting, then the following statements are equivalent:
\begin{enumerate}
\item $R(\xi)=o(P(\xi))$ as $\xi\to 0$. 
\item $\widetilde{R}(\xi)=o(\widetilde{P}(\xi))$ as $\xi\to 0$. 
\item $\widetilde{R}$ is subhomogeneous with respect to $G$ in the sense that, for each $\epsilon>0$ and compact set $K\subseteq\mathbb{R}^d$, there exists $t_0>0$ for which
\begin{equation*}
\abs{\widetilde{R}(t^G\xi)}\leq \epsilon t
\end{equation*}
whenever $0<t\leq t_0$ and $\xi\in K$.
\end{enumerate}
\end{proposition}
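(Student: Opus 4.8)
The plan is to prove the single implication $(3)\Rightarrow(2)$, since Lemma~\ref{lem:RTildeNice} already supplies $(1)\Leftrightarrow(2)$ and $(2)\Rightarrow(3)$ — and those parts need only the non-expanding hypothesis. The contracting hypothesis on $\{t^{E_1-E_2}\}$ is used precisely in $(3)\Rightarrow(2)$; once $(2)$ is established, $(1)$ follows from it, again by Lemma~\ref{lem:RTildeNice}, closing the cycle.

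The heart of the argument is a two-sided estimate for $\widetilde P$ along the $\{t^G\}$-orbits of the unit sphere $\mathbb S_d$ in $\mathbb R^d$. Writing $\xi=(\eta,\zeta)$ and $G=E_1\oplus F_2$, and using $P_1(t^{E_1}\eta)=tP_1(\eta)$, $Q(t^{F_2}\zeta)=t^{E_2}Q(\zeta)$, and $[E_1,E_2]=0$, a direct computation gives the identity
\[
\widetilde P(t^G\xi)=P_1(t^{E_1}\eta)+P_2(t^{E_1}\eta-Q(t^{F_2}\zeta))=t\,h_t(\eta,\zeta),\qquad h_t(\eta,\zeta):=P_1(\eta)+P_2(t^{E_1-E_2}\eta-Q(\zeta)),
\]
valid for all $t>0$. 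Because $\{t^{E_1-E_2}\}$ is contracting, $t^{E_1-E_2}\eta\to0$ as $t\to0$ uniformly for $\eta$ in compact sets, so $(t,\xi)\mapsto h_t(\xi)$ extends to a continuous function on $[0,1]\times\mathbb R^d$ whose restriction to $\{t=0\}$ is the positive homogeneous function $P_\infty$ of Theorem~\ref{thm:TrueAsymptotic}; in particular $h_0=P_\infty$ is positive-definite, and each $h_t$ with $0<t\le1$ is positive-definite as well, since $P_1,P_2$ are positive-definite and $Q$ is nondegenerate. Hence $h$ is continuous and strictly positive on the compact set $[0,1]\times\mathbb S_d$, so there are constants $0<\beta_1\le\beta_2<\infty$ with $\beta_1\le h_t(\xi)\le\beta_2$ for $0<t\le1$ and $\abs{\xi}=1$; equivalently, $\widetilde P(\xi)\asymp P_\infty(\xi)$ for $\xi$ in a punctured neighborhood of $0$.

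Granting this, I would deduce $(3)\Rightarrow(2)$ by contraposition, using that $\{t^G\}$ is itself contracting, since $t^G=t^{E_1}\oplus t^{F_2}$ and both $\{t^{E_1}\}$ and $\{t^{F_2}\}$ are contracting (Propositions~\ref{prop:PosHomChar} and~\ref{prop:CompIsPosHom}). If $\widetilde R(\xi)\neq o(\widetilde P(\xi))$ as $\xi\to0$, choose $\epsilon_0>0$ and a sequence $\omega_k\to0$ with $\omega_k\neq0$ and $\abs{\widetilde R(\omega_k)}>\epsilon_0\widetilde P(\omega_k)$. By Proposition~A.5 of \cite{BR22} write $\omega_k=t_k^G\xi_k$ with $\abs{\xi_k}=1$ and $t_k\to0$; then $\widetilde P(\omega_k)=t_k h_{t_k}(\xi_k)\ge\beta_1 t_k$, so $\abs{\widetilde R(t_k^G\xi_k)}>\epsilon_0\beta_1 t_k$ for every $k$. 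Since the $\xi_k$ lie in the compact set $K=\mathbb S_d$ and $t_k\to0$, this contradicts condition~$(3)$ applied with this $K$ and with $\epsilon_0\beta_1$ in place of $\epsilon$. Therefore $\widetilde R(\xi)=o(\widetilde P(\xi))$ as $\xi\to0$, which is $(2)$, and then Lemma~\ref{lem:RTildeNice} gives the full equivalence $(1)\Leftrightarrow(2)\Leftrightarrow(3)$.

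I expect the main obstacle to be the lower bound $h_t(\xi)\ge\beta_1>0$, uniform over $\xi\in\mathbb S_d$ and $t\in(0,1]$: it is exactly this uniform nondegeneracy of $h_t$ as $t\to0$ that the contracting hypothesis delivers — via the continuous extension of $h$ to $t=0$ and the positive-definiteness of $P_\infty$ — and it is what upgrades the orbit-wise decay statement $(3)$ to the genuine neighborhood statement $(2)$. The upper bound, by contrast, needs only the non-expanding hypothesis and can be read off from Lemma~\ref{lem:P2P1Estimates}. The remaining ingredients — the algebraic identity for $\widetilde P(t^G\xi)$, the polar-type decomposition of Proposition~A.5 of \cite{BR22}, and the equivalences already recorded in Lemma~\ref{lem:RTildeNice} — are routine.
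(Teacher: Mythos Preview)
Your proposal is correct and follows essentially the same strategy as the paper: establish a uniform lower bound $\widetilde P(t^G\xi)\gtrsim t$ for $\xi$ on a fixed compact set away from the origin and $t$ small, then use this to convert the orbit-wise condition~(3) into the neighborhood condition~(2). The differences are cosmetic rather than substantive. The paper takes $K=S_{\widetilde P_\infty}$ (the unital level set of $\widetilde P_\infty$) instead of your $K=\mathbb S_d$, and rather than invoking continuity of $h$ on $[0,1]\times K$ directly, it first isolates the error $\widetilde{\mathcal E}_\infty=\widetilde P-\widetilde P_\infty$ and shows that $\widetilde{\mathcal E}_\infty$ is itself subhomogeneous with respect to $G$; this yields the sharper statement $(1-\epsilon)t\le\widetilde P(t^G\xi')\le(1+\epsilon)t$ on $S_{\widetilde P_\infty}$ for small $t$, after which the proof proceeds directly rather than by contraposition. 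Your compactness argument on $[0,1]\times\mathbb S_d$ is a slightly more economical packaging of the same idea (it bypasses the auxiliary subhomogeneity lemma), while the paper's version makes the approximate $G$-homogeneity of $\widetilde P$ near $0$ more explicit. Either route works with comparable effort.
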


We now use the proposition to prove the assertions made right before it.
\begin{example}\label{ex:R1R2Details}
Let $R_1$ and $R_2$ be as in the paragraph preceding the proposition and let $P(\xi)=(\eta+\zeta^2)^2+\eta^4$. As shown in Example \ref{ex:OnDiagonalMotivatingExample}, we have $P_1(\eta)=\eta^2$, $P_2(\eta)=\eta^4$, $Q(\zeta)=\zeta^2$, $E_1=I/2$, $E_2=I/4$, and $F_2=1/8$. Observe that, since $E_1-E_2=I/4$, $\{t^{E_1-E_2}\}$ is contracting and so an application of proposition is justified for $R_1$ and $R_2$. Focusing first on $R_1$, we compute
\begin{equation*}
\widetilde{R}_1(\xi)=R_1(\eta-\zeta^2,\zeta)=\left((\eta-\zeta^2)^2+2(\eta-\zeta^2)\zeta^2+\zeta^4\right)\zeta^2=\eta^2\zeta^2
\end{equation*}
and, because $G=E_1\oplus F_2$ has standard matrix representation $\diag(1/2,1/8)$,
\begin{equation*}
\widetilde{R}_1(t^G\xi)=(t^{1/2}\eta)^2(t^{1/8}\zeta)^2=t^{5/4}\eta^2\zeta^2  
\end{equation*}
for $t>0$ and $\xi=(\eta,\zeta)\in\mathbb{R}^2$. Consequently, given $\epsilon>0$ and a compact set $K\subseteq\mathbb{R}^2$, we observe that
\begin{equation*}
   \abs{ \widetilde{R}_1(t^G\xi)}\leq t^{5/4}\eta^2\zeta^2\leq t t_0^{1/4}\eta^2\zeta^2\leq \epsilon t
\end{equation*}
whenever $\xi=(\eta,\zeta)\in K$ and $0<t\leq t_0:=\epsilon^4/(1+\sup_{(\eta,\zeta)\in K}\eta^2\zeta^2)^4$. Thus $\widetilde{R}_1$ is subhomogeneous with respect to $G$ and from Proposition \ref{prop:RTildeVeryNice} we conclude that $R_1(\xi)=o(P(\xi))$ as $\xi\to 0$.

For $R_2$, we have
\begin{equation*}
    \widetilde{R}_2(\xi)=(\eta-\zeta^2)^2\zeta^4
\end{equation*}
and therefore
\begin{equation*}
\widetilde{R}_2(t^G\xi)=\left(t^{1/2}\eta-(t^{1/8}\zeta)^2\right)^2\left(t^{1/8}\zeta\right)^4=t(t^{1/4}\eta-\zeta^2)^2\zeta^4
\end{equation*}
for $t>0$ and $\xi=(\eta,\zeta)\in\mathbb{R}^2$. Thus, for any $\xi=(\eta,\zeta)\in\mathbb{R}^2$ for which $\zeta\neq 0$,
\begin{equation*}
\lim_{t\to 0}t^{-1}\widetilde{R}_2(t^G\xi)=\zeta^8\neq 0.
\end{equation*}
Consequently, $\widetilde{R}_2$ is not subhomogeneous with respect to $G$ and, by virtue of the proposition, we conclude that $R_2(\xi)\neq o(P(\xi))$ as $\xi\to 0$.
\end{example}

\begin{example}\label{ex:PerturbByLaplaceFollowUp}
We return to the set-up of Example \ref{ex:CanonicalNMHFollowUp} and let $Q:\mathbb{R}^a\to\mathbb{R}^a$ be a nondegenerate multivariate homogeneous function given by
\begin{equation}\label{eq:CanonicalQFollowUp}
    Q(\zeta)=\left(\zeta_{\sigma(1)}^{\alpha_1},\zeta_{\sigma(2)}^{\alpha_2},\dots,\zeta_{\sigma(a)}^{\alpha_a}\right)
\end{equation}
for $\zeta=(\zeta_1,\zeta_2,\dots,\zeta_a)\in\mathbb{R}^a$ where $\alpha_1,\alpha_2,\dots,\alpha_a\in\mathbb{N}_+$ and $\sigma$ is a permutation of the set $\{1,2,\dots,a\}$. As in that example we shall take positive homogeneous functions $P_1$ and $P_2$ on $\mathbb{R}^a$ for which, for $k=1,2$, $\Exp(P_k)$ contains $E_k\in\End(\mathbb{R}^a)$ with standard matrix representation $\diag(\lambda_{k,1},\lambda_{k,2},\dots,\lambda_{k,a})$ where $\lambda_{k,j}>0$ for $j=1,2,\dots,a$. As we showed in Example \ref{ex:CanonicalNMHFollowUp}, the function $P:\mathbb{R}^{2a}\to\mathbb{R}$ defined by
\begin{equation*}
    P(\xi)=P_1(\eta+Q(\zeta))+P_2(\eta)
\end{equation*}
for $\xi=(\eta,\zeta)\in\mathbb{R}^{2a}$ satisfies the hypotheses of Theorem \ref{thm:OnDiagonal} provided that $\lambda_{1,j}\geq \lambda_{2,j}$ for all $j=1,2,\dots,a$. In that case, we proved that
\begin{equation*}
    \varphi(t)=H_{P}^t(0)\asymp t^{-\mu_{\infty}}
\end{equation*}
for $t\geq 1$ where
\begin{equation*}
\mu_\infty=\sum_{j=1}^a\left(\lambda_{1,j}+\frac{\lambda_{2,j}}{\alpha_j}\right).
\end{equation*}
Motivated by the example preceding Proposition \ref{prop:RTildeVeryNice}, we shall perturb $P:\mathbb{R}^{2a}\to\mathbb{R}$ by \begin{equation*}
    L_k(\xi)=\abs{\xi}^{2k}=(\abs{\eta}^2+\abs{\zeta}^2)^k
\end{equation*}
for $\xi=(\eta,\zeta)\in\mathbb{R}^{2a}$ where $k>0$. We have the following result.
\begin{proposition}\label{prop:CanonicalPerturbation}
Suppose that $\lambda_{1,j}>\lambda_{2,j}$ for $j=1,2,\dots,a$. If
\begin{equation*}
   k>\frac{1}{2 \min_{j=1,2,\dots,a}(\lambda_{2,j}/\alpha_j)},
\end{equation*}
then $L_k(\xi)=o(P(\xi))$ as $\xi\to 0$.
\end{proposition}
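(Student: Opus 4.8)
The plan is to invoke Proposition~\ref{prop:RTildeVeryNice}. Since $\lambda_{1,j}>\lambda_{2,j}$ for every $j$, the generator $E_1-E_2=\diag(\lambda_{1,1}-\lambda_{2,1},\dots,\lambda_{1,a}-\lambda_{2,a})$ has positive diagonal, so $\{t^{E_1-E_2}\}$ is contracting and the proposition applies. Thus it suffices to prove that $\widetilde{L_k}:=L_k\circ T$, with $T(\eta,\zeta)=(\eta-Q(\zeta),\zeta)$, is subhomogeneous with respect to $G=E_1\oplus F_2$, where (as in Example~\ref{ex:CanonicalNMHFollowUp}) $F_2$ has standard representation $\diag\bigl(\lambda_{2,\sigma^{-1}(1)}/\alpha_{\sigma^{-1}(1)},\dots,\lambda_{2,\sigma^{-1}(a)}/\alpha_{\sigma^{-1}(a)}\bigr)$.

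First I would compute $\widetilde{L_k}(\eta,\zeta)=\bigl(\abs{\eta-Q(\zeta)}^2+\abs{\zeta}^2\bigr)^k$; then, using $t^G(\eta,\zeta)=(t^{E_1}\eta,t^{F_2}\zeta)$ together with the homogeneity relation $Q(t^{F_2}\zeta)=t^{E_2}Q(\zeta)$, I would record
\begin{equation*}
\widetilde{L_k}(t^G\xi)=\Bigl(\abs{t^{E_1}\eta-t^{E_2}Q(\zeta)}^2+\abs{t^{F_2}\zeta}^2\Bigr)^k
\end{equation*}
for $t>0$ and $\xi=(\eta,\zeta)\in\mathbb{R}^{2a}$. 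Next, fix a compact set $K\subseteq\mathbb{R}^{2a}$ and set $\beta=\min_{j}(\lambda_{2,j}/\alpha_j)$. For $0<t\le 1$ and $\xi\in K$, a componentwise estimate gives $(t^{\lambda_{1,j}}\eta_j-t^{\lambda_{2,j}}\zeta_{\sigma(j)}^{\alpha_j})^2\le 2t^{2\lambda_{1,j}}\eta_j^2+2t^{2\lambda_{2,j}}\zeta_{\sigma(j)}^{2\alpha_j}\le C_K\,t^{2\lambda_{2,j}}$ (using $\lambda_{1,j}>\lambda_{2,j}$ and $t\le 1$), while $\abs{t^{F_2}\zeta}^2=\sum_m t^{2\lambda_{2,\sigma^{-1}(m)}/\alpha_{\sigma^{-1}(m)}}\zeta_m^2\le C_K\,t^{2\beta}$. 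Since $\alpha_j\ge 1$ forces $\lambda_{2,j}\ge\lambda_{2,j}/\alpha_j\ge\beta$, summing the $a$ components and raising to the $k$-th power yields a constant $C=C(K,k)$ with $\widetilde{L_k}(t^G\xi)\le C\,t^{2k\beta}$ for all $\xi\in K$ and $0<t\le 1$.

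Finally, the hypothesis $k>1/(2\beta)$ is exactly $2k\beta>1$, so for any $\epsilon>0$ one can choose $t_0\in(0,1]$ with $C\,t^{2k\beta-1}\le\epsilon$ whenever $0<t\le t_0$, giving $\widetilde{L_k}(t^G\xi)\le\epsilon t$ on $K$ for $0<t\le t_0$; hence $\widetilde{L_k}$ is subhomogeneous with respect to $G$ and Proposition~\ref{prop:RTildeVeryNice} delivers $L_k(\xi)=o(P(\xi))$ as $\xi\to 0$. The only point requiring care is the bookkeeping with the permutation $\sigma$: one must check that the smallest power of $t$ appearing in $\abs{t^{F_2}\zeta}^2$ is precisely $2\min_j(\lambda_{2,j}/\alpha_j)$ and that the mixed term $\abs{t^{E_1}\eta-t^{E_2}Q(\zeta)}^2$ never produces a smaller one (it only produces powers $\ge 2\min_j\lambda_{2,j}$); once this is confirmed the remaining estimates are routine.
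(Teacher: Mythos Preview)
Your proof is correct and follows essentially the same approach as the paper: verify that $\{t^{E_1-E_2}\}$ is contracting, then establish the subhomogeneity of $\widetilde{L_k}$ with respect to $G=E_1\oplus F_2$ and invoke Proposition~\ref{prop:RTildeVeryNice}. The only cosmetic difference is that the paper bounds $\widetilde{L_k}(t^G\xi)$ via operator norms (factoring $t^{E_1}\eta-t^{E_2}Q(\zeta)=t^{E_2}(t^{E_1-E_2}\eta-Q(\zeta))$ and using $\|t^{E_2}\|,\|t^{F_2}\|\le t^{\omega}$ with $\omega=\min_j(\lambda_{2,j}/\alpha_j)$), whereas you work componentwise; both routes yield the same bound $\widetilde{L_k}(t^G\xi)\le C\,t^{2k\omega}$ and the same conclusion.
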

\begin{exproof}
We have
\begin{equation*}
    \widetilde{L}_k(\xi)=\left(\abs{\eta-Q(\zeta)}^2+\abs{\zeta}^2\right)^k
\end{equation*}
for $\xi=(\eta,\zeta)\in\mathbb{R}^{2a}$. Using our analysis in Example \ref{ex:CanonicalNMH}, we see that $F_2$ has standard matrix representation
\begin{equation*}
    \diag\left(\frac{\lambda_{2,\sigma^{-1}}(1)}{\alpha_{\sigma^{-1}(1)}},\frac{\lambda_{2,\sigma^{-1}}(2)}{\alpha_{\sigma^{-1}(2)}},\dots,\frac{\lambda_{2,\sigma^{-1}}(a)}{\alpha_{\sigma^{-1}(a)}}\right).
\end{equation*}
Thus, for $G=E_1\oplus F_2$, we have
\begin{eqnarray*}
\widetilde{L}_k(t^G\xi)&=&\left(\abs{t^{E_1}\eta-Q(t^{F_2}\zeta)}^2+\abs{t^{F_2}\zeta}^2\right)^k\\
&=&\left(\abs{t^{E_2}(t^{E_1-E_2}\eta-Q(\zeta))}^2+\abs{t^{F_2}\zeta}^2\right)^k\\
&\leq &\left(\|t^{E_2}\|^2\abs{t^{E_1-E_2}\eta-Q(\zeta)}^2+\|t^{F_2}\|^2\abs{\zeta}^2\right)^k\\
&\leq &\left(\|t^{E_2}\|^2\left(\|t^{E_1-E_2}\|\abs{\eta}+\abs{Q(\zeta)}\right)^2+\|t^{F_2}\|^2\abs{\zeta}^2\right)
\end{eqnarray*}
for $t>0$ and $\xi=(\eta,\zeta)\in\mathbb{R}^{2a}$. Given that $E_1$, $E_2$, and $F_2$ are diagonal, our hypotheses guarantee that $
\|t^{E_1-E_2}\|\leq 1$, $\|t^{E_2}\|\leq t^{\omega}$, and $\|t^{F_2}\|\leq t^{\omega}$ for $0<t\leq 1$ where
\begin{equation*}
\omega=\min\left(\Spec(F_2)\right)=\min_{j=1,2,\dots,a}(\lambda_{2,j}/\alpha_j)\leq \min_{j=1,2\dots,a}\lambda_{2,j}=\min\left(\Spec(E_2)\right).
\end{equation*}
Therefore,
\begin{equation*}
    \widetilde{L}_k(t^G\xi)\leq \left(t^{2\omega}\left(\abs{\eta}+\abs{Q(\zeta)}\right)^2+t^{2\omega}\abs{\zeta}^2\right)^k\leq t^{2k\omega}\left(\left(\abs{\eta}+\abs{Q(\zeta)}\right)^2+\abs{\zeta}^2\right)^k
\end{equation*}
for $0<t\leq 1$ and $\xi=(\eta,\zeta)\in\mathbb{R}^{2a}$. Since $2k\omega=2k\left(\min_{j=1,2,\dots,a}\lambda_{2,j}/\alpha_j\right)>1$, it follows that, for each $\epsilon>0$ and compact set $K\subseteq\mathbb{R}^{2a}$,
\begin{equation*}
    \abs{\widetilde{L}_k(t^G\xi)}\leq \epsilon t
\end{equation*}
for $\xi\in K$ and $0<t\leq t_0$ where $t_0>0$ is chosen so that
\begin{equation*}
    t_0^{2k\omega-1}\sup_{\xi=(\eta,\zeta)\in K}\left(\left(\abs{\eta}+\abs{Q(\zeta)}\right)^2+\abs{\zeta}^2\right)^k<\epsilon.
\end{equation*}
Upon noting that $\{t^{E_1-E_2}\}$ is contracting because $\lambda_{1,j}>\lambda_{2,j}$ for $j=1,2,\dots,a$, the desired result follows immediately from Proposition \ref{prop:RTildeVeryNice}.
\end{exproof}

Upon noting that the hypotheses of Proposition \ref{prop:CanonicalPerturbation} guarantee that $\{t^{E_1-E_2}\}$ is contracting, by an application of Theorem \ref{thm:Perturbation} and Proposition \ref{prop:CanonicalPerturbation}, we immediately obtain the following corollary.

\begin{corollary}\label{cor:LaplaceCompareSemiElliptic}
Let $\Lambda$ be a constant-coefficient operator on $\mathbb{R}^{2a}$ with symbol $P(\xi)=P_1(\eta+Q(\zeta))+P_2(\eta)$ where $Q$ is given by \eqref{eq:CanonicalQFollowUp} and $P_1$ and $P_2$ are positive-definite semi-elliptic polynomials given by
\begin{equation*}
    P_1(\eta)=\sum_{|\beta:\mathbf{m}^{(1)}|=1}b_{1,\beta}\eta^\beta\hspace{1cm}\mbox{and}\hspace{1cm}P_2(\eta)=\sum_{|\beta:\mathbf{m}^{(2)}|=1}b_{2,\beta}\eta^\beta
\end{equation*}
for $\eta\in\mathbb{R}^a$, respectively, where $\mathbf{m}^{(1)}=\left(m_1^{(1)},m_2^{(1)},\dots,m_a^{(1)}\right)$ and $\mathbf{m}^{(2)}=\left(m_1^{(2)},m_2^{(2)},\dots,m_a^{(2)}\right)$ are $a$-tuples of even positive integers. Also, let $\Delta=\partial_{x_1}^2+\partial_{x_2}^2+\cdots+\partial_{x_{2a}}^2$ denote the Laplacian on $\mathbb{R}^{2a}$. If $m_j^{(2)}>m_j^{(1)}$ for all $j=1,2,\dots,a$, then, for any integer
\begin{equation*}
    k>\left(\max_{j=1,2,\dots,a}\alpha_jm_j^{(2)}\right)/2,
\end{equation*}
the heat kernel $H_{P+L_k}^t(x)$ associated to the operator $\Lambda+(-\Delta)^k$ satisfies the on-diagonal large-time asymptotics, $H_{P+L_k}^t(0)\asymp t^{-\mu_\infty}$ for $t\geq 1$ and
\begin{equation*}
    \lim_{t\to\infty}t^{\mu_\infty}H_{P+L_k}^t(0)=H_{P_\infty}^1(0)=\frac{1}{(2\pi)^d}\int_{\mathbb{R}^d}e^{-P_\infty(\xi)}\,d\xi
\end{equation*}
where
\begin{equation*}
    \mu_\infty=\sum_{j=1}^a\left(\frac{1}{m_j^{(1)}}+\frac{1}{\alpha_jm_j^{(2)}}\right)
\end{equation*}
and
\begin{equation*}
    P_\infty(\xi)=\sum_{|\beta:\mathbf{m}^{(1)}|=1}b_{1,\beta}\eta^\beta+\sum_{|\beta:\mathbf{m}^{(2)}|=1}b_{2,\beta}(-Q(\zeta))^\beta=\sum_{|\beta:\mathbf{m}^{(1)}|=1}b_{1,\beta}\eta^\beta+\sum_{|\beta:\mathbf{m}^{(2)}|=1}b_{2,\beta}(-1)^{|\beta|}\zeta_\sigma^{\alpha\odot\beta}
\end{equation*}
for $\xi=(\eta,\zeta)\in\mathbb{R}^{2a}$ where $\zeta_\sigma$ denotes the action of the permutation $\sigma$ on $\zeta=(\zeta_1,\zeta_2,\dots,\zeta_a)$ defined by $\zeta_\sigma\\
=(\zeta_{\sigma(1)},\zeta_{\sigma(2)},\dots,\zeta_{\sigma(a)})$ and for each multi-index $\beta$, we have set $|\beta|=\beta_1+\beta_2+\cdots+\beta_a$, $\alpha\odot\beta=(\alpha_1\beta_2,\alpha_2\beta_2,\dots,\alpha_a\beta_a)$.
\end{corollary}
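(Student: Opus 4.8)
The strategy is to deduce the corollary from Theorem \ref{thm:Perturbation}, drawing the underlying homogeneous structure from Example \ref{ex:CanonicalNMHFollowUp} (equivalently, Example \ref{ex:PerturbByLaplaceFollowUp}) and the admissibility of the perturbation $L_k$ from Proposition \ref{prop:CanonicalPerturbation}.

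First I would record the homogeneous data. Since $P_1$ and $P_2$ are positive-definite semi-elliptic polynomials, the semi-elliptic example of Section \ref{sec:Hom} shows they are positive homogeneous on $\mathbb{R}^a$ with $E_i := \diag(1/m_1^{(i)},\dots,1/m_a^{(i)}) \in \Exp(P_i)$, so that $\lambda_{i,j} = 1/m_j^{(i)}$ in the notation of Example \ref{ex:CanonicalNMHFollowUp}. As $Q$ is the canonical nondegenerate multivariate homogeneous map \eqref{eq:CanonicalQFollowUp} and the hypothesis $m_j^{(2)} > m_j^{(1)}$ reads $\lambda_{1,j} > \lambda_{2,j}$ for every $j$, the Proposition in Example \ref{ex:CanonicalNMHFollowUp} applies: $P$ satisfies the hypotheses of Theorem \ref{thm:OnDiagonal}, the group $\{t^{E_1-E_2}\}$ is diagonal with strictly positive spectrum, hence contracting, and $\mu_\infty = \sum_{j=1}^a (\lambda_{1,j} + \lambda_{2,j}/\alpha_j) = \sum_{j=1}^a\big(1/m_j^{(1)} + 1/(\alpha_j m_j^{(2)})\big)$, the asserted value.

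Next I would check that $R := L_k$ is admissible for Theorem \ref{thm:Perturbation}. With $k$ a positive integer, $(-\Delta)^k$ has symbol $L_k(\xi) = |\xi|^{2k} = (|\eta|^2 + |\zeta|^2)^k$, a continuous and non-negative function, so $\Re(L_k) = L_k \ge 0$. For the required decay $L_k(\xi) = o(P(\xi))$ as $\xi\to 0$, I would cite Proposition \ref{prop:CanonicalPerturbation}: its structural hypothesis $\lambda_{1,j} > \lambda_{2,j}$ is precisely $m_j^{(2)} > m_j^{(1)}$, and its numerical hypothesis $k > \big(2\min_j(\lambda_{2,j}/\alpha_j)\big)^{-1}$, upon substituting $\lambda_{2,j}/\alpha_j = 1/(\alpha_j m_j^{(2)})$ so that $\min_j(\lambda_{2,j}/\alpha_j) = 1/\max_j(\alpha_j m_j^{(2)})$, is exactly the corollary's hypothesis $k > \tfrac12 \max_j(\alpha_j m_j^{(2)})$ (and such an integer $k$ always exists). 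Having verified both conditions, Theorem \ref{thm:Perturbation}(1) gives $|H_{P+L_k}^t(0)| \asymp t^{-\mu_\infty}$ for $t\ge 1$, and Theorem \ref{thm:Perturbation}(2) — available because $\{t^{E_1-E_2}\}$ is contracting — gives $\lim_{t\to\infty} t^{\mu_\infty} H_{P+L_k}^t(0) = H_{P_\infty}^1(0) = \frac{1}{(2\pi)^d}\int_{\mathbb{R}^d} e^{-P_\infty(\xi)}\,d\xi$.

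Finally I would make $P_\infty$ explicit. By Theorem \ref{thm:TrueAsymptotic}, $P_\infty(\xi) = P_1(\eta) + P_2(-Q(\zeta))$; expanding $P_2(-Q(\zeta)) = \sum_{|\beta:\mathbf{m}^{(2)}|=1} b_{2,\beta}(-Q(\zeta))^\beta$ and computing $(-Q(\zeta))^\beta = \prod_j (-\zeta_{\sigma(j)}^{\alpha_j})^{\beta_j} = (-1)^{|\beta|}\prod_j \zeta_{\sigma(j)}^{\alpha_j\beta_j} = (-1)^{|\beta|}\zeta_\sigma^{\alpha\odot\beta}$ yields the displayed formula for $P_\infty$. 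The one step requiring genuine care is the numerical translation above between the spectral hypothesis of Proposition \ref{prop:CanonicalPerturbation} and the explicit bound on $k$; everything else is a matter of invoking results already in hand.
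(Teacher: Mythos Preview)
Your proposal is correct and follows precisely the route the paper takes: the paper states, immediately before the corollary, that it follows from Theorem \ref{thm:Perturbation} together with Proposition \ref{prop:CanonicalPerturbation} once one notes that the hypotheses force $\{t^{E_1-E_2}\}$ to be contracting. You have simply made the bookkeeping explicit---identifying $\lambda_{i,j}=1/m_j^{(i)}$, translating the bound on $k$, and unwinding $P_\infty$---which is exactly what is implicit in the paper's one-line justification.
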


For an easy application of this corollary, consider our motivating example in which $P(\xi)=P_1(\eta+Q(\zeta))+P_2(\eta)=(\eta+\zeta^2)^2+\eta^4$ for $\xi=(\eta,\zeta)\in\mathbb{R}^2$. In this case, $P_1$ and $P_2$ are positive-definite and semi-elliptic with $\mathbf{m}^{(1)}=(m_1^{(1)})=(2)$ and $\mathbf{m}^{(2)}=(m_1^{(2)})=(4)$, respectively, and $Q(\zeta)=\zeta^2$ is a nondegenerate multivariate homogeneous of the form \eqref{eq:CanonicalQFollowUp} with $\alpha_1=2$. Thus, for any integer
\begin{equation*}
    k>\left(\max_j \alpha_jm_j^{(2)}\right)/2=8/2=4,
\end{equation*}
the heat kernel $H_{P+L_k}$ associated to
\begin{equation*}
\Lambda+(-\Delta)^k=\partial_{x_1}^4+\partial_{x_2}^4+2i\partial_{x_1}\partial_{x_2}^2-\partial_{x_1}^2+(-\Delta)^k
\end{equation*}
has $H_{P+L_k}^t(0)\asymp t^{-5/8}$ for $t\geq 1$ and $\lim_{t\to\infty}t^{5/8}H_{P+L_k}^t(0)=\Gamma(9/8)/2\pi^{3/2}$. In particular, this holds for $k=5$ and so we have recaptured the result of Example \ref{ex:PerturbByLaplace}.
\end{example}

\section{An application to the study of convolution powers of complex-valued functions on \texorpdfstring{$\mathbb{Z}^d$}{Lg}}\label{sec:ConvPower}

Given a finitely-supported\footnote{We work with this condition for simplicity. One can assume, more generally, that $\phi$ and all of its multivariate moments are absolutely summable, c.f., \cite{RSC17}.} function $\phi:\mathbb{Z}^{d}\to\mathbb{C}$, we are interested in the behavior of its convolution powers $\phi^{(n)}:\mathbb{Z}^{d}\to\mathbb{C}$ defined iteratively by putting $\phi^{(1)}=\phi$ and, for $n\geq 2$, 
\begin{equation*}
\phi^{(n)}(x)=\sum_{y\in\mathbb{Z}^{d}}\phi^{(n-1)}(x-y)\phi(y)
\end{equation*}
for $x\in\mathbb{Z}^{d}$. In the case that $\phi$ is non-negative and $\sum_{x}\phi(x)=1$, the behavior of $\phi^{(n)}$ is well-known and is the subject of the local central limit theorem \cite{Sp64}. Beyond the probabilistic setting, the study of convolution powers of complex-valued functions dates back to the late nineteenth century and was initially investigated by E. L. de Forest through its applications to data smoothing. During the explosion of scientific computing in the mid-twentieth century, the study was reinvigorated by applications to numerical solution algorithms to partial differential equations. Early on, these studies focused almost entirely in one spatial dimension, i.e., $d=1$, and, for an account of these results and a thorough discussion of the early history, we refer the reader to the article \cite{DSC14}. Recent developments in the context of one dimension can be found in \cite{RSC15}, \cite{CF22}, and \cite{Coe22}. Moving beyond one spatial dimension, the articles \cite{RSC17}, \cite{BR22}, and \cite{Ra22} develop a theory for convolution powers of complex-valued functions on $\mathbb{Z}^d$ and, in particular, the article \cite{RSC17} establishes local limit theorems, sup-norm estimates, off-diagonal estimates, and stability results in that context. As we only briefly discuss the local limit theorems of \cite{RSC17} and \cite{Ra22} below, we refer the reader to these articles, both of which provide history and a more thorough presentation of the theory than is given here.

For our finitely-supported function $\phi:\mathbb{Z}^{d}\to\mathbb{C}$, we define its Fourier transform $\widehat{\phi}$ by
\begin{equation*}
\widehat{\phi}(\xi)=\sum_{x\in\mathbb{Z}^{d}}\phi(x)e^{ix\cdot\xi}
\end{equation*}
for $\xi\in\mathbb{R}^{d}$. With this, we obtain the representation
\begin{equation}\label{eq:ConvolutionFTRepresentation}
\phi^{(n)}(x)=\frac{1}{(2\pi)^{d}}\int_{\mathbb{T}^{d}}\widehat{\phi}(\xi)^ne^{-ix\cdot\xi}\,d\xi
\end{equation}
for $x\in\mathbb{Z}^{d}$ and $n\in\mathbb{N}_+$ where $\mathbb{T}^{d}=(-\pi,\pi]^{d}$ or, equivalently, any representation of the $d$ dimensional torus of the form $(-\pi,\pi]^{d}+\xi'$ for $\xi'\in\mathbb{R}^{d}$. Beyond the assumption of finite support, we shall assume that $\phi$ has been normalized so that
\begin{equation*}
\max_{\xi}\abs{\widehat{\phi}(\xi)}=\sup_{\xi}\abs{\widehat{\phi}(\xi)}=1.
\end{equation*}
Also, and though this assumption can be significantly weakened (c.f., \cite{RSC17,Ra22}), we shall assume that this maximum is attained at exactly one point $\xi_0\in\mathbb{T}^{d}$ and that
\begin{equation}\label{eq:GammaExpansion}
\Gamma_{\xi_0}(\xi):=\log\left(\frac{\widehat{\phi}(\xi+\xi_0)}{\widehat{\phi}(\xi_0)}\right)=i\alpha\cdot\xi-P(\xi)+R(\xi)
\end{equation}
where $\alpha\in\mathbb{R}^{d}$, $P$ is a positive-definite polynomial and $R$ is a smooth complex-valued function for which $R(\xi)=o(P(\xi))$ as $\xi\to 0$. Extending the one-dimensional results of \cite{DSC14} and \cite{RSC15}, the article \cite{RSC17} establishes local limit theorems in the case that the polynomial $P$ is a positive homogeneous function on $\mathbb{R}^d$. Under the assumptions above, if $P$ is a positive homogeneous polynomial on $\mathbb{R}^d$, Theorem 1.6 of \cite{RSC17} says that
\begin{equation}\label{eq:LLTGood}
    \phi^{(n)}(x)=\widehat{\phi}(\xi_0)^n e^{-ix\cdot\xi_0}H_P^n(x-n\alpha)+o(n^{-\mu_P})
\end{equation}
uniformly for $x\in\mathbb{Z}^d$ as $n\to\infty$ where $\mu_P$ is the homogeneous order of $P$ and, for each $x\in\mathbb{R}^d$ and $n\in\mathbb{N}_+$,
\begin{equation*}
H_P^n(x)=\frac{1}{(2\pi)^d}\int_{\mathbb{R}^d}e^{-nP(\xi)}e^{-ix\cdot\xi}\,d\xi.
\end{equation*}
Thanks to the homogeneity of $P$, it is easy to see that
\begin{equation}\label{eq:HomogeneityOfLLTKernel}
    H_P^n(x)=n^{-\mu_P}H_P^1(n^{-E^*}x)
\end{equation}
for all $x\in\mathbb{R}^d$, $n\in\mathbb{N}_+$, and $E\in\Exp(P)$ where $E^*$ denotes the adjoint of $E$. In this way, the local limit theorem \eqref{eq:LLTGood} can be written in terms of the single rescaled attractor, $H_P^1$. We note that homogeneity and the resultant space-time rescaling in \eqref{eq:HomogeneityOfLLTKernel} are central to the proof of Theorem 1.6 of \cite{RSC17}. Using homogeneity as a main ingredient and making use of the generalized polar-coordinate integration formula developed in \cite{BR22}, the recent article \cite{Ra22} extends Theorem 1.6 of \cite{RSC17} further to include the case in which the positive homogeneous polynomial $P$ is replaced by $iQ(\xi)$ where $Q$ is a real-valued function on $\mathbb{R}^d$ for which $\xi\mapsto \abs{Q(\xi)}$ is positive homogeneous; see Theorems 1.9 and Theorem 3.8 of \cite{Ra22}. At present and to our knowledge, all known local limit theorem on $\mathbb{Z}^d$ assume, in one way or another, that the expansion \eqref{eq:GammaExpansion} is dominated, at low order, by a homogeneous polynomial. The following treats an example in which this assumption is not satisfied.

\begin{theorem}\label{thm:LLT}
For positive integers $a$ and $b$, set $d=a+b$ and consider $\phi:\mathbb{Z}^{d}\to\mathbb{C}$ as above, i.e., $\phi$ is finitely supported on $\mathbb{Z}^d$, $\widehat{\phi}$ is maximized in absolute value at a single point $\xi_0\in\mathbb{T}^{d}$ and the local approximation $\Gamma_{\xi_0}(\xi)=i\alpha\cdot \xi -P(\xi)+R(\xi)$ is valid on the domain of $\Gamma_{\xi_0}$ where $\alpha\in\mathbb{R}^{d}$, $P$ is a positive-definite polynomial, and $R(\xi)=o(P(\xi))$ as $\xi\to 0$. Suppose, additionally, that $P$ satisfies the hypotheses of Theorem \ref{thm:OnDiagonal}. That is, we assume that
\begin{equation*}
P(\xi)=P_1(\eta+Q(\zeta))+P_2(\eta)
\end{equation*}
for $\xi=(\eta,\zeta)\in\mathbb{R}^{{a}}\times\mathbb{R}^{{b}}=\mathbb{R}^{d}$ where: 
\begin{enumerate}
\item $P_1$ and $P_2$ are positive homogeneous functions on $\mathbb{R}^{{a}}$ with homogeneous orders $\mu_{P_1}$ and $\mu_{P_2}$, respectively.
\item $Q:\mathbb{R}^{{b}}\to\mathbb{R}^{{a}}$ is nondegenerate multivariate homogeneous.
\item There exist $E_1\in\Exp(P_1)$ and $E_2\in\Exp(P_2)$, and $F_1,F_2\in\End(\mathbb{R}^{{b}})$ for which
\begin{enumerate}
\item For $k=1,2,$ $Q$ is homogeneous with respect to the pair $(E_k,F_k)$. 
\item We have $[E_1,E_2]=E_1E_2-E_2E_1$ and $\{t^{E_1-E_2}\}$ is non-expanding.
\end{enumerate}
\end{enumerate}
Upon setting $\mu_\phi=\mu_{P_1}+\mu_{P_2\circ(-Q)}=\tr E_1+\tr F_2$ and taking $H_P$ as in Theorem \ref{thm:OnDiagonal}, we have the following local limit theorem: For each $\epsilon>0$, there exists $N\in\mathbb{N}_+$ for which
\begin{equation*}
\abs{\phi^{(n)}(x)-\widehat{\phi}(\xi_0)^ne^{-ix\cdot\xi_0}H_P^n(x-n\alpha)}<\epsilon n^{-\mu_\phi}
\end{equation*}
for all $n\geq N$ and $x\in\mathbb{Z}^{d}$. In other words,
\begin{equation*}
\phi^{(n)}(x)=\widehat{\phi}(\xi_0)^ne^{-ix\cdot\xi_0}H_P^n(x-n\alpha)+o(n^{-\mu_\phi})
\end{equation*}
uniformly for $x\in\mathbb{Z}^d$ as $n\to\infty$.
\end{theorem}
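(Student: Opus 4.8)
The plan is to follow the Fourier-analytic blueprint of \cite{RSC17} and \cite{Ra22}, localizing the integral \eqref{eq:ConvolutionFTRepresentation} near the maximum point $\xi_0$ and comparing the resulting expression to the heat kernel $H_P^n$. Writing $\psi^{(n)}(x) = \widehat{\phi}(\xi_0)^{-n}e^{ix\cdot\xi_0}\phi^{(n)}(x)$, the representation \eqref{eq:ConvolutionFTRepresentation} gives
\begin{equation*}
\psi^{(n)}(x) = \frac{1}{(2\pi)^d}\int_{\mathbb{T}^d}e^{n\Gamma_{\xi_0}(\xi)}e^{-ix\cdot\xi}\,d\xi,
\end{equation*}
and we must show $\psi^{(n)}(x)-H_P^n(x-n\alpha) = o(n^{-\mu_\phi})$ uniformly in $x$. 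First I would split $\mathbb{T}^d$ into a small neighborhood $\mathcal{U}$ of $0$ and its complement. Off $\mathcal{U}$ we have $\abs{\widehat{\phi}(\xi+\xi_0)}\le e^{-c}<1$ for some $c>0$ (since the maximum is attained uniquely), so that contribution is $O(e^{-cn})$, negligible against $n^{-\mu_\phi}$. On $\mathcal{U}$, using $\Gamma_{\xi_0}(\xi)=i\alpha\cdot\xi - P(\xi)+R(\xi)$ with $R(\xi)=o(P(\xi))$, the integral becomes a perturbation of $\int_{\mathcal{U}}e^{-nP(\xi)}e^{-i(x-n\alpha)\cdot\xi}\,d\xi$, which in turn differs from the full-space integral defining $H_P^n(x-n\alpha)$ by a tail term controlled via Lemma \ref{lem:ExpIntegrability} and the lower bound in Lemma \ref{lem:P2P1Estimates}.

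The heart of the argument is the rescaling that makes the $n\to\infty$ limit tractable, and here the key insight—exactly as in the proof of Theorem \ref{thm:OnDiagonal}—is to pass through the dual picture. Apply the measure-preserving change of variables $T(\eta,\zeta)=(\eta-Q(\zeta),\zeta)$ and then the anisotropic dilation $\xi\mapsto n^{-G}\xi$ with $G=E_1\oplus F_2$, $\mu_\phi=\tr G$. After this substitution the comparison integral on $\mathcal{U}$ takes the form
\begin{equation*}
\frac{n^{-\mu_\phi}}{(2\pi)^d}\int_{n^{G}\mathcal{U}'}e^{-n\widetilde{P}(n^{-G}\xi)}\bigl(e^{-n\widetilde{R}(n^{-G}\xi)}-1 + 1\bigr)e^{-i(x-n\alpha)\cdot T(n^{-G}\xi)}\,d\xi,
\end{equation*}
and by Lemma \ref{lem:RTildeNice}(\ref{item:RTildeNice2}) the subhomogeneity of $\widetilde{R}$ forces $n\widetilde{R}(n^{-G}\xi)\to 0$ pointwise, while Lemma \ref{lem:P2P1Estimates} provides the uniform domination $e^{-n\widetilde{P}(n^{-G}\xi)}\le e^{M}e^{-CP_*(\xi)}$ for $n\ge 1$ with $P_*$ positive homogeneous and hence $e^{-CP_*}\in L^1$. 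This is precisely the mechanism already exploited in the proof of Theorem \ref{thm:Perturbation}, so the $R$-error and the truncation error both vanish after multiplication by $n^{\mu_\phi}$, via dominated convergence. Crucially, the factor $e^{-i(x-n\alpha)\cdot T(n^{-G}\xi)}$ has modulus $1$, so all these estimates are uniform in $x\in\mathbb{Z}^d$.

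The step I expect to be the main obstacle is making the uniformity in $x$ genuinely rigorous while handling the three separate error sources simultaneously: the off-$\mathcal{U}$ exponential tail, the tail of the Gaussian-type integral outside $\mathcal{U}$ (i.e. $\int_{\mathbb{R}^d\setminus n^G\mathcal{U}'}$), and the $\widetilde{R}$-perturbation. Each is individually routine given the lemmas, but one must choose the neighborhood $\mathcal{U}$, then $N$, in the right order so that a single $\epsilon$ works for all three and for all $x$; the $x$-independence is what lets one avoid any pointwise Gaussian decay estimate on $H_P^n$ itself (which would be delicate here since $P$ is inhomogeneous and no good off-diagonal bound is available). A secondary technical point is verifying that $\Gamma_{\xi_0}$ is defined and smooth on a fixed neighborhood of $0$ and that the remainder $R$ inherited from \eqref{eq:GammaExpansion} still satisfies $R(\xi)=o(P(\xi))$ after the transformation $T$—but this is exactly Lemma \ref{lem:RTildeNice}(\ref{item:RTildeNice1}). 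Once these bookkeeping matters are arranged, the proof closes by the triangle inequality: $\psi^{(n)}(x)-H_P^n(x-n\alpha)$ is bounded by the sum of the three errors, each $o(n^{-\mu_\phi})$ uniformly in $x$.
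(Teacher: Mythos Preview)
Your proposal is correct and follows essentially the same route as the paper's proof: localize near $\xi_0$, apply the measure-preserving transformation $T$ followed by the anisotropic rescaling $\xi\mapsto n^{-G}\xi$ with $G=E_1\oplus F_2$, and control the three error terms (off-neighborhood tail, heat-kernel tail, and $\widetilde{R}$-perturbation) using Lemmas \ref{lem:RTildeNice} and \ref{lem:P2P1Estimates} exactly as you describe. The paper organizes the bookkeeping by first fixing a compact set $K$ via a uniform-integrability lemma (Lemma \ref{lem:CompactSetFromIntegrability}) and then choosing $N$ in stages, but this is precisely the ``right order'' you anticipate, and your dominated-convergence phrasing is equivalent.
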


Combining this result with Theorem \ref{thm:OnDiagonal} and Theorem \ref{thm:TrueAsymptotic}, we obtain the following corollary. 
\begin{corollary}\label{cor:ConvSupNorm}
Let $\phi$ satisfy the hypotheses of the above theorem. If $\alpha=0$, then
\begin{equation}\label{eq:ConvSupNormAsymptotic}
\|\phi^{(n)}\|_\infty\asymp n^{-\mu_\phi}
\end{equation}
for $n\geq 1$. If, additionally, $\{t^{E_1-E_2}\}$ is contracting, then
\begin{equation}\label{eq:ConvSupNormTrueAsymptotic}
    \lim_{n\to\infty}n^{\mu_\phi}\|\phi^{(n)}\|_\infty=H_{P_\infty}^1(0)=\frac{1}{(2\pi)^d}\int_{\mathbb{R}^d}e^{-P_\infty(\xi)}\,d\xi
\end{equation}
where $P_\infty(\xi)=P_1(\eta)+P_2(-Q(\zeta))$ for $\xi=(\eta,\zeta)\in\mathbb{R}^d$.
\end{corollary}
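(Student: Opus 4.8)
The plan is to deduce the corollary directly from the local limit theorem of Theorem~\ref{thm:LLT}, combined with the on-diagonal estimates of Theorem~\ref{thm:OnDiagonal} for \eqref{eq:ConvSupNormAsymptotic} and with Theorem~\ref{thm:TrueAsymptotic} for \eqref{eq:ConvSupNormTrueAsymptotic}. The one elementary fact that makes this work is that, because $P\geq 0$, the function $\xi\mapsto e^{-tP(\xi)}$ is nonnegative, so for every $t>0$ and every $x\in\mathbb{R}^d$,
\[
\abs{H_P^t(x)}\leq\frac{1}{(2\pi)^d}\int_{\mathbb{R}^d}e^{-tP(\xi)}\,d\xi=H_P^t(0)=\varphi(t),
\]
with equality at $x=0$; since $0\in\mathbb{Z}^d$, this gives $\sup_{x\in\mathbb{Z}^d}\abs{H_P^n(x)}=\varphi(n)$ for each $n\in\mathbb{N}_+$.

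First I would specialize Theorem~\ref{thm:LLT} to the case $\alpha=0$: given $\epsilon>0$, there is $N\in\mathbb{N}_+$ so that $\abs{\phi^{(n)}(x)-\widehat{\phi}(\xi_0)^ne^{-ix\cdot\xi_0}H_P^n(x)}<\epsilon n^{-\mu_\phi}$ for all $n\geq N$ and $x\in\mathbb{Z}^d$. Because $\abs{\widehat{\phi}(\xi_0)}=\abs{e^{-ix\cdot\xi_0}}=1$, the reverse triangle inequality shows that $\abs{\phi^{(n)}(x)}$ and $\abs{H_P^n(x)}$ differ by less than $\epsilon n^{-\mu_\phi}$, uniformly for $x\in\mathbb{Z}^d$ and $n\geq N$. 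Taking suprema over $x\in\mathbb{Z}^d$ and applying the identity above, I get
\[
\varphi(n)-\epsilon n^{-\mu_\phi}\;\leq\;\abs{\phi^{(n)}(0)}\;\leq\;\|\phi^{(n)}\|_\infty\;\leq\;\varphi(n)+\epsilon n^{-\mu_\phi}\qquad(n\geq N).
\]

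To finish \eqref{eq:ConvSupNormAsymptotic}, I would invoke Theorem~\ref{thm:OnDiagonal} to pick positive constants $c,C$ with $c\,n^{-\mu_\phi}\leq\varphi(n)\leq C\,n^{-\mu_\phi}$ for $n\geq 1$ (recall $\mu_\infty=\mu_\phi$), then choose $\epsilon=c/2$ above to obtain $\tfrac{c}{2}n^{-\mu_\phi}\leq\|\phi^{(n)}\|_\infty\leq(C+\tfrac{c}{2})n^{-\mu_\phi}$ for $n\geq N$. For the finitely many indices $1\leq n<N$ one only needs that $\|\phi^{(n)}\|_\infty$ is finite (as $\phi$ has finite support) and nonzero (else $\widehat{\phi}^{\,n}\equiv 0$, contradicting $\sup_\xi\abs{\widehat{\phi}(\xi)}=1$), so after enlarging the constants the two-sided bound holds for all $n\geq 1$. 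For \eqref{eq:ConvSupNormTrueAsymptotic}, when $\{t^{E_1-E_2}\}$ is contracting, Theorem~\ref{thm:TrueAsymptotic} gives $n^{\mu_\phi}\varphi(n)\to H_{P_\infty}^1(0)$; multiplying the displayed two-sided inequality by $n^{\mu_\phi}$ and letting first $n\to\infty$ and then $\epsilon\downarrow 0$ yields $\lim_{n\to\infty}n^{\mu_\phi}\|\phi^{(n)}\|_\infty=H_{P_\infty}^1(0)$, which is positive and equals $\frac{1}{(2\pi)^d}\int_{\mathbb{R}^d}e^{-P_\infty(\xi)}\,d\xi$ by definition of the heat kernel, the integral being finite since $P_\infty$ is positive homogeneous (its exponent set contains $E_1\oplus F_2$) so that Lemma~\ref{lem:ExpIntegrability} applies.

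There is no serious obstacle here: the corollary is a soft consequence of results already in hand, and the only points deserving care are the uniformity in $x$ of the approximation in Theorem~\ref{thm:LLT} — precisely what permits passing to sup-norms — and the attainment of $\sup_x\abs{H_P^t(x)}$ at the origin. The substantive work lies in Theorem~\ref{thm:LLT} itself, not in this deduction.
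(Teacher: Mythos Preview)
Your proposal is correct and follows essentially the same approach as the paper's proof: both combine Theorem~\ref{thm:LLT} with Theorems~\ref{thm:OnDiagonal} and~\ref{thm:TrueAsymptotic}, using that $\sup_x\abs{H_P^n(x)}=H_P^n(0)$ and that $\abs{\widehat{\phi}(\xi_0)^n e^{-ix\cdot\xi_0}}=1$. Your sandwich inequality $\varphi(n)-\epsilon n^{-\mu_\phi}\leq\|\phi^{(n)}\|_\infty\leq\varphi(n)+\epsilon n^{-\mu_\phi}$ packages the argument a bit more tidily than the paper's separate upper/lower bound treatment, but the content is the same.
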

\begin{proof}
For each $\epsilon>0$, an application of Theorem \ref{thm:LLT} guarantees that
\begin{equation}\label{eq:ConvSupNorm1}
\abs{\phi^{(n)}(x)}\leq \epsilon n^{-\mu_\phi}+\abs{H_P^n(x)}\leq  \epsilon n^{-\mu_\phi}+H_P^n(0)
\end{equation}
for all $x\in\mathbb{Z}^{d}$ and sufficiently large values of $n$; here, we have noted that $\abs{\widehat{\phi}(\xi_0)^ne^{-ix\cdot\xi_0}}=1$ and $\abs{H_P^n(x)}\leq H_P^n(0)$ whenever $n\in\mathbb{N}_+$ and $x\in\mathbb{Z}^d$. By virtue of Theorem \ref{thm:OnDiagonal} and in view of the fact that $\mu_\phi=\mu_\infty$, we have
\begin{equation*}
\|\phi^{(n)}\|_\infty=\sup_{x\in\mathbb{Z}^{d}}\abs{\phi^{(n)}(x)}\leq C n^{-\mu_\phi}
\end{equation*}
for sufficiently large $n$ where $C$ is some positive constant. We now obtain a matching lower bound. Using Theorem \ref{thm:OnDiagonal}, $H_P^n(0)\geq 2C' n^{-\mu_\phi}$ for all $n\in\mathbb{N}_+$ for some $C'>0$. An appeal to Theorem \ref{thm:LLT} (with $\epsilon=C'$) now guarantees that
\begin{equation*}
2C'n^{-\mu_\phi}\leq\abs{\widehat{\phi}(\xi_0)^nH_P^n(0)}
\leq\abs{\widehat{\phi}(\xi_0)^nH_P^n(0)-\phi^{(n)}(0)}+\abs{\phi^{(n)}(0)}\leq C'n^{-\mu_\phi}+\abs{\phi^{(n)}(0)}
\end{equation*}
and consequently
\begin{equation*}
C'n^{-\mu_\phi}\leq \abs{\phi^{(n)}(0)}
\end{equation*}
for all sufficiently large $n$. Thus, for some $N\in\mathbb{N}_+$,
\begin{equation*}
C'n^{-\mu_\phi}\leq \abs{\phi^{(n)}(0)}\leq \|\phi^{(n)}\|_\infty\leq Cn^{-\mu_\phi}
\end{equation*}
for $n\geq N$. With this, the asymptotic \eqref{eq:ConvSupNormAsymptotic} follows by, if necessary, adjusting $C$ and $C'$ to account for those $n$ from $1$ to $N-1$ (and while noting that $\|\phi^{(n)}\|_\infty$ cannot vanish for any such $n$ for otherwise all subsequent convolution powers would vanish identically). 

We now prove prove \eqref{eq:ConvSupNormTrueAsymptotic}. In view of \eqref{eq:ConvSupNorm1}, for each $\epsilon>0$,
\begin{equation*}
n^{\mu_\phi}\|\phi^{(n)}\|_\infty =\sup_{x\in\mathbb{Z^d}}n^{\mu_\phi}\abs{\phi^{(n)}(x)}\leq \epsilon +n^{\mu_\phi}H_P^n(0)
\end{equation*}
for all sufficiently large $n$. Upon noting that $\mu_\phi=\mu_\infty$, an appeal to Theorem \ref{thm:TrueAsymptotic} guarantees that
\begin{equation*}
   \limsup_{n\to\infty} n^{\mu_\phi}\|\phi^{(n)}\|_\infty\leq \epsilon+\limsup_{n\to\infty} n^{\mu_\phi}H_P^n(0)=\epsilon+H_{P_\infty}^1(0)
\end{equation*}
for each $\epsilon>0$ and therefore
\begin{equation*}
    \limsup_{n\to\infty} n^{\mu_\phi}\|\phi^{(n)}\|_\infty\leq H_{P_\infty}^1(0).
\end{equation*}
By virtue of Theorem \ref{thm:TrueAsymptotic} and Theorem \ref{thm:LLT}, observe that
\begin{equation*}
    H_{P_\infty}^1(0)=\lim_{n\to\infty}n^{\mu_\phi}H_P^n(0)=\lim_{n\to\infty}n^{\mu_\phi}\abs{\widehat{\phi}(\xi_0)^nH_P^n(0)}=\lim_{n\to\infty} n^{\mu_\phi}\abs{\phi^{(n)}(0)}.
\end{equation*}
Since $\abs{\phi^{(n)}(0)}\leq\|\phi^{(n)}\|_\infty$ for all $n$, it follows that
\begin{equation*}
H_{P_\infty}^1(0)= \liminf_{n\to\infty}n^{\mu_\phi}\abs{\phi^{(n)}(0)}\leq\liminf_{n\to\infty} n^{\mu_\phi}\|\phi^{(n)}\|_\infty\leq\limsup_{n\to\infty} n^{\mu_\phi}\|\phi^{(n)}\|_\infty\leq H_{P_\infty}^1(0)
\end{equation*}
and, from this, \eqref{eq:ConvSupNormTrueAsymptotic} follows at once.
\end{proof}

As we did in the proofs of Theorem \ref{thm:OnDiagonal} and Theorem \ref{thm:Perturbation}, our proof of Theorem \ref{thm:LLT} makes use of the measure-preserving transformation $T(\eta,\zeta)=(\eta-Q(\zeta),\zeta)$ from $\mathbb{R}^{d}=\mathbb{R}^{{a}}\times\mathbb{R}^{{b}}$ to itself. It is easy to verify that $T:\mathbb{R}^d\to\mathbb{R}^d$ is a homeomorphism and $T(0)=0$. With this transformation, we define $\widetilde{P}=P\circ T$ and $\widetilde{R}=R\circ T$ and note that the domain of $\widetilde{R}$ coincides with the preimage of $\Gamma_{\xi_0}$'s domain under $T$ and necessarily contains an open neighborhood of $0$. Finally, for convenience of notation, we shall set $G=E_1\oplus F_2$ and recall that $\{t^G=t^{E_1}\oplus t^{F_2}\}$ is a contracting group. Before the proof of the theorem, we present two lemmas; the first follows immediately from Lemma \ref{lem:RTildeNice}.

\begin{lemma}\label{lem:RTildeExponentLimit}
For any compact set $K\subseteq\mathbb{R}^{d}$ and $\epsilon>0$, there is a natural number $N$ for which
\begin{equation*}
\abs{e^{n\widetilde{R}\left(n^{-G}\xi\right)}-1}<\epsilon
\end{equation*}
for all $n\geq N$ and $\xi\in K$.
\end{lemma}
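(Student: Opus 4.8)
The plan is to deduce this immediately from the subhomogeneity of $\widetilde{R}$ established in Item \ref{item:RTildeNice2} of Lemma \ref{lem:RTildeNice}, together with the elementary bound $\abs{e^z-1}\le \abs{z}e^{\abs{z}}$ valid for all $z\in\mathbb{C}$ (so that $\abs{e^z-1}\le e\abs{z}$ whenever $\abs{z}\le 1$).

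Since $P$ satisfies the hypotheses of Theorem \ref{thm:OnDiagonal} and, in the present setting, $R(\xi)=o(P(\xi))$ as $\xi\to 0$, Lemma \ref{lem:RTildeNice} applies and tells us that $\widetilde{R}=R\circ T$ is subhomogeneous with respect to $G=E_1\oplus F_2$. Given $\epsilon>0$ and a compact set $K\subseteq\mathbb{R}^{d}$, I would invoke this subhomogeneity with the constant $\delta:=\min\{1,\epsilon/e\}$ to obtain some $t_0\in(0,1]$ for which $\abs{\widetilde{R}(t^G\xi)}\le\delta t$ whenever $0<t\le t_0$ and $\xi\in K$. Setting $N:=\lceil 1/t_0\rceil$, for every integer $n\ge N$ the number $t=1/n$ satisfies $0<t\le t_0$ and $t^G=(1/n)^G=n^{-G}$, so the subhomogeneity estimate reads $\abs{\widetilde{R}(n^{-G}\xi)}\le\delta/n$, i.e.\ $\abs{n\widetilde{R}(n^{-G}\xi)}\le\delta\le 1$, for all $\xi\in K$. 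Combining this with the elementary bound gives
\begin{equation*}
\abs{e^{n\widetilde{R}(n^{-G}\xi)}-1}\le e\,\abs{n\widetilde{R}(n^{-G}\xi)}\le e\delta\le\epsilon
\end{equation*}
for all $n\ge N$ and $\xi\in K$, which is exactly the assertion.

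No genuine obstacle arises here; this is purely a repackaging of Lemma \ref{lem:RTildeNice} with the reparametrization $t=1/n$. The only subtlety — that $e^{n\widetilde{R}(n^{-G}\xi)}$ makes sense only once $n^{-G}\xi$ lies in the (open, $0$-containing) domain of $\widetilde{R}$ — is already subsumed in Lemma \ref{lem:RTildeNice}, whose proof selects $t_0$ small enough that the set $\{t^G\xi : 0<t\le t_0,\ \xi\in K\}$ sits inside that domain; since $\{t^G\}$ is contracting this holds automatically for $n$ large.
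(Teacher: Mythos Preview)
Your proof is correct and is exactly the approach the paper takes: the paper states only that the lemma ``follows immediately from Lemma \ref{lem:RTildeNice},'' and you have simply made explicit how the subhomogeneity estimate with $t=1/n$, together with the elementary inequality $\abs{e^z-1}\le e\abs{z}$ for $\abs{z}\le 1$, yields the desired uniform bound.
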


The follows lemma asserts that the collection $\left\{\xi\mapsto\exp(-n\widetilde{P}(n^{-G}\xi)/2)\right\}_n$ is uniformly integrable on $\mathbb{R}^d$.

\begin{lemma}\label{lem:CompactSetFromIntegrability}
For any $\epsilon>0$, there exists a compact set $K$ for which
\begin{equation*}
\int_{\mathbb{R}^{d}\setminus K}e^{-n \widetilde{P}(n^{-G}\xi)/2}\,d\xi<\epsilon
\end{equation*}
for all $n\in\mathbb{N}_+$.
\end{lemma}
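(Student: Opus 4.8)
The plan is to reduce the uniform integrability of the family $\{\xi\mapsto \exp(-n\widetilde P(n^{-G}\xi)/2)\}_n$ to a single integrability statement via the estimates of Lemma \ref{lem:P2P1Estimates}, exactly as in the proof of Theorem \ref{thm:OnDiagonal} and Theorem \ref{thm:Perturbation}. First I would unwind the scaling. Since $\widetilde P(\eta,\zeta)=P_1(\eta)+P_2(\eta-Q(\zeta))$ and $G=E_1\oplus F_2$, the homogeneity of $P_1,P_2$ together with $Q(t^{F_2}\zeta)=t^{E_2}Q(\zeta)$ gives
\begin{equation*}
n\widetilde P(n^{-G}\xi)=P_1(\eta)+P_2\big(n^{-(E_1-E_2)}\eta-Q(\zeta)\big)=P_1(\eta)+P_2\big((1/n)^{E_1-E_2}\eta-Q(\zeta)\big)
\end{equation*}
for $\xi=(\eta,\zeta)\in\mathbb R^d$ and $n\in\mathbb N_+$, which is precisely the expression $P_1(\eta)+P_2(t^{E_2-E_1}\eta-Q(\zeta))$ appearing in \eqref{eq:LargeTimeAsymptoticRepresentation} with $t=n\ge 1$. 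Because $\{t^{E_1-E_2}\}$ is non-expanding, Lemma \ref{lem:P2P1Estimates} (part 2, applied with the roles arranged as in the proof of Theorem \ref{thm:OnDiagonal}) furnishes positive constants $C$ and $M$, independent of $n$, for which
\begin{equation*}
n\widetilde P(n^{-G}\xi)=P_1(\eta)+P_2\big((1/n)^{E_1-E_2}\eta-Q(\zeta)\big)\ge C\big(P_1(\eta)+P_1(-Q(\zeta))\big)-M
\end{equation*}
for all $n\ge 1$ and all $\xi=(\eta,\zeta)\in\mathbb R^d$. Writing $P_*(\xi)=P_1(\eta)+P_1(-Q(\zeta))$ and noting (via Proposition \ref{prop:CompIsPosHom}) that $P_*$ is a positive homogeneous function on $\mathbb R^d$, we obtain the uniform domination
\begin{equation*}
e^{-n\widetilde P(n^{-G}\xi)/2}\le e^{M/2}\,e^{-\tfrac{C}{2}P_*(\xi)}
\end{equation*}
for every $n\in\mathbb N_+$ and $\xi\in\mathbb R^d$.

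The rest is routine. By Lemma \ref{lem:ExpIntegrability}, $\xi\mapsto e^{-(C/2)P_*(\xi)}$ is integrable on $\mathbb R^d$. Hence, given $\epsilon>0$, the absolute continuity of the integral produces $\delta>0$ such that $\int_A e^{-(C/2)P_*(\xi)}\,d\xi<\epsilon\,e^{-M/2}$ for every measurable $A$ with $m(A)<\delta$; but more simply, since $P_*$ is positive homogeneous its sublevel sets $\{P_*\le r\}$ are compact (Proposition \ref{prop:PosHomChar}), exhaust $\mathbb R^d$ as $r\to\infty$, and $e^{-(C/2)P_*}\in L^1$ forces $\int_{\{P_*>r\}}e^{-(C/2)P_*(\xi)}\,d\xi\to 0$ as $r\to\infty$. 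Choose $r$ large enough that this tail is $<\epsilon\,e^{-M/2}$ and set $K=\{\xi\in\mathbb R^d:P_*(\xi)\le r\}$, a compact set. Then for every $n\in\mathbb N_+$,
\begin{equation*}
\int_{\mathbb R^d\setminus K}e^{-n\widetilde P(n^{-G}\xi)/2}\,d\xi\le e^{M/2}\int_{\{P_*>r\}}e^{-\tfrac{C}{2}P_*(\xi)}\,d\xi<\epsilon,
\end{equation*}
which is the claim.

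The only point requiring care — and the one I would flag as the main obstacle — is applying Lemma \ref{lem:P2P1Estimates} with the correct identification of which variable plays the role of $\eta$ and which plays the role of $\xi=Q(\zeta)$, and verifying that the constants it produces depend only on $P_1,P_2,E_1,E_2$ and not on $n$; this is exactly what the hypothesis that $\{t^{E_1-E_2}\}$ is non-expanding (so that $\{(1/n)^{E_1-E_2}\}_{n\ge 1}$ is a uniformly bounded family) guarantees, and it is the same mechanism already used after \eqref{eq:LargeTimeAsymptoticRepresentation}. Once that bound is in hand, the uniform tail estimate is immediate from Lemma \ref{lem:ExpIntegrability}.
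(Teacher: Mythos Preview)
Your proof is correct and follows essentially the same approach as the paper: both unwind the scaling to obtain $n\widetilde P(n^{-G}\xi)=P_1(\eta)+P_2((1/n)^{E_1-E_2}\eta-Q(\zeta))$, apply the lower bound from Lemma~\ref{lem:P2P1Estimates} to dominate uniformly by $e^{M'}e^{-C'P_*(\xi)}$ with $P_*(\xi)=P_1(\eta)+P_1(-Q(\zeta))$, and then invoke Lemma~\ref{lem:ExpIntegrability} for the integrable majorant. Your explicit choice of $K$ as a sublevel set of $P_*$ is a slight elaboration, but the argument is the same.
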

\begin{proof}
Fix $\epsilon>0$ and observe that, for $n\in\mathbb{N}_+$ and $\xi=(\eta,\zeta)\in\mathbb{R}^{d}$,
\begin{eqnarray*}
n\widetilde{P}(n^{-G}\xi)&=&nP_1(n^{-E_1}\eta)+nP_2(n^{-E_1}\eta-Q(n^{-F_2}\zeta))\\
&=&P_1(\eta)+P_2(n^{E_2-E_1}\eta-Q(\zeta))
\end{eqnarray*}
where we have used the fact that $Q$ is homogeneous with respect to the pair $(E_2,F_2)$. By virtue of Lemma \ref{lem:P2P1Estimates}, we can find positive constants $C$ and $M$ for which
\begin{equation*}
2C (P_1(\eta)+P_1(-Q(\zeta))-2M\leq P_1(\eta)+P_2(n^{E_2-E_1}\eta-Q(\zeta))
\end{equation*}
for all $(\eta,\zeta)\in\mathbb{R}^{d}$ and $n\in\mathbb{N}_+$. Consequently,
\begin{equation*}
-n\widetilde{P}(n^{-G}\xi)/2=-(1/2)(P_1(\eta)+P_2(n^{E_2-E_2}\eta-Q(\zeta))\leq M-C(P_1(\eta)+P_1(-Q(\zeta))
\end{equation*}
for all $\xi=(\eta,\zeta)\in\mathbb{R}^{d}$ and $n\in\mathbb{N}_+$. Upon noting that $\xi=(\eta,\zeta)\mapsto C(P_1(\eta)+P_1(-Q(\zeta))$ is a positive homogeneous function, an appeal to Lemma \ref{lem:ExpIntegrability} guarantees a compact set $K\subseteq\mathbb{R}^{d}$ for which
\begin{equation*}
\int_{\mathbb{R}^{d}\setminus K}e^{-n\widetilde{P}(n^{-G}\xi)/2}\,d\xi\leq \int_{\mathbb{R}^{d}\setminus K}e^{M-C(P_1(\eta)+P_1(-Q(\eta)))}\,d\xi<\epsilon
\end{equation*} 
for all $n\in\mathbb{N}_+$.
\end{proof}
\begin{proof}[Proof of Theorem \ref{thm:LLT}]
Fix $\epsilon>0$. Without loss of generality, we shall assume that $\xi_0$ lives on the interior of $\mathbb{T}^{d}$ for, otherwise, another representation of the $d$-dimensional torus can be used as the domain of integration in \eqref{eq:ConvolutionFTRepresentation} and the proof proceeds without change. Let us select a sufficiently small open neighborhood $\mathcal{O}_{\xi_0}\subseteq\mathbb{T}^{d}$ of $\xi_0$ for which the following properties hold:
\begin{enumerate}[label=P\arabic*]
\item\label{property:OpenNeighborhoodProperties1}The functions $\Gamma_{\xi_0}$ and $R$ are defined and smooth on the open neighborhood $\mathcal{O}:=\mathcal{O}_{\xi_0}-\xi_0$ of $0$.
\item\label{property:OpenNeighborhoodProperties2} The open neighborhood $\mathcal{U}:=T^{-1}(\mathcal{O})=T^{-1}(\mathcal{O}_{\xi_0}-\xi_0)$ of $0$ has
\begin{equation*}
\abs{\widetilde{R}(\xi)}\leq \widetilde{P}(\xi)/2
\end{equation*}
for all $\xi\in\mathcal{U}$.
\end{enumerate}
The fact that $\mathcal{O}_{\xi_0}$ can be chosen so that \ref{property:OpenNeighborhoodProperties1} holds is clear from the definitions of $\Gamma_{\xi_0}$ and $R$. The ability to choose $\mathcal{O}_{\xi_0}$ so that Property \ref{property:OpenNeighborhoodProperties2} also holds is a consequence of Item \ref{item:RTildeNice1} of Lemma \ref{lem:RTildeNice}. Using Lemma \ref{lem:CompactSetFromIntegrability}, let $K\subseteq\mathbb{R}^{d}$ be a compact set for which
\begin{equation*}
\int_{\mathbb{R}^{d}\setminus K}e^{-n\widetilde{P}(n^{-G}\xi)/2}\,d\xi<\epsilon/4
\end{equation*}
for all $n\in\mathbb{N}_+$.

With the sets $\mathcal{O}_{\xi_0}$, $\mathcal{O}$, $\mathcal{U}$, and $K$ in hand, we shall now go about selecting $N$. First, because $\{t^{G}\}$ is a contracting  group and $\mathcal{U}$ is an open neighborhood of $0$, there exists $N_1\in\mathbb{N}_+$ for which $n^{-G}(K)\subseteq\mathcal{U}$ (equivalently, $K\subseteq n^G(\mathcal{U})$) for all $n\geq N_1$. By an appeal to Lemma \ref{lem:RTildeExponentLimit}, let $N_2\in\mathbb{N}_+$ be such that
\begin{equation*}
\abs{e^{n\widetilde{R}(n^{-G}\xi)}-1}<\frac{\epsilon}{4(m(K)+1)}
\end{equation*}
for all $\xi\in K$ and $n\geq N_1$ where $m(K)$ is the $d$-dimensional Lebesgue measure of $K$. Finally, given that $\abs{\widehat{\phi}(\xi)}$ is maximized only at $\xi_0$,
\begin{equation*}
\rho:=\sup_{\xi\in \mathbb{T}^{d}\setminus\mathcal{O}_{\xi_0}}\abs{\widehat{\phi}(\xi)}<1
\end{equation*}
and so we may choose a natural number $N\geq \max\{N_1,N_2\}$ for which $n^{\mu_\phi}\rho^n<\epsilon/4$ for all $n\geq N$. 

By virtue of the identity \eqref{eq:ConvolutionFTRepresentation}, we compute
\begin{eqnarray}\label{eq:LLT1}\nonumber
\lefteqn{n^{\mu_\phi}\abs{\phi^{(n)}(x)-\widehat{\phi}(\xi_0)^ne^{-ix\cdot\xi_0}H_P^n(x-n\alpha)}}\\\nonumber
&=&\frac{n^{\mu_\phi}}{(2\pi)^{d}}\abs{\int_{\mathbb{T}^{d}}\widehat{\phi}(\xi)^ne^{-ix\cdot\xi}\,d\xi-\widehat{\phi}(\xi_0)^n e^{-ix\cdot\xi_0}\int_{\mathbb{R}^{d}}e^{-nP(\xi)}e^{-i(x-n\alpha)\cdot\xi}\,d\xi}\\\nonumber
&\leq & \frac{n^{\mu_\phi}}{(2\pi)^{d}}\int_{\mathbb{T}^{d}\setminus\mathcal{O}_{\xi_0} }\abs{\widehat{\phi}(\xi)}^n\,d\xi+\\\nonumber
& &\hspace{2cm} n^{\mu_\phi}\abs{\int_{\mathcal{O}_{\xi_0}}\widehat{\phi}(\xi)^n e^{-ix\cdot\xi}\,d\xi-\widehat{\phi}(\xi_0)^ne^{-ix\cdot\xi_0}\int_{\mathbb{R}^{d}}e^{-nP(\xi)}e^{-i(x-n\alpha)\cdot\xi}\,d\xi}\\\nonumber
&\leq & n^{\mu_\phi}\rho^n+n^{\mu_\phi}\abs{\int_{\mathcal{O}}\widehat{\phi}(\xi+\xi_0)^ne^{-ix\cdot(\xi+\xi_0)}\,d\xi-\widehat{\phi}(\xi_0)^ne^{-ix\cdot\xi_0}\int_{\mathbb{R}^{d}}e^{-nP(\xi)}e^{-i(x-n\alpha)\cdot\xi}\,d\xi}\\
&\leq &\frac{\epsilon}{4}+n^{\mu_\phi}\abs{\int_{\mathcal{O}}e^{n\Gamma_{\xi_0}(\xi)}e^{-ix\cdot\xi}\,d\xi-\int_{\mathbb{R}^{d}}e^{-nP(\xi)}e^{-i(x-n\alpha)\cdot\xi}\,d\xi}
\end{eqnarray}
for all $n\geq N$ and $x\in\mathbb{Z}^{d}$ where we have made of variables $\xi\mapsto \xi+\xi_0$ and made use of the fact that $\abs{\widehat{\phi}(\xi_0)^ne^{-ix\cdot\xi_0}}=1$. Let us now make the measure-preserving change of variables $\xi\mapsto T(\xi)$ to see that
\begin{eqnarray*}
\int_{\mathcal{O}}e^{n\Gamma_{\xi_0}(\xi)}e^{-ix\cdot\xi}\,d\xi&=&\int_{\mathcal{U}}e^{n(\Gamma_{\xi_0}\circ T)(\xi)}e^{-ix\cdot T(\xi)}\,d\xi\\
&=&\int_{n^{-G}(K)}e^{n(\Gamma_{\xi_0}\circ T)(\xi)}e^{-ix\cdot T(\xi)}\,d\xi+\int_{\mathcal{U}\setminus n^{-G}(K)}e^{n(\Gamma_{\xi_0}\circ T)(\xi)}e^{-ix\cdot T(\xi)}\,d\xi.
\end{eqnarray*}
for each $n\geq N\geq N_1$ and $x\in\mathbb{Z}^d$. Similarly,
\begin{equation*}
\int_{\mathbb{R}^{d}}e^{-nP(\xi)}e^{-i(x-n\alpha)\cdot\xi}\,d\xi=\int_{n^{-G}(K)}e^{-n\widetilde{P}(\xi)}e^{-i(x-n\alpha)\cdot T(\xi)}\,d\xi+\int_{\mathbb{R}^{d}\setminus n^{-G}(K)}e^{-n\widetilde{P}(\xi)}e^{-i(x-n\alpha)\cdot T(\xi)}\,d\xi
\end{equation*}
for all $n\in\mathbb{N}_+$ and $x\in\mathbb{Z}^{d}$. Consequently, for $x\in\mathbb{Z}^{d}$ and $n\geq N$,
\begin{eqnarray}\label{eq:LLT2}\nonumber
\lefteqn{\hspace{-1cm}\abs{\int_{\mathcal{O}}e^{n\Gamma_{\xi_0}(\xi)}e^{-ix\cdot\xi}\,d\xi-\int_{\mathbb{R}^{d}}e^{-nP(\xi)}e^{-i(x-n\alpha)\cdot\xi}\,d\xi}}\\\nonumber
\hspace{2cm}&\leq& \abs{\int_{n^{-G}(K)}e^{n(\Gamma_{\xi_0}\circ T)(\xi)}e^{-ix\cdot T(\xi)}\,d\xi-\int_{n^{-G}(K)}e^{-n\widetilde{P}(\xi)}e^{-i(x-n\alpha)\cdot T(\xi)}\,d\xi}\\\nonumber
&&\hspace{1cm}+\abs{\int_{\mathcal{U}\setminus n^{-G}(K)}e^{n(\Gamma_{\xi_0}\circ T)(\xi)}e^{-ix\cdot T(\xi)}\,d\xi}+\abs{\int_{\mathbb{R}^{d}\setminus n^{-G}(K)}e^{-n\widetilde{P}(\xi)}e^{-i(x-n\alpha)\cdot T(\xi)}\,d\xi}\\\nonumber
&\leq &\abs{\int_{n^{-G}(K)}\left(e^{n\widetilde{R}(\xi)}-1\right)e^{-n\widetilde{P}(\xi)}e^{-i(x-n\alpha)\cdot T(\xi)}\,d\xi}\\\nonumber
&&\hspace{1cm}+\int_{\mathcal{U}\setminus n^{-G}(K)}\abs{e^{n(\Gamma_{\xi_0}\circ T)(\xi)}e^{-ix\cdot T(\xi)}}\,d\xi+\int_{\mathbb{R}^{d}\setminus n^{-G}(K)}\abs{e^{-n\widetilde{P}(\xi)}e^{-i(x-n\alpha)\cdot T(\xi)}}\,d\xi\\\nonumber
&\leq & \int_{n^{-G}(K)}\abs{e^{-n\widetilde{R}(\xi)}-1}\,d\xi\\
&&\hspace{1cm}+\int_{\mathcal{U}\setminus n^{-G}(K)}e^{n\Re[(\Gamma_{\xi_0}\circ T)(\xi)]}\,d\xi+\int_{\mathbb{R}^{d}\setminus n^{-G}(K)}e^{-n\widetilde{P}(\xi)}\,d\xi
\end{eqnarray}
where we have used the fact that $(\Gamma_{\xi_0}\circ T)(\xi)=i\alpha\cdot T(\xi)-\widetilde{P}(\xi)+\widetilde{R}(\xi)$ and $\widetilde{P}$ is non-negative. Upon making the change of variables $\xi\mapsto n^{-G}\xi$ and recalling how $N_2$ was chosen, observe that
\begin{equation}\label{eq:LLT3}
\int_{n^{-G}(K)}\abs{e^{-n\widetilde{R}(\xi)}-1}\,d\xi=n^{-\tr G}\int_K\abs{e^{-n\widetilde{R}(n^{-G}\xi)}-1}\,d\xi\leq n^{-\mu_\phi}\frac{\epsilon}{4(m(K)+1)}m(K)\leq\frac{\epsilon n^{-\mu_\phi}}{4}
\end{equation}
whenever $n\geq N\geq N_2$; here, we have recalled that $\tr G=\tr (E_1\oplus F_2)=\tr E_1+\tr E_2=\mu_\phi$. Recalling our choice of $K$ and upon noting that
\begin{equation*}
\Re[(\Gamma_{\xi_0}\circ T)(\xi)]=-\widetilde{P}(\xi)+\Re[\widetilde{R}(\xi)]\leq -\widetilde{P}(\xi)+\widetilde{P}(\xi)/2=-\widetilde{P}(\xi)/2
\end{equation*}
for all $\xi\in \mathcal{U}$ thanks to \ref{property:OpenNeighborhoodProperties2}, we find that
\begin{eqnarray}\label{eq:LLT4}\nonumber
\int_{\mathcal{U}\setminus n^{-G}(K)}e^{n\Re[(\Gamma_{\xi_0}\circ T)(\xi)]}\,d\xi&\leq & \int_{\mathcal{U}\setminus n^{-G}(K)}e^{-n\widetilde{P}(\xi)/2}\,d\xi\\\nonumber
&\leq & \int_{\mathbb{R}^{d}\setminus n^{-G}(K)}e^{-n\widetilde{P}(\xi)/2}\,d\xi\\
&=&n^{-\mu_\phi}\int_{\mathbb{R}^{d}\setminus K}e^{-n\widetilde{P}(n^{-G}\xi)/2}\,d\xi<\frac{\epsilon n^{-\mu_\phi}}{4}
\end{eqnarray}
for all $n\geq N$ where we have made the change of variables $\xi\mapsto n^{-G}\xi$ between the final and penultimate lines. Finally, using the fact that $\widetilde{P}$ is non-negative, a similar computation shows that
\begin{equation}\label{eq:LLT5}
\int_{\mathbb{R}^{d}\setminus n^{-G}(K)}e^{-n\widetilde{P}(\xi)}\,d\xi\leq n^{-\mu_\phi} \int_{\mathbb{R}^{d}\setminus K}e^{-n\widetilde{P}(n^{-G}\xi)/2}\,d\xi<\frac{\epsilon n^{-\mu_\phi}}{4}
\end{equation}
for all $n\in\mathbb{N}_+$. Combining \eqref{eq:LLT2}, \eqref{eq:LLT3}, \eqref{eq:LLT4}, and \eqref{eq:LLT5} yields
\begin{equation*}
n^{\mu_\phi}\abs{\int_{\mathcal{O}}e^{n\Gamma_{\xi_0}(\xi)}e^{-ix\cdot\xi}\,d\xi-\int_{\mathbb{R}^{d}}e^{-nP(\xi)}e^{-i(x-n\alpha)\cdot\xi}\,d\xi}<\frac{3\epsilon}{4}
\end{equation*}
for all $n\geq N$ and $x\in\mathbb{Z}^{d}$. Finally, substituting the above estimate into \eqref{eq:LLT1} gives
\begin{equation*}
n^{\mu_\phi}\abs{\phi^{(n)}(x)-\widehat{\phi}(\xi_0)^n e^{-ix\cdot\xi}H_P^n(x-n\alpha)}<\epsilon
\end{equation*}
for all $n\geq N$ and $x\in\mathbb{Z}^{d}$, as was asserted.
\end{proof}

\begin{example}[The introductory Example]
For $\phi:\mathbb{Z}^2\to\mathbb{C}$ given by \eqref{eq:IntroPhi}, we have
\begin{eqnarray*}
\widehat{\phi}(\xi)&=&\frac{1}{100} \left(100-\left(\sin (\eta)+4 \sin ^2(\zeta/2)\right)^2-\frac{797}{600} \sin
   ^4(\eta)-10 \sin ^6(\eta/2)-\frac{1}{6}\sin ^6(\zeta)-\frac{179}{1200} \sin ^8(\zeta)\right.\\
&&\left.-\frac{1}{6} \sin (\eta) \sin^4(\zeta)-\frac{77}{900} \sin (\eta) \sin ^6(\zeta)-\frac{47}{150}\sin(\zeta)^2\sin(\eta)^3+\frac{3}{100} \sin
   ^2(\eta) \sin ^4(\zeta)\right)
\end{eqnarray*} 
for $\xi=(\eta,\zeta)\in\mathbb{R}^2$. A straightforward computation shows that $\max_{\xi}|\widehat{\phi}(\xi)|=1$ and, within $\mathbb{T}^2$, this maximum is attained only at the origin, $\xi_0=(0,0)$. To give a reader a sense of $\widehat{\phi}$, we have illustrated its absolute value in Figure \ref{fig:PhiHat}.
\begin{figure}[!htb]
    \centering
    \includegraphics[width=0.6\textwidth]{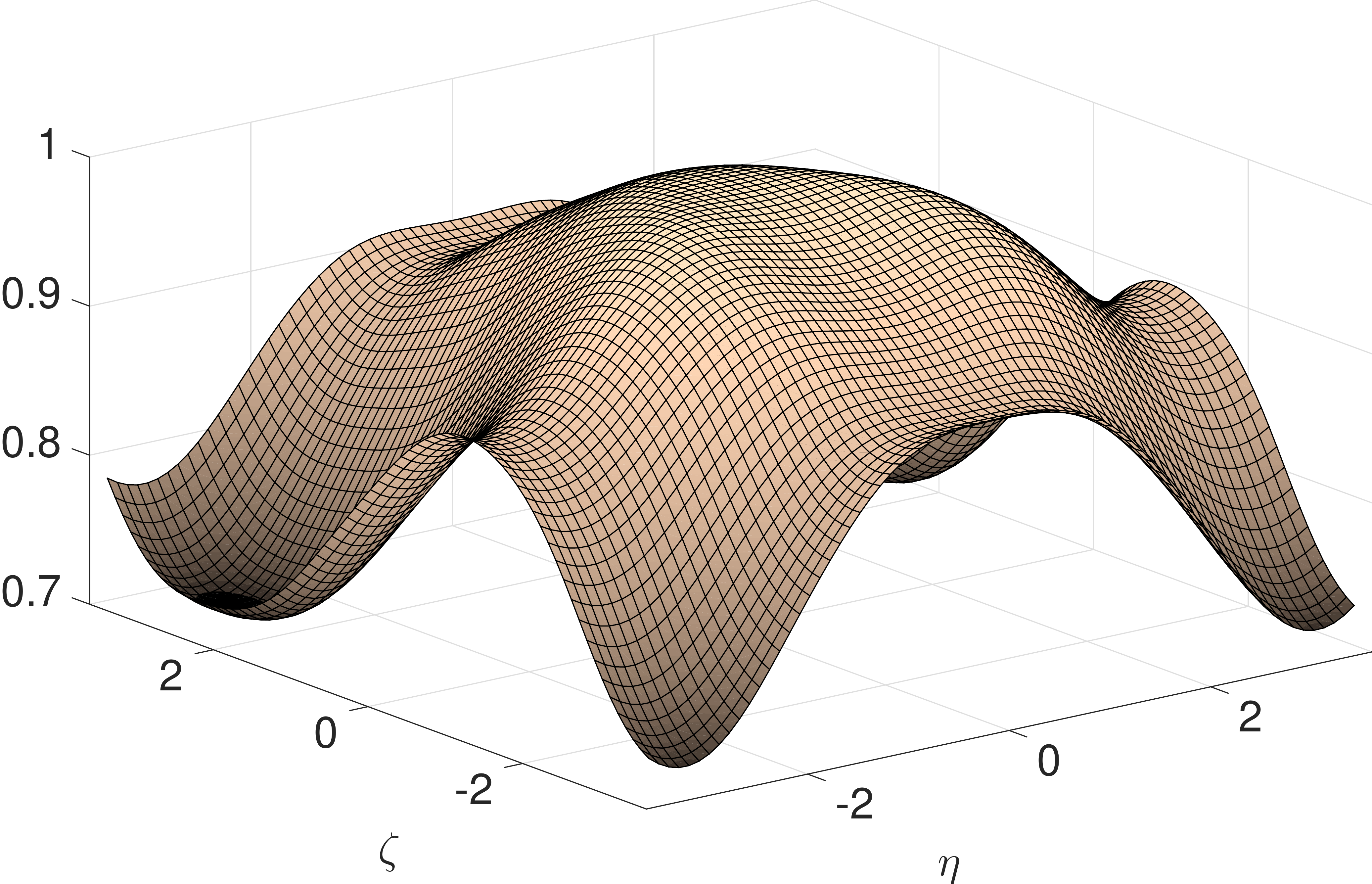}
    \caption{The graph of $|\widehat{\phi}(\xi)|$ for $\xi=(\eta,\zeta)\in\mathbb{T}^2$.}
    \label{fig:PhiHat}
\end{figure}
It is easy to see that $\widehat{\phi}(\xi_0)=1$ and we compute
\begin{equation*}
\Gamma_{\xi_0}(\xi)=\log(\widehat{\phi}(\xi))=-\frac{1}{100}P(\xi)+R(\xi)
\end{equation*}
where $P(\xi)=(\eta+\zeta^2)^2+\eta^4$ is the symbol discussed throughout the introduction and
\begin{equation*}
R(\xi)=-\frac{1}{1000}\left(\frac{1}{5}\eta^2\zeta^6-\frac{47}{45}\eta^3\zeta^4-\frac{79}{400}\zeta^4\zeta^4-\frac{1201}{1000}\eta^5\zeta^2-\frac{60739}{9000}\eta^6-\frac{37233979}{16800000}\eta^8\right)+O(\abs{\xi}^9)
\end{equation*}
as $\xi\to 0$. Observe that $\xi\mapsto \abs{\xi}^9=L_{4.5}(\xi)$ where $L_k$ is defined in Example \ref{ex:PerturbByLaplace} and corresponds to the $k$-th power of the Laplacian. As shown in that example (following Corollary \ref{cor:LaplaceCompareSemiElliptic}), $L_{4.5}(\xi)=o(P(\xi))$ as $\xi\to 0$ and so it follows that $O(\abs{\xi}^9)=o(P(\xi)/100)$ as $\xi\to 0$. Thus, to show that $R(\xi)=o(P(\xi)/100)$ as $\xi\to 0$ so that we may apply Theorem \ref{thm:LLT} and Corollary \ref{cor:ConvSupNorm}, we must show that the monomials $R_1(\xi):=\eta^2\zeta^6$, $R_2(\xi):=\eta^3\zeta^4$, $R_3(\xi):=\eta^4\zeta^4$, $R_4(\xi):=\eta^5\zeta^2$, $R_5(\xi):=\eta^6$, and $R_6(\xi)=\eta^8$ are ``little-o'' of $P(\xi)$ as $\xi\to 0$ and, to this end, our approach will  employ Proposition \ref{prop:RTildeVeryNice} just as we did in Example \ref{ex:R1R2Details}. Indeed, we have
\begin{equation*}
\widetilde{R_1}(\xi)=(\rho_1\circ T)(\eta,\zeta)=(\eta-\zeta^2)^2\zeta^6
\end{equation*}
for $\xi=(\eta,\zeta)\in\mathbb{R}^2$ where $T(\eta,\zeta)=(\eta-\zeta^2,\zeta)$. Therefore,
\begin{equation*}
\widetilde{R_1}(t^G\xi)=(t^{1/2}\eta-(t^{1/8}\zeta)^2)^2(t^{1/8}\zeta)^6=t^{5/4}(t^{1/4}\eta-\zeta^2)^2\zeta^6
\end{equation*}
for $t>0$ and $\xi=(\eta,\zeta)\in\mathbb{R}^2$ where $G=E_1\oplus F_2$ has standard matrix representation $\diag(1/2,1/8)$. From this it follows (by the same argument used in Example \ref{ex:R1R2Details}) that $\widetilde{R_1}$ is subhomogeneous with respect to $G$ and therefore $R_1(\xi)=o(P(\xi))=o(P(\xi)/100)$ as $\xi\to 0$ on account of Proposition \ref{prop:RTildeVeryNice}. We leave it to the reader the verify this conclusion for $R_k$ for $k=2,3,\dots,6$. Consequently, $R(\xi)=o(P(\xi)/100)$ as $\xi\to 0$.

Since $\xi\mapsto P(\xi)/100$ meets the hypotheses of Theorem \ref{thm:LLT} (as well as Theorem \ref{thm:OnDiagonal}) with $\mu_\infty=5/8$ and $R(\xi)=o(P(\xi)/100)$ as $\xi\to 0$, an appeal to Theorem \ref{thm:LLT} guarantees that
\begin{equation*}
\phi^{(n)}(x)=\widehat{\phi}(\xi_0)^ne^{-ix\cdot\xi_0}H_{P/100}^n(x-n\alpha)+o(n^{-5/8})=H_{P}^{n/100}(x)+o(n^{-5/8})
\end{equation*}
uniformly for $x\in\mathbb{Z}^2$ as $n\to\infty$; here, we have noted that $\mu_\phi=\mu_\infty=5/8$, $\xi_0=(0,0)$, $\widehat{\phi}(\xi_0)=1$, and $\alpha=(0,0)$. We recall that this conclusion was discussed in the introduction and illustrated in Figure \ref{fig:Intro}. Finally, we appeal to Corollary \ref{cor:ConvSupNorm} to conclude that $\|\phi^{(n)}\|_\infty\asymp n^{-5/8}$ as $n\geq 1$ and
\begin{equation*}
    \lim_{n\to\infty}n^{5/8}\|\phi^{(n)}\|_\infty=H_{(P/100)_\infty}^1(0).
\end{equation*}
Of course, $(P/100)_\infty=P_\infty/100$ where $P_\infty(\xi)=\eta^2+\zeta^8$ and therefore
\begin{equation*}
    H_{(P/100)_\infty}^1(0)=H_{P_\infty}^{1/100}(0)=(100)^{5/8}H_{P_\infty}(0)
\end{equation*}
where we have used the homogeneity of $P_\infty$ with respect to $G$. In view of the calculations done in Example \ref{ex:OnDiagonalMotivatingExampleTrueAsymptotics}, we conclude that
\begin{equation*}
\lim_{n\to\infty}n^{5/8}\|\phi^{(n)}\|_\infty=(100)^{5/8}H_{P_\infty}^1(0)=\frac{(100)^{5/8}}{2\pi^{3/2}}\Gamma(9/8)\approx 1.50376.
\end{equation*}
This is illustrated in Figure \ref{fig:ExampleLogLog} wherein $\|\phi^{(n)}\|_\infty$ is compared with $1.5 \times n^{-5/8}$ on a log-log scale for $10^1\leq n\leq10^6$. 
\begin{figure}[!htb]
    \centering
    \includegraphics[width=0.6\textwidth]{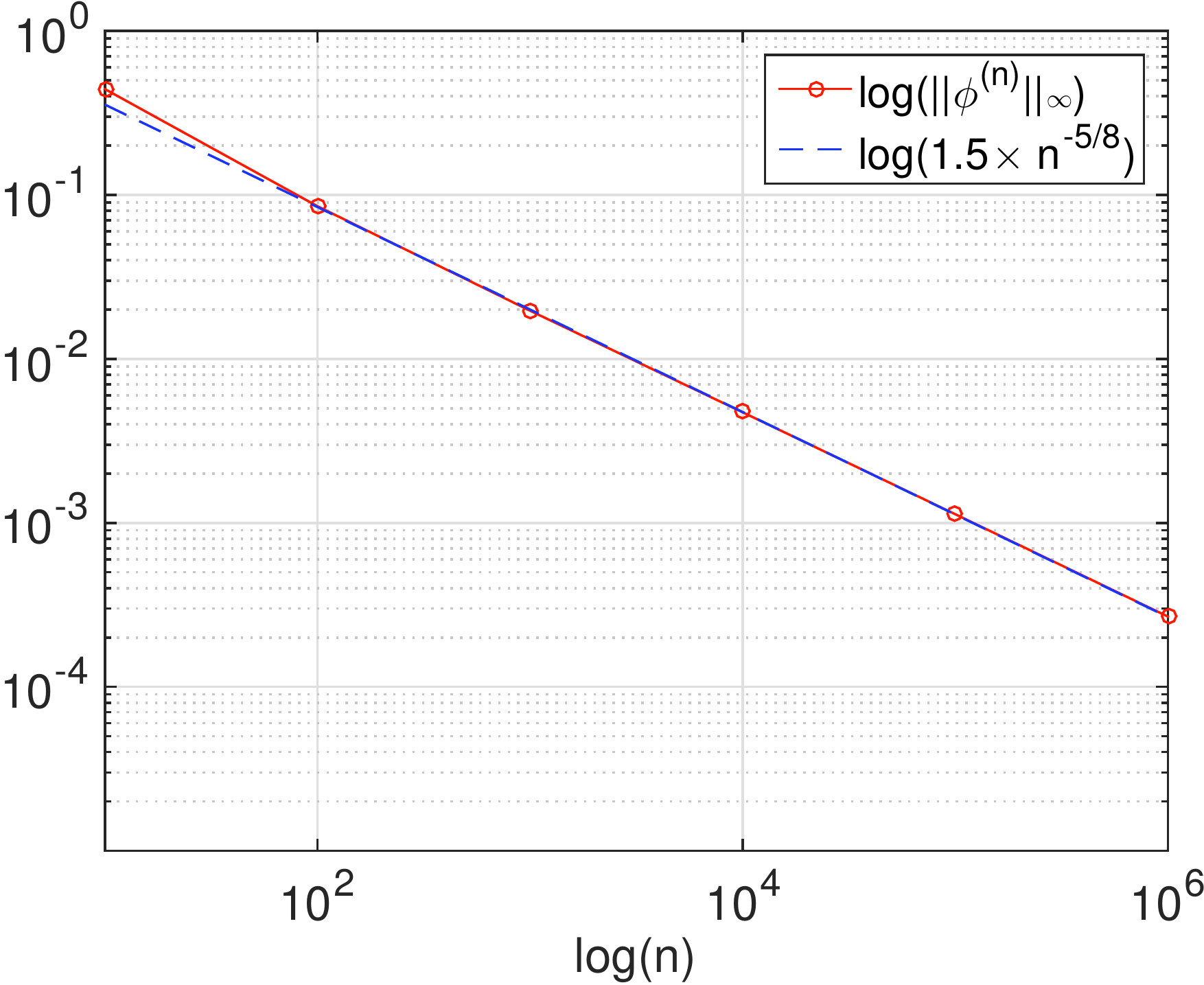}
    \caption{A log-log plot of $\|\phi^{(n)}\|_\infty$ and $1.5\times n^{-5/8}$ for $10^1\leq n\leq 10^6$.}
    \label{fig:ExampleLogLog}
\end{figure}
\end{example}

\begin{example}[A nearby non-example]
In this example, we consider a complex-valued function $\psi$ on $\mathbb{Z}^2$ which is a close approximation to the function $\phi$ of the preceding example, however, its convolution powers $\psi^{(n)}$ behave distinctly from those of $\phi$. In particular, this example shows that the hypotheses concerning $R(\xi)$ in the expansion \eqref{eq:GammaExpansion} of Theorem \ref{thm:LLT} are necessary and, further, it demonstrates the delicate nature of ``higher order'' terms present in these expansions insofar as they affect the behavior of convolution powers. In what follows, we shall assume the notation of the preceding example and consider $\psi:\mathbb{Z}^d\to\mathbb{C}$ defined by
\begin{equation*}
\psi=\frac{1}{960000}\left(\psi_1+\psi_2+\psi_3\right)
\end{equation*}
where
\begin{equation*}
\psi_1(x)=
\begin{cases}
862318 & (x_1,x_2)=(0,0)\\
22500\pm 19200 i & (x_1,x_2)=\pm(1,0)\\
    -3412&(x_1,x_2)=\pm(2,0)\\
    1500 & (x_1,x_2)=\pm(3,0)\\
    -797 &(x_1,x_2)=\pm (4,0)\\
    0 &\mbox{else}
\end{cases},
\hspace{1cm}
\psi_2(x)=
\begin{cases}
38400 & (x_1,x_2)=(0,\pm 1)\\
    -9225 & (x_1,x_2)=(0,\pm 2)\\
    -150 & (x_1,x_2)=(0,\pm 4)\\
    25 &(x_1,x_2)=(0,\pm 6)\\
    0 &\mbox{else}
\end{cases}
\end{equation*}
and
\begin{equation*}
    \psi_3(x)=
    \begin{cases}
    \pm9600 i & (x_1,x_2)=(\mp 1,1), (\mp 1,-1)\\
    0 &\mbox{else}
    \end{cases}
\end{equation*}
for $x=(x_1,x_2)\in\mathbb{Z}^2$. We remark that $\psi$ is close to $\phi$ in several senses. For example, we have $\|\phi-\psi\|_\infty=51/20935\approx 0.0024\approx 0.0024 \times \|\phi\|_\infty$ and $\|\phi-\psi\|_2=1/\sqrt{247293}\approx 0.0020\approx 0.0022\times\|\phi\|_2$. Of course, given that the behavior of convolution powers $\psi^{(n)}$ is determined by the nature of the Fourier transform of $\psi$ near points at which is it maximized in absolute value (c.f., \cite{RSC15,RSC17,BR22,Ra22}), we analyze $\widehat{\psi}$. We have
\begin{equation*}
\widehat{\psi}(\xi)=\frac{1}{100} \left(100-\left(\sin (\eta)+4 \sin ^2(\zeta/2)\right)^2-\frac{1}{6}\sin ^6(\zeta)\right)
\end{equation*}
for $\xi=(\eta,\zeta)\in\mathbb{R}^2$. As illustrated in Figure \ref{fig:FTCompare}, the graph of $|\widehat{\psi}(\xi)|$ is extremely similar to that of $|\widehat{\phi}(\xi)|$ on $\mathbb{T}^2$.
\begin{figure}[!htb]
    \begin{subfigure}{0.49\textwidth}
    \centering
    \includegraphics[width=0.8\textwidth]{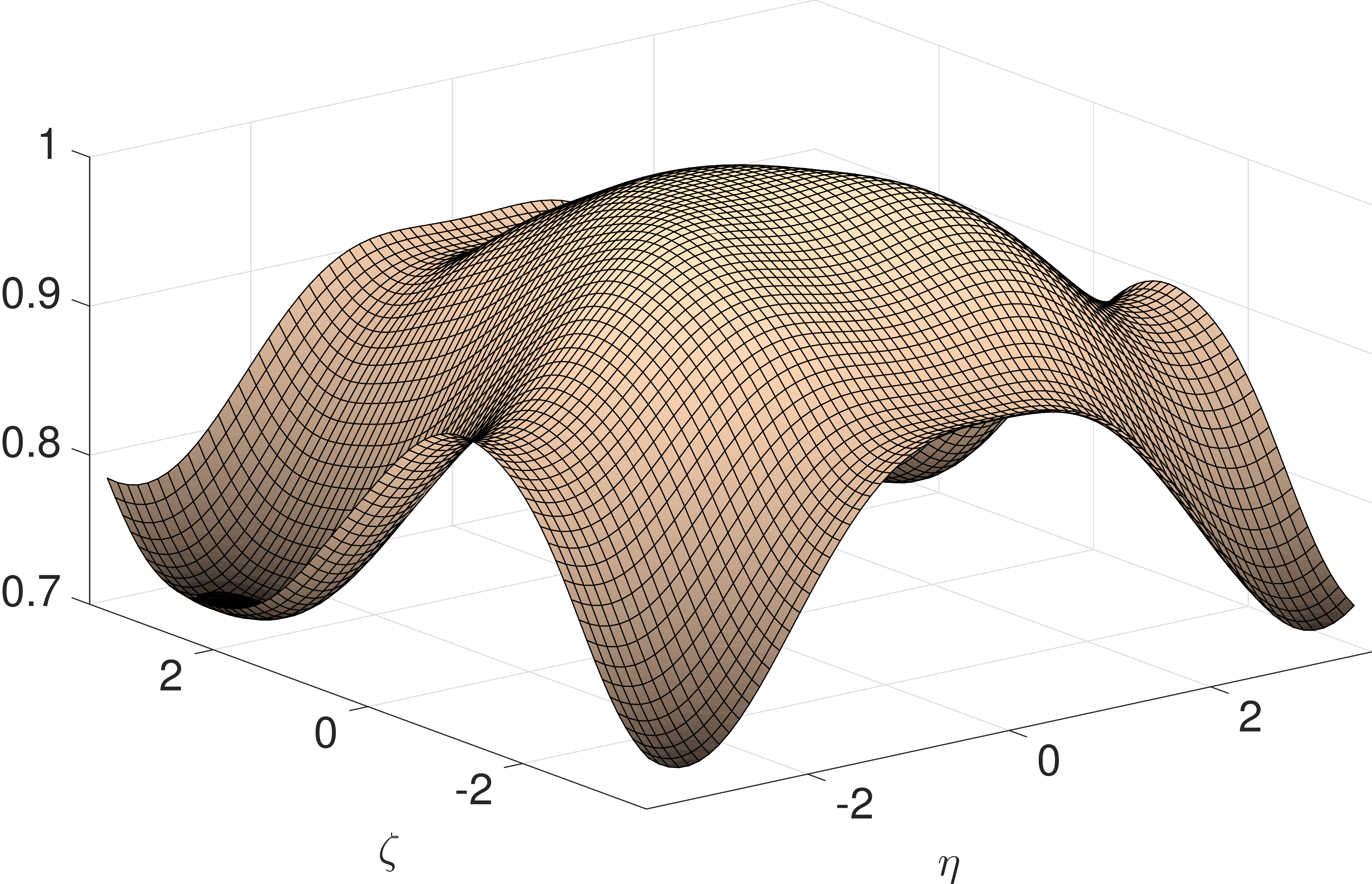}
    \caption{$|\widehat{\psi}(\xi)|$ for $\xi=(\eta,\zeta)\in\mathbb{T}^2$.}
    \end{subfigure}
    \begin{subfigure}{0.49\textwidth}
    \centering
    \includegraphics[width=0.8\textwidth]{PhiHat.pdf}
    \caption{$|\widehat{\phi}(\xi)|$ for $\xi=(\eta,\zeta)\in\mathbb{T}^2$.}
    \end{subfigure}
    \caption{Illustration of the Fourier transforms of $\psi$ and $\phi$ on $\mathbb{T}^2$.}
    \label{fig:FTCompare}
\end{figure}
Akin to the previous example, $\max_\xi|\widehat{\psi}(\xi)|=1$ and this maximum is attained only at $\xi_0=(0,0)$ in $\mathbb{T}^2$ where we have $\widehat{\psi}(\xi_0)=1$. We compute
\begin{equation*}
\Gamma_{\xi_0}(\xi)=\log(\widehat{\psi}(\xi))=-\frac{1}{100}P(\xi)+R(\xi)
\end{equation*}
where $P(\xi)=(\eta+\zeta^2)^2+\eta^4$ and
\begin{eqnarray}\label{eq:RBadDecomposition}\nonumber
R(\xi)&=&\frac{1}{100}\left(\frac{179}{1200}\zeta^8+\frac{1}{6}\eta \zeta^4-\frac{23}{900}\eta\zeta^6-\frac{3}{100}\eta^2\zeta^4+\frac{47}{150}\eta^3\zeta^2\right)\\\nonumber
&&+\frac{1}{10000}\left(\frac{1}{3}\eta^2\zeta^6-\frac{47}{18}\eta^3\zeta^4-\frac{227}{600}\eta^4\zeta^4-\frac{1003}{300}\eta^5\zeta^2+\frac{60739}{900}\eta^6-\frac{37233979}{1680000}\eta^8+O(\abs{\xi}^9)\right)\\
&=:&R_1(\xi)+R_2(\xi)
\end{eqnarray}
as $\xi=(\eta,\zeta)\to 0$. In other words, akin to the analogous expansion for $\widehat{\phi}$, $\Gamma_{\xi_0}$ is made up of the polynomial $-P(\xi)/100$ and a series $R(\xi)$ consistent of ``higher order terms'' relative to those of $P$. With the initial aim of applying Theorem \ref{thm:LLT} or Corollary \ref{cor:ConvSupNorm} to this example, we ask if $R(\xi)=o(P(\xi))$ as $\xi\to 0$. As we did in the preceding example, we investigate this by composing by the non-linear transformation $T$ and then checking if $R_1(\xi)$ and $R_2(\xi)$ are subhomogeneous with respect to $G=E_1\oplus F_2$. We find that
\begin{equation*}
\widetilde{R}_1(\xi)=\frac{1}{100}\left(\frac{179}{1200}\zeta^8+\frac{1}{6}(\eta-\zeta^2)\\
\zeta^4-\frac{23}{900}(\eta-\zeta^2)\zeta^6-\frac{3}{100}(\eta-\zeta^2)^2\zeta^4+\frac{47}{150}(\eta-\zeta^2)^3\zeta^2\right)
\end{equation*}
for $\xi=(\eta,\zeta)$. From this it follows that
\begin{equation}\label{eq:R1TildeBad}
\lim_{t\to 0}t^{-1}\widetilde{R}_1(t^G\xi)=\lim_{t\to 0} \frac{1}{6}  (\eta-t^{-1/4}\zeta^2)\zeta^4=-\infty
\end{equation}
whenever $\xi=(\eta,\zeta)$ is such that $\zeta\neq 0$.  Consequently, $\widetilde{R}_1$ is not subhomogeneous with respect to $G$. By contrast, we find that $\widetilde{R}_2$ is subhomogeneous with respect to $G$ by an argument analogous to that used in Example \ref{ex:R1R2Details}. In view of Proposition \ref{prop:RTildeVeryNice}, we conclude that $R(\xi)\neq o(P(\xi))$ as $\xi\to 0$. We note that, though our calculation above for $\widetilde{R}_1$ shows that $R_1$ is not well controlled by $P(\xi)$ for small $\xi$, this loss of control only happens from below and from this we will still be able to deduce something useful (see Lemma \ref{lem:PsiNotAymptotic} and Proposition \ref{prop:PsiNotAsymptitic} below). 

Though the expansion of $\Gamma_{\xi_0}$ for $\widehat{\psi}$ agrees with that for $\widehat{\phi}$ at the lowest order (both are $-P(\xi)/100$), we cannot apply Theorem \ref{thm:LLT} or Corollary \ref{cor:ConvSupNorm} to this example because $R(\xi)\neq o(P(\xi))$ as $\xi\to 0$. In particular, for $\|\psi^{(n)}\|_\infty$, we are not able to deduce from Corollary \ref{cor:ConvSupNorm} the decay of $n^{-5/8}$ which is characteristic of $H_{P/100}^n$ and $\|\phi^{(n)}\|_\infty$. Figure \ref{fig:NonExampleLogLog} presents strong numerical evidence that $\|\psi^{(n)}\|_\infty$ decays faster than $n^{-5/8}$ as $n\to\infty$.
\begin{figure}[!htb]
    \centering
    \includegraphics[width=0.6\textwidth]{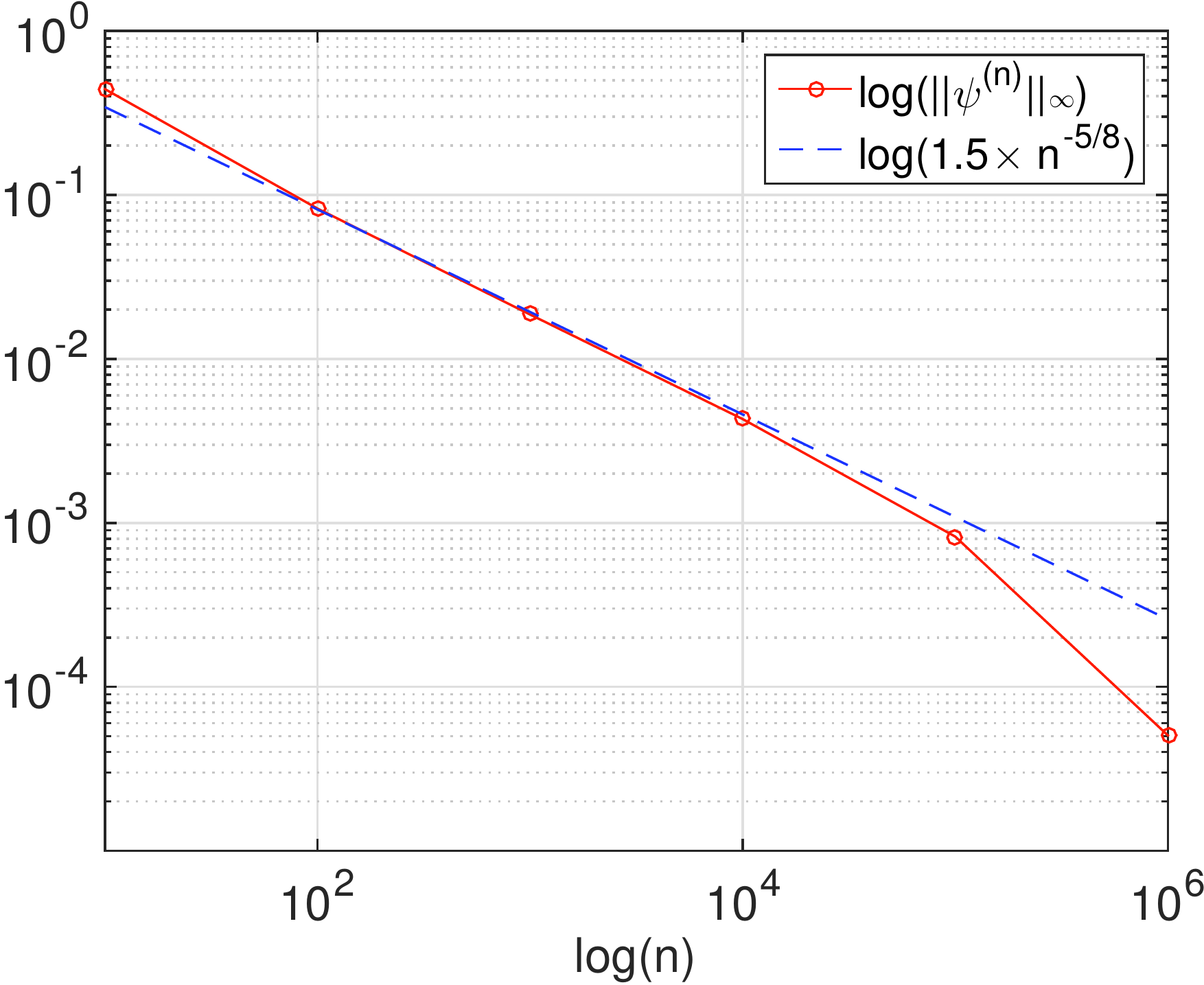}
    \caption{A log-log plot of $\|\psi^{(n)}\|_\infty$ and $1.5\times n^{-5/8}$ for $10^1\leq n\leq 10^6$.}
    \label{fig:NonExampleLogLog}
\end{figure}
In fact, the following proposition confirms that this is the case.

\begin{proposition}\label{prop:PsiNotAsymptitic}
We have
\begin{equation*}
\lim_{n\to\infty}n^{5/8}\|\psi^{(n)}\|_\infty=0.
\end{equation*}
\end{proposition}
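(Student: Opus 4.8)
The plan is to bound $|\psi^{(n)}(x)|$ directly through the Fourier representation \eqref{eq:ConvolutionFTRepresentation}; the essential point is that, although $R(\xi)\neq o(P(\xi))$ as $\xi\to0$, this failure is \emph{one-sided}, and a one-sided (upper) bound on $\Re[(\Gamma_{\xi_0}\circ T)(\cdot)]$ near $0$ is precisely what is needed to bound $|\psi^{(n)}|$ from above. Since $|\psi^{(n)}(x)|\le(2\pi)^{-2}\int_{\mathbb{T}^2}|\widehat\psi(\xi)|^n\,d\xi$ for every $x$, it suffices to prove $n^{5/8}\int_{\mathbb{T}^2}|\widehat\psi(\xi)|^n\,d\xi\to0$. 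As $|\widehat\psi|$ attains its maximum value $1$ only at $\xi_0=0$, fix a small neighborhood $\mathcal O_{\xi_0}$ of $\xi_0$; on its complement $|\widehat\psi|\le\rho<1$, so that contribution is $O(\rho^n)=o(n^{-5/8})$. On $\mathcal O=\mathcal O_{\xi_0}-\xi_0$ (a neighborhood of $0$) the function $\widehat\psi(\cdot+\xi_0)$ is real and positive, so $|\widehat\psi(\xi+\xi_0)|^n=\exp\!\big(-\tfrac{n}{100}P(\xi)+nR(\xi)\big)$. Writing $\mathcal P=\tfrac{1}{100}P$, which still satisfies the hypotheses of Theorem~\ref{thm:OnDiagonal} with $\mu_\infty=5/8$ and the same exponent data $E_1,E_2,F_1,F_2$, $G=E_1\oplus F_2$, $\operatorname{tr}G=5/8$, I apply the measure-preserving map $T(\eta,\zeta)=(\eta-\zeta^2,\zeta)$ and then the substitution $\xi\mapsto n^{-G}\xi$ to obtain
\[
n^{5/8}\int_{\mathcal O_{\xi_0}}|\widehat\psi(\xi)|^n\,d\xi=\int_{n^{G}\mathcal U}\exp\!\big(-n\widetilde{\mathcal P}(n^{-G}\xi)+n\widetilde R(n^{-G}\xi)\big)\,d\xi,
\]
where $\mathcal U=T^{-1}(\mathcal O)$, $\widetilde{\mathcal P}=\mathcal P\circ T$ and $\widetilde R=R\circ T=\widetilde R_1+\widetilde R_2$ as in \eqref{eq:RBadDecomposition}. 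The goal becomes to show this integral tends to $0$.

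For almost every $\xi=(\eta,\zeta)$ — namely whenever $\zeta\neq0$ — the integrand tends to $0$ pointwise: by \eqref{eq:R1TildeBad}, $n\widetilde R_1(n^{-G}\xi)=t^{-1}\widetilde R_1(t^{G}\xi)\to-\infty$ with $t=1/n$; since $\widetilde R_2$ is subhomogeneous with respect to $G$, $n\widetilde R_2(n^{-G}\xi)\to0$; and since $\{t^{E_1-E_2}\}$ is contracting ($E_1-E_2=\tfrac{1}{4}I$), the computation in Lemma~\ref{lem:CompactSetFromIntegrability} gives $n\widetilde{\mathcal P}(n^{-G}\xi)=\tfrac{1}{100}\big(P_1(\eta)+P_2(n^{E_2-E_1}\eta-Q(\zeta))\big)\to\tfrac{1}{100}P_\infty(\xi)$, a finite quantity. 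Hence the exponent tends to $-\infty$, so the integrand tends to $0$ a.e.

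For the domination the crux is to establish, after possibly shrinking $\mathcal O_{\xi_0}$, the one-sided estimate $\widetilde R(\xi)\le\tfrac{1}{2}\widetilde{\mathcal P}(\xi)$ for all $\xi\in\mathcal U$ — this is the precise form of ``the loss of control of $R$ by $P$ happens only from below.'' Since $\widetilde R_2=o(\widetilde{\mathcal P})$ as $\xi\to0$ (subhomogeneity is equivalent to this by Lemma~\ref{lem:RTildeNice} / Proposition~\ref{prop:RTildeVeryNice}), it reduces to showing that the positive part of $\widetilde R_1$ is $o(\widetilde{\mathcal P})$. I would prove this by a region decomposition, splitting $\mathcal U$ according to whether $|\eta-\zeta^2|\ge\zeta^2$ or $|\eta-\zeta^2|<\zeta^2$, using $\widetilde{\mathcal P}(\eta,\zeta)=\tfrac{1}{100}\big(\eta^2+(\eta-\zeta^2)^4\big)$ and exploiting that the unique $G$-weight-less-than-one monomial of $\widetilde R_1$, namely $-\tfrac{1}{600}\zeta^6$, has a \emph{negative} coefficient: where $\eta-\zeta^2<0$ it dominates and forces $\widetilde R_1<0$, while elsewhere one has $\widetilde{\mathcal P}\gtrsim\zeta^4$, which makes every monomial of $\widetilde R_1$ negligible against $\widetilde{\mathcal P}$ near $0$ — this is in the spirit of the region analysis of Example~\ref{ex:PerturbByLaplace}. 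Granting this estimate, on $n^{G}\mathcal U$ the exponent in the displayed identity is $\le-\tfrac{1}{2}\,n\widetilde{\mathcal P}(n^{-G}\xi)$, and by Lemma~\ref{lem:P2P1Estimates} (exactly as in the proof of Lemma~\ref{lem:CompactSetFromIntegrability}) this is $\le M-C\big(\eta^2+\zeta^4\big)$ for constants $M,C>0$ independent of $n$; thus the integrand is dominated by the integrable function $\xi\mapsto e^{M}e^{-C(\eta^2+\zeta^4)}$ (Lemma~\ref{lem:ExpIntegrability}). The dominated convergence theorem then yields that the integral tends to $0$.

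Combining the three steps, $n^{5/8}|\psi^{(n)}(x)|\le o(1)+n^{5/8}O(\rho^n)=o(1)$ uniformly in $x$, whence $n^{5/8}\|\psi^{(n)}\|_\infty\to0$. The main obstacle is the one-sided estimate $\widetilde R\le\tfrac{1}{2}\widetilde{\mathcal P}$ near $0$: because $\mathcal P$ (equivalently $\widetilde{\mathcal P}$) is inhomogeneous, this cannot be read off by comparing orders of terms and requires the explicit region-by-region comparison sketched above, with the sign of the one dangerous monomial being decisive; everything else runs parallel to the proofs of Theorem~\ref{thm:OnDiagonal} and Theorem~\ref{thm:LLT}.
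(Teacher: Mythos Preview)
Your proposal is correct and follows essentially the same line as the paper: the paper isolates the one-sided estimate $R(\xi)\le\tfrac{1-\delta}{100}P(\xi)$ near $0$ as a separate lemma (proved in the \emph{original} coordinates via the region decomposition $\mathcal R_1\cup\mathcal R_2$ of Example~\ref{ex:PerturbByLaplace}, which is equivalent to your $\widetilde R\le\tfrac12\widetilde{\mathcal P}$ on $\mathcal U$), then combines it with the divergence \eqref{eq:R1TildeBad} and Lemma~\ref{lem:P2P1Estimates} to run the dominated-convergence argument after the change of variables $\xi\mapsto T(n^{-G}\xi)$, exactly as you describe. The only detail to watch is that your assertion ``$\widetilde R_1<0$ throughout $\{\eta-\zeta^2<0\}$'' is slightly too strong right at the boundary (where $\widetilde R_1\approx\tfrac{179}{120000}\zeta^8>0$), but the weaker bound $\widetilde R_1\le c\,\widetilde{\mathcal P}$ you actually need still holds there; the paper's choice of regions makes this bookkeeping a bit cleaner.
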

\begin{lemma}\label{lem:PsiNotAymptotic}
There exists $\delta\in (0,1)$ and an open neighborhood $\mathcal{O}\subseteq\mathbb{T}^2$ of $0$ for which
\begin{equation*}
R(\xi)\leq \frac{1-\delta}{100}P(\xi)
\end{equation*}
for $\xi\in\mathcal{O}$.
\end{lemma}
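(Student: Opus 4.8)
The plan is to produce a small neighborhood $\mathcal O$ of $0$ on which $100\,R(\xi)\le\tfrac12 P(\xi)$, so that the lemma holds with $\delta=\tfrac12$. Using the decomposition $R=R_1+R_2$ from \eqref{eq:RBadDecomposition}, I first dispose of $R_2$: since $E_1-E_2=I/4$ the group $\{t^{E_1-E_2}\}$ is contracting, so Proposition~\ref{prop:RTildeVeryNice} applies, and since $\widetilde R_2$ is subhomogeneous with respect to $G$ (verified above by the argument of Example~\ref{ex:R1R2Details}), we get $R_2(\xi)=o(P(\xi))$ as $\xi\to 0$, hence $R_2(\xi)\le\tfrac{1}{500}P(\xi)$ on some neighborhood of $0$. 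It then remains to bound $R_1$ from above, i.e. to show that
\[
100\,R_1(\xi)=\frac{179}{1200}\zeta^8+\frac16\eta\zeta^4-\frac{23}{900}\eta\zeta^6-\frac{3}{100}\eta^2\zeta^4+\frac{47}{150}\eta^3\zeta^2\le cP(\xi)
\]
near $0$ for some constant $c<1$.

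The crux is the estimate $\zeta^8\le 2P(\xi)$ for all $\xi=(\eta,\zeta)$ in a neighborhood of $0$. To prove it, fix $\epsilon>0$ with $(1+\epsilon)^4=2$ and split into the cases $\zeta^2\le(1+\epsilon)|\eta|$ and $\zeta^2>(1+\epsilon)|\eta|$. In the first, $\zeta^8\le(1+\epsilon)^4\eta^4=2\eta^4\le 2P(\xi)$ immediately. In the second, with $u=\eta+\zeta^2$, one checks that $|u|\ge\tfrac{\epsilon}{1+\epsilon}\zeta^2$ regardless of the sign of $\eta$ (if $\eta\ge 0$ then $u\ge\zeta^2$; if $\eta<0$ then $u=\zeta^2-|\eta|>\zeta^2-\tfrac{1}{1+\epsilon}\zeta^2$), so $P(\xi)\ge u^2\ge\tfrac{\epsilon^2}{(1+\epsilon)^2}\zeta^4$ and therefore $\zeta^8\le\tfrac{(1+\epsilon)^4}{\epsilon^4}P(\xi)^2\le 2P(\xi)$ once $P(\xi)$ is sufficiently small. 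Since $P\ge 0$ and $P(\xi)\to\infty$ as $|\xi|\to\infty$, the sublevel set $\{P<r\}$ is a bounded open neighborhood of $0$ for every $r>0$, which yields the claim.

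With $\zeta^8\le 2P(\xi)$ and the trivial $\eta^4\le P(\xi)$ in hand, a term-by-term sign analysis of $100\,R_1$ completes the argument. On a sufficiently small neighborhood of $0$: $\tfrac{179}{1200}\zeta^8\le\tfrac{179}{600}P(\xi)$; $-\tfrac{3}{100}\eta^2\zeta^4\le 0$; the term $-\tfrac{23}{900}\eta\zeta^6$ is $\le 0$ when $\eta\ge 0$, while for $\eta\le 0$ it equals $\tfrac{23}{900}|\eta|\zeta^6\le\tfrac{23}{900}P(\xi)^{1/4}(2P(\xi))^{3/4}=\tfrac{23\cdot 2^{3/4}}{900}P(\xi)$; and $\tfrac16\eta\zeta^4$ and $\tfrac{47}{150}\eta^3\zeta^2$ are $\le 0$ when $\eta\le 0$, while for $\eta\ge 0$ one has $u=\eta+\zeta^2\ge\max(\eta,\zeta^2)\ge 0$, hence $\eta\zeta^4\le u^3\le P(\xi)^{3/2}$ and $\eta^3\zeta^2\le u^4\le P(\xi)^2$. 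Summing, the contributions bounded by constant multiples of $P(\xi)$ total at most $(\tfrac{179}{600}+\tfrac{23\cdot 2^{3/4}}{900})P(\xi)\approx 0.34\,P(\xi)$, and the other two are $o(P(\xi))$; so $100\,R_1(\xi)\le\tfrac25 P(\xi)$ near $0$. Combined with $R_2(\xi)\le\tfrac{1}{500}P(\xi)$ this gives $100\,R(\xi)\le\tfrac12 P(\xi)$ on a neighborhood $\mathcal O$ of $0$, which is the lemma with $\delta=\tfrac12$.

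The hard part is the estimate $\zeta^8\le cP(\xi)$: the naive split ($|\eta|\ge\tfrac12\zeta^2$) yields only $\zeta^8\le 16P(\xi)$, which is useless here since one needs $c<1200/179\approx 6.7$ for the $\zeta^8$ term of $100\,R_1$ to contribute strictly less than $P(\xi)$. One must exploit that $P=(\eta+\zeta^2)^2+\eta^4$ controls $\zeta^8$ essentially sharply on the region $\eta\approx-\zeta^2$, where $\zeta^8\approx\eta^4\le P$; the dichotomy above achieves exactly this (and in fact gives a constant arbitrarily close to $1$). Everything else is routine. It is worth noting, in line with the discussion preceding the lemma, that the term $\tfrac16\eta\zeta^4$ responsible for the failure of subhomogeneity of $\widetilde R_1$—and hence for $R\neq o(P)$—is harmless for the upper bound: it is nonpositive precisely on the half-space $\eta\le 0$ where it could have been large, and is $o(P)$ on $\eta\ge 0$, so it obstructs only a lower bound on $R$ relative to $P$.
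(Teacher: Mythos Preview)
Your proof is correct; the only blemish is a harmless arithmetic slip at the very end: with $100R_1\le\frac{2}{5}P$ and $R_2\le\frac{1}{500}P$ you get $100R\le\frac{3}{5}P$, not $\frac{1}{2}P$, but this still yields the lemma with $\delta=\frac{2}{5}$ (or simply tighten the $R_2$ bound, which is $o(P)$ anyway).

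Your route and the paper's share the same high-level structure---dispose of $R_2$ via Proposition~\ref{prop:RTildeVeryNice}, then show $100R_1\le\rho P$ for some $\rho<1$---but the geometric decompositions differ. The paper reuses the regions $\mathcal{R}_1,\mathcal{R}_2$ from Example~\ref{ex:PerturbByLaplace} (splitting on whether $|\eta+\zeta^2|$ is small or large compared to $\zeta^2$): on $\mathcal{R}_1$ one has $0\le\zeta^2\le-\sqrt{2}\,\eta$, so every term of $100R_1$ is bounded by a constant times $\eta^4\le P$, giving the sharp constant $7/9$; on $\mathcal{R}_2$ one has $\zeta^4\lesssim P^{1/2}$ and all of $100R_1$ is $O(P^{5/4})$. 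You instead isolate a single key pointwise estimate $\zeta^8\le 2P$ near $0$ (proved by a dichotomy on $\zeta^2$ versus $(1+\epsilon)|\eta|$ that is morally the same split as $\mathcal{R}_1/\mathcal{R}_2$), and then do a clean sign-of-$\eta$ case analysis on each monomial. Your argument is a bit more self-contained and makes the role of the critical inequality $\zeta^8\lesssim P$ explicit; the paper's version recycles earlier computations and gets a slightly sharper constant on the ``bad'' region. Both are elementary and of comparable length.
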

\begin{exproof}
In \eqref{eq:RBadDecomposition}, we have written $R(\xi)=R_1(\xi)+R_2(\xi)$ and noted, on account of Proposition \ref{prop:RTildeVeryNice}, that $R_2(\xi)=o(P(\xi)/100)$ as $\xi\to 0$. Thus, to prove the statement at hand, it suffices to find an open neighborhood $\mathcal{O}\subseteq\mathbb{T}^2$ of $0$ and $\rho\in (0,1)$ for which
\begin{equation*}
\mathcal{E}(\xi):=\frac{179}{1200}\zeta^8+\frac{1}{6}\eta\zeta^4-\frac{23}{900}\eta\zeta^6-\frac{3}{100}\eta^2\zeta^4+\frac{47}{150}\eta^3\zeta^2=100R_1(\xi)\leq \rho P(\xi)
\end{equation*}
for all $\xi=(\eta,\zeta)\in\mathcal{O}$. As a first approximation, we set $\mathcal{O}=\{\xi=(\eta,\zeta):P(\xi)<1\}$ and decompose it as $\mathcal{O}=\mathcal{R}_1\cup\mathcal{R}_2$ where $\mathcal{R}_1$ and $\mathcal{R}_2$ are given by \eqref{eq:Region1} and \eqref{eq:Region2}, respectively. In view of our analysis in Example \ref{ex:PerturbByLaplace}, we observe that, for $\xi=(\eta,\zeta)\in\mathcal{R}_1$, $0\leq\zeta^2\leq -\sqrt{2}\eta$ and, in particular, $\eta$ is non-positive. Therefore,
\begin{equation*}
\mathcal{E}(\xi)\leq\frac{179}{1200}4\eta^4+0+\frac{23}{900}(\sqrt{2})^3\eta^4+0+0=\left(\frac{179}{300}+\frac{23\sqrt{2}}{450}\right)\eta^4\leq \frac{7}{9}P(\xi)
\end{equation*}
for $\xi=(\eta,\zeta)\in\mathcal{R}_1$. On $\mathcal{R}_2$, we have
\begin{equation*}
\max\left\{\eta^4,\left(1-\frac{1}{\sqrt{2}}\right)^2\zeta^4\right\}\leq P(\xi)
\end{equation*}
and therefore
\begin{eqnarray*}
\lefteqn{\mathcal{E}(\xi)\leq \frac{179}{1200}(6+4\sqrt{2})^2P(\xi)^2+\frac{1}{6}(6+4\sqrt{2})P(\xi)^{5/4}}\\
&&\hspace{2cm}+\frac{23}{900}(6+4\sqrt{2})^{3/2}P(\xi)^{7/4}+0+\frac{47}{150}(2+\sqrt{2})P(\xi)^{5/4}\leq 150 P(\xi)^{5/4}
\end{eqnarray*}
where we have used the fact that $P(\xi)<1$ for $\xi\in\mathcal{O}$. Thus, by further restricting the open set $\mathcal{O}$ so that $150 P(\xi)^{1/4}<7/9$, the preceding estimates ensure that $\mathcal{E}(\xi)\leq \rho P(\xi)$ for all $\xi\in\mathcal{O}$ where $\rho=7/9$.
\end{exproof}
\begin{exproof}[Proof of Proposition \ref{prop:PsiNotAsymptitic}.]
By an appeal to the preceding lemma, let $\mathcal{O}\subseteq\mathbb{T}^2$ be an open neighborhood of $0$ for which
\begin{equation}\label{eq:PsiNotAsymptotic1}
\abs{\widehat{\psi}(\xi)}=e^{-P(\xi)/100+R(\xi)}\leq e^{-\gamma P(\xi)}
\end{equation}
for $\xi\in\mathcal{O}$ where $\gamma=\delta/100>0$. Using similar arguments to those which appear in the proof of Theorem \ref{thm:LLT}, we find that
\begin{equation*}
\|\varphi^{(n)}\|_\infty=\sup_{x\in\mathbb{Z}^2}\abs{\psi^{(n)}(x)}\leq \rho^n+\int_{\mathcal{O}}\abs{\widehat{\psi}(\xi)}^n\,d\xi
\end{equation*}
for all $n\in\mathbb{N}_+$ where $0<\rho<1$. Focusing on the integral above, we make the change of variables $\xi\mapsto T(n^{-G}\xi)$ to see that
\begin{equation*}
n^{5/8}\int_{\mathcal{O}}\abs{\widehat{\psi}(\xi)}^n\,d\xi=\int_{n^{G}(\mathcal{U})}\exp\left(-n\left(\frac{\widetilde{P}(n^{-G}\xi)}{100}-\widetilde{R}(n^{-G}\xi)\right)\right)\,d\xi
\end{equation*}
for each $n\in\mathbb{N}_+$ where $\mathcal{U}=T^{-1}(\mathcal{O})$. Upon noting that $T(n^{-G}\xi)\in\mathcal{O}$ whenever $\xi\in n^G(\mathcal{U})$, it follows from Lemma \ref{lem:P2P1Estimates} and \eqref{eq:PsiNotAsymptotic1} that
\begin{equation*}
\exp\left(-n\left(\frac{\widetilde{P}(n^{-G}\xi)}{100}-\widetilde{R}(n^{-G}\xi)\right)\right)\chi_{_{n^G(\mathcal{U})}}(\xi)\leq e^{-n\gamma\widetilde{P}(n^{-G}\xi)}\chi_{_{n^G(\mathcal{U})}}(\xi)\leq e^Me^{-C( \eta^2+\zeta^4)}
\end{equation*}
for all $n\in\mathbb{N}_+$ and $\xi=(\eta,\zeta)\in\mathbb{R}^2$; here $M$ and $C$ are positive constants and, for a measurable set $\mathcal{A}\subseteq\mathbb{R}^2$, $\chi_{_\mathcal{A}}$ denotes the indicator function of $\mathcal{A}$. Also, thanks to the computation \eqref{eq:R1TildeBad} and the fact that $\widetilde{R}_2(\xi)=o(\widetilde{P}(\xi))$ as $\xi\to 0$, we have
\begin{equation*}
\lim_{n\to\infty}\exp\left(-n\left(\frac{\widetilde{P}(n^{-G}\xi)}{100}-\widetilde{R}(n^{-G}\xi)\right)\right)\chi_{_{n^G(\mathcal{U})}}(\xi)=0,
\end{equation*}
for almost every $\xi\in\mathbb{R}^d$. Given that $\xi\mapsto e^Me^{-C(\eta^2+\zeta^4)}\in L^1(\mathbb{R}^2)$, an appeal to the dominated convergence theorem is justified and we conclude that
\begin{eqnarray*}
\lim_{n\to\infty}n^{5/8}\|\psi^{(n)}\|_\infty&=&\lim_{n\to\infty}n^{5/8}\rho^n+\lim_{n\to\infty}n^{-5/8}\int_{\mathcal{O}}\abs{\widehat{\psi}(\xi)}^n\,d\xi\\
&=&0+\lim_{n\to\infty}\int_{n^{G}(\mathcal{U})}\exp\left(-n\left(\frac{\widetilde{P}(n^{-G}\xi)}{100}-\widetilde{R}(n^{-G}\xi)\right)\right)\,d\xi\\
&=&0.
\end{eqnarray*}
\end{exproof}
To our knowledge, there is no known theory that is able to treat this example and, in particular, the asymptotic behavior of $\|\psi^{(n)}\|_\infty$ remains unknown.
\end{example}
\section{Discussion}\label{sec:Discussion}

\subsection{Another perspective} 
As we noted in the introduction and demonstrated in the proof of Theorem \ref{thm:OnDiagonal}, key to the analysis in this paper is the observation that
\begin{equation*}
\varphi(t)=H_P^t(0)=H_{\widetilde{P}}^t(0)=\widetilde{\varphi}(t)
\end{equation*}
for all $t>0$ where $\widetilde{P}=P\circ T$ can be seen, in some sense, as a ``dual'' symbol to $P$. In terms of our motivating example in which $P(\xi)=(\eta+\zeta^2)^2+\eta^4$ and
\begin{equation*}
\Lambda=\partial_{x_1}^4+\partial_{x_2}^4+2i\partial_{x_1}\partial_{x_2}^2-\partial_{x_1}^2,
\end{equation*}
$\widetilde{P}(\xi)=\eta^2+(\eta-\zeta^2)^4$ for $\xi=(\eta,\zeta)\in\mathbb{R}^2$ and corresponds to the constant coefficient operator
\begin{equation*}
\widetilde{\Lambda}=-\partial_{x_1}^2+\partial_{x_1}^4+4i\partial_{x_1}^3\partial_{x_2}^2-6\partial_{x_1}^2\partial_{x_2}^4-4i\partial_{x_1}\partial_{x_2}^6+\partial_{x_2}^8.
\end{equation*}
Though the operator $\Lambda$ is a self-adjoint fourth-order elliptic operator and is therefore well understood, $\widetilde{\Lambda}$ is mysterious and somewhat poorly behaved as the following proposition shows.
\begin{proposition}
The operator $\widetilde{\Lambda}$ with symbol $\widetilde{P}=\eta^2+(\eta-\zeta^2)^4$ is not hypoelliptic. Still, $-\widetilde{\Lambda}$ generates a semigroup $\{e^{-t\widetilde{\Lambda}}\}_{t>0}$ with heat kernel given by
\begin{equation*}
H_{\widetilde{P}}^t(x)=\frac{1}{(2\pi)^2}\int_{\mathbb{R}^2}e^{-t\widetilde{P}(\xi)}e^{-ix\cdot\xi}\,d\xi
\end{equation*}
defined for $t>0$ and $x\in\mathbb{R}^2$. This heat kernel has
\begin{equation*}
\widetilde{\varphi}(t)=H_{\widetilde{P}}^t(0)\asymp 
\begin{cases}
 t^{-1/2} & 0<t\leq 1 \\
t^{-5/8} & t\geq 1
\end{cases}
\end{equation*}
for $t>0$ and, further,
\begin{equation*}
\lim_{t\to 0}t^{1/2}\widetilde{\varphi}(t)=\frac{1}{\pi^2}\Gamma(5/4)^2\hspace{1cm}\mbox{and}\hspace{1cm}\lim_{t\to\infty}t^{5/8}\widetilde{\varphi}(t)=\frac{1}{2\pi^{3/2}}\Gamma(9/8).
\end{equation*}
\end{proposition}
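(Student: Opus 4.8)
The plan is to derive the whole proposition from the measure-preserving change of variables $\xi \mapsto T\xi$, where $T(\eta,\zeta) = (\eta - \zeta^2,\zeta)$ has $\det DT \equiv 1$, reducing every claim about $\widetilde\Lambda$ to a fact about $\Lambda$ already established, and to handle the non-hypoellipticity assertion separately via H\"{o}rmander's criterion. First I would observe that $\widetilde P = P\circ T$ is continuous, positive-definite (in particular real-valued and non-negative), and vanishes only at $0$: this is immediate since $T$ is a homeomorphism of $\mathbb{R}^2$ fixing $0$ and $P$ is continuous and positive-definite. Because $\widetilde\Lambda = \widetilde P(D)$ is a constant-coefficient operator with real polynomial symbol, it is symmetric and non-negative on $C_c^\infty(\mathbb{R}^2)$ and essentially self-adjoint, its closure being, via the Fourier transform, multiplication by $\widetilde P$; hence $-\widetilde\Lambda$ generates the contraction semigroup $\{e^{-t\widetilde\Lambda}\}$, which in the frequency domain is multiplication by $e^{-t\widetilde P}$.

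To obtain the convolution kernel I only need $e^{-t\widetilde P}\in L^1(\mathbb{R}^2)$ for each $t>0$, and this is immediate from $\xi\mapsto T\xi$:
\[
\int_{\mathbb{R}^2} e^{-t\widetilde P(\xi)}\,d\xi = \int_{\mathbb{R}^2} e^{-tP(T\xi)}\,d\xi = \int_{\mathbb{R}^2} e^{-tP(\xi')}\,d\xi' < \infty
\]
by the existence statement in Theorem \ref{thm:OnDiagonal} applied to $P$, so $H_{\widetilde P}^t = \mathcal{F}^{-1}(e^{-t\widetilde P})$ is well defined and is the kernel of $e^{-t\widetilde\Lambda}$. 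Taking the same computation at $x=0$ yields the identity that drives the asymptotics,
\[
\widetilde\varphi(t) = H_{\widetilde P}^t(0) = \frac{1}{(2\pi)^2}\int_{\mathbb{R}^2} e^{-t\widetilde P(\xi)}\,d\xi = \frac{1}{(2\pi)^2}\int_{\mathbb{R}^2} e^{-tP(\xi)}\,d\xi = H_P^t(0) = \varphi(t)
\]
for all $t>0$. Since $P(\eta,\zeta) = (\eta+\zeta^2)^2 + \eta^4$ is exactly the case $p = q = 2$, $l = 4$ of Example \ref{ex:OnDiagonalMotivatingExample}, I would then read off $\varphi(t)\asymp t^{-1/2}$ for $0 < t\le 1$ and $\varphi(t)\asymp t^{-5/8}$ for $t\ge 1$, and from Example \ref{ex:OnDiagonalMotivatingExampleTrueAsymptotics} (with the scaling computation \eqref{eq:IntroSmallTimeTrueAsymptotic} supplying the $t\to 0$ limit) the precise constants $\lim_{t\to 0} t^{1/2}\varphi(t) = \Gamma(5/4)^2/\pi^2$ and $\lim_{t\to\infty} t^{5/8}\varphi(t) = \Gamma(9/8)/(2\pi^{3/2})$. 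As $\widetilde\varphi \equiv \varphi$, all the asymptotic claims about $\widetilde\varphi$ follow at once.

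For non-hypoellipticity I would invoke H\"{o}rmander's characterization of hypoellipticity for constant-coefficient operators: since $\widetilde P$ is positive-definite, hence nonvanishing on $\mathbb{R}^2\setminus\{0\}$, hypoellipticity of $\widetilde\Lambda$ would force $\partial^\alpha\widetilde P(\xi)/\widetilde P(\xi)\to 0$ as $\xi\to\infty$ in $\mathbb{R}^2$ for every multi-index $\alpha\ne 0$. I would test this along the curve $\xi = (\zeta^2,\zeta)$: expanding $\widetilde P(\eta,\zeta) = \eta^2 + \eta^4 - 4\eta^3\zeta^2 + 6\eta^2\zeta^4 - 4\eta\zeta^6 + \zeta^8$ gives $\widetilde P(\zeta^2,\zeta) = \zeta^4$ while $\partial_\zeta^4\widetilde P(\eta,\zeta) = 144\eta^2 - 1440\eta\zeta^2 + 1680\zeta^4$, so $\partial_\zeta^4\widetilde P(\zeta^2,\zeta) = 384\,\zeta^4 = 384\,\widetilde P(\zeta^2,\zeta)$; the ratio is therefore the nonzero constant $384$ along a curve escaping to infinity, contradicting the criterion, so $\widetilde\Lambda$ is not hypoelliptic. (Equivalently, the complex zero variety $\{\widetilde P = 0\}\subseteq\mathbb{C}^2$ contains a sequence going to infinity with bounded imaginary part, e.g. $\eta_k = k$ with $\zeta_k$ a square root of $k - \sqrt{k}\, e^{i\pi/4}$, which also violates H\"{o}rmander's criterion.) The only point requiring any care in the whole argument is quoting the correct form of this criterion; the remainder is bookkeeping organized around $T$ and the examples already worked out, so I do not anticipate a real obstacle.
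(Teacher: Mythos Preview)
Your proposal is correct and follows essentially the same route as the paper: both reduce the heat-kernel statements to those for $P$ via the measure-preserving transformation $T$ and the identity $\widetilde\varphi\equiv\varphi$, then cite Examples \ref{ex:OnDiagonalMotivatingExample} and \ref{ex:OnDiagonalMotivatingExampleTrueAsymptotics}, and both establish non-hypoellipticity by computing $\partial_\zeta^4\widetilde P$ along the curve $\xi=(\zeta^2,\zeta)$ and applying H\"ormander's criterion. Your expanded form $\partial_\zeta^4\widetilde P=144\eta^2-1440\eta\zeta^2+1680\zeta^4$ agrees with the paper's factored expression, and your additional parenthetical zero-variety argument is a correct alternative not given in the paper.
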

\begin{proof}
The statements concerning $H_{\widetilde{P}}$ and $\widetilde{\varphi}$ follow directly from our analysis in Examples \ref{ex:OnDiagonalMotivatingExample} and \ref{ex:OnDiagonalMotivatingExampleTrueAsymptotics} through the identity $\varphi=\widetilde{\varphi}$. Thus, it remains to prove that $\widetilde{\Lambda}$ is not hypoelliptic. To see this, we compute
\begin{equation*}
\partial^4_{\zeta}\widetilde{P}(\eta,\zeta)=384\zeta^4-1152\zeta^2(\eta-\zeta^2)+144(\zeta-\zeta^2)^2
\end{equation*}
to see that
\begin{equation*}
\lim_{\zeta\to\infty}\frac{\partial^4_{\zeta}\widetilde{P}(\zeta^2,\zeta)}{\widetilde{P}(\zeta^2,\zeta)}=384.
\end{equation*}
Thus, for the multi-index $\alpha=(0,4)$, 
\begin{equation*}
\lim_{|\xi|\to\infty}\frac{\widetilde{P}^{(\alpha)}(\xi)}{\widetilde{P}(\xi)}\neq 0;
\end{equation*}
here, for a multi-index $\alpha=(\alpha_1,\alpha_2)$, $\widetilde{P}^{(\alpha)}=\partial_{\eta}^{\alpha_1}\partial_{\zeta}^{\alpha_2}\widetilde{P}$ in the notation of L. H\"{o}rmander \cite{Ho83}. By virtue of Theorem 11.1.1 of \cite{Ho83}, we conclude that $\widetilde{\Lambda}$ is not hypoelliptic. 
\end{proof}

Of course, as we discussed in the introduction, the theory formulated in this article has a version in which one begins with the operator $\widetilde{\Lambda}$ whose heat kernel $H_{\widetilde{P}}$, along the diagonal, is well behaved in large time but whose small time behavior is elusive and is deduced more easily from $H_{P}$ via the correspondence $H_{P}^t(0)=H_{\widetilde{P}}^t(0)$. 
%In a companion paper, we shall follow this perspective and explore the nature of $H_{\widetilde{P}}$.

\subsection{Future Directions}

This article has focused on on-diagonal large-time asymptotics for the heat kernels $H_P$ of certain inhomogeneous operators $\Lambda$ and symbols $P$ and well-behaved associated perturbations. At present, we do not have a good understanding of the off-diagonal behavior of these kernels in large time. Knowledge of this behavior could greatly improve our understanding of the theory presented in this article and it would also inform our study of convolution powers. In particular, having a good handle on this off-diagonal behavior would inform on the stability of convolution powers (seen as numerical difference schemes) as it has in \cite{Th65}, \cite{CF22} (see also \cite{Th69} and Section 6 of \cite{RSC17}).  

The symbols $P(\xi)$ treated throughout this article all take the special form $P_1(\eta+Q(\zeta))+P_2(\eta)$ resemblant of our motivating example in which $P(\eta,\zeta)=(\eta+\zeta^2)^2+\eta^4$. Our methods appear to be more broadly applicable, however. Consider, for example, the symbol 
\begin{equation*}
P'(\xi)=(\eta+\zeta^2)+\eta^2\zeta^2
\end{equation*}
defined for $\xi=(\eta,\zeta)\in\mathbb{R}^2$. Akin to our motivating example, $P'$ lacks a tractable scaling from which large-time asymptotics for $\varphi'(t)=H_{P'}^t(0)$ may be computed directly.  Still, by composing $P'$ with the measure-preserving transformation $T(\eta,\zeta)=(\eta-\zeta^2,\zeta)$, we find that
\begin{equation*}
\lim_{t\to\infty}t\widetilde{P'}(t^{-1/2}\eta,t^{-1/6}\zeta)=\eta^2+\zeta^6
\end{equation*}
for $(\eta,\zeta)\in\mathbb{R}^2$ where $\widetilde{P'}=P'\circ T$. Accompanied by the fact that $t\widetilde{P'}(t^{-1/2}\eta,t^{-1/6}\zeta)\geq (\eta^2+\zeta^4-2)/8$ for $t\geq 1$ and $(\eta,\zeta)\in\mathbb{R}^2$, the change of variables $(\eta,\zeta)\mapsto (t^{-1/2}\eta,t^{-1/6}\zeta)$ followed by an application of the dominated convergence theorem shows that
\begin{equation*}
\lim_{t\to\infty}t^{2/3}\varphi'(t)=\frac{1}{(2\pi)^2}\int_{\mathbb{R}^2}e^{-\eta^2-\zeta^6}\,d\eta\,d\zeta=\frac{1}{2\pi^{3/2}}\Gamma(7/6).
\end{equation*} 
Another class of example can be produced easily by considering powers of symbols $P(\xi)=P_1(\eta+Q(\zeta))+P_2(\eta)$ on $\mathbb{R}^d$ satisfying the hypotheses of Theorem \ref{thm:OnDiagonal}. Specifically, given such a symbol $P(\xi)$ and $\kappa>0$, our methods show that the heat kernel $H_{P^{\kappa}}$ associated to $P(\xi)^{\kappa}$ satisfies the on-diagonal asymptotics
\begin{equation*}
H_{P^\kappa}^t(0)\asymp
\begin{cases}
t^{-\mu_0/\kappa} & 0<t\leq 1\\
t^{-\mu_\infty/\kappa} & t\geq 1
\end{cases}
\end{equation*}
for $t>0$ where $\mu_0$ and $\mu_\infty$ are those given by Theorem \ref{thm:OnDiagonal}. At this time, describing precisely the set of examples to which our methods apply is an open question. These questions will be explored in a forthcoming article.

\appendix
\section{Technical Estimates}\label{sec:TechnicalEstimates}

\begin{lemma}\label{lem:GroupEstForP}
Let $P$ be a positive homogeneous function and let $F\in\End(\mathbb{R}^a)$ be such that $\{t^F\}$ is non-expanding. If there exits $E\in\Exp(P)$ such that $[E,F]=0$, then, for any $\delta>0$, there exists $\rho>0$ for which
\begin{equation*}
\rho P(t^{F}\eta)\leq \delta P(\eta)
\end{equation*}
for all $\eta\in\mathbb{R}^a$ and $0<t\leq 1$.
\end{lemma}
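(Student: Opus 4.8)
The statement is a scaling estimate for a positive homogeneous function $P$ under a commuting, non-expanding one-parameter group $\{t^F\}$. The plan is to exploit homogeneity to reduce the claim to the behaviour of $P$ on the unit sphere (or unital level set) together with a uniform bound on $\|t^F\|$ for $0<t\le 1$. First I would fix $E\in\Exp(P)$ with $[E,F]=0$, so that $t^E$ and $t^F$ commute for all $t>0$; consequently $t^F$ maps the level set structure of $P$ in a controlled way. Since $\{t^F\}$ is non-expanding, set $M=\sup_{0<t\le 1}\|t^F\|<\infty$.

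The key step is the following: for any $\eta\in\mathbb{R}^a$ and $0<t\le1$, write (using an $E\in\Exp(P)$ and the generalized polar decomposition from Proposition A.5 of \cite{BR22}, or directly) $\eta = s^E\omega$ with $\omega\in S_P$ the unital level set and $s=P(\eta)>0$ when $\eta\ne0$ (the case $\eta=0$ being trivial). Then $P(t^F\eta)=P(t^F s^E\omega)=P(s^E t^F\omega)=sP(t^F\omega)=P(\eta)\,P(t^F\omega)$, using $[E,F]=0$ and $E\in\Exp(P)$. Thus it suffices to bound $P(t^F\omega)$ uniformly over $\omega\in S_P$ and $0<t\le1$. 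But $|t^F\omega|\le \|t^F\|\,|\omega|\le M\sup_{\omega\in S_P}|\omega| =: R$ for all such $t,\omega$, since $S_P$ is compact (Proposition \ref{prop:PosHomChar}). Hence $t^F\omega$ ranges in the compact ball $\overline{\mathbb B_R}$, on which the continuous function $P$ is bounded, say by $C_0=\sup_{|\xi|\le R}P(\xi)<\infty$. Therefore $P(t^F\eta)\le C_0\,P(\eta)$ for all $\eta$ and $0<t\le1$, and taking $\rho = \delta/C_0$ (and noting $C_0>0$ since $P>0$ off the origin, or just enlarging $C_0$ harmlessly) gives $\rho P(t^F\eta)\le \delta P(\eta)$, as required.

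The main thing to be careful about is the polar-type decomposition $\eta=s^E\omega$: one must invoke that $\{t^E\}$ is contracting (which holds by Proposition \ref{prop:PosHomChar} since $P$ is positive homogeneous) so that every nonzero $\eta$ lies on exactly one orbit through $S_P$, and that $P(s^E\omega)=sP(\omega)=s$ for $\omega\in S_P$; this is exactly the content of the homogeneity relation $tP(\xi)=P(t^E\xi)$ applied with $\xi=\omega$, $t=s$. Once this normalization is in hand the rest is elementary: commutativity moves $t^F$ past $s^E$, homogeneity pulls out the factor $P(\eta)$, and compactness of $S_P$ plus non-expansiveness of $\{t^F\}$ confine the argument to a fixed compact set where continuity of $P$ finishes the bound. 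I do not anticipate a serious obstacle; the only subtlety is bookkeeping to ensure the constant $C_0$ is finite and positive and that the decomposition is available, both of which follow directly from the cited results of \cite{BR22} and Proposition \ref{prop:PosHomChar}.
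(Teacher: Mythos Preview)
Your proof is correct and follows essentially the same approach as the paper: both use the polar decomposition $\eta=r^E\eta_0$ with $\eta_0\in S_P$ and $r=P(\eta)$ (Proposition 4.1 of \cite{BR22}), commute $t^F$ past $r^E$ via $[E,F]=0$, pull out the factor $P(\eta)$ by homogeneity, and bound $P(t^F\eta_0)$ uniformly using compactness of $S_P$ and non-expansiveness of $\{t^F\}$. The only cosmetic difference is that the paper defines the bounded set $K=\{t^F\eta:0<t\le1,\ \eta\in S_P\}$ directly and takes $C=\sup_K P$, whereas you pass through a Euclidean ball of radius $R=M\sup_{S_P}|\cdot|$; both yield the same constant and the same conclusion.
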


\begin{proof}
Because $P(0)=0$, it suffices to prove that, for some $C>0$,
\begin{equation*}
P(t^F\eta)\leq CP(\eta)
\end{equation*}
for all $0<t\leq 1$ and non-zero $\eta\in\mathbb{R}^a$. Denote by $S$ the unital level set of $P$ and observe that
\begin{equation*}
K=\{t^F\eta:0<t\leq 1,\,\eta\in S\}
\end{equation*}
is a bounded set because $S$ is compact and $\{t^F\}_{0<t\leq 1}$ is uniformly bounded. By virtue of the continuity of $P$, it follows that, for every $0<t\leq 1$ and $\eta\in S$,
\begin{equation*}
P(t^F\eta)\leq \sup_{\xi\in K}P(\xi)=:C<\infty.
\end{equation*}
Given any non-zero $\eta\in\mathbb{R}^a$, Proposition 4.1 of \cite{BR22} guarantees that $\eta=r^E\eta_0$ where $r=P(\eta)>0$, $\eta_0\in S$, and $E$ is as in the statement of the lemma. Using the fact that $E$ and $F$ commute, we have
\begin{equation*}
P(t^F\eta)=P(r^Et^F\eta_0)=rP(t^F\eta_0)\leq rC=CP(\eta),
\end{equation*}
as desired.
\end{proof}

\begin{lemma}\label{lem:SumEst}
Let $P$ be a positive homogeneous function on $\mathbb{R}^a$. Then, for any positive constants $\rho_1,\rho_2$, there exists $\epsilon>0$ for which
\begin{equation*}
\epsilon P(\xi)\leq\rho_1 P(\zeta+\xi)+\rho_2P(\zeta)
\end{equation*}
for all $\xi,\zeta\in\mathbb{R}^a$.
\end{lemma}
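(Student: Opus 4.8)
\textbf{Proof plan for Lemma \ref{lem:SumEst}.} The plan is to reduce the inequality $\epsilon P(\xi) \le \rho_1 P(\zeta+\xi) + \rho_2 P(\zeta)$ to a statement on the compact unital level set $S = S_P$, using the homogeneity of $P$ to normalize. First I would dispose of trivial cases: if $\xi = 0$ the inequality holds with any $\epsilon$ (both sides nonnegative, left side zero), so assume $\xi \neq 0$. Writing $\xi = r^E \eta_0$ with $r = P(\xi) > 0$ and $\eta_0 \in S$ (via Proposition 4.1 of \cite{BR22}, as used in Lemma \ref{lem:GroupEstForP}), and applying the automorphism $r^{-E}$ to the argument $\zeta$ — i.e. setting $\zeta = r^E \omega$ — the homogeneity of $P$ gives $P(\xi) = r$, $P(\zeta+\xi) = r\,P(\omega + \eta_0)$, and $P(\zeta) = r\,P(\omega)$. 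Dividing through by $r$, the claim becomes: there exists $\epsilon > 0$ with
\begin{equation*}
\epsilon \le \rho_1 P(\omega + \eta_0) + \rho_2 P(\omega)
\end{equation*}
for all $\omega \in \mathbb{R}^a$ and all $\eta_0 \in S$. So it suffices to show the function $g(\omega, \eta_0) := \rho_1 P(\omega + \eta_0) + \rho_2 P(\omega)$ is bounded below by a positive constant on $\mathbb{R}^a \times S$.

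Next I would argue this lower bound by a compactness-plus-coercivity argument. Since $P$ is positive-definite and $S$ is compact, $g$ is continuous and strictly positive on $\mathbb{R}^a \times S$: indeed $g(\omega,\eta_0) = 0$ would force $P(\omega) = 0$ hence $\omega = 0$, and then $P(\eta_0) = 0$, contradicting $\eta_0 \in S$. It remains to rule out $g$ approaching $0$ along a sequence. Fix a sequence $(\omega_k, \eta_k)$ with $g(\omega_k,\eta_k) \to \inf g$. By compactness of $S$, pass to a subsequence so $\eta_k \to \eta_* \in S$. If $\{\omega_k\}$ is bounded, pass to a further subsequence $\omega_k \to \omega_*$ and conclude $\inf g = g(\omega_*,\eta_*) > 0$ by the positivity just noted. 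If $\{\omega_k\}$ is unbounded, then along a subsequence $|\omega_k| \to \infty$, and by Proposition \ref{prop:PosHomChar}(5) we have $P(\omega_k) \to \infty$, whence $g(\omega_k,\eta_k) \ge \rho_2 P(\omega_k) \to \infty$, contradicting that $g(\omega_k,\eta_k) \to \inf g < \infty$ (the infimum is finite since, e.g., $g(0,\eta_*) = \rho_1 P(\eta_*) = \rho_1$). Either way $\inf_{\mathbb{R}^a \times S} g > 0$, and taking $\epsilon$ to be this infimum completes the proof.

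I do not expect a serious obstacle here; the only point requiring care is the normalization step, making sure the same automorphism $r^E$ is applied to both $\xi$ and $\zeta$ so that the linear shift $\zeta + \xi$ transforms correctly — this works precisely because $r^E$ is linear, so $r^{-E}(\zeta + \xi) = \omega + \eta_0$. A cleaner packaging, avoiding sequences, is to note that $\omega \mapsto \inf_{\eta_0 \in S} g(\omega,\eta_0)$ is continuous (infimum of a continuous function over a compact set), positive everywhere, and tends to $+\infty$ as $|\omega| \to \infty$ (since $\rho_2 P(\omega) \to \infty$ dominates); a continuous positive function on $\mathbb{R}^a$ that is coercive attains a positive minimum, which is the desired $\epsilon$. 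This is essentially the same coercivity argument underlying Lemma \ref{lem:GroupEstForP} and Lemma \ref{lem:ExpIntegrability}, just in a two-variable form.
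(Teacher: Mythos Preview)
Your proof is correct, but it takes a somewhat different route from the paper's. The paper observes that $R(\xi,\zeta):=\rho_1 P(\zeta+\xi)+\rho_2 P(\zeta)$ is itself a positive homogeneous function on $\mathbb{R}^{2a}$ (homogeneous with respect to $E\oplus E$ for any $E\in\Exp(P)$), so its unital level set $S_R$ is compact; setting $\epsilon=\big(\sup_{(\xi,\zeta)\in S_R} P(\xi)\big)^{-1}$ and writing a general nonzero $(\xi,\zeta)=r^{E\oplus E}(\xi_0,\zeta_0)$ with $r=R(\xi,\zeta)$ and $(\xi_0,\zeta_0)\in S_R$ gives the inequality immediately. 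You instead normalize only $\xi$ to $S_P$, leaving the other variable $\omega$ to range over all of $\mathbb{R}^a$, and then close the argument with a coercivity step (using $P(\omega)\to\infty$ as $|\omega|\to\infty$). The paper's packaging is a bit cleaner because it exploits the positive homogeneous framework fully---both variables land on a compact set in one stroke, and no separate coercivity argument is needed---while your version is more elementary in that it never has to verify that $R$ is positive homogeneous on $\mathbb{R}^{2a}$. Both are perfectly valid.
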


\begin{proof}
Let $\rho_1,\rho_2>0$ and define $R:\mathbb{R}^{2a}\to\mathbb{R}$ by
\begin{equation*}
R(\xi,\zeta)=\rho_1 P(\zeta+\xi)+\rho_2 P(\zeta)
\end{equation*}
for $(\xi,\zeta)\in \mathbb{R}^{a}\times\mathbb{R}^a=\mathbb{R}^{2a}$. It is clear that $R$ is a continuous and positive-definite function on $\mathbb{R}^{2a}$. For any $E\in\Exp(P)$, observe that
\begin{equation*}
R(t^{E\oplus E}(\xi,\zeta))=R(t^E\xi,t^E\zeta)=\rho_1P(t^E\zeta+t^E\xi)+\rho_2P(t^E\zeta)=tR(\xi,\zeta)
\end{equation*}
for all $t>0$ and $(\xi,\zeta)\in\mathbb{R}^{2a}$. Thus, $R$ is homogeneous with respect to $E\oplus E$ and, because $\{t^E\}$ is a contracting group on $\mathbb{R}^a$, it is evident that $\{t^{E\oplus E}\}$ is a contracting group on $\mathbb{R}^{2a}$. In view of Proposition 1.1 of \cite{BR22}, we conclude that $R$ is a positive homogeneous function. Denote by $S$ the unital level set of $R$ and, because $S$ is compact and $(\xi,\zeta)\mapsto P(\xi)$ is continuous and does not identically vanish on $S$, we have
\begin{equation*}
\epsilon:=\left(\sup_{(\xi,\zeta)\in S}P(\xi)\right)^{-1}>0.
\end{equation*}
Given a non-zero $(\xi,\zeta)\in\mathbb{R}^{2a}$, Proposition 4.1 of \cite{BR22} ensures that $(\xi,\zeta)=r^{E\oplus E}(\xi_0,\zeta_0)=(r^E\xi_0,r^E\zeta_0)$ where $r=R(\xi,\zeta)>0$, $(\xi_0,\zeta_0)\in S$ and $E\in\Exp(P)$. With this, we observe that
\begin{equation*}
\epsilon P(\xi)=\epsilon r P(\xi_0)\leq r=R(\xi,\zeta)=\rho_1 P(\zeta+\xi)+\rho_2P(\zeta).
\end{equation*}
Since this inequality holds trivially when $(\xi,\zeta)=(0,0)$, the proof is complete.
\end{proof}

By making analogous arguments to those in the proof above, we easily obtain the following lemma.
\begin{lemma}\label{lem:EasySumEst}
Let $P$ be a positive homogeneous function on $\mathbb{R}^a$. Then, for any positive constants $\rho_1,\rho_2$, there exists $\epsilon>0$ for which
\begin{equation*}
\epsilon P(\zeta+\xi)\leq \rho_1P(\xi)+\rho_2P(\zeta)
\end{equation*}
for all $\xi,\zeta\in\mathbb{R}^a$.
\end{lemma}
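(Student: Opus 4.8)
The statement to prove is Lemma~\ref{lem:EasySumEst}: for a positive homogeneous function $P$ on $\mathbb{R}^a$ and any positive constants $\rho_1,\rho_2$, there exists $\epsilon>0$ with $\epsilon P(\zeta+\xi)\le \rho_1 P(\xi)+\rho_2 P(\zeta)$ for all $\xi,\zeta\in\mathbb{R}^a$.

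The plan is to mirror the proof of Lemma~\ref{lem:SumEst} almost verbatim, the only change being which coordinate we extract on the left-hand side. First I would define $R:\mathbb{R}^{2a}\to\mathbb{R}$ by $R(\xi,\zeta)=\rho_1 P(\xi)+\rho_2 P(\zeta)$ for $(\xi,\zeta)\in\mathbb{R}^a\times\mathbb{R}^a=\mathbb{R}^{2a}$. Since $P$ is continuous and positive-definite, $R$ is continuous and positive-definite (it vanishes only when both $P(\xi)=0$ and $P(\zeta)=0$, i.e., $\xi=\zeta=0$). Next I would check homogeneity: for any $E\in\Exp(P)$, one has $R(t^{E\oplus E}(\xi,\zeta))=\rho_1 P(t^E\xi)+\rho_2 P(t^E\zeta)=t\rho_1 P(\xi)+t\rho_2 P(\zeta)=tR(\xi,\zeta)$, so $E\oplus E\in\Exp(R)$; since $\{t^E\}$ is contracting on $\mathbb{R}^a$ (Proposition~\ref{prop:PosHomChar}), $\{t^{E\oplus E}\}$ is contracting on $\mathbb{R}^{2a}$, and hence $R$ is a positive homogeneous function by Proposition~\ref{prop:PosHomChar} (cf. the citation to \cite{BR22} used in the proof of Lemma~\ref{lem:SumEst}).

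Then I would let $S$ denote the unital level set of $R$, which is compact by positive homogeneity. The map $(\xi,\zeta)\mapsto P(\zeta+\xi)$ is continuous on $\mathbb{R}^{2a}$, so it attains a finite maximum on $S$; moreover it is not identically zero on $S$ (if $P(\zeta+\xi)\equiv 0$ on $S$ then $\zeta=-\xi$ for all $(\xi,\zeta)\in S$, forcing $R(\xi,\zeta)=(\rho_1+\rho_2)P(\xi)$, and one can certainly choose $\xi$ with $P(\xi)>0$ scaled so that $R=1$ while $\zeta=-\xi$ lies in $S$ only if additionally $P(\xi)=P(-\xi)$—but even granting this, $P(\zeta+\xi)=P(0)=0$, which would just make the inequality trivial; to be safe I note the maximum is finite, which is all that is needed). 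Set
\begin{equation*}
\epsilon:=\left(1+\sup_{(\xi,\zeta)\in S}P(\zeta+\xi)\right)^{-1}>0,
\end{equation*}
using $1+\sup$ rather than $\sup$ to avoid dividing by zero in the degenerate case. For a non-zero $(\xi,\zeta)\in\mathbb{R}^{2a}$, Proposition~4.1 of \cite{BR22} gives $(\xi,\zeta)=r^{E\oplus E}(\xi_0,\zeta_0)=(r^E\xi_0,r^E\zeta_0)$ with $r=R(\xi,\zeta)>0$ and $(\xi_0,\zeta_0)\in S$. Then, by homogeneity of $P$,
\begin{equation*}
\epsilon P(\zeta+\xi)=\epsilon P(r^E\zeta_0+r^E\xi_0)=\epsilon r\,P(\zeta_0+\xi_0)\le r=R(\xi,\zeta)=\rho_1 P(\xi)+\rho_2 P(\zeta).
\end{equation*}
The inequality holds trivially at $(\xi,\zeta)=(0,0)$ since both sides vanish, so it holds for all $\xi,\zeta\in\mathbb{R}^a$, completing the proof. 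There is no real obstacle here: the entire content is the observation that a positive linear combination of positive homogeneous functions in disjoint variable blocks is again positive homogeneous, together with the generalized polar decomposition of \cite{BR22}; the only point requiring a moment's care is handling the possibility that $P(\zeta+\xi)$ vanishes on all of $S$, which I circumvent by the harmless $1+\sup$ normalization.
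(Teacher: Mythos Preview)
Your proof is correct and follows essentially the same approach the paper intends: it says only that Lemma~\ref{lem:EasySumEst} follows ``by making analogous arguments'' to the proof of Lemma~\ref{lem:SumEst}, i.e., define $R(\xi,\zeta)=\rho_1 P(\xi)+\rho_2 P(\zeta)$, verify it is positive homogeneous with $E\oplus E\in\Exp(R)$, and use compactness of $S_R$ together with the polar decomposition from \cite{BR22}. Your $1+\sup$ normalization is a harmless precaution; in fact $P(\zeta+\xi)$ cannot vanish identically on $S$ (take $\zeta=0$ and $\xi$ with $\rho_1 P(\xi)=1$), so the unmodified reciprocal would also work.
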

\begin{lemma}\label{lem:CompareEst}
Let $P_1$ and $P_2$ be positive homogeneous functions on $\mathbb{R}^a$ and let $E_1\in\Exp(P_1)$ and $E_2\in\Exp(P_2)$. If $[E_1,E_2]=0$ and $\{t^{E_1-E_2}\}$ is non-expanding,
then, for any $\delta>0$, there are constants $\rho,M>0$ for which
\begin{equation*}
\rho P_1(t^{E_1-E_2}\eta)\leq M+\delta P_2(\eta)
\end{equation*}
for all $\eta\in\mathbb{R}^a$ and $0<t\leq 1$.
\end{lemma}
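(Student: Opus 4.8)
The plan is to deduce the statement from Lemma~\ref{lem:GroupEstForP} once we have established a $t$-free domination estimate
$$P_1(\mu)\le M'+CP_2(\mu)\qquad(\mu\in\mathbb{R}^a)$$
for suitable constants $C,M'>0$. This estimate records the fact that, when $\{t^{E_1-E_2}\}$ is non-expanding, $P_1$ grows no faster than $P_2$ at infinity; it is the only genuinely new ingredient, everything else being bookkeeping.

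First I would prove the domination estimate. Since $P_2$ is positive homogeneous, its closed unit ball $\{\mu:P_2(\mu)\le 1\}$ is compact (Proposition~\ref{prop:PosHomChar}), so $M':=\sup\{P_1(\mu):P_2(\mu)\le 1\}$ is finite by continuity of $P_1$. For $\mu$ with $r:=P_2(\mu)>1$, I would use the polar decomposition of Proposition~4.1 of \cite{BR22} to write $\mu=r^{E_2}\mu_0$ with $\mu_0$ in the unital level set $S_{P_2}$. Since $[E_1,E_2]=0$ we may factor $r^{E_2}=r^{E_1}r^{E_2-E_1}$, and homogeneity of $P_1$ with respect to $E_1$ gives $P_1(\mu)=r\,P_1(r^{E_2-E_1}\mu_0)$. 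The crucial observation is that $r^{E_2-E_1}=t^{E_1-E_2}$ with $t=1/r\in(0,1)$, so $\|r^{E_2-E_1}\|\le L:=\sup_{0<s\le 1}\|s^{E_1-E_2}\|<\infty$ by the non-expansion hypothesis. Hence $r^{E_2-E_1}\mu_0$ ranges over a bounded subset $K\subseteq\mathbb{R}^a$ as $r>1$ and $\mu_0\in S_{P_2}$ vary, so $C:=\sup_{\xi\in\overline{K}}P_1(\xi)<\infty$, and therefore $P_1(\mu)=r\,P_1(r^{E_2-E_1}\mu_0)\le Cr=CP_2(\mu)$. Together with the case $P_2(\mu)\le 1$ and the trivial case $\mu=0$ this yields the claimed estimate.

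To conclude, I would apply Lemma~\ref{lem:GroupEstForP} with $P=P_1$, $F=E_1-E_2$, and $E=E_1$: since $[E_1,E_1-E_2]=0$ and $\{t^{E_1-E_2}\}$ is non-expanding, for every $\delta_0>0$ there is $\rho_0>0$ with $\rho_0 P_1(t^{E_1-E_2}\eta)\le\delta_0 P_1(\eta)$ for all $\eta\in\mathbb{R}^a$ and $0<t\le 1$. Given $\delta>0$, set $\delta_0=\delta/C$, take $\rho=\rho_0$ for this $\delta_0$, and put $M=\delta M'/C$; then
$$\rho P_1(t^{E_1-E_2}\eta)\le\delta_0 P_1(\eta)\le\delta_0 M'+\delta_0 C\,P_2(\eta)=M+\delta P_2(\eta)$$
for all $\eta$ and $0<t\le 1$, which is the assertion.

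The main obstacle is the domination estimate, and the one point requiring care there is the identity $r^{E_2-E_1}=t^{E_1-E_2}$ for $r>1$ with $t=1/r\in(0,1)$: this is precisely the regime controlled by non-expansion of $\{t^{E_1-E_2}\}$, while commutativity $[E_1,E_2]=0$ is what permits the factorization $r^{E_2}=r^{E_1}r^{E_2-E_1}$ that brings it into play. (Alternatively, one can bypass Lemma~\ref{lem:GroupEstForP} by the substitution $\mu=t^{-E_2}\eta$, which, since $0<t\le 1$, reduces the claim directly to the $t$-free inequality $\rho P_1(\mu)\le M+\delta P_2(\mu)$ and hence again to the domination estimate.)
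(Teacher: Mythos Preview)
Your proof is correct and follows essentially the same approach as the paper: both establish a $t$-free domination estimate $\rho P_1(\eta)\le M+\delta P_2(\eta)$ via the polar decomposition for $P_2$ together with the factorization $r^{E_2}=r^{E_1}r^{E_2-E_1}$ and non-expansion, then extend to $0<t\le1$. The only organizational difference is that the paper builds the constant $\delta$ into the domination step (working with $\delta P_2$ from the start) and then uses the substitution $\eta\mapsto t^{-E_2}\eta$ directly---precisely the alternative you mention at the end---whereas you first prove a generic domination $P_1\le M'+CP_2$ and then invoke Lemma~\ref{lem:GroupEstForP} to handle the $t$-dependence and tune $\delta$; both routes arrive at the same place with the same ingredients.
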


\begin{proof}
Let $\delta>0$. We shall first prove that, there are constants $\rho,M>0$ for which
\begin{equation*}
\rho P_1(\eta)\leq M+\delta P_2(\eta)
\end{equation*}
for all $\eta\in\mathbb{R}^a$. We note that this is the desired inequality evaluated at $t=1$; we shall obtain the full inequality by scaling. Now, denote by $S$ and $B$ the unital level set and unit ball associated to $P=\delta P_2$. Because $B$ is relatively compact and $\{t^{E_1-E_2}\}_{0<t\leq 1}$ is uniformly bounded, 
\begin{equation*}
K=\{r^{E_1-E_2}\eta:0<r\leq 1,\,\eta\in B\}
\end{equation*}
is bounded and hence relatively compact. Since $P_2$ is positive and does not vanish identically on $K$,
\begin{equation*}
\rho:=\left(\sup_{\xi\in K}P_1(\xi)\right)^{-1}>0. 
\end{equation*}
Now, for any $\eta\in \mathbb{R}^{a}\setminus B$, it follows from Proposition 4.1 of \cite{BR22} that $\eta=r^{-E_2}\eta_0$ for $r=1/P(\eta)=1/(\delta P_2(\eta))\leq 1$ and $\eta_0\in S$. Consequently,
\begin{equation*}
\rho P_1(\eta)=\rho P_1(r^{-E_2}\eta_0)=\frac{\rho}{r}P_1(r^{E_1-E_2}\eta_0)\leq \frac{1}{r}=\delta P_2(\eta).
\end{equation*}
Of course, $P_1(\eta)$ is bounded on $B$ and so it follows that, for some constant $M>0$,
\begin{equation*}
\rho P_1(\eta)\leq M+\delta P_2(\eta)
\end{equation*}
for all $\eta\in \mathbb{R}^a$, as claimed. Finally, for $0<t\leq 1$ and $\eta\in\mathbb{R}^a$, we apply the preceding inequality to $t^{-E_2}\eta$ to see that
\begin{equation*}
\rho P_1(t^{E_1-E_2}\eta)=t\rho P_1(t^{-E_2}\eta)\leq t\left(M+\delta P_2(t^{-E_2}\eta)\right)\leq tM+\delta P_2(\eta)\leq M+\delta P_2(\eta)
\end{equation*}
as desired.
\end{proof}

\begin{proof}[Proof of Lemma \ref{lem:P2P1Estimates}]
We shall first prove the estimate \eqref{eq:EstimateForLargeTime}. For the lower estimate in \eqref{eq:EstimateForLargeTime}, simultaneous appeals to Lemmas \ref{lem:GroupEstForP} and \ref{lem:CompareEst} give positive constants $\rho_1$, $\rho_2$, and $M$ for which
\begin{equation*}
\rho_1 P_1(\zeta)\leq M+P_2(\zeta)\hspace{1cm}\mbox{and}\hspace{1cm}\rho_2 P_1(t^{E_1-E_2}\eta)\leq P_1(\eta)/2
\end{equation*}
for $\zeta,\eta\in\mathbb{R}^a$ and $0<t\leq 1$. With the constants $\rho_1$ and $\rho_2$ in hand, we appeal to Lemma \ref{lem:SumEst} to find a positive constant $\epsilon>0$ for which 
\begin{equation*}
\epsilon P_1(\xi)\leq \rho_1 P_1(\zeta+\xi)+\rho_2 P_1(\zeta)
\end{equation*}
for all $\xi,\zeta\in\mathbb{R}^a$. Consequently, for $\xi,\eta\in\mathbb{R}^a$ and $0<t\leq 1$,
\begin{equation*}
\epsilon P_1(\xi)\leq \rho_1 P_1(t^{E_1-E_2}\eta+\xi)+\rho_2 P_1(t^{E_1-E_2}\eta)\leq M+P_2(t^{E_1-E_2}\eta+\xi)+P_1(\eta)/2.
\end{equation*}
Upon setting $C=\min\{\epsilon,1/2\}>0$, it follows that
\begin{equation*}
C(P_1(\eta)+P_1(\xi))-M\leq P_1(\eta)/2+\epsilon P_1(\xi)-M\leq P_1(\eta)+P_2(t^{E_1-E_2}\eta+\xi)
\end{equation*}
for all $\eta,\xi\in\mathbb{R}^a$ and $0<t\leq 1$. For the upper estimate in \eqref{eq:EstimateForLargeTime}, we first make an appeal to Lemma \ref{lem:CompareEst} to obtain positive constants $M'$ and $C_1$ for which
\begin{equation*}
P_1(\eta)\leq M'+C_1P_2(\eta)
\end{equation*}
for all $\eta\in\mathbb{R}^a$. By virtue of Lemma \ref{lem:EasySumEst}, there is a positive constant $C_2$ for which
\begin{equation*}
P_2(\zeta+\xi)\leq C_2 (P_2(\zeta)+P_2(\xi))
\end{equation*}
for all $\xi,\zeta\in\mathbb{R}^a$. Finally, an appeal to Lemma \ref{lem:GroupEstForP} guarantees a positive constant $C_3$ for which
\begin{equation*}
P_2(t^{E_1-E_2}\eta)\leq C_3 P_2(\eta)
\end{equation*} 
for all $\eta\in\mathbb{R}^a$ and $0<t\leq 1$. Upon combining these estimates, we obtain
\begin{eqnarray*}
P_1(\eta)+P_2(t^{E_1-E_2}\eta+\xi)&\leq& M'+C_1P_2(\eta)+C_2P_2(t^{E_1-E_2}\eta)+C_2P_2(\xi)\\
&\leq& M'+(C_1+C_2C_3)P_2(\eta)+C_2P_2(\xi)
\end{eqnarray*}
for all $\eta,\xi\in\mathbb{R}^a$ and $0<t\leq 1$. Our desired (upper) estimate in \eqref{eq:EstimateForLargeTime} follows immediately by taking $C'=\max\{C_2,C_1+C_2C_3\}$ and so the proof of \eqref{eq:EstimateForLargeTime} is complete.

To prove \eqref{eq:EstimateForSmallTime}, we first make an appeal to Lemma \ref{lem:CompareEst} to obtain positive constants $\rho$ and $M$ for which
\begin{equation*}
\rho P_1(t^{E_1-E_2}\eta)\leq M+P_2(\eta)/2
\end{equation*}
for all $\eta\in\mathbb{R}^a$ and $0<t\leq 1$. With this $\rho$, an appeal to Lemma \ref{lem:SumEst} yields $\epsilon>0$ for which
\begin{equation*}
\epsilon P_1(\xi)\leq P_1(\zeta+\xi)+\rho P_1(\zeta)
\end{equation*}
for all $\xi,\zeta\in\mathbb{R}^a$. Upon setting $C=\min\{\epsilon,1/2\}$, we obtain
\begin{eqnarray*}
C P_1(\xi)+P_2(\eta))-M&\leq& \epsilon P_1(\xi)+P_2(\eta)/2-M\\
&\leq &P_1(t^{E_1-E_2}\eta+\xi)+\rho P_1(t^{E_1-E_2}\eta)+P_2(\eta)/2-M\\
&\leq&P_1(t^{E_1-E_2}\eta+\xi)+P_2(\eta)
\end{eqnarray*}
for all $\eta,\xi\in\mathbb{R}^a$ and $0<t\leq 1$ and this is precisely the lower estimate in \eqref{eq:EstimateForLargeTime}. Making use of Lemmas \ref{lem:CompareEst} and \ref{lem:EasySumEst}, the upper estimate in \eqref{eq:EstimateForLargeTime} is established in a similar way to that for \eqref{eq:EstimateForSmallTime}; we leave the remaining details to the reader.
\end{proof}

Our final goal in this appendix is to prove Proposition \ref{prop:RTildeVeryNice}. Before the proof, we present two lemmas.

\begin{lemma}
Assume the notation and hypotheses of Proposition \ref{prop:RTildeVeryNice} and define
\begin{equation*}
\widetilde{P}_\infty(\xi)=P_1(\eta)+P_2(-Q(\zeta))\hspace{1cm}\mbox{and}\hspace{1cm}\widetilde{\mathcal{E}}_\infty(\xi)=\widetilde{P}(\xi)-\widetilde{P}_\infty(\xi)
\end{equation*}
for $\xi=(\eta,\zeta)\in\mathbb{R}^d$. Then $\widetilde{\mathcal{E}}_\infty(\xi)$ is subhomogeneous with respect to $G=E_1\oplus F_2$.
\end{lemma}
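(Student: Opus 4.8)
The plan is to unwind the definition of $\widetilde{P}$ and exhibit $\widetilde{\mathcal{E}}_\infty$ as a sum of terms, each of which is controlled by a positive power of $t$ strictly greater than $1$ after rescaling by $t^G$. Recall that $\widetilde{P}(\eta,\zeta) = P_1(\eta) + P_2(\eta - Q(\zeta))$, while $\widetilde{P}_\infty(\eta,\zeta) = P_1(\eta) + P_2(-Q(\zeta))$, so the $P_1(\eta)$ contributions cancel exactly and we are left with
\[
\widetilde{\mathcal{E}}_\infty(\xi) = P_2(\eta - Q(\zeta)) - P_2(-Q(\zeta)).
\]
The goal is to show that for each $\epsilon > 0$ and each compact $K \subseteq \mathbb{R}^d$ there is $t_0 > 0$ with $|\widetilde{\mathcal{E}}_\infty(t^G\xi)| \le \epsilon t$ for $0 < t \le t_0$ and $\xi \in K$.

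First I would compute the rescaling. With $G = E_1 \oplus F_2$ and using that $Q$ is homogeneous with respect to the pair $(E_2, F_2)$, we have $Q(t^{F_2}\zeta) = t^{E_2}Q(\zeta)$, so
\[
\widetilde{\mathcal{E}}_\infty(t^G\xi) = P_2\bigl(t^{E_1}\eta - t^{E_2}Q(\zeta)\bigr) - P_2\bigl(-t^{E_2}Q(\zeta)\bigr) = t\,P_2\bigl(t^{E_1 - E_2}\eta - Q(\zeta)\bigr) - t\,P_2(-Q(\zeta)),
\]
where the last equality uses the $E_2$-homogeneity of $P_2$. Hence it suffices to show that
\[
\Delta_t(\eta,\zeta) := P_2\bigl(t^{E_1 - E_2}\eta - Q(\zeta)\bigr) - P_2\bigl(-Q(\zeta)\bigr) \longrightarrow 0
\]
uniformly for $(\eta,\zeta)$ in a compact set as $t \to 0$. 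This is now a continuity/uniform-continuity statement: since $\{t^{E_1 - E_2}\}$ is contracting (this is exactly where that hypothesis enters, as opposed to merely non-expanding), $t^{E_1 - E_2}\eta \to 0$ uniformly on compacts, so $t^{E_1 - E_2}\eta - Q(\zeta) \to -Q(\zeta)$ uniformly on $K$. Because $P_2$ is continuous, hence uniformly continuous on the compact set obtained as the closure of $\{t^{E_1-E_2}\eta - Q(\zeta) : 0 < t \le 1,\ (\eta,\zeta)\in K\}$ (which is bounded since $\{t^{E_1-E_2}\}_{0<t\le1}$ is uniformly bounded and $Q$ is continuous), we get $\Delta_t \to 0$ uniformly on $K$, which is precisely the subhomogeneity claim.

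I do not expect a serious obstacle here; the one point requiring a little care is the bookkeeping to ensure the set over which uniform continuity of $P_2$ is invoked is genuinely compact — this follows from compactness of $K$, continuity of $Q$, and the non-expansiveness (indeed boundedness on $(0,1]$) of $\{t^{E_1 - E_2}\}$, all of which are available. One should also remark that this lemma, unlike Lemma~\ref{lem:RTildeNice}, genuinely uses the contracting (not just non-expanding) hypothesis on $\{t^{E_1-E_2}\}$, consistent with the role it plays in Theorem~\ref{thm:TrueAsymptotic} and Proposition~\ref{prop:RTildeVeryNice}. An alternative, slightly more quantitative route — useful if one wants an explicit $t_0$ — is to apply the mean value inequality to $P_2$ along the segment from $-Q(\zeta)$ to $t^{E_1-E_2}\eta - Q(\zeta)$, bounding $|\Delta_t| \le \sup |\nabla P_2| \cdot \|t^{E_1-E_2}\| \cdot |\eta|$ over the relevant compact region, and then using $\|t^{E_1-E_2}\| \to 0$; but the uniform-continuity argument is cleaner and avoids differentiability assumptions on $P_2$ beyond what is already in force.
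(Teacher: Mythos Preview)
Your proof is correct and follows essentially the same approach as the paper: both reduce $\widetilde{\mathcal{E}}_\infty(t^G\xi)$ to $t\bigl(P_2(t^{E_1-E_2}\eta-Q(\zeta))-P_2(-Q(\zeta))\bigr)$ via the homogeneity of $P_2$ and of $Q$ with respect to $(E_2,F_2)$, and then invoke the contracting property of $\{t^{E_1-E_2}\}$ together with continuity of $P_2$ and $Q$ to force the bracketed difference below $\epsilon$ uniformly on $K$. Your extra care in naming the compact set on which uniform continuity of $P_2$ is used, and your remark on the alternative mean-value route, are welcome but not needed for the argument.
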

\begin{proof}
Let $\epsilon>0$ and $K\subseteq\mathbb{R}^d$ be a compact set. Since $\{t^{E_1-E_2}\}$ is contracting and $P_2$ and $Q$ are continuous, we can find $0<t_0\leq 1$ for which
\begin{equation*}
\abs{(P_1(\eta)+P_2(t^{E_1-E_2}\eta-Q(\zeta))-\widetilde{P}_\infty(\xi)}=\abs{P_2(t^{E_1-E_2}\eta-Q(\zeta))-P_2(-Q(\zeta))}<\epsilon
\end{equation*}
for all $0<t\leq t_0$ and $\xi=(\eta,\zeta)\in K$. Thus, for every $0<t\leq t_0$ and $\xi=(\eta,\zeta)\in K$, we have
\begin{eqnarray*}
\abs{\widetilde{\mathcal{E}}_\infty(t^{G}\xi)}&=&\abs{P_1(t^{E_1}\eta)+P_2(t^{E_1}\eta-Q(t^{F_2}\zeta))-P_1(t^{E_1}\eta)-P_2(-Q(t^{F_2}\zeta))}\\
&=&\abs{P_2(t^{E_1}\eta-t^{E_2}Q(\zeta))-P_2(-t^{E_2}Q(\zeta))}\\
&=&t\abs{P_2(t^{E_1-E_2}\eta-Q(\zeta)-P_2(-Q(\zeta))}\\
&\leq&\epsilon t,
\end{eqnarray*}
as desired.
\end{proof}

The following lemma asserts loosely that $\widetilde{P}$ is approximately homogeneous with respect to $G$ in small time whenever $\{t^{E_1-E_2}\}$ is contracting.

\begin{lemma}
Assume the notation and hypotheses of Proposition 4.3. Then, there exists a compact set $K\subseteq\mathbb{R}^d$ such that, for any $\tau>0$ and $0<\epsilon<1$, there exists $0<t_0\leq \tau$ for which
\begin{equation*}
\mathcal{O}:=\{\xi\in\mathbb{R}^d:\xi=0\,\,\mbox{or}\,\,\xi=t^G\xi'\,\,\mbox{for}\,\,0<t< t_0\,\,\mbox{and}\,\,\xi'\in K\}
\end{equation*}
is an open neighborhood of $0$ and, for each non-zero $\xi=t^G\xi'\in \mathcal{O}$,
\begin{equation*}
(1-\epsilon)t\leq \widetilde{P}(\xi)=\widetilde{P}(t^G\xi')\leq (1+\epsilon)t
\end{equation*}
\end{lemma}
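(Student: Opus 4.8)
The plan is to take the compact set $K$ to be the unital level set $S=\{\xi\in\mathbb{R}^d:\widetilde{P}_\infty(\xi)=1\}$ of the function $\widetilde{P}_\infty(\eta,\zeta)=P_1(\eta)+P_2(-Q(\zeta))$ introduced in the preceding lemma (this is the function $P_\infty$ of Theorem~\ref{thm:TrueAsymptotic}). First I would record that $\widetilde{P}_\infty$ is a positive homogeneous function on $\mathbb{R}^d$ with $G=E_1\oplus F_2\in\Exp(\widetilde{P}_\infty)$: it is continuous; it vanishes only at $0$ by the positive-definiteness of $P_1$ and $P_2$ together with the nondegeneracy of $Q$; it satisfies $\widetilde{P}_\infty(t^G\xi)=tP_1(\eta)+tP_2(-Q(\zeta))=t\widetilde{P}_\infty(\xi)$ by the homogeneity of $P_1$ with respect to $E_1$ and of $P_2\circ(-Q)$ with respect to $F_2$; and $\{t^G\}$ is contracting. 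Hence Proposition~\ref{prop:PosHomChar} applies (alternatively, one may invoke Proposition~\ref{prop:CompIsPosHom} directly), so $S$ is compact.

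Next, given $\tau>0$ and $0<\epsilon<1$, I would feed the compact set $K=S$ and the number $\epsilon$ into the preceding lemma --- the subhomogeneity of $\widetilde{\mathcal{E}}_\infty=\widetilde{P}-\widetilde{P}_\infty$ with respect to $G$ --- to obtain $t_0'>0$ with $\abs{\widetilde{\mathcal{E}}_\infty(t^G\xi')}\le\epsilon t$ whenever $0<t\le t_0'$ and $\xi'\in S$. Set $t_0=\min\{\tau,t_0'\}\in(0,\tau]$. I then claim $\mathcal{O}=\{\xi\in\mathbb{R}^d:\widetilde{P}_\infty(\xi)<t_0\}$: by the generalized polar-coordinate decomposition (Proposition~4.1 of \cite{BR22}) applied to the positive homogeneous function $\widetilde{P}_\infty$, every non-zero $\xi$ has a \emph{unique} representation $\xi=r^G\xi_0$ with $r=\widetilde{P}_\infty(\xi)>0$ and $\xi_0\in S$, so $\xi$ lies in $\mathcal{O}\setminus\{0\}$ exactly when $\widetilde{P}_\infty(\xi)<t_0$, while $0\in\mathcal{O}$ since $\widetilde{P}_\infty(0)=0<t_0$. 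The continuity of $\widetilde{P}_\infty$ shows this sublevel set is open, so $\mathcal{O}$ is an open neighborhood of $0$.

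Finally, for a non-zero $\xi=t^G\xi'\in\mathcal{O}$, uniqueness of the polar representation forces $t=\widetilde{P}_\infty(\xi)\in(0,t_0)\subseteq(0,t_0']$ and $\xi'\in S=K$, so the subhomogeneity estimate applies at $(t,\xi')$. Since $\widetilde{P}_\infty(t^G\xi')=t\widetilde{P}_\infty(\xi')=t$, we obtain $\widetilde{P}(\xi)=\widetilde{P}_\infty(t^G\xi')+\widetilde{\mathcal{E}}_\infty(t^G\xi')=t+\widetilde{\mathcal{E}}_\infty(t^G\xi')$, and $\abs{\widetilde{\mathcal{E}}_\infty(t^G\xi')}\le\epsilon t$ gives $(1-\epsilon)t\le\widetilde{P}(\xi)\le(1+\epsilon)t$, as required.

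The argument is short because it merely assembles the two preceding lemmas; the only point requiring care is the identification of $\mathcal{O}$ with the sublevel set $\{\widetilde{P}_\infty<t_0\}$ and the attendant uniqueness of the representation $\xi=t^G\xi'$, both of which are supplied by the polar-coordinate decomposition of \cite{BR22}. I do not anticipate a substantive obstacle beyond this bookkeeping.
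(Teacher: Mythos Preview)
Your proof is correct and follows essentially the same approach as the paper: both take $K=S_{\widetilde{P}_\infty}$, invoke the subhomogeneity of $\widetilde{\mathcal{E}}_\infty$ from the preceding lemma, identify $\mathcal{O}$ with the sublevel set $\{\widetilde{P}_\infty<t_0\}$ via the polar decomposition, and read off the estimate from $\widetilde{P}=\widetilde{P}_\infty+\widetilde{\mathcal{E}}_\infty$. Your version is slightly more explicit about the uniqueness of the polar representation and the choice $t_0=\min\{\tau,t_0'\}$, but otherwise the arguments coincide.
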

\begin{proof}
Let $K=S_{\widetilde{P}_\infty}$ be the unital level set of $\widetilde{P}_\infty$ and, by an appeal to the preceding lemma, let $0<t_0\leq \tau$ be such that
\begin{equation*}
\abs{\widetilde{\mathcal{E}}_\infty(t^G\xi')}\leq \epsilon t
\end{equation*}
whenever $0<t\leq t_0$ and $\xi'\in K$. In this notation, we observe that $\mathcal{O}$, as defined in the statement, coincides with the $\widetilde{P}_\infty$-adapted open ball $B_{t_0}=\{\xi\in\mathbb{R}^d:\widetilde{P}_\infty(\xi)<t_0\}$ since $G\in\Exp(\widetilde{P}_\infty)$; in particular, $\mathcal{O}=B_{t_0}$ is necessarily an neighborhood of $0$ (see Proposition 4.1 of \cite{BR22}). Further, for each non-zero $\xi=t^G\xi'\in\mathcal{O}=B_{t_0}$, we have $\widetilde{P}_\infty(\xi)=t$ and therefore
\begin{equation*}
\abs{\widetilde{P}(\xi)-t}=\abs{\widetilde{\mathcal{E}}_\infty(\xi)}=\abs{\widetilde{\mathcal{E}}_\infty(t^G\xi')}\leq \epsilon t
\end{equation*}
so that $(1-\epsilon)t\leq \widetilde{P}(\xi)=\widetilde{P}(t^G\xi')\leq (1+\epsilon)t$.
\end{proof}

\begin{proof}[Proof of Proposition \ref{prop:RTildeVeryNice}]
In view of Lemma \ref{lem:RTildeNice}, we need only to prove the sufficiency of the subhomogeneity condition. Specifically, we prove that $\widetilde{R}(\xi)=o(\widetilde{P}(\xi))$ as $\xi\to 0$ when $\{t^{E_1-E_2}\}$ is contracting and $\widetilde{R}$ is subhomogeneous with respect to $G$. To this end, let $\epsilon>0$ and take $K$ as in the preceding lemma. With our assumption that $\widetilde{R}$ is subhomogeneous with respect to $G$, let $\tau>0$ be such that $\abs{\widetilde{R}(t^G\xi')}\leq \epsilon t/2$ whenever $\xi'\in K$ and $0<t\leq \tau$. By an appeal to the preceding lemma, let $0<t_0\leq \tau$ for which
\begin{equation*}
 \widetilde{P}(\xi)=\widetilde{P}(t^G\xi')\geq t/2
\end{equation*}
whenever $\xi=t^G\xi'$ is a non-zero member of the open neighborhood
\begin{equation*}
\mathcal{O}=\{\xi\in\mathbb{R}^d:\xi=0\,\,\mbox{or}\,\,\xi=t^G\xi'\,\,\mbox{for}\,\,0<t<t_0\,\,\mbox{and}\,\,\xi'\in K\}
\end{equation*}
of $0$ in $\mathbb{R}^d$. Thus, for any non-zero $\xi=t^G\xi'\in\mathcal{O}$,
\begin{equation*}
\abs{\widetilde{R}(\xi)}=\abs{\widetilde{R}(t^G\xi')}\leq\epsilon t/2\leq \epsilon \widetilde{P}(t^G\xi')=\epsilon\widetilde{P}(\xi).
\end{equation*}
By the continuity of $\widetilde{R}$ and $\widetilde{P}$, this estimate clearly holds when $\xi=0$ and so we have shown that $\widetilde{R}(\xi)=o(\widetilde{P}(\xi))$ as $\xi\to 0$ as was asserted.
\end{proof}

\noindent\textbf{\large Acknowlegements:} Evan Randles would like to thank Cornell University and Colby College for financial support and Cornell University for hosting him during his 2021-2022 sabbatical leave from Colby College. Laurent Saloff-Coste is partially supported by NSF grant DMS-1707589 and DMS-2054593.

\end{document}